\titleformat{\section}[hang]{\center\Large\bf}{\thesection.}{0.5cm}{}
\DeclareSymbolFont{cyrletters}{OT2}{wncyr}{m}{n}
\DeclareMathSymbol{\Sha}{\mathalpha}{cyrletters}{"58}
\DeclareMathSymbol{\Brusse}{\mathalpha}{cyrletters}{"42}
\theoremstyle{plain}
\newtheorem{theorem}{Th\'eor\`eme}[section]
\newtheorem{lemma}[theorem]{Lemme}
\newtheorem{proposition}[theorem]{Proposition}
\newtheorem{corollary}[theorem]{Corollaire}
\newtheorem{definition}[theorem]{D\'efinition}
\theoremstyle{definition}
\newtheorem{remarque}[theorem]{Remarque}
\newtheorem{example}[theorem]{Exemple}
\newtheorem{notation}[theorem]{Notation}
\newtheoremstyle{hypo}  % follow `plain` defaults but change HEADSPACE.
  {\topsep}   % ABOVESPACE
  {\topsep}   % BELOWSPACE
  {\itshape}  % BODYFONT
  {1.5ex}       % INDENT (empty value is the same as 0pt)
  {\bfseries} % HEADFONT
  {)}         % HEADPUNCT
  {8pt plus 1pt minus 1pt}  % HEADSPACE. `plain` default: {5pt plus 1pt minus 1pt}
  {}          % CUSTOM-HEAD-SPEC
\theoremstyle{hypo}
\newtheorem{hypo}[theorem]{(H}
\newtheoremstyle{hypol}  % follow `plain` defaults but change HEADSPACE.
  {\topsep}   % ABOVESPACE
  {\topsep}   % BELOWSPACE
  {\itshape}  % BODYFONT
  {1.5ex}       % INDENT (empty value is the same as 0pt)
  {\bfseries} % HEADFONT
  {)$_{\ell}$}         % HEADPUNCT
  {8pt plus 1pt minus 1pt}  % HEADSPACE. `plain` default: {5pt plus 1pt minus 1pt}
  {}          % CUSTOM-HEAD-SPEC
\theoremstyle{hypol}
\newtheorem{hypol}[theorem]{(H}
\newtheoremstyle{DP}  % follow `plain` defaults but change HEADSPACE.
  {\topsep}   % ABOVESPACE
  {\topsep}   % BELOWSPACE
  {\itshape}  % BODYFONT
  {}       % INDENT (empty value is the same as 0pt)
  {\bfseries} % HEADFONT
  {.}         % HEADPUNCT
  {8pt plus 1pt minus 1pt}  % HEADSPACE. `plain` default: {5pt plus 1pt minus 1pt}
  {}          % CUSTOM-HEAD-SPEC
\theoremstyle{DP}
\begin{document}

\title{\textbf{\scshape Variétés abéliennes sur les corps de fonctions de courbes sur des corps locaux supérieurs}}

\author{Diego Izquierdo\\
\small
École Normale Supérieure\\
\small
45, Rue d'Ulm - 75005 Paris - France\\
\small
\texttt{diego.izquierdo@ens.fr}
}
\date{}
\normalsize
\maketitle
\setcounter{section}{-1}

\section{\textsc{Introduction}}

\subsection{Contexte et motivations}

\hspace{4ex} Depuis les travaux de John Tate dans les années 1960, les théorèmes de dualité arithmétique pour la cohomologie galoisienne des groupes algébriques commutatifs sur des corps locaux ou globaux ont joué un rôle central en arithmétique. Les groupes algébriques concernés sont divers: les groupes finis, les tores, les variétés abéliennes. Rappelons brièvement les résultats portant sur ces dernières.\\

\hspace{4ex} Dans le cadre local, Tate montre en 1958, dans l'exposé \cite{Tat58}, qu'étant donnée une variété abélienne $A$ sur un corps $p$-adique $k$ de variété abélienne duale $A^t$, il existe un accouplement canonique $H^0(k,A) \times H^1(k,A^t) \rightarrow \text{Br}(k) \cong \mathbb{Q}/\mathbb{Z}$ qui met en dualité parfaite le groupe profini $H^0(k,A)$ et le groupe de torsion $H^1(k,A^t)$. Dans le cadre global, en généralisant des travaux de Cassels pour les courbes elliptiques, il construit pour chaque variété abélienne $A$ sur un corps de nombres $K$ de variété duale $A^t$ un accouplement $\Sha^1(K,A) \times \Sha^1(K,A^t) \rightarrow \mathbb{Q}/\mathbb{Z}$ puis annonce au Congrès International des Mathématiciens de 1962 (\cite{Tat63}) la non-dégénérescence de ce dernier modulo divisibles. Ici, $\Sha^1(K,A)$ désigne le groupe de Tate-Shafarevich constitué  des classes d'isomorphismes de torseurs sous $A$ triviales dans tous les complétés de $K$. Des résultats analogues ont aussi été établis pour les variétés abéliennes sur $\mathbb{F}_p((u))$ et sur $\mathbb{F}_p(u)$ (Remarque I.3.6 et Théorème I.6.13 de \cite{MilADT}).\\

\hspace{4ex} Par ailleurs, ces dernières années, nous avons été témoins d'un regain d'intérêt pour les théorèmes de dualité sur d'autres corps de caractéristique 0 que les corps $p$-adiques et les corps de nombres. Citons par exemple les travaux de Scheiderer et van Hamel (\cite{SVH}) et Harari et Szamuely (\cite{HS1}) pour $\mathbb{Q}_p((u))$ et $\mathbb{Q}_p(u)$, ceux de Colliot-Thélène et Harari (\cite{CTH}) pour $\mathbb{C}((t))((u))$ et $\mathbb{C}((t))(u)$, et ceux de l'auteur (\cite{Izq1}) pour les corps de la forme $k((u))$ et $k(u)$ avec $k=\mathbb{Q}_p((t_1))...((t_d))$ ou $k=\mathbb{C}((t_1))...((t_d))$. Cependant, aucun de ces travaux ne porte sur les variétés abéliennes. En fait, à la connaissance de l'auteur, on ne dispose jusqu'à présent que de résultats pour les variétés abéliennes sur $\mathbb{C}((u))$ et $\mathbb{C}(u)$: cela remonte à des travaux de Ogg dans les années 1960 (\cite{Ogg}). Le but du présent article est donc d'établir des théorèmes de dualité, analogues à ceux de Tate rappelés ci-dessus, pour les variétés abéliennes sur les corps de la forme $k((u))$ et $k(u)$ avec $k=\mathbb{Q}_p$ ou $k=\mathbb{C}((t))$, voire avec $k=\mathbb{Q}_p((t_1))...((t_d))$ ou $k=\mathbb{C}((t_1))...((t_d))$. 

\begin{remarque}
Il est vrai qu'on peut déjà trouver des résultats similaires pour les variétés abéliennes sur $\mathbb{Q}_p((t_1))...((t_d))$ dans \cite{Koy}, mais l'article en question contient un grand nombre d'erreurs et soit le théorème principal soit la principale proposition permettant de le prouver semble erroné (voir la remarque \ref{faux}).
\end{remarque}

\subsection{Organisation de l'article}

\hspace{4ex} Cet article est constitué de 6 sections.\\

\hspace{4ex} La première partie permet de faire quelques rappels et d'établir quelques résultats préliminaires. On y étudie notamment la cohomologie des tores sur des corps (dits $d$-locaux) de la forme $\mathbb{C}((t_0))...((t_d))$ ou $\mathbb{Q}_p((t_2))...((t_d))$.\\

\hspace{4ex} La deuxième partie porte sur les variétés abéliennes sur $\mathbb{C}((t_0))((t_1))$ et sur $\mathbb{C}((t_0))(t)$. Voici les principaux résultats obtenus:

\begin{theorem} (théorèmes \ref{1-local}, \ref{noyau} et \ref{C((t))(X)} et corollaire \ref{corglob}) \label{thintro}
\begin{itemize}
\item[(i)] Soient $k = \mathbb{C}((t_0))((t_1))$ et $A$ une variété abélienne sur $k$. Soit $A^t$ sa variété abélienne duale. Les groupes $H^1(k,A)$ et $(H^0(k,A^t)^{\wedge})^D$ sont isomorphes modulo divisibles. Plus précisément, on a une suite exacte:
$$0 \rightarrow (\mathbb{Q}/\mathbb{Z})^{m(A)} \rightarrow H^1(k,A) \rightarrow (H^0(k,A^t)^{\wedge})^D \rightarrow 0,$$
où $m(A)$ est un entier naturel compris entre 0 et $4\dim A$ dépendant de la géométrie de $A$. L'entier $m(A)$ est nul si, et seulement si, la variété abélienne sur $\mathbb{C}((t_0))$ apparaissant dans la réduction de $A$ modulo $t_1$ a très mauvaise réduction. Lorsque la fibre spéciale du modèle de Néron est connexe, le noyau de $H^1(k,A) \rightarrow (H^0(k,A^t)^{\wedge})^D$ est un groupe contenant $H^1_{nr}(k,A)$ qui peut être décrit explicitement: on le note $H^1_{nrs}(k,A)$.
\item[(ii)] Soient $k=\mathbb{C}((t_0))$ et $K$ le corps des fonctions d'une courbe projective lisse géométriquement intègre $X$ sur $k$. On note $X^{(1)}$ l'ensemble des points fermés de $X$. Soit $A$ une variété abélienne sur $K$, de variété abélienne duale $A^t$. Soit $Z$ l'ensemble des $v \in X^{(1)}$ tels que $m(A \times_K K_v)=0$. On suppose que, pour toute place $v \in X^{(1)}\setminus Z$, la fibre spéciale du modèle de Néron de $A \times_K K_v$ est connexe. Alors il existe une dualité parfaite:
$$\overline{\Sha^1(K,A)} \times \overline{\Sha^1_{nrs}(A^t)} \rightarrow \mathbb{Q}/\mathbb{Z}$$
ainsi qu'un accouplement $\Sha^1(K,A) \times \Sha^1(K,A^t) \rightarrow \mathbb{Q}/\mathbb{Z}$ dont le noyau à gauche (resp. à droite) est constitué des éléments de $\Sha^1(K,A)$ (resp. $\Sha^1(K,A^t)$) qui sont divisibles dans $\Sha^1_{nrs}(A)$ (resp. $\Sha^1_{nrs}(A^t)$).
Ici:
\begin{gather*}
\Sha^1(K,A) := \text{Ker}\left(H^1(K,A) \rightarrow \prod_{v \in X^{(1)}} H^1(K_v,A)\right),\\
\Sha^1_{nrs}(A^t) := \text{Ker}\left( H^1(K,A^t) \rightarrow \prod_{v \in X^{(1)}\setminus Z} H^1(K_v,A^t)/H^1_{nrs}(K_v,A^t) \times \prod_{v \in Z} H^1(K_v,A^t)\right),
\end{gather*}
 et, pour chaque groupe abélien $B$, $\overline{B}$ désigne le quotient de $B$ par son sous-groupe divisible maximal.
\end{itemize}
\end{theorem}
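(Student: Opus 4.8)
Le plan est de traiter séparément les deux énoncés, le cas global (ii) reposant de manière cruciale sur la dualité locale (i). Pour (i), je commencerais par un dévissage via le modèle de Néron. Posons $\mathcal{O} = \mathbb{C}((t_0))[[t_1]]$, de corps résiduel $F = \mathbb{C}((t_0))$, et notons $\mathcal{A}$ le modèle de Néron de $A$ sur $\mathcal{O}$, de fibre spéciale $\mathcal{A}_0$. La suite de connexité $0 \to \mathcal{A}_0^0 \to \mathcal{A}_0 \to \Phi \to 0$ et le dévissage de la composante neutre $0 \to T \to \mathcal{A}_0^0 \to B \to 0$ (avec $T$ un tore et $B$ une variété abélienne sur $F$, la réduction devenant semi-abélienne après une extension finie en caractéristique nulle) ramènent le calcul de $H^1(k,A)$ à celui de la cohomologie sur le corps 1-local $F$ de $B$, de $T$ et de $\Phi$. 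Pour la cohomologie des tores sur $F$, j'invoquerais les résultats de la première partie; pour $B$, la dualité d'Ogg sur $\mathbb{C}((t_0))$. L'accouplement $H^1(k,A) \times H^0(k,A^t)^{\wedge} \to \mathbb{Q}/\mathbb{Z}$ se construirait alors par recollement de ces dualités résiduelles avec la cohomologie de la direction uniformisante, de groupe de Galois $\hat{\mathbb{Z}}(1)$.

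Le point délicat de (i) est le terme $(\mathbb{Q}/\mathbb{Z})^{m(A)}$: il mesure l'écart divisible entre $H^1(k,A)$ et le dual de $H^0(k,A^t)^{\wedge}$, et provient de la partie de bonne réduction de $B$, c'est-à-dire de la variété abélienne intervenant dans la réduction de $B$ modulo $t_0$. Il faut montrer que cet écart est exactement un groupe divisible dont le corang $m(A)$ s'exprime via cette partie abélienne, et s'annule précisément lorsque $B$ a très mauvaise réduction. Cela demande une analyse fine des groupes de composantes et de la structure de $B(\overline{F})$ comme $\hat{\mathbb{Z}}$-module, ainsi que l'identification explicite du noyau $H^1_{nrs}(k,A)$ sous l'hypothèse de connexité de la fibre spéciale.

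Pour (ii), la stratégie est celle de Poitou–Tate et Cassels–Tate, adaptée aux courbes sur un corps local. Je construirais d'abord l'accouplement $\Sha^1(K,A) \times \Sha^1(K,A^t) \to \mathbb{Q}/\mathbb{Z}$ par cup-produit et somme des invariants locaux fournis par (i), la compatibilité globale résultant d'une loi de réciprocité $\sum_{v} \mathrm{inv}_v = 0$ pour $X/k$. La non-dégénérescence modulo divisibles s'obtiendrait à partir d'une suite exacte de type Poitou–Tate pour $X$ sur $\mathbb{C}((t_0))$, dans l'esprit de \cite{HS1}, \cite{CTH} et \cite{Izq1}, combinée à la dualité locale en chaque place $K_v = \mathbb{C}((t_0))((t_v))$. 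Le choix des conditions locales $H^1_{nrs}(K_v, A^t)$ est précisément dicté par (i): ce sont les noyaux locaux qui rendent $\overline{\Sha^1(K,A)}$ et $\overline{\Sha^1_{nrs}(A^t)}$ duaux l'un de l'autre.

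L'obstacle principal sera double. D'une part, il faut contrôler les sous-groupes divisibles: l'accouplement n'est parfait qu'après passage au quotient $\overline{(\cdot)}$ par le divisible maximal, ce qui exige des énoncés de cofinitude pour les groupes de Tate–Shafarevich et une description précise des éléments divisibles, d'où la forme annoncée du noyau de l'accouplement sur $\Sha^1$ lui-même. D'autre part, l'appariement correct des conditions locales $H^1_{nrs}$ aux places $v \in X^{(1)} \setminus Z$, et la vérification que l'hypothèse de connexité de la fibre spéciale du modèle de Néron garantit la compatibilité des dualités locales de (i) avec la suite globale, constitueront le cœur technique de la preuve.
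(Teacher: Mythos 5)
Votre plan pour le point (i) passe à côté du mécanisme central de la preuve. Le dévissage du modèle de Néron n'est utilisé dans l'article que pour compter $H^0$ : on dévisse $A(k)=\mathcal{A}(\mathcal{O}_k)$ via la surjection à noyau uniquement divisible $\mathcal{A}(\mathcal{O}_k)\rightarrow A_0(\kappa)$, puis les suites $0\rightarrow A_0^0\rightarrow A_0\rightarrow F\rightarrow 0$ et $0\rightarrow U\times T\rightarrow A_0^0\rightarrow B\rightarrow 0$ et le théorème de Ogg pour $H^1(\kappa,B)$, ce qui donne $|A(k)/n|/|{_n}A(k)|$. On ne dévisse jamais $H^1(k,A)$ lui-même. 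La surjectivité de $H^1(k,A)\rightarrow (H^0(k,A^t)^{\wedge})^D$ et la structure du noyau viennent d'ailleurs : l'accouplement est défini directement sur $k$ par Barsotti-Weil ($A\otimes^{\mathbf{L}}A^t\rightarrow\mathbb{G}_m[1]$), la surjectivité au niveau fini résulte de la dualité parfaite pour les $\text{Gal}(k^s/k)$-modules finis sur $k$ (de dimension cohomologique $2$) appliquée à ${_n}A$ et ${_n}A^t=\underline{\text{Hom}}({_n}A,\mathbb{Q}/\mathbb{Z}(1))$, et le cardinal $|{_n}H^1(k,A)|$ s'obtient par la caractéristique d'Euler-Poincaré $\chi(k,{_n}A)=1$ combinée à cette dualité ; le « recollement de dualités résiduelles » que vous invoquez n'est pas une construction et ne remplace aucun de ces trois ingrédients. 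Deux imprécisions en outre : $m(A)$ ne provient pas que de la partie abélienne de la réduction de $B$ — par Ogg, $m(A)=2(2\dim B-\epsilon)$ fait intervenir la partie abélienne \emph{et} le rang torique de cette réduction, d'où l'équivalence avec la très mauvaise réduction de $B$ — et l'identification du noyau avec $H^1_{nrs}(k,A)$ (théorème \ref{noyau}) exige un argument que vous ne mentionnez pas : la dualité sur $\hat{\mathbb{Z}}=\text{Gal}(k^{nr}/k)$ appliquée à $H^1(k^{nr},A)\cong(\varprojlim_m {_m}A^t(k^{nr}))^D$ et un diagramme de compatibilité avec l'accouplement local.

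Pour le point (ii), il y a deux lacunes sérieuses. D'abord, l'accouplement que vous décrivez — cup-produit puis somme des invariants locaux — est identiquement nul sur $\Sha^1(K,A)\times\Sha^1(K,A^t)$ : les éléments de ces groupes sont localement triviaux, donc tous les invariants locaux de leurs cup-produits s'annulent ; l'accouplement de Cassels-Tate demande une construction plus fine. L'article le définit géométriquement : pour $V$ ouvert de $X$, l'accouplement $\mathcal{A}^t\otimes^{\mathbf{L}}\mathcal{A}\rightarrow\mathbb{G}_m[1]$ induit $H^1(V,\mathcal{A}^t)\times H^1_c(V,\mathcal{A})\rightarrow H^3_c(V,\mathbb{G}_m)\cong\mathbb{Q}/\mathbb{Z}$, que l'on restreint aux groupes $D^1(V,\mathcal{A})$ et $D^1_{nrs}(V,\mathcal{A}^t)$. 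Ensuite, la « suite exacte de type Poitou-Tate » pour les variétés abéliennes sur $K$ n'existe pas dans les références que vous citez (\cite{HS1}, \cite{CTH}, \cite{Izq1} traitent les tores et les modules finis) : l'établir est précisément le contenu de la preuve. L'article y parvient par la dualité d'Artin-Verdier au niveau fini ($H^1(V,{_{\ell^r}}\mathcal{A}^t)\times H^2_c(V,{_{\ell^r}}\mathcal{A})\rightarrow\mathbb{Q}/\mathbb{Z}$, lemme \ref{acc}, issu de \cite{Izq1}), passage à la limite en $\{\ell\}$ et $T_{\ell}$, identification $D^1(V,\mathcal{A})\cong\Sha^1_{nr}(V,A)$ par la suite spectrale de Leray (lemme \ref{Sha}), puis passage à la limite sur $V$ en utilisant la stationnarité des suites décroissantes de sous-groupes d'un groupe de torsion de type cofini (corollaire \ref{cornrs}) ; c'est là, et seulement là, que les théorèmes locaux \ref{1-local} et \ref{noyau} et l'hypothèse (H \ref{HC})$_{\ell}$ (garantie par votre hypothèse de connexité) servent, pour rendre bijective la flèche verticale de droite du diagramme clé de la proposition \ref{d1}. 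Enfin, l'énoncé symétrique sur $\Sha^1(A)\times\Sha^1(A^t)$ (corollaire \ref{corglob}) requiert de vérifier que les deux accouplements induits coïncident, via une compatibilité en catégorie dérivée entre $j_!$ et le cup-produit, point totalement absent de votre plan. Vous identifiez correctement le rôle des conditions locales $H^1_{nrs}$ et des quotients par les divisibles, mais en l'état votre texte est un programme, pas une preuve.
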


\begin{remarque}
L'énoncé précédent est moins général que les énoncés qui seront démontrés dans l'article: il est en fait possible d'affaiblir l'hypothèse de connexité des fibres spéciales des modèles de Néron. On remarquera aussi que l'hypothèse ne concerne que les places de mauvaise réduction.
\end{remarque}
\vspace{5pt}
\hspace{4ex} Les parties 3 et 5 sont consacrées à une généralisation de (i) du théorème \ref{thintro} aux variétés abéliennes sur $\mathbb{C}((t_0))...((t_d))$ ou $k'((t_2))...((t_d))$ avec $k'$ corps $p$-adique. Plus précisément, elles permettent de construire un accouplement entre la cohomologie d'une variété abélienne et la cohomologie d'un certain faisceau qui lui est associé puis:
\begin{itemize}
\item[$\bullet$] de démontrer que ledit accouplement induit toujours une dualité parfaite modulo divisibles (corollaires \ref{surj} et \ref{surjQp} et théorèmes \ref{modulo divisibles} et \ref{modulo divisibles Qp}),
\item[$\bullet$] de déterminer quand c'est un accouplement parfait (sans quotienter par les sous-groupes divisbles) (corollaire \ref{cormaj}, théorèmes \ref{nullité}, \ref{nullitéQp} et \ref{nullitéQpp2} et proposition \ref{nullitéQpp}),
\item[$\bullet$] de calculer dans certains cas les noyaux à gauche et à droite de l'accouplement (théorèmes \ref{noyaubis} et \ref{noyauQp}).
\end{itemize}
\vspace{10pt}

\hspace{4ex} Les parties 4 et 6 sont consacrées à une généralisation de (ii) du théorème \ref{thintro} aux variétés abéliennes sur $k(X)$ où $k=\mathbb{C}((t_0))...((t_d))$ ou $k=k'((t_2))...((t_d))$ avec $k'$ corps $p$-adique et $X$ est une courbe projective lisse géométriquement intègre sur $k$ (théorèmes \ref{cornrsbis}, \ref{th1} et \ref{cornrsQp} et corollaires \ref{corglobbis} et \ref{corglobQp}).\\

\hspace{4ex} Finalement, dans la septième partie, on s'intéresse à la finitude du premier groupe de Tate-Shafarevich d'une variété abélienne sur $k(X)$ où $k=\mathbb{C}((t_0))...((t_d))$ ou $k=k'((t_2))...((t_d))$ avec $k'$ corps $p$-adique et $X$ est une courbe projective lisse géométriquement intègre sur $k$.

\subsection{Remerciements} 

Je tiens à remercier en premier lieu David Harari pour son soutien et ses conseils, ainsi que sa lecture soigneuse de ce texte: sans lui, ce travail n'aurait pas pu voir le jour. Je suis aussi très reconnaissant à Jean-Louis Colliot-Thélène et à Tamás Szamuely pour leurs commentaires et leurs remarques. Je voudrais finalement remercier l'École Normale Supérieure pour ses excellentes conditions de travail.

\subsection{Notations}

\textbf{Corps.} Si $l$ est un corps, on notera $l^s$ sa clôture séparable. Si de plus $l$ est un corps de valuation discrète complet, on notera $l^{nr}$ son extension non ramifiée maximale.\\

\textbf{Groupes abéliens.} Pour $M$ un groupe topologique abélien (éventuellement muni de la topologie discrète), $n>0$ un entier et $\ell$ un nombre premier, on notera:
\begin{itemize}
\item[$\bullet$] $M_{tors}$ la partie de torsion de $M$.
\item[$\bullet$] ${_n}M$ la partie de $n$-torsion de $M$.
\item[$\bullet$] $M\{\ell\}$ la partie de torsion $\ell$-primaire de $M$.
\item[$\bullet$] $M_{\text{non}-\ell}=\bigoplus_{p \neq \ell} M\{p\}$ où $p$ décrit les nombres premiers différents de $\ell$.
\item[$\bullet$] $M^{(\ell)}$ le complété pour la topologie $\ell$-adique de $M$.
\item[$\bullet$] $M^{\wedge}$ la limite projective des $M/nM$.
\item[$\bullet$] $T_{\ell}M$ la limite projective des ${_{\ell^r}}M$.
\item[$\bullet$] $M_{div}$ le sous-groupe divisible maximal de $M$.
\item[$\bullet$] $\overline{M}=M/M_{div}$ le quotient de $M$ par son sous-groupe divisible maximal.
\item[$\bullet$] $M^D$ le groupe des morphismes continus $M \rightarrow \mathbb{Q}/\mathbb{Z}$.
\end{itemize}
Un groupe abélien de torsion sera dit de type cofini si, pour tout entier naturel $n>0$, sa $n$-torsion est finie. La partie $\ell$-primaire d'un tel groupe est la somme directe d'un $\ell$-groupe abélien fini et d'une puissance finie de $\mathbb{Q}_{\ell}/\mathbb{Z}_{\ell}$.
\vspace{5 mm}

\textbf{Faisceaux et cohomologie.} Sauf indication du contraire, tous les faisceaux sont considérés pour le petit site étale. Soit $r \geq 0$. Pour $F$ et $G$ deux faisceaux fppf sur un schéma $X$, on note $\underline{\text{Ext}}^r_X(F,G)$ (ou $\underline{\text{Ext}}^r(F,G)$ s'il n'y a pas d'ambigüité) le faisceau associé pour la topologie étale au préfaisceau $T \mapsto \text{Ext}^r_{T_{fppf}}(F,G)$. On rappelle qu'avec cette définition, la formule de Barsotti-Weil garantit que, si $A$ est une variété abélienne sur un corps $k$, alors  la variété abélienne duale $A^t$ représente le faisceau $\underline{\text{Ext}}^1_k(A,\mathbb{G}_m)$ (voir par exemple le théorème III.18.1 de \cite{Oort}). Par ailleurs, en mimant les notations pour les groupes abéliens, on pose, pour $F$ un faisceau sur un schéma $X$ et $l$ un nombre premier, $H^r(X,T_lF) = \varprojlim_n H^r(X,{_{l^n}}F)$ et $H^r(X,F\{l\}) = \varinjlim_n H^r(X,{_{l^n}}F)$.\\

\textbf{Catégories dérivées.} Nous serons amenés quelques fois à considérer des catégories dérivées. On notera alors $- \otimes^{\textbf{L}} -$ le produit tensoriel dérivé.\\

\textbf{Corps locaux supérieurs.} Les corps 0-locaux sont par définition les corps finis et le corps $\mathbb{C}((t))$. Pour $d \geq 1$, un corps $d$-local est un corps complet pour une valuation discrète dont le corps résiduel est $(d-1)$-local. \emph{On remarquera que cette définition est plus générale que la définition standard.} Lorsque $k$ est un corps $d$-local, on notera $k_0$, $k_1$, ..., $k_d$ les corps tels que $k_0$ est fini ou $\mathbb{C}((t))$, $k_d=k$, et pour chaque $i$ le corps $k_i$ est le corps résiduel de $k_{i+1}$. On rappelle le théorème de dualité sur un corps $d$-local $k$: pour tout $\text{Gal}(k^s/k)$-module fini $M$ d'ordre $n$ premier à $\text{Car}(k_1)$, on a un accouplement parfait de groupes finis $H^r(k,M) \times H^{d+1-r}(k,\text{Hom}(M,\mu_n^{\otimes d})) \rightarrow H^{d+1}(k,\mu_n^{\otimes d}) \cong \mathbb{Z}/n\mathbb{Z}$. Ce théorème est énoncé et démontré dans \cite{MilADT} (théorème 2.17) lorsque $k_0 \neq \mathbb{C}((t))$. Il se prouve exactement de la même manière dans ce dernier cas: en effet, il suffit de procéder par récurrence à l'aide du lemme 2.18 de \cite{MilADT}, l'initialisation étant réduite à la dualité évidente $H^r(k_{-1},M) \times H^{-r}(k_{-1},\text{Hom}(M,\mathbb{Z}/n\mathbb{Z})) \rightarrow H^0(k_{-1},\mathbb{Z}/n\mathbb{Z})\cong\mathbb{Z}/n\mathbb{Z}$ pour le corps ``$-1$-local'' $k_{-1} = \mathbb{C}$.\\

\textbf{Groupes de Tate-Shafarevich.} Lorsque $L$ est le corps des fonctions d'une variété projective lisse géométriquement intègre $Y$ sur un corps $l$ et $M$ est un $\text{Gal}(L^s/L)$-module discret, le $r$-ième groupe de Tate-Shafarevich de $M$ est, par définition, le groupe $\Sha^r(L,M) = \text{Ker}(H^r(L,M) \rightarrow \prod_{v \in Y^{(1)}} H^r(L_v,M))$.\\

\textbf{Groupe de Brauer.} Lorsque $Z$ est un schéma, on note $\text{Br}(Z)$ le groupe de Brauer cohomologique $H^2(Z,\mathbb{G}_m)$. Si $Z$ est une $l$-variété géométriquement intègre pour un certain corps $l$, on note $\text{Br}_{1}(Z)$ le groupe de Brauer algébrique $\text{Ker}(\text{Br}(Z) \rightarrow \text{Br}(Z \times_l l^s))$. \\

\textbf{Cadre.} Dans toute la suite, $d$ désignera un entier naturel fixé (éventuellement nul), $k$ un corps $d$-local et $X$ une courbe projective lisse géométriquement intègre sur $k$. On notera $X^{(1)}$ l'ensemble de ses points de codimension 1 et $K$ son corps des fonctions. \emph{Lorsque $k_0$ est fini, on supposera que le corps $k_1$ est de caractéristique 0}: autrement dit, ou bien $k_0 = \mathbb{C}((t))$, ou bien $d\geq 1$ et $k_1$ est un corps $p$-adique. Lorsque $M$ est un $\text{Gal}(K^s/K)$-module discret, on notera parfois $\Sha^r(M)$ au lieu de $\Sha^r(K,M)$.\\

\textbf{Cohomologie à support compact.} Pour $j: U \hookrightarrow X$ une immersion ouverte et $\mathcal{F}$ un faisceau sur $U$, le $r$-ième groupe de cohomologie à support compact est, par définition, le groupe $H^r_c(U,\mathcal{F}) = H^r(X,j_!\mathcal{F})$. On remarquera que, contrairement à la définition classique de la cohomologie à support compact, cette définition ne dépend pas du choix d'une compactification lisse de $U$ (plus précisément, nous avons choisi $X$ comme compactification lisse de $U$).\\

\textbf{Tores algébriques.} On dit qu'un groupe algébrique $T$ sur un corps $l$ est un tore si $T \times_l l^s$ est isomorphe à $\mathbb{G}_m^r$ pour un certain $r \geq 0$. On rappelle que le foncteur $T \mapsto \hat {T}=\text{Hom}(T \times l^s,\mathbb{G}_{m,l^s})$ établit une équivalence de catégories entre les tores algébriques sur $l$ et les $\text{Gal}(l^s/l)$-modules qui, en tant que groupes abéliens, sont libres de type fini. Si $T$ est un tore sur $l$, on appelle rang de $T$ la dimension d'un sous-tore déployé maximal: c'est aussi le rang de $H^0(l,\hat{T})$. Par ailleurs, on dit qu'un tore $T$ sur $l$ est quasi-trivial s'il est isomorphe à $R_{L/l}\mathbb{G}_m$ pour une certaine $l$-algèbre finie séparable $L$.

\section{\textsc{Préliminaires}}

\subsection{Groupes de torsion de type cofini}

Dans cet article, nous étudierons souvent la structure des groupes de cohomologie par des arguments de comptage. Ainsi, les lemmes qui suivent, qui ne sont que des exercices d'algèbre élémentaire, seront utilisés très souvent sans référence explicite:

\begin{lemma}
Soient $A$ et $A'$ deux groupes de torsion de type cofini. Si $|{_n}A|=|{_n}A'|$ pour tout entier naturel $n$, alors $A$ et $A'$ sont (non canoniquement) isomorphes.
\end{lemma}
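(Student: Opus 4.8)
The plan is to reduce everything to one prime at a time and then invoke the explicit structure of $\ell$-primary groups of cofinite type recalled just above. Since a torsion group is the direct sum of its $\ell$-primary components, $A \cong \bigoplus_\ell A\{\ell\}$ and $A' \cong \bigoplus_\ell A'\{\ell\}$, so it suffices to prove $A\{\ell\} \cong A'\{\ell\}$ for every prime $\ell$. First I would observe that the hypothesis on the $n$-torsion, specialized to $n = \ell^r$, yields exactly $|{_{\ell^r}}A\{\ell\}| = |{_{\ell^r}}A'\{\ell\}|$ for all $r \geq 0$: indeed any $x$ with $\ell^r x = 0$ has order a power of $\ell$, so ${_{\ell^r}}A = {_{\ell^r}}(A\{\ell\})$, and the passage to $n=\ell^r$ loses no information. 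The statement thus becomes: an $\ell$-primary group $B$ of cofinite type is determined up to isomorphism by the sequence $r \mapsto |{_{\ell^r}}B|$.

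Next I would write, using the structure statement recalled above, $B = (\mathbb{Q}_\ell/\mathbb{Z}_\ell)^{m} \oplus F$ with $F$ a finite abelian $\ell$-group, say $F \cong \bigoplus_i \mathbb{Z}/\ell^{a_i}\mathbb{Z}$. Since ${_{\ell^r}}(\mathbb{Q}_\ell/\mathbb{Z}_\ell) \cong \mathbb{Z}/\ell^r\mathbb{Z}$ and ${_{\ell^r}}(\mathbb{Z}/\ell^{a}\mathbb{Z}) \cong \mathbb{Z}/\ell^{\min(r,a)}\mathbb{Z}$, setting $b_r := v_\ell\big(|{_{\ell^r}}B|\big)$ gives $b_r = r m + \sum_i \min(r, a_i)$.

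Then the key computation is to recover the invariants $m$ and the multiset $\{a_i\}$ from $(b_r)_r$. Taking first differences, $b_{r+1} - b_r = m + \#\{\, i : a_i > r \,\}$; as $r \to \infty$ the finite count tends to $0$, so $m = \lim_r (b_{r+1}-b_r)$ is recovered, and subtracting yields $\#\{\, i : a_i > r \,\}$ for every $r$. A second difference then produces $\#\{\, i : a_i = r \,\}$ for each $r \geq 1$, i.e.\ the full isomorphism type of $F$. Hence $(b_r)_r$ determines $B$ up to (non-canonical) isomorphism, and equality of the sequences for $A\{\ell\}$ and $A'\{\ell\}$ forces $A\{\ell\} \cong A'\{\ell\}$; reassembling over all $\ell$ gives $A \cong A'$.

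The argument is entirely elementary and I do not expect any serious obstacle. The only point requiring a little care is the bookkeeping in the final step, namely checking that the sequence of torsion orders genuinely pins down both the corank $m$ (the number of copies of $\mathbb{Q}_\ell/\mathbb{Z}_\ell$) and each elementary divisor of the finite part $F$; the first- and second-difference computation handles this cleanly once one has isolated the $\ell$-primary part via $n = \ell^r$.
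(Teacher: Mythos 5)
Your proof is correct. The paper in fact offers no proof of this lemma --- it is declared an elementary algebra exercise and used without explicit reference --- and your argument (reduction to $\ell$-primary components via $n=\ell^r$, then recovery of the corank $m$ and of the elementary divisors of the finite part from the sequence $b_r = v_\ell(|{_{\ell^r}}B|)$ by first and second differences) is exactly the intended one, resting on the structure $A\{\ell\} \cong F_\ell \oplus (\mathbb{Q}_\ell/\mathbb{Z}_\ell)^{r_\ell}$ that the paper recalls in the very next lemma.
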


\begin{lemma}
Soit $A$ un groupe de torsion de type cofini. Pour chaque nombre premier $\ell$, il existe un groupe fini $F_{\ell}$ et un entier naturel $r_{\ell}$ tels que $A\{\ell\} \cong F_{\ell} \oplus (\mathbb{Q}_{\ell}/\mathbb{Z}_{\ell})^{r_{\ell}}$. De plus, pour tout entier naturel $n$, on a:
$$\frac{|{_n}A|}{|A/n|}=\prod_{\ell} \ell^{r_{\ell}v_{\ell}(n)}.$$
Ici, $v_{\ell}(n)$ désigne la valuation $\ell$-adique de $n$.
\end{lemma}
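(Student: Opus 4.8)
Le plan est de démontrer d'abord l'énoncé structurel, puis d'en déduire la formule de comptage par un calcul élémentaire.

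Pour la structure, je fixerais un premier $\ell$ et poserais $G = A\{\ell\}$~: c'est un groupe de torsion $\ell$-primaire dont la $\ell$-torsion ${}_\ell G = {}_\ell A$ est finie, puisque $A$ est de type cofini. Je commencerais par décomposer $G = G_{div} \oplus R$, en invoquant le fait classique que le sous-groupe divisible maximal d'un groupe abélien en est un facteur direct (les groupes divisibles sont exactement les $\mathbb{Z}$-modules injectifs). Le groupe $G_{div}$ étant divisible, $\ell$-primaire et de torsion, il est isomorphe à une somme directe de copies de $\mathbb{Q}_\ell/\mathbb{Z}_\ell$, et le nombre $r_\ell$ de ces copies est fini car ${}_\ell G_{div} \subseteq {}_\ell G$ l'est. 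Tout revient alors à montrer que la partie réduite $R$, qui est $\ell$-primaire, réduite et de $\ell$-torsion finie, est finie.

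C'est là le point délicat, et c'est le seul endroit où il faut réellement exploiter la finitude du socle, et non la seule hypothèse de réduction. J'introduirais le premier sous-groupe d'Ulm $R^1 = \bigcap_{k \geq 0} \ell^k R$ et montrerais qu'il est divisible. Étant donné $x \in R^1$, l'ensemble $P$ de ses antécédents par la multiplication par $\ell$ est un translaté de ${}_\ell R$, donc fini et non vide~; comme $x \in \ell^{n+1} R$ pour tout $n$, chaque $P \cap \ell^n R$ est non vide, et ces ensembles finis décroissants ont une intersection $P \cap R^1$ non vide. Un élément $y^* \in P \cap R^1$ vérifie $\ell y^* = x$ avec $y^* \in R^1$, d'où $x \in \ell R^1$ et finalement $R^1 = \ell R^1$. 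Comme $R$ est réduit, on obtient $R^1 = 0$. La suite décroissante de sous-espaces $\bigl({}_\ell(\ell^k R)\bigr)_k$ de l'espace fini ${}_\ell R$ a alors pour intersection ${}_\ell(R^1) = 0$, donc ${}_\ell(\ell^k R) = 0$ pour $k$ assez grand~; un groupe $\ell$-primaire à $\ell$-torsion nulle étant nul, cela donne $\ell^k R = 0$, c'est-à-dire que $R$ est borné. Un groupe abélien borné dont le socle est fini étant fini (théorème de Prüfer~: somme directe de cycliques, en nombre $\dim_{\mathbb{F}_\ell} {}_\ell R$), on conclut que $R =: F_\ell$ est fini et que $G \cong F_\ell \oplus (\mathbb{Q}_\ell/\mathbb{Z}_\ell)^{r_\ell}$.

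Pour la formule de comptage, j'utiliserais la décomposition primaire $A = \bigoplus_\ell A\{\ell\}$ et le fait que, $n$ étant fixé, la multiplication par $n$ induit sur $A\{\ell\}$ le même noyau et le même conoyau que la multiplication par $\ell^{v_\ell(n)}$ (la partie de $n$ première à $\ell$ y agissant bijectivement). Il suffit alors de calculer $|{}_{\ell^k} G| / |G/\ell^k G|$ pour $G = F_\ell \oplus (\mathbb{Q}_\ell/\mathbb{Z}_\ell)^{r_\ell}$ et $k = v_\ell(n)$~: sur la partie finie, noyau et conoyau de la multiplication par $\ell^k$ ont même cardinal (contribution $1$), tandis que sur chaque facteur $\mathbb{Q}_\ell/\mathbb{Z}_\ell$ la $\ell^k$-torsion est d'ordre $\ell^k$ et le quotient par $\ell^k$ est nul (contribution $\ell^k$). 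On trouve $|{}_{\ell^k} G| / |G/\ell^k G| = \ell^{k r_\ell}$, et le produit sur $\ell$ (fini puisque $v_\ell(n) = 0$ presque partout) donne exactement $\prod_\ell \ell^{r_\ell v_\ell(n)}$. Le reste n'étant que comptage, la seule véritable difficulté demeure l'étape de finitude de $R$ via le sous-groupe d'Ulm.
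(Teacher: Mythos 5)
Votre preuve est correcte et complète. Le texte ne donne en fait aucune démonstration de ce lemme : il est présenté, avec le précédent, comme un \og exercice d'algèbre élémentaire \fg{} (la décomposition $A\{\ell\} \cong F_\ell \oplus (\mathbb{Q}_\ell/\mathbb{Z}_\ell)^{r_\ell}$ est même rappelée sans justification dans les Notations), de sorte qu'il n'y a pas d'approche de l'article à laquelle comparer la vôtre. Votre argument isole bien l'unique point délicat — montrer qu'un groupe $\ell$-primaire réduit à socle fini est fini — et le traite correctement via le sous-groupe d'Ulm $R^1=\bigcap_k \ell^k R$ : la propriété d'intersection finie sur les ensembles $P \cap \ell^n R$ (qui n'est disponible que grâce à la finitude de ${}_\ell R$) donne $R^1=\ell R^1$, donc $R^1=0$, puis la stabilisation de ${}_\ell(\ell^k R)$ force $R$ à être borné, donc fini par Prüfer. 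La formule de comptage s'en déduit ensuite par décomposition primaire et par le fait que noyau et conoyau d'un endomorphisme d'un groupe fini ont même cardinal, exactement comme vous le faites ; c'est la partie routinière du lemme.
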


\subsection{Caractéristique d'Euler-Poincaré}

\begin{proposition}
Soit $F$ un $\text{Gal}(k^s/k)$-module fini. 
\begin{itemize}
\item[(i)] Si $k_0$ est fini et $d\geq 2$, alors $\chi (k,F) = 1$.
\item[(ii)] Si $k_0 =\mathbb{C}((t))$, alors $\chi (k,F) = 1$.
\end{itemize}
\end{proposition}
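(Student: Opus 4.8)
Le plan est de raisonner par récurrence sur $d$, en ramenant à chaque étape le calcul de $\chi(k,F)$ à un calcul de caractéristiques d'Euler--Poincaré sur le corps résiduel $k_{d-1}$. Rappelons d'abord que, $k$ étant $d$-local, les groupes $H^r(k,F)$ sont finis et nuls pour $r > d+1$, de sorte que $\chi(k,F) = \prod_{r \geq 0} |H^r(k,F)|^{(-1)^r}$ est bien défini. L'outil central est la suite spectrale de Hochschild--Serre associée à la suite exacte $1 \rightarrow I \rightarrow \text{Gal}(k^s/k) \rightarrow \text{Gal}(k_{d-1}^s/k_{d-1}) \rightarrow 1$, où $I$ désigne le groupe d'inertie. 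Dès que $k_{d-1}$ est de caractéristique $0$ --- ce qui est le cas pour $d \geq 2$ en situation (i) et pour $d \geq 1$ en situation (ii) --- l'inertie est modérée, isomorphe à $\hat{\mathbb{Z}}(1)$, donc de dimension cohomologique $1$ et procyclique. La suite spectrale n'a alors que deux lignes non nulles ($q = 0$ et $q = 1$), et la multiplicativité de la caractéristique d'Euler--Poincaré dans une suite spectrale fournit la formule de réduction
$$\chi(k,F) = \frac{\chi(k_{d-1}, F^I)}{\chi(k_{d-1}, H^1(I,F))}.$$
C'est essentiellement le mécanisme du lemme 2.18 de \cite{MilADT}.

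Pour la partie (ii), le cas de base est $k = \mathbb{C}((t))$: ici $\text{Gal}(k^s/k) \cong \hat{\mathbb{Z}}$ est procyclique de dimension cohomologique $1$, engendré topologiquement par un élément $\sigma$, et pour tout module fini $F$ on a $|H^0(k,F)| = |\ker(\sigma - 1)| = |\text{coker}(\sigma - 1)| = |H^1(k,F)|$ (le noyau et le conoyau d'un endomorphisme d'un groupe fini ayant même cardinal), d'où $\chi(k,F) = 1$. On conclut ensuite par récurrence sur $d$: si $\chi(k_{d-1}, M) = 1$ pour tout module fini $M$, la formule de réduction donne immédiatement $\chi(k, F) = 1/1 = 1$.

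Le cœur de la démonstration est le cas de base de la partie (i), à savoir $d = 2$, où le corps résiduel $k_1$ est $p$-adique et où l'on ne peut plus descendre (la caractéristique résiduelle $p$ pourrait diviser $|F|$). On utilise ici la formule d'Euler--Poincaré de Tate sur le corps local $k_1$: pour tout module fini $M$, $\chi(k_1, M) = |M\{p\}|^{-[k_1 : \mathbb{Q}_p]}$, quantité ne dépendant que de l'ordre de la partie $p$-primaire de $M$. La formule de réduction donne
$$\chi(k_2, F) = \left(\frac{|(F^I)\{p\}|}{|H^1(I,F)\{p\}|}\right)^{-[k_1:\mathbb{Q}_p]},$$
et l'on est ramené à vérifier que $|(F^I)\{p\}| = |H^1(I,F)\{p\}|$. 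Or $I \cong \hat{\mathbb{Z}}(1)$ est procyclique, donc $H^0(I, F\{p\}) = \ker(\sigma - 1)$ et $H^1(I, F\{p\}) = \text{coker}(\sigma - 1)$ sur le groupe fini $F\{p\}$, et ces deux groupes ont même cardinal; comme le passage à la partie $p$-primaire commute à la cohomologie, cela fournit l'égalité voulue et donc $\chi(k_2, F) = 1$. Le cas $d \geq 3$ de (i) se traite ensuite par récurrence exactement comme en (ii), le corps résiduel $k_{d-1}$ étant alors $(d-1)$-local avec $d - 1 \geq 2$, donc vérifiant déjà $\chi(k_{d-1}, -) \equiv 1$ par hypothèse de récurrence.

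Le principal obstacle est précisément le cas $d = 2$: c'est le seul endroit où $\chi$ n'est pas identiquement $1$ sur le corps résiduel, et il faut concilier la non-trivialité de la formule de Tate sur $k_1$ avec le résultat final. Le point clé --- et c'est là que l'hypothèse $d \geq 2$ est essentielle, puisqu'elle évite d'avoir à franchir naïvement le passage d'un corps résiduel de caractéristique $0$ à un corps résiduel de caractéristique $p$ --- est que $\chi(k_1, -)$ ne dépend que de l'ordre de la partie $p$-primaire. Ainsi l'action galoisienne précise sur $F^I$ et sur $H^1(I,F)$ (et en particulier la torsion à la Tate apparaissant dans l'identification $H^1(I,F) \cong (F(-1))_I$) est sans importance: seule compte l'égalité des cardinaux des parties $p$-primaires, qui résulte de l'argument noyau--conoyau pour l'inertie procyclique.
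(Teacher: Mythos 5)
Votre preuve est correcte et suit essentiellement la même route que celle de l'article: même suite spectrale d'inertie à deux lignes donnant la formule de réduction $\chi(k,F) = \chi(k_{d-1},F^I)/\chi(k_{d-1},H^1(I,F))$, même argument de comptage noyau--conoyau pour l'inertie procyclique (l'article cite la proposition 1.7.7(i) de \cite{CNF}), et même recours à la formule d'Euler--Poincaré de Tate (théorème 2.8 de \cite{MilADT}) pour le cas de base $d=2$ de (i). La seule différence est de présentation: vous explicitez que la formule de Tate ne dépend que de l'ordre de la partie $p$-primaire, alors que l'article utilise directement l'égalité des cardinaux de $H^0(k^{nr},F)$ et $H^1(k^{nr},F)$, ce qui suffit tout autant.
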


\begin{proof}
\begin{itemize}
\item[(i)] Notons $\kappa$ le corps résiduel de $k$ et procédons par récurrence sur $d$.
\begin{itemize}
\item[$\bullet$] Supposons que $d=2$. On dispose de la suite spectrale $H^r(\kappa,H^s(k^{nr},F)) \Rightarrow H^{r+s}(k,F)$, qui dégénère en une suite exacte longue:
$$... \rightarrow H^r(\kappa,H^0(k^{nr},F)) \rightarrow H^r(k,F) \rightarrow H^{r-1}(\kappa,H^1(k^{nr},F)) \rightarrow ...$$
On a donc $\chi (k,F) = \frac{\chi (\kappa,H^0(k^{nr},F))}{\chi (\kappa,H^1(k^{nr},F))}$. Or $H^0(k^{nr},F)$ et $H^1(k^{nr},F)$ ont même cardinal puisque $\text{Gal}(k^s/k^{nr}) \cong \hat{\mathbb{Z}}$ (cela découle aisément de la proposition 1.7.7(i) de \cite{CNF}). Par conséquent, d'après le théorème 2.8 de \cite{MilADT}, on a $\chi (\kappa,H^0(k^{nr},F)) = \chi (\kappa,H^1(k^{nr},F))$, et donc $\chi (k,F) = 1$.
\item[$\bullet$] Soit $d>2$ et supposons que la proposition soit vraie pour tout corps $(d-1)$-local. Comme avant, la suite spectrale $H^r(\kappa,H^s(k^{nr},F)) \Rightarrow H^{r+s}(k,F)$ dégénère en une suite exacte longue:
$$... \rightarrow H^r(\kappa,H^0(k^{nr},F)) \rightarrow H^r(k,F) \rightarrow H^{r-1}(\kappa,H^1(k^{nr},F)) \rightarrow ...$$
On a donc $\chi (k,F) = \frac{\chi (\kappa,H^0(k^{nr},F))}{\chi (\kappa,H^1(k^{nr},F))}$. Par hypothèse de récurrence, on a $\chi (\kappa,H^0(k^{nr},F)) = \chi (\kappa,H^1(k^{nr},F))=1$, et donc $\chi (k,F) = 1$.
\end{itemize}
\item[(ii)] L'énoncé est vrai pour $d=0$ puisque $\text{Gal}(\mathbb{C}((t))^s/\mathbb{C}((t))) \cong \hat{\mathbb{Z}}$. On procède ensuite par récurrence comme dans (i).
\end{itemize}
\end{proof}

\subsection{Tores algébriques}

Pour commencer, on rappelle le lemme d'Ono, qui sera utilisé à de nombreuses reprises dans la suite:

\begin{lemma} \textbf{(Lemme d'Ono - théorème 1.5.1 de \cite{Ono})}\\
Soient $l$ un corps et $T$ un tore sur $l$. Il existe un entier naturel non nul $m$, des $l$-tores quasi-triviaux $T_0$ et $R$ et un $l$-schéma en groupes fini commutatif $F$ tels que l'on ait une suite exacte:
$$0 \rightarrow F \rightarrow R \rightarrow T^m \times_l T_0 \rightarrow 0.$$
\end{lemma}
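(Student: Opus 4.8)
Le plan est de se ramener à un énoncé sur les réseaux galoisiens via l'équivalence de catégories (contravariante) $T \mapsto \hat{T}$ rappelée plus haut. Soit $L/l$ une extension galoisienne finie déployant $T$, de groupe $G = \text{Gal}(L/l)$, de sorte que $M := \hat{T}$ est un $\mathbb{Z}[G]$-réseau (libre de type fini comme groupe abélien). Sous cette équivalence, les tores quasi-triviaux correspondent exactement aux réseaux de permutation, c'est-à-dire aux sommes directes de modules de la forme $\mathbb{Z}[G/H]$ pour $H \leq G$ (le tore $R_{L'/l}\mathbb{G}_m$, avec $L' = L^H$, ayant pour module des caractères $\mathbb{Z}[G/H]$). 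De plus, une isogénie de tores, c'est-à-dire un morphisme surjectif à noyau fini, correspond à une injection de réseaux à conoyau fini, le noyau de l'isogénie étant le schéma en groupes fini commutatif dual de Cartier du conoyau. Il suffit donc de produire un entier $m \geq 1$, des réseaux de permutation $P$ et $Q$, et une injection de $\mathbb{Z}[G]$-modules $M^m \oplus Q \hookrightarrow P$ à conoyau fini.

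Pour ce faire, on commencerait par travailler rationnellement. D'après le théorème d'induction d'Artin, les modules de permutation $\mathbb{Q}[G/C]$ associés aux sous-groupes cycliques $C \leq G$ engendrent $R_{\mathbb{Q}}(G) \otimes_{\mathbb{Z}} \mathbb{Q}$; par conséquent, un multiple convenable $N \cdot [M \otimes \mathbb{Q}]$ s'écrit comme combinaison $\mathbb{Z}$-linéaire de classes de modules de permutation dans l'anneau des représentations rationnelles $R_{\mathbb{Q}}(G)$. En séparant les coefficients positifs des coefficients négatifs, on obtient une égalité dans $R_{\mathbb{Q}}(G)$ de la forme $N \cdot [M \otimes \mathbb{Q}] + [Q \otimes \mathbb{Q}] = [P \otimes \mathbb{Q}]$, où $P$ et $Q$ sont des réseaux de permutation. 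Comme $\mathbb{Q}[G]$ est semi-simple, cette égalité dans le groupe de Grothendieck se relève en un isomorphisme de $\mathbb{Q}[G]$-modules $\phi \colon (M \otimes \mathbb{Q})^N \oplus (Q \otimes \mathbb{Q}) \xrightarrow{\sim} P \otimes \mathbb{Q}$.

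Il resterait alors à revenir aux réseaux. L'isomorphisme $\phi$ envoie le réseau $M^N \oplus Q$ sur un sous-$\mathbb{Z}[G]$-module commensurable à $P$ dans $P \otimes \mathbb{Q}$; quitte à multiplier $\phi$ par un entier non nul bien choisi pour chasser les dénominateurs, on peut supposer que $\phi$ induit un morphisme de $\mathbb{Z}[G]$-modules $M^N \oplus Q \rightarrow P$, nécessairement injectif (car $\phi$ est un isomorphisme rationnel) et à conoyau fini (car source et but ont même rang). En posant $m = N$ et en transportant cette injection par l'équivalence de catégories, on obtient l'isogénie $R \rightarrow T^m \times_l T_0$ cherchée, avec $\hat{T_0} = Q$, $\hat{R} = P$ et $F$ son noyau, qui est bien un schéma en groupes fini commutatif.

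La principale difficulté se situe dans le passage du rationnel à l'entier, et plus précisément dans la traduction correcte entre tores et réseaux \emph{sur un corps quelconque}: il faut s'assurer que l'équivalence entre isogénies de tores et injections de réseaux à conoyau fini reste valable en caractéristique positive, où le noyau $F$ peut posséder une partie infinitésimale. Ce point ne pose toutefois pas de problème pour l'énoncé visé, puisque $F$ y est autorisé à être un schéma en groupes fini commutatif arbitraire. Le reste de la preuve repose uniquement sur le théorème d'induction d'Artin et sur la semi-simplicité de $\mathbb{Q}[G]$, qui sont des ingrédients classiques.
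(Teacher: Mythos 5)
The paper itself gives no proof of this lemma: it is quoted as théorème 1.5.1 of \cite{Ono}, so the only comparison available is with the classical argument of that reference. Your proof is correct and is essentially that classical argument: translate the statement into one about $\mathbb{Z}[G]$-lattices via the contravariant equivalence $T \mapsto \hat{T}$ (quasi-trivial tori corresponding to permutation lattices, isogenies to injections with finite cokernel), use Artin induction plus semisimplicity of $\mathbb{Q}[G]$ to get a rational isomorphism $(\hat{T}\otimes\mathbb{Q})^m \oplus (Q\otimes\mathbb{Q}) \cong P\otimes\mathbb{Q}$ with $P,Q$ permutation lattices, clear denominators to obtain an injection of lattices with finite cokernel, and dualize back — your closing remark that in positive characteristic the kernel may acquire an infinitesimal part, which is harmless since $F$ is allowed to be an arbitrary finite commutative group scheme, is exactly the right point to flag.
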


\begin{proposition}\label{tor0}
Supposons que $k_0=\mathbb{C}((t))$. Soient $T$ un tore sur $k$ et $\rho$ son rang, c'est-à-dire la dimension d'un sous-tore déployé maximal. On a alors pour chaque entier naturel naturel $n$:
$$\frac{|{_n}T(k)|}{|T(k)/n|}=n^{-d\rho}.$$ 
\end{proposition}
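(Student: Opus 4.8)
The plan is to introduce, for a fixed $n$ and any abelian group $M$ with finite $n$-torsion and finite cofixed quotient, the ratio $q(M):=|{}_nM|/|M/n|$, and to establish $q(T(k))=n^{-d\rho}$ by reducing to quasi-trivial tori via the lemma of Ono. I would first record two elementary facts. (a) Applying the snake lemma to multiplication by $n$ on a short exact sequence $0\to A\to B\to C\to 0$ yields the exact sequence $0\to {}_nA\to {}_nB\to {}_nC\to A/n\to B/n\to C/n\to 0$, so that $q(B)=q(A)q(C)$ whenever all six terms are finite. (b) For every finite abelian group $M$ one has $q(M)=1$, since $|{}_nM|=|M/n|$ for any endomorphism of a finite group.

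Next I would settle the quasi-trivial case. For $T_0=R_{L/k}\mathbb{G}_m$ with $L=\prod_i L_i$ a finite separable $k$-algebra, one has $T_0(k)=\prod_i L_i^{\times}$, and each $L_i$ is again $d$-local with bottom field a finite extension of $\mathbb{C}((t))$, hence containing $\mu_n$. Walking up the residue tower of $L_i$ and using that in residue characteristic $0$ the principal units form a uniquely divisible group (so they contribute neither $n$-torsion nor anything modulo $n$), I obtain $|\mu_n(L_i)|=n$ and $|L_i^{\times}/n|=n^{d+1}$, whence $q(L_i^{\times})=n^{-d}$. Since $R_{L_i/k}\mathbb{G}_m$ has rank $1$ and rank is additive over the field factors, this reads $q(T_0(k))=n^{-d\rho(T_0)}$.

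For an arbitrary torus $T$, the lemma of Ono supplies an exact sequence $0\to F\to R\to S\to 0$ with $S:=T^m\times_k T_0$, where $R,T_0$ are quasi-trivial and $F$ is finite. Taking $k$-points and using $H^1(k,R)=0$ (Shapiro together with Hilbert~90) gives $0\to F(k)\to R(k)\to S(k)\to H^1(k,F)\to 0$; the outer terms are finite, so (a) and (b) force $q(S(k))=q(R(k))=n^{-d\rho(R)}$. As $R\to S$ is an isogeny, $\widehat{R}$ and $\widehat{S}$ agree after $\otimes\,\mathbb{Q}$, hence $\rho(R)=\rho(S)$ and $q(S(k))=n^{-d\rho(S)}$. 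Finally $q(S(k))=q(T(k))^m\,q(T_0(k))$ and $\rho(S)=m\rho(T)+\rho(T_0)$; substituting the quasi-trivial value of $q(T_0(k))$ yields $q(T(k))^m=n^{-dm\rho(T)}$, and extracting the $m$-th root gives the formula.

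The step I expect to require the most care is not conceptual but the finiteness bookkeeping that makes $q$ well defined in the first place. Finiteness of ${}_nT(k)=H^0(k,{}_nT)$ is immediate since $\operatorname{char}k=0$ makes ${}_nT$ a finite étale group scheme, but for $T(k)/n$ I would invoke the Kummer embedding $T(k)/n\hookrightarrow H^1(k,{}_nT)$ together with the finiteness of the Galois cohomology of finite modules over a $d$-local field—the very finiteness that underlies the duality theorem recalled in the introduction. Once this is secured, the multiplicativity argument above closes up with no further difficulty.
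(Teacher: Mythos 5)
Your proof is correct and follows essentially the same route as the paper's: reduction to the quasi-trivial case via Ono's lemma, with the snake-lemma multiplicativity of $q(M)=|{_n}M|/|M/n|$ absorbing the finite kernel $F(k)$ and cokernel $H^1(k,F)$, exactly as in the paper. The only minor divergence is the quasi-trivial base case, which the paper computes cohomologically (Hilbert 90 plus the residue exact sequence $0 \rightarrow H^1(\kappa,\mu_n) \rightarrow H^1(k,\mu_n) \rightarrow H^0(\kappa,\mathbb{Z}/n\mathbb{Z}) \rightarrow 0$ and induction on $d$), whereas you compute it from the structure of $L^{\times}$ (uniquely divisible principal units in residue characteristic $0$); the two inductions are equivalent, and your explicit treatment of the finiteness of $T(k)/n$ via the Kummer embedding makes precise a point the paper leaves implicit.
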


\begin{proof}
\begin{itemize}
\item[$\bullet$] Montrons d'abord la proposition pour $T=\mathbb{G}_m$. On remarque que:
$$\frac{|{_n}\mathbb{G}_m(k)|}{|\mathbb{G}_m(k)/n|}=\frac{|H^0(k,\mu_n)|}{|H^1(k,\mu_n)|}=\frac{n}{|H^1(k,\mu_n)|}.$$
En notant $\kappa$ le corps résiduel de $k$, on a une suite exacte:
$$0 \rightarrow H^1(\kappa,\mu_n) \rightarrow H^1(k,\mu_n) \rightarrow H^0(\kappa,\mathbb{Z}/n\mathbb{Z}) \rightarrow 0.$$
On en déduit que $|H^1(k,\mu_n)|=n|H^1(\kappa,\mu_n)|$. Par récurrence, cela montre que $|H^1(k,\mu_n)|=n^{d+1}$, et donc que la proposition est vraie pour $T=\mathbb{G}_m$. Cela entraîne bien sûr que la proposition est vraie pour tout tore quasi-trivial.
\item[$\bullet$] On se place maintenant dans le cas général. Soient $m$ un entier naturel non nul et $T_0$ un tore quasi-trivial sur $k$ tels que l'on a une suite exacte:
$$0 \rightarrow F \rightarrow R \rightarrow T^m \times_k T_0 \rightarrow 0$$
avec $F$ un schéma en groupes fini commutatif sur $k$ et $R$ un tore quasi-trivial sur $k$. Notons $\rho_R$ le rang de $R$. En passant à la cohomologie, on obtient une suite exacte:
$$0 \rightarrow F(k) \rightarrow R(k) \rightarrow T(k)^m\times T_0(k) \rightarrow H^1(k,F) \rightarrow 0.$$
Comme $F$ est fini, on déduit du lemme du serpent que:
$$\left( \frac{|{_n}T(k)|}{|T(k)/n|}\right) ^m \times \frac{|{_n}T_0(k)|}{|T_0(k)/n|}=  \frac{|{_n}R(k)|}{|R(k)/n|}.$$
Or nous avons montré que $\frac{|{_n}T_0(k)|}{|T_0(k)/n|} =n^{-d(\rho_R-\rho m)} $ et $\frac{|{_n}R(k)|}{|R(k)/n|}=n^{-d\rho_R}$. On en déduit que $ \frac{|{_n}T(k)|}{|T(k)/n|} = n^{-d\rho}$.
\end{itemize}
\end{proof}

\begin{remarque}\label{tor0qp}
Lorsque $k_0$ est un corps fini de caractéristique $p$, on montre de manière analogue que:
$$\frac{|{_n}\mathbb{G}_m(k)|}{|\mathbb{G}_m(k)/n|}=n^{-d}p^{-[k_1:\mathbb{Q}_p]v_p(n)},$$
pour tout $n \geq 1$, et si $p$ ne divise pas $n$:
$$ \frac{|{_n}T(k)|}{|T(k)/n|}=n^{-d\rho}, $$
pour tout tore $T$. Si de plus $d=1$, alors $ \frac{|{_n}T(k)|}{|T(k)/n|}=n^{-d\rho}p^{-[k_1:\mathbb{Q}_p]\dim T \cdot v_p(n)} $ pour tout $n$ et pour tout $T$.
\end{remarque}

\begin{proposition}\label{tor0tors}
Supposons que $k_0=\mathbb{C}((t))$. Soit $T$ un tore de rang $\rho$ sur $k$. On a alors pour chaque entier naturel naturel $n$:
$$\frac{|{_n}T(k)|}{|T(k)_{tors}/n|}=n^{\rho}.$$ 
\end{proposition}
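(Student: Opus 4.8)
The plan is to reduce the statement to a single computation of the quantity $q(G) := |{}_nG|/|G/n|$ for the torsion group $G = T(k)_{tors}$, and then to run the same Ono-type dévissage as in the proof of la proposition \ref{tor0}. First I would observe that ${}_nT(k)$ consists of torsion elements, so ${}_nT(k) = {}_n(T(k)_{tors})$, whence
$$\frac{|{}_nT(k)|}{|T(k)_{tors}/n|} = q\bigl(T(k)_{tors}\bigr), \qquad q(G) := \frac{|{}_nG|}{|G/n|}.$$
Since ${}_nT(k) = T[n](k)$ is finite for every $n$, the group $T(k)_{tors}$ is de type cofini, so by the second elementary lemma on cofinite type groups stated above one has $q(G) = \prod_\ell \ell^{r_\ell v_\ell(n)}$, where $r_\ell$ is the corank of the $\ell$-primary part. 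In particular $q$ is multiplicative in short exact sequences of groupes de torsion de type cofini and is trivial on finite groups (for a finite abelian group ${}_nG$ and $G/n$ have equal order). The goal thus becomes $q(T(k)_{tors}) = n^\rho$.

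Next I would settle the quasi-trivial case. For $T = \mathbb{G}_m$ one has $T(k)_{tors} = \mu_\infty(k) = \mathbb{Q}/\mathbb{Z}$, because $\mathbb{C} \subseteq k$ already contains all roots of unity, and $q(\mathbb{Q}/\mathbb{Z}) = n$; as $\mathbb{G}_m$ has rang $1$ this is the claimed formula. For a general quasi-trivial torus $R_{L/k}\mathbb{G}_m$ with $L = \prod_{i=1}^s L_i$ a product of $s$ finite separable field extensions, one gets $R(k)_{tors} = \prod_i \mu_\infty(L_i) \cong (\mathbb{Q}/\mathbb{Z})^s$, so $q = n^s$, while the rang of $R_{L/k}\mathbb{G}_m$ is exactly $s$.

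Finally, for an arbitrary tore $T$ I would invoke le lemme d'Ono to obtain $0 \to F \to R \to T^m \times_k T_0 \to 0$ with $R, T_0$ quasi-triviaux, $F$ fini, and pass to $k$-points:
$$0 \to F(k) \to R(k) \to T(k)^m \times T_0(k) \to H^1(k,F) \to 0,$$
where $F(k)$ and $H^1(k,F)$ are finite. The crucial point is that passing to torsion subgroups, although not exact, does not change $q$ here: if $0 \to A \to B \to C \to 0$ with $C$ finite, then $A_{tors} = A \cap B_{tors}$ and $B_{tors}/A_{tors}$ injects into $C$, so $q(A_{tors}) = q(B_{tors})$; and if $A \twoheadrightarrow A'$ has finite kernel $F$, then $A_{tors} \twoheadrightarrow A'_{tors}$ with that same finite kernel, so $q(A_{tors}) = q(A'_{tors})$. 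Splitting the four-term sequence through its image and applying these two observations gives $q(R(k)_{tors}) = q\bigl((T(k)^m\times T_0(k))_{tors}\bigr) = q(T(k)_{tors})^m\, q(T_0(k)_{tors})$. Since the surjection $R \to T^m\times T_0$ has finite kernel, it is an isogeny, so ranks are preserved and additive over the product, giving $\rho_R = m\rho + \rho_{T_0}$. Combined with the quasi-trivial case this reads $n^{\rho_R} = q(T(k)_{tors})^m\, n^{\rho_{T_0}}$, i.e. $q(T(k)_{tors})^m = n^{m\rho}$, hence $q(T(k)_{tors}) = n^\rho$.

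The main obstacle is exactly this torsion-subgroup bookkeeping: one must check that the finite kernel and cokernel appearing in the Ono sequence do not affect $q$ of the relevant torsion subgroups, even though the torsion functor is only left exact. Once that is in place, the remainder of the argument is a routine copy of the dévissage used for la proposition \ref{tor0}.
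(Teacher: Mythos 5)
Your proposal is correct and follows essentially the same route as the paper's proof: the quasi-trivial base case ($T(k)_{tors}\cong(\mathbb{Q}/\mathbb{Z})^{\rho}$ because $\mathbb{C}\subseteq k$), Ono's lemma, and the fact that finite groups contribute trivially to the ratio $q(G)=|{_n}G|/|G/n|$, all converging to the same key identity $q(R(k)_{tors})=q(T(k)_{tors})^m\,q(T_0(k)_{tors})$ and the same rank count $\rho_R=m\rho+\rho_{T_0}$. The only difference is one of bookkeeping order: the paper first passes to torsion at the level of Galois modules, using finiteness of $F(k^s)$ to get the exact sequence $0\to F(k^s)_{tors}\to R(k^s)_{tors}\to T^m(k^s)_{tors}\times T_0(k^s)_{tors}\to 0$, and then applies Galois cohomology (the finite error term being $Q\subseteq H^1(k,F(k^s)_{tors}))$, whereas you take the $k$-point sequence from the proof of la proposition \ref{tor0} first and then pass to torsion subgroups, controlling the non-exactness of the torsion functor by your two elementary lemmas — in particular, your observation that a lift of a torsion element along a surjection with finite kernel is again torsion plays exactly the role that divisibility over $k^s$ plus left-exactness of $H^0$ plays in the paper.
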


\begin{proof}
\begin{itemize}
\item[$\bullet$] La propriété est évidente pour $T=\mathbb{G}_m$. Elle est donc aussi vraie pour tout tore quasi-trivial.
\item[$\bullet$] On se place maintenant dans le cas général. Soient $m$ un entier naturel non nul et $T_0$ un tore quasi-trivial sur $k$ tels que l'on a une suite exacte:
$$0 \rightarrow F \rightarrow R \rightarrow T^m \times_k T_0 \rightarrow 0$$
avec $F$ un schéma en groupes fini commutatif sur $k$ et $R$ un tore quasi-trivial sur $k$. Comme $F(k^s)$ est fini, on déduit une suite exacte:
$$0 \rightarrow F(k^s)_{tors}\rightarrow R(k^s)_{tors}\rightarrow T^m(k^s)_{tors}\times T_0(k^s)_{tors} \rightarrow 0.$$
En passant à la cohomologie, on obtient l'exactitude de:
$$0 \rightarrow F(k)_{tors} \rightarrow R(k)_{tors} \rightarrow T(k)_{tors}^m\times T_0(k)_{tors} \rightarrow H^1(k,F(k^s)_{tors}).$$
Or $F(k^s)_{tors}$ est fini. Donc il en est de même du groupe $H^1(k,F(k^s)_{tors})$, et il existe une suite exacte:
$$0 \rightarrow F(k)_{tors} \rightarrow R(k)_{tors} \rightarrow T(k)_{tors}^m\times T_0(k)_{tors} \rightarrow Q \rightarrow 0,$$
où $Q$ est fini. En notant $\rho_R$ le rang de $R$, on déduit du lemme du serpent que:
$$\left( \frac{|{_n}T(k)|}{|T(k)_{tors}/n|}\right) ^m \times \frac{|{_n}T_0(k)|}{|T_0(k)_{tors}/n|}=  \frac{|{_n}R(k)|}{|R(k)_{tors}/n|}.$$
Or nous avons montré que $\frac{|{_n}T_0(k)|}{|T_0(k)_{tors}/n|} =n^{\rho_R-\rho m} $ et $\frac{|{_n}R(k)|}{|R(k)_{tors}/n|}=n^{\rho_R}$. On en déduit que $ \frac{|{_n}T(k)|}{|T(k)/n|} = n^{\rho}$.
\end{itemize}
\end{proof}

\begin{remarque}\label{tor0torsqp}
\begin{itemize}
\item[(i)] Comme $T(k)_{tors}$ est un groupe de torsion de type cofini, on déduit de la proposition précédente que $T(k)_{tors} \cong F \oplus (\mathbb{Q}/\mathbb{Z})^{\rho}$ pour un certain groupe abélien fini $F$ dépendant de $T$.
\item[(ii)] Lorsque $k_0$ est un corps fini de caractéristique $p$, on a $\frac{|{_n}T(k)|}{|T(k)_{tors}/n|}=1$ pour tout tore $T$ puisque $T(k)_{tors}$ est fini.
\end{itemize}
\end{remarque}

\begin{proposition}\label{torr}
Supposons que $k_0=\mathbb{C}((t))$. Soit $T$ un tore de rang $\rho$ sur $k$. Soit $r \geq 1$. On note $c_{r,d} = 0$ si $r=1$ et $c_{r,d} = \binom{d+1}{r}$ si $r>1$. Il existe un groupe abélien fini $F$ (qui dépend de $r$ et de $T$) tel que:
$$H^r(k,T) \cong F \oplus (\mathbb{Q}/\mathbb{Z})^{c_{r,d}\rho }.$$ 
De plus, si $T = \mathbb{G}_m$ (ou si $T$ est quasi-trivial), alors $F=0$.
\end{proposition}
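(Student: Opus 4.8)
The plan is to compute $H^r(k,\mathbb{G}_m)$ exactly by induction on $d$, to deduce the quasi-trivial case by Shapiro, and then to reach an arbitrary torus through le lemme d'Ono and a corank count. Throughout I use that $k$ has caractéristique $0$ (so all finite group schemes are étales and contain their racines de l'unité) and that finite modules over a $d$-local field have finite cohomology, as recalled after the duality statement in the introduction.

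For $T=\mathbb{G}_m$ the case $d=0$ is immediate, since $\text{Gal}(\mathbb{C}((t))^s/\mathbb{C}((t)))\cong\hat{\mathbb{Z}}$ has cohomological dimension $1$ and trivial Brauer group. For $d\geq1$ let $\kappa$ be the residue field, a $(d-1)$-local field, and $I=\text{Gal}(k^s/k^{nr})$ the inertia. As $k_1$ has caractéristique $0$ one has $I\cong\hat{\mathbb{Z}}$, whence $H^q(I,(k^s)^{\times})=0$ for $q\geq2$ and $H^1(I,(k^s)^{\times})=H^1(k^{nr},\mathbb{G}_m)=0$ by Hilbert 90; the spectral sequence de Hochschild--Serre then collapses to $H^r(k,\mathbb{G}_m)\cong H^r(\text{Gal}(k^{nr}/k),(k^{nr})^{\times})$. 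I would exploit the valuation sequence $1\to\mathcal{O}_{k^{nr}}^{\times}\to(k^{nr})^{\times}\to\mathbb{Z}\to0$: a uniformisante of $k$ is a $\text{Gal}(k^{nr}/k)$-invariant uniformisante of $k^{nr}$, so this sequence splits $\text{Gal}(k^{nr}/k)$-équivariantement. Since the principal units are uniquely divisibles (caractéristique résiduelle $0$), $H^r(\text{Gal}(k^{nr}/k),\mathcal{O}_{k^{nr}}^{\times})\cong H^r(\kappa,\mathbb{G}_m)$, while $H^r(\text{Gal}(k^{nr}/k),\mathbb{Z})$ vanishes for $r=1$ and is isomorphic to $H^{r-1}(\kappa,\mathbb{Q}/\mathbb{Z})$ for $r\geq2$. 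Hence $H^1(k,\mathbb{G}_m)=0$ and, for $r\geq2$,
$$H^r(k,\mathbb{G}_m)\cong H^r(\kappa,\mathbb{G}_m)\oplus H^{r-1}(\kappa,\mathbb{Q}/\mathbb{Z}).$$
Feeding in the inductive hypothesis over $\kappa$ together with the finite-module computation $H^{s}(\kappa,\mathbb{Q}/\mathbb{Z})\cong(\mathbb{Q}/\mathbb{Z})^{\binom{d}{s}}$ and la règle de Pascal $\binom{d}{r}+\binom{d}{r-1}=\binom{d+1}{r}$, this gives $H^r(k,\mathbb{G}_m)\cong(\mathbb{Q}/\mathbb{Z})^{c_{r,d}}$ with no finite part.

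For a quasi-trivial torus $T_0=\prod_i R_{L_i/k}\mathbb{G}_m$, le lemme de Shapiro gives $H^r(k,R_{L_i/k}\mathbb{G}_m)\cong H^r(L_i,\mathbb{G}_m)$, each $L_i$ being again $d$-local with $(L_i)_0=\mathbb{C}((t))$; as each factor has rang $1$ this yields $H^r(k,T_0)\cong(\mathbb{Q}/\mathbb{Z})^{c_{r,d}\rho}$, i.e. $F=0$. For a general torus $T$ I would apply le lemme d'Ono to get $0\to F\to R\to T^m\times_k T_0\to0$ with $F$ fini and $R,T_0$ quasi-triviaux. The group $H^r(k,T)$ is de torsion de type cofini: de torsion because $r\geq1$, and de type cofini because la suite de Kummer $0\to T[n]\to T\to T\to0$ realises ${_n}H^r(k,T)$ as a sous-quotient of the finite group $H^r(k,T[n])$. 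In the long exact sequence attached to la suite d'Ono every $H^r(k,F)$ is finite, so the corank of $H^r(k,T^m\times T_0)$ equals that of the divisible group $H^r(k,R)$; using rank additivity of the character lattices ($\rho_R=m\rho+\rho_{T_0}$) and the quasi-trivial case, this forces $\text{corang}\,H^r(k,T)=c_{r,d}\rho$. The same sequence shows that $H^r(k,T^m\times T_0)$ is finite modulo its sous-groupe divisible maximal, so $\overline{H^r(k,T)}$ is finite; splitting off the divisible part gives $H^r(k,T)\cong \overline{H^r(k,T)}\oplus(\mathbb{Q}/\mathbb{Z})^{c_{r,d}\rho}$ with $\overline{H^r(k,T)}$ finite.

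The delicate points are, first, the absence of any finite contribution in the $\mathbb{G}_m$ computation, which rests squarely on the existence of a $\text{Gal}(k^{nr}/k)$-invariant uniformisante splitting the valuation sequence, and second, in the general case, passing from ``corang fini'' to ``quotient réduit fini'': one must control all premiers simultaneously, which is achieved by keeping the argument at the level of the genuinely divisible groups furnished by the quasi-trivial case and isolating the finite error terms into the cohomology de $F$.
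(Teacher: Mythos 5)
Your proposal is correct, but it reaches the key case $T=\mathbb{G}_m$ by a genuinely different route than the paper. The paper runs a double induction on $d$ and $r$ entirely with finite coefficients: it quotes from the annexe of \cite{Ser} the exact sequence $0 \to H^{r+1}(\kappa,\mu_n) \to H^{r+1}(k,\mu_n) \to H^{r}(\kappa,\mathbb{Z}/n\mathbb{Z}) \to 0$, identifies each term with the $n$-torsion of the appropriate $H^*(\cdot,\mathbb{G}_m)$ using the divisibility supplied by the induction hypothesis, and concludes by counting, via the structure lemmas for torsion groups of cofinite type. You instead re-prove the substance of that quoted sequence: Hochschild--Serre for $k^{nr}/k$ (where the vanishing of $H^{q}(I,(k^s)^{\times})$ for $q\geq 2$ uses, besides $I\cong\hat{\mathbb{Z}}$, the standard fact that profinite cohomology is torsion in positive degrees), then the equivariantly split valuation sequence and the unique divisibility of principal units, giving the cleaner statement $H^r(k,\mathbb{G}_m)\cong H^r(\kappa,\mathbb{G}_m)\oplus H^{r-1}(\kappa,\mathbb{Q}/\mathbb{Z})$ for $r\geq 2$. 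What this buys is a canonical direct-sum decomposition with no per-$n$ counting; what it costs is the auxiliary input $H^{s}(\kappa,\mathbb{Q}/\mathbb{Z})\cong(\mathbb{Q}/\mathbb{Z})^{\binom{d}{s}}$, which you assert without proof. This is not a serious gap, but it does need a sentence to keep the induction self-contained: since $\mathbb{C}\subset\kappa$ all Tate twists are trivial, so for $s\geq 2$ Kummer theory plus the divisibility in your induction hypothesis gives $H^{s}(\kappa,\mathbb{Q}/\mathbb{Z})\cong\varinjlim_n H^{s}(\kappa,\mu_n)\cong H^{s}(\kappa,\mathbb{G}_m)$, while for $s=1$ one has ${_n}H^{1}(\kappa,\mathbb{Q}/\mathbb{Z})\cong H^{1}(\kappa,\mathbb{Z}/n\mathbb{Z})$, of order $n^{d}$ by the proof of Proposition \ref{tor0}. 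For a general torus, your Ono-lemma reduction is the same as the paper's; in fact your last step is more careful than the paper's write-up: the identity $|{_n}H^r(k,T)|/|H^r(k,T)/n|=n^{c_{r,d}\rho}$ alone only fixes the corank at each prime $\ell$ and cannot by itself exclude an infinite direct sum of finite $\ell$-groups in the reduced part, whereas your observation that $H^r(k,T)^m\times H^r(k,T_0)$ contains a divisible subgroup of finite index (the image of $H^r(k,R)$, divisible because $F=0$ in the quasi-trivial case, with finite kernel and with cokernel bounded by $H^{r+1}(k,F)$) is exactly what yields the finiteness of $\overline{H^r(k,T)}$ at all primes simultaneously.
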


\begin{proof} On procède en deux étapes:
\begin{itemize}
\item[(A)] Montrons d'abord la proposition pour $T=\mathbb{G}_m$ en procédant par double récurrence sur $d$ et $r$. Pour $d=0$, la proposition est vraie par les théorème de Hilbert 90 et par dimension cohomologique. Supposons-la donc vraie pour un certain $d \geq 0$, et considérons $k$ un corps $d+1$-local. 
\begin{itemize}
\item[$\bullet$] Pour $r=1$, on a bien $H^1(k,\mathbb{G}_m)=0$ par le théorème de Hilbert 90. 
\item[$\bullet$] Pour $r=2$, si l'on note $\kappa$ le corps résiduel de $k$, on a pour chaque $n\geq 1$ la suite exacte (voir le paragraphe 2 de l'annexe du chapitre II de \cite{Ser}):
$$0 \rightarrow H^{2}(\kappa,\mu_n) \rightarrow H^{2}(k,\mu_n) \rightarrow H^{1}(\kappa,\mathbb{Z}/n\mathbb{Z}) \rightarrow 0.$$
Comme $H^2(\kappa,\mu_n)={_n}H^2(\kappa,\mathbb{G}_m)$, $H^2(k,\mu_n)={_n}H^2(k,\mathbb{G}_m)$ et $|H^1(\kappa,\mathbb{Z}/n\mathbb{Z})|=n^{d+1}$ d'après la preuve de la proposition précédente, on obtient que $|{_n}H^2(k,\mathbb{G}_m)|=n^{d+1}|{_n}H^2(k,\mathbb{G}_m)|=n^{d+1+\binom{d+1}{2}}= n^{\binom{d+2}{2}}$. On en déduit que $H^2(k,\mathbb{G}_m) \cong \mathbb{Q}/\mathbb{Z}^{\binom{d+2}{2}}$.
\item[$\bullet$] Supposons que l'on ait montré que $H^r(k,\mathbb{G}_m) \cong \mathbb{Q}/\mathbb{Z}^{\binom{d+2}{r}}$ pour un certain $r \geq 2$. On a alors la suite exacte:
$$0 \rightarrow H^{r+1}(\kappa,\mu_n) \rightarrow H^{r+1}(k,\mu_n) \rightarrow H^{r}(\kappa,\mathbb{Z}/n\mathbb{Z}) \rightarrow 0.$$
Par hypothèse de récurrence, les groupes $H^r(\kappa,\mathbb{G}_m)$, $H^r(k,\mathbb{G}_m)$ et $H^{r-1}(\kappa,\mathbb{G}_m)$ sont divisibles, et donc on a $H^{r+1}(\kappa,\mu_n) = {_n}H^{r+1}(\kappa,\mathbb{G}_m)$, $H^{r+1}(k,\mu_n) = {_n}H^{r+1}(k,\mathbb{G}_m)$ et $H^{r}(\kappa,\mathbb{Z}/n\mathbb{Z}) \cong H^{r}(\kappa,\mu_n) = {_n}H^r(\kappa,\mathbb{G}_m)$. On en déduit, toujours à l'aide de l'hypothèse de récurrence, que $|{_n}H^{r+1}(k,\mathbb{G}_m)| = n^{ \binom{d+1}{r+1}+  \binom{d+1}{r}}=n^{ \binom{d+2}{r+1}}$, et donc que $H^{r+1}(k,\mathbb{G}_m) \cong \mathbb{Q}/\mathbb{Z}^{\binom{d+2}{r+1}}$.
\end{itemize}
Cela achève la démonstration de la proposition pour $T=\mathbb{G}_m$. Le lemme de Shapiro montre alors que la proposition est vraie pour tout tore quasi-trivial.
\item[(B)] On se place maintenant dans le cas général. Soient $m$ un entier naturel non nul et $T_0$ un tore quasi-trivial sur $k$ tels que l'on a une suite exacte:
$$0 \rightarrow F \rightarrow R \rightarrow T^m \times_k T_0 \rightarrow 0$$
avec $F$ un schéma en groupes fini commutatif sur $k$ et $R$ un tore quasi-trivial sur $k$. Notons $\rho_R$ le rang de $R$. En passant à la cohomologie, on obtient une suite exacte:
$$H^r(k,F) \rightarrow H^r(k,R) \rightarrow H^r(k,T)^m\times H^r(k,T_0) \rightarrow H^{r+1}(k,F).$$
Comme $F$ est fini, on déduit du lemme du serpent que:
$$\left( \frac{|{_n}H^r(k,T)|}{|H^r(k,T)/n|}\right) ^m \times \frac{|{_n}H^r(k,T_0)|}{|H^r(k,T_0)/n|} = \frac{|{_n}H^r(k,R)|}{|H^r(k,R)/n|}.$$
Par conséquent:
$$\left(\frac{|{_n}H^r(k,T)|}{|H^r(k,T)/n|}\right) ^m =n^{c_{r,d}(-\rho_R+m\rho)} n^{c_{r,d}\rho_R})=n^{mc_{r,d}\rho},$$
et on a $ \frac{|{_n}H^r(k,T)|}{|H^r(k,T)/n|}=n^{c_{r,d}\rho}$. On en déduit que $H^r(k,T) \cong F \oplus \mathbb{Q}/\mathbb{Z}^{c_{r,d}\rho }$ pour un certain groupe abélien fini $F$.
\end{itemize}
\end{proof}

\begin{remarque}\label{torrqp}
Lorsque $k_0$ est un corps fini de caractéristique $p$, on montre de manière analogue que, pour $r \geq 3$:
\begin{gather*}
\frac{|{_n}H^2(k,\mathbb{G}_m)|}{|H^r(k,\mathbb{G}_m)/n|} = n^d \cdot p^{(d-1)[k_1:\mathbb{Q}_p]v_p(n)},\\
\frac{|{_n}H^r(k,\mathbb{G}_m)|}{|H^r(k,\mathbb{G}_m)/n|} = p^{c_{r-1,d-2}[k_1:\mathbb{Q}_p]v_p(n)}.
\end{gather*}
pour tout $n \geq 1$, et si $p$ ne divise pas $n$ ou si $d=1$:
\begin{gather*}
\frac{|{_n}H^2(k,T)|}{|H^2(k,T)/n|} = n^{d\rho} ,\\
\frac{|{_n}H^r(k,T)|}{|H^r(k,T)/n|} = 1.
\end{gather*}
pour tout tore $T$ et tout $r \geq 3$.
\end{remarque}

\begin{proposition}\label{tor1tors}
Supposons que $k_0=\mathbb{C}((t))$. Soit $T$ un tore de rang $\rho$ sur $k$. On a alors pour chaque entier naturel naturel $n$:
$$\frac{|{_n}H^1(k,T(k^s)_{tors})|}{|H^1(k,T(k^s)_{tors})/n|}=n^{(d+1)\rho}.$$ 
\end{proposition}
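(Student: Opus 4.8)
The plan is to follow the same two-step strategy as in the proofs of Propositions \ref{tor0tors} and \ref{torr}: first treat $T = \mathbb{G}_m$ and quasi-trivial tori by a direct computation, then reduce the general case to these by Ono's lemma together with a counting argument.

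For the base case $T = \mathbb{G}_m$, note that $\mathbb{G}_m(k^s)_{tors} = \mu_{\infty}$, the group of all roots of unity, which lies entirely in $k$ since $\mathbb{C} \subseteq k$. First I would use the short exact sequence $0 \to \mu_n \to \mu_{\infty} \xrightarrow{n} \mu_{\infty} \to 0$: since $H^0(k,\mu_{\infty}) = \mu_{\infty}$ is divisible, the connecting map vanishes and one gets $H^1(k,\mu_n) \cong {_n}H^1(k,\mu_{\infty})$. As the proof of Proposition \ref{tor0} shows $|H^1(k,\mu_n)| = n^{d+1}$, we deduce $|{_n}H^1(k,\mu_{\infty})| = n^{d+1}$ for every $n$. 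Since this $n$-torsion is finite, the group $H^1(k,\mu_{\infty})$ is of type cofini, and the equality $|{_n}H^1(k,\mu_{\infty})| = n^{d+1}$ forces $H^1(k,\mu_{\infty}) \cong (\mathbb{Q}/\mathbb{Z})^{d+1}$; in particular the desired ratio equals $n^{d+1} = n^{(d+1)\rho}$ since $\rho = 1$. By Shapiro's lemma, for a quasi-trivial torus $T_0 = \prod_i R_{L_i/k}\mathbb{G}_m$ one has $H^1(k, T_0(k^s)_{tors}) \cong \bigoplus_i H^1(L_i, \mu_{\infty})$; as each $L_i$ is again $d$-local over $k_0 = \mathbb{C}((t))$ and the rank of $T_0$ is the number of factors, the same computation yields the ratio $n^{(d+1)\rho}$ for every quasi-trivial torus.

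For the general case I would apply the lemme d'Ono to obtain $m \geq 1$, quasi-trivial tori $R$ and $T_0$, and a schéma en groupes fini commutatif $F$ fitting in $0 \to F \to R \to T^m \times_k T_0 \to 0$. Passing to torsion in $k^s$-points (these are divisible since $k^s$ is separably closed of caractéristique $0$, and surjectivity on the right holds because $H^1(k^s,F) = 0$ and $F(k^s)$ is finite, exactly as in the proof of Proposition \ref{tor0tors}) gives the exact sequence of $\mathrm{Gal}(k^s/k)$-modules
$$0 \to F(k^s)_{tors} \to R(k^s)_{tors} \to T(k^s)_{tors}^m \times T_0(k^s)_{tors} \to 0.$$
Taking cohomology produces the four-term exact sequence $H^1(k,F(k^s)) \to H^1(k,R(k^s)_{tors}) \to H^1(k,T(k^s)_{tors})^m \times H^1(k,T_0(k^s)_{tors}) \to H^2(k,F(k^s))$, whose outer terms are finite because the cohomology of a finite module over $k$ is finite. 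Since the ratio $|{_n}M|/|M/n|$ is multiplicative in short exact sequences and equals $1$ on finite groups, the lemme du serpent gives
$$\left(\frac{|{_n}H^1(k,T(k^s)_{tors})|}{|H^1(k,T(k^s)_{tors})/n|}\right)^m \cdot \frac{|{_n}H^1(k,T_0(k^s)_{tors})|}{|H^1(k,T_0(k^s)_{tors})/n|} = \frac{|{_n}H^1(k,R(k^s)_{tors})|}{|H^1(k,R(k^s)_{tors})/n|},$$
exactly as in the proof of Proposition \ref{torr}. By the quasi-trivial case the right-hand side is $n^{(d+1)\rho_R}$ and the second factor on the left is $n^{(d+1)\rho_0}$, where $\rho_R$ and $\rho_0$ are the ranks of $R$ and $T_0$; since ranks are additive in the Ono sequence ($\rho_R = m\rho + \rho_0$, the finite $F$ contributing nothing), one concludes that the ratio for $T$ equals $n^{(d+1)\rho}$.

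The point requiring the most care is the type cofini property: one must check that $H^1(k,T(k^s)_{tors})$ has finite $n$-torsion for every $n$ so that the counting argument applies. This I would establish from the sequence $0 \to T[n] \to T(k^s)_{tors} \xrightarrow{n} T(k^s)_{tors} \to 0$, which exhibits ${_n}H^1(k,T(k^s)_{tors})$ as a quotient of the finite group $H^1(k,T[n])$. The computation of the base-case corank ($d+1$ rather than $\rho = 1$) is where the $d$-local structure of $k$ genuinely enters; everything else is a formal transposition of the arguments already used for Propositions \ref{tor0tors} and \ref{torr}.
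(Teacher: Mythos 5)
Your proof is correct and follows essentially the same route as the paper's: compute the case $T=\mathbb{G}_m$ via $H^1(k,\mu_n)\cong{_n}H^1(k,\mu_\infty)$ and $|H^1(k,\mu_n)|=n^{d+1}$, extend to quasi-trivial tori by Shapiro, then reduce the general case through the lemme d'Ono and the snake-lemma counting argument. The paper compresses the last two steps into ``en procédant comme dans les propositions précédentes''; your write-up simply makes explicit the details (exactness of the torsion sequence, finiteness of the outer cohomology terms, rank additivity, type cofini) that this phrase leaves implicit.
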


\begin{proof}
Pour $T=\mathbb{G}_m$, on a $H^1(k,\mu_n) = {_n}H^1(k,\mathbb{G}_m(k^s)_{tors})$, et donc $H^1(k,\mathbb{G}_m(k^s)_{tors}) \cong (\mathbb{Q}/\mathbb{Z})^{d+1}$. La formule est donc vraie pour les tores quasi-triviaux. En procédant comme dans les propositions précédentes, on obtient le résultat désiré.
\end{proof}

\begin{remarque}\label{tor1torsqp}
Lorsque $k_0$ est un corps fini de caractéristique $p$, on montre de manière analogue que:
$$\frac{|{_n}H^1(k,\mathbb{G}_m(k^s)_{tors})|}{|H^1(k,\mathbb{G}_m(k_s)_{tors})/n|} = n^d \cdot p^{[k_1:\mathbb{Q}_p]v_p(n)}.$$
pour tout $n \geq 1$, et si $p$ ne divise pas $n$:
$$\frac{|{_n}H^1(k,T(k^s)_{tors})|}{|H^1(k,T(k_s)_{tors})/n|} = n^{d\rho} .$$
pour tout tore $T$. Si de plus $d=1$, alors $$\frac{|{_n}H^1(k,T(k^s)_{tors})|}{|H^1(k,T(k_s)_{tors})/n|} = n^{d\rho}\cdot p^{[k_1:\mathbb{Q}_p]\dim T \cdot v_p(n)} $$ pour tout $n$ et pour tout $T$.\\ Plus généralement, si $r \in \mathbb{N}$, $i \in \mathbb{Z}$, $\ell$ est un nombre premier différent de $p$ et $t \in \mathbb{N}$, alors:
\begin{equation*} \frac{|{_{\ell^t}}H^r(k,\mathbb{Q}_{\ell}/\mathbb{Z}_{\ell}(i))|}{|H^r(k,\mathbb{Q}_{\ell}/\mathbb{Z}_{\ell}(i))/{\ell^t}|} = \left\lbrace
\begin{array}{cc}
\ell^{tc_{i,d-1}}  & \text{si } r\in \{i,i+1\} \\
1 & \text{sinon,}
\end{array}\right. 
\end{equation*}
\begin{equation*} \frac{|{_{\ell^t}}H^r(k,\varinjlim_s {_{\ell^s}}T(k^s) \otimes \mathbb{Z}/\ell^s\mathbb{Z}(i))|}{|H^r(k,\varinjlim_s {_{\ell^s}}T(k^s) \otimes \mathbb{Z}/\ell^s\mathbb{Z}(i))/{\ell^t}|} = \left\lbrace
\begin{array}{cc}
\ell^{tc_{i+1,d-1}\rho}  & \text{si } r\in \{i+1,i+2\} \\
1 & \text{sinon.}
\end{array}\right. 
\end{equation*}
\end{remarque}

\subsection{Variétés abéliennes}

\subsubsection{Réduction des variétés abéliennes}

Soient $l$ un corps local et $A$ une variété abélienne sur $l$. Notons $\mathcal{A}$ le modèle de Néron de $A$ et $A_0$ sa fibre spéciale. Soit $A_0^0$ la composante connexe du neutre dans $A_0$. On rappelle que $A_0/A_0^0$ est un groupe algébrique fini et qu'il existe une suite exacte:
$$0 \rightarrow U \times_l T \rightarrow A_0^0 \rightarrow B \rightarrow 0,$$
où $U$ est un groupe abélien unipotent, $T$ un tore et $B$ une variété abélienne. Dans le cas où $l$ est de caractéristique résiduelle nulle, $U$ est une puissance de $\mathbb{G}_a$.

\begin{definition}
On dit que $A$ a \textbf{très mauvaise réduction} si $T=0$ et $B=0$. 
\end{definition}

\begin{theorem} \textbf{(Théorème de Ogg - Théorème 1 de \cite{Ogg})}\\
Soient $l$ un corps algébriquement clos de caractéristique 0 et $A$ une variété abélienne sur $l((t))$. Soient $\mathcal{A}$ le modèle de Néron de $A$ et $A_0$ la fibre spéciale de $\mathcal{A}$. Soit $A_0^0$ la composante connexe du neutre dans $A_0$. On considère une suite exacte $0 \rightarrow U \times_l T \rightarrow A_0^0 \rightarrow B \rightarrow 0$ où $U$ est une puissance de $\mathbb{G}_a$, $T$ est un tore et $B$ une variété abélienne. Soient $r$ la dimension de $T$, $s$ la dimension de $U$ et $\epsilon = r+2s$. Alors $H^1(l,A) \cong (\mathbb{Q}/\mathbb{Z})^{2\dim A - \epsilon}$. En particulier, le groupe $H^1(l,A) \cong (\mathbb{Q}/\mathbb{Z})^{2\dim A - \epsilon}$ est nul si, et seulement si, la variété abélienne $A$ a très mauvaise réduction.
\end{theorem}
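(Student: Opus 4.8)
The plan is to compute the finite groups $H^1(l((t)),A)[n]$ for every $n\geq 1$ and then to recognize $H^1(l((t)),A)$ from these orders. Write $K=l((t))$, $g=\dim A$, and $G=\mathrm{Gal}(K^s/K)$. Since $l$ is algebraically closed of characteristic $0$, every finite extension of $K$ has the form $l((t^{1/m}))$, so $G\cong\hat{\mathbb{Z}}$; in particular $\mathrm{cd}(G)=1$ and, for every finite $G$-module $F$, one has $H^i(K,F)=0$ for $i\geq 2$ and $|H^0(K,F)|=|H^1(K,F)|$ (Euler characteristic $1$ for $\hat{\mathbb{Z}}$, which is also the content of the Euler--Poincaré proposition above). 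I also record the general fact that $H^1(K,A)$ is a torsion group: it is the direct limit of the groups $H^1(\mathrm{Gal}(L/K),A(L))$ over finite extensions $L/K$, each of which is killed by $[L:K]$.

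First I would extract the orders $|H^1(K,A)[n]|$ from the Kummer sequence attached to multiplication by $n$ on $A$, namely
$$0\to A(K)/n\to H^1(K,A[n])\to H^1(K,A)[n]\to 0.$$
Using $|H^1(K,A[n])|=|H^0(K,A[n])|=|A(K)[n]|$, this reduces the problem to computing $|A(K)[n]|$ and $|A(K)/n|$, since $|H^1(K,A)[n]|=|A(K)[n]|/|A(K)/n|$.

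Next I would use the Néron model $\mathcal{A}$ to control $A(K)=\mathcal{A}(\mathcal{O}_K)$. Smoothness of $\mathcal{A}$ together with the Henselian property of $\mathcal{O}_K=l[[t]]$ makes the reduction map surjective, giving a short exact sequence $0\to A^1(K)\to A(K)\to A_0(l)\to 0$, where $A^1(K)$ is the kernel of reduction. In residue characteristic $0$ the formal logarithm identifies $A^1(K)$ with $(t\,l[[t]])^g$, a $\mathbb{Q}$-vector space; hence $A^1(K)$ is uniquely divisible, so $A(K)[n]\cong A_0(l)[n]$ and $A(K)/n\cong A_0(l)/n$. It then remains to analyse $A_0(l)$ through the two given exact sequences. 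Over the algebraically closed characteristic-$0$ field $l$ the additive part $U(l)$ is a $\mathbb{Q}$-vector space (uniquely divisible), $T(l)=(l^\times)^r$ is divisible with $T(l)[n]\cong(\mathbb{Z}/n)^r$, and $B(l)$ is divisible with $B(l)[n]\cong(\mathbb{Z}/n)^{2\dim B}$; the component group $\Phi=A_0/A_0^0$ is finite, so $|\Phi[n]|=|\Phi/n|$. Chasing these through the snake lemma yields $A_0^0(l)/n=0$, $|A_0^0(l)[n]|=n^{r+2\dim B}$, and then $A_0(l)/n\cong\Phi/n$ together with $|A_0(l)[n]|=n^{r+2\dim B}|\Phi[n]|$. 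Substituting back, the finite factors coming from $\Phi$ cancel and I obtain $|H^1(K,A)[n]|=n^{r+2\dim B}$.

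Finally I would convert this into the dimension count in the statement: since $\dim A_0^0=g$ and $\dim A_0^0=\dim U+\dim T+\dim B=s+r+\dim B$, we have $\dim B=g-r-s$, whence $r+2\dim B=2g-(r+2s)=2g-\epsilon$. Thus $|H^1(K,A)[n]|=n^{2g-\epsilon}$ for all $n$. Because $H^1(K,A)$ is a torsion group all of whose $n$-torsion subgroups are finite of order $n^{2g-\epsilon}$, the elementary lemmas on torsion groups of cofinite type recalled above force each $\ell$-primary part to be $(\mathbb{Q}_\ell/\mathbb{Z}_\ell)^{2g-\epsilon}$ with no finite summand, i.e. $H^1(K,A)\cong(\mathbb{Q}/\mathbb{Z})^{2g-\epsilon}$, and this vanishes exactly when $r=s=0$ and $\dim B=0$, i.e. when $A$ has very bad reduction. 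The main obstacle I anticipate is the local analysis of $A(K)$: proving that the kernel of reduction is uniquely divisible (the formal-logarithm argument, genuinely using characteristic $0$) and that reduction is onto all components of $A_0(l)$, since everything downstream depends on replacing $A(K)[n]$ and $A(K)/n$ by their counterparts for $A_0(l)$.
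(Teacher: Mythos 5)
The paper offers no proof of this statement: it is quoted as a known result (Théorème 1 de \cite{Ogg}), so there is no internal argument to compare yours against. Your proof is, however, correct in substance and self-contained, and it is exactly the counting technique the paper itself deploys later for the théorème \ref{1-local}: Kummer sequence together with $\chi(\hat{\mathbb{Z}},F)=1$ to reduce the computation of $|{_n}H^1(K,A)|$ to the ratio $|{_n}A(K)|/|A(K)/n|$; then $A(K)=\mathcal{A}(\mathcal{O}_K)$ with uniquely divisible kernel of reduction (the formal logarithm argument, valid because the residue characteristic is $0$); then dévissage of $A_0(l)$ through the finite component group, the uniquely divisible $U(l)$, the divisible $T(l)$ and $B(l)$; and finally the structure lemma for torsion groups of cofinite type to pass from $|{_n}H^1(K,A)|=n^{2\dim A-\epsilon}$ for all $n$ to $H^1(K,A)\cong(\mathbb{Q}/\mathbb{Z})^{2\dim A-\epsilon}$. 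Each of these steps is justified correctly (surjectivity of reduction by smoothness plus henselianity, exactness of the two dévissage sequences on $l$-points because $l$ is algebraically closed, cancellation of the contributions of the finite group $\Phi$).

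One slip in your final sentence: since $\dim B=\dim A-r-s$, the exponent is $2\dim A-\epsilon=r+2\dim B$, and this vanishes if and only if $r=0$ and $\dim B=0$, i.e. $T=0$ and $B=0$, which is precisely the paper's definition of très mauvaise réduction. It does \emph{not} require $s=0$: in the very bad reduction case one has $s=\dim U=\dim A$, and demanding $r=s=\dim B=0$ as you wrote would force $A=0$. The conclusion you state ("i.e. when $A$ has very bad reduction") is the right one, but the characterization "$r=s=0$ and $\dim B=0$" preceding it is incorrect and should be replaced by "$r=0$ and $\dim B=0$".
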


\begin{definition}
On appellera $\epsilon$ l'\textbf{entier de Ogg} de $A$.
\end{definition}

\subsubsection{Variétés abéliennes sur un corps fini}

Soit $\mathbb{F}$ un corps fini de caractéristique $p$ et de cardinal $q$. Soient $\ell$ un nombre premier différent de $p$ et $i$ un entier relatif. Le but de ce paragraphe est d'établir la proposition suivante:

\begin{proposition}\label{vafini}
Soit $A$ une variété abélienne sur $\mathbb{F}$. On note $A\{\ell\}(i)$ le module galoisien $\varinjlim_r {_{\ell^r}}A(\mathbb{F}^s) \otimes \mathbb{Z}/\ell^r\mathbb{Z}(i)$. Alors le groupe $H^0(\mathbb{F},A\{\ell\}(i))$ est fini.
\end{proposition}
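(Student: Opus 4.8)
Pour établir cette proposition, je commencerais par comprendre précisément le module galoisien $A\{\ell\}(i) = \varinjlim_r {_{\ell^r}}A(\mathbb{F}^s) \otimes \mathbb{Z}/\ell^r\mathbb{Z}(i)$. Comme $\ell \neq p$, le module de $\ell^r$-torsion ${_{\ell^r}}A(\mathbb{F}^s)$ est un $\mathbb{Z}/\ell^r\mathbb{Z}$-module libre de rang $2\dim A$, et à la limite $T_{\ell}A$ est un $\mathbb{Z}_{\ell}$-module libre de rang $2\dim A$ sur lequel agit continûment le groupe $\text{Gal}(\mathbb{F}^s/\mathbb{F}) \cong \hat{\mathbb{Z}}$. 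L'action est donc entièrement déterminée par l'endomorphisme de Frobenius $\varphi \in \text{Aut}(T_{\ell}A)$, dont le polynôme caractéristique est le polynôme de Weil de $A$ à coefficients entiers, indépendant de $\ell$.

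\textbf{Réduction au calcul d'invariants.}
Puisque $\text{Gal}(\mathbb{F}^s/\mathbb{F})$ est topologiquement engendré par $\varphi$, le groupe $H^0(\mathbb{F},A\{\ell\}(i))$ est exactement le sous-groupe des éléments fixes sous $\varphi$ (agissant en incluant le twist à la Tate $(i)$, c'est-à-dire que $\varphi$ agit sur $\mathbb{Z}/\ell^r\mathbb{Z}(i)$ par multiplication par $q^i$). Ainsi $H^0(\mathbb{F},A\{\ell\}(i)) = \varinjlim_r \ker(\varphi - q^i \cdot \text{id} : {_{\ell^r}}A(\mathbb{F}^s) \otimes \mathbb{Z}/\ell^r\mathbb{Z} \to {_{\ell^r}}A(\mathbb{F}^s) \otimes \mathbb{Z}/\ell^r\mathbb{Z})$. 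La finitude de ce groupe de torsion $\ell$-primaire équivaut, par les lemmes sur les groupes de type cofini du début de l'article, au fait qu'il ne contient pas de copie de $\mathbb{Q}_{\ell}/\mathbb{Z}_{\ell}$, c'est-à-dire que l'endomorphisme $\varphi - q^i$ de $T_{\ell}A \otimes \mathbb{Q}_{\ell}/\mathbb{Z}_{\ell}$ a un noyau fini.

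\textbf{Cœur de l'argument.}
La clé est d'observer que $\varphi - q^i$ opère sur le $\mathbb{Z}_{\ell}$-module libre $T_{\ell}A$, et que son noyau sur $T_{\ell}A \otimes \mathbb{Q}_{\ell}/\mathbb{Z}_{\ell}$ est infini si, et seulement si, $q^i$ est valeur propre de $\varphi$, ce qui revient à dire que $q^i$ est racine du polynôme de Weil $P(X)$ de $A$. Or, d'après les estimations de Weil, toute racine complexe $\alpha$ de $P$ vérifie $|\alpha| = q^{1/2}$, donc $|\alpha| = q^{1/2} \neq q^i = |q^i|$ puisque $i$ est entier. Par conséquent $q^i$ n'est jamais racine de $P(X)$, l'endomorphisme $\varphi - q^i$ est injectif sur $T_{\ell}A \otimes \mathbb{Q}_{\ell}$, et son noyau sur $T_{\ell}A \otimes \mathbb{Q}_{\ell}/\mathbb{Z}_{\ell}$ est fini. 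Cela donne la finitude voulue. La principale difficulté que j'anticipe est purement de nature bureaucratique: il faut relier proprement le passage à la limite inductive $\varinjlim_r$ définissant $A\{\ell\}(i)$ au calcul du noyau de $\varphi - q^i$ sur $T_{\ell}A \otimes \mathbb{Q}_{\ell}/\mathbb{Z}_{\ell}$, en vérifiant que la cardinalité des noyaux ${_{\ell^r}}$-tronqués reste bornée indépendamment de $r$, ce qui découle précisément de ce que $\det(\varphi - q^i) \neq 0$ a une valuation $\ell$-adique finie fixe.
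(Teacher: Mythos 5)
Votre preuve est correcte et suit essentiellement la même démarche que celle de l'article : dans les deux cas, tout repose sur l'hypothèse de Riemann de Weil pour les variétés abéliennes sur un corps fini, qui garantit que les valeurs propres du Frobenius sur $T_{\ell}A$ sont de module complexe $q^{1/2}$ (ou $q^{-1/2}$ selon la normalisation choisie), module qui n'est jamais égal à une puissance entière de $q$. Signalons seulement un lapsus de signe sans conséquence : puisque le Frobenius arithmétique $\varphi$ agit sur le twist $\mathbb{Z}/\ell^r\mathbb{Z}(i)$ par $q^{i}$, l'action tordue est $q^{i}\varphi$ et les points fixes forment le noyau de $\varphi - q^{-i}$ plutôt que de $\varphi - q^{i}$, mais l'argument par les modules complexes n'en est pas affecté.
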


\begin{proof}
Supposons que $H^0(\mathbb{F},A\{\ell\}(i))$ soit infini. Cela signifie que $H^0(\mathbb{F},A\{\ell\}(i))$ possède un sous-groupe isomorphe à $\mathbb{Q}_{\ell}/\mathbb{Z}_{\ell}$, et donc que $A(\mathbb{F}^s)$ possède un sous-module galoisien isomorphe à $\mathbb{Q}_{\ell}/\mathbb{Z}_{\ell}(-i)$. Par conséquent, le module de Tate $T_{\ell} A(\mathbb{F}^s)$ contient $\mathbb{Z}_{\ell}(-i)$ comme module galoisien. Comme le Frobenius géométrique agit sur $\mathbb{Z}_{\ell}(1)$ par  multiplication par $q^{-1}$, on déduit que le Frobenius géométrique sur $T_{\ell} A(\mathbb{F}^s) \otimes_{\mathbb{Z}_{\ell}} \mathbb{Q}_{\ell}$ possède une valeur propre de module complexe $q^{i}$. Mais $T_{\ell} A(\mathbb{F}^s)$ est isomorphe à $H^1(A^t,\mathbb{Z}_{\ell}) \otimes_{\mathbb{Z}_{\ell}} \mathbb{Z}_{\ell}(1)$, et d'après les conjectures de Weil (théorème IV.1.2 de \cite{FK}), les valeurs propres du Frobenius géométrique sur $H^1(A^t,\mathbb{Z}_{\ell}) \otimes_{\mathbb{Z}_{\ell}} \mathbb{Z}_{\ell}(1)\otimes_{\mathbb{Z}_{\ell}} \mathbb{Q}_{\ell}$ sont de module complexe $q^{-1/2}$: absurde! Donc $H^0(\mathbb{F},A\{\ell\}(i))$ est fini.
\end{proof}

\begin{remarque}
Ce résultat sera notamment utile dans la section \ref{Qp} afin de déterminer quand il y a un bon théorème de dualité pour les groupes de cohomologie d'une variété abélienne sur un corps de la forme $\mathbb{Q}_p((t_1))...((t_{d-1}))$.
\end{remarque}

\section{\textsc{Variétés abéliennes sur le corps des fonctions d'une courbe sur $\mathbb{C}((t))$}}\label{C}

\subsection{Étude locale}

Le but de cette partie est d'établir un théorème de dualité pour les variétés abéliennes sur $\mathbb{C}((t_0))((t_1))$ (théorème \ref{1-local}). Il s'agit d'un résultat analogue aux théorèmes de dualité pour les variétés abéliennes sur $\mathbb{Q}_p$ et sur $\mathbb{F}_p((t))$. En fait, c'est essentiellement le "dernier" cas de corps local de dimension cohomologique 2 non compris, et il se trouve qu'il pose certaines difficultés supplémentaires par rapport aux cas déjà connus.\\
On se place donc dans le cas où $k = \mathbb{C}((t_0))((t_1))$ (et alors $d=1$).  Soient $\kappa = \mathbb{C}((t_0))$ et $A$ une variété abélienne sur $k$. Soit $A^t$ sa variété abélienne duale, qui, d'après le théorème de Barsotti-Weil, représente le faisceau $\underline{\text{Ext}}^1_k(A,\mathbb{G}_m)$ (on rappelle que $\underline{\text{Ext}}^r_k(A,\mathbb{G}_m)$ est le faisceau associé à $T \mapsto \text{Ext}^r_{T_{fppf}}(F,G)$). Comme $\underline{\text{Ext}}^r_k(A,\mathbb{G}_m)=0$ pour $r \neq 1$, on dispose d'un accouplement $A \otimes^{\mathbf{L}} A^t \rightarrow \mathbb{G}_m[1],$
d'où un accouplement:
$$H^r(k,A) \times H^{1-r}(k,A^t) \rightarrow \text{Br} \; k \cong \mathbb{Q}/\mathbb{Z}.$$

\begin{lemma}
Pour chaque entier naturel $n$ et chaque entier $r$, le morphisme $ H^{r-1}(k,A)/n \rightarrow ({_n}H^{2-r}(k,A^t))^D$ est injectif et le morphisme ${_n}H^{r}(k,A) \rightarrow (H^{1-r}(k,A^t)/n)^D$ est surjectif.
\end{lemma}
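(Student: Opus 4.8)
The plan is to deduce both assertions from the duality theorem for finite Galois modules over the $1$-local field $k$, applied to the $n$-torsion group schemes ${_n}A$ and ${_n}A^t$, and to compare that finite duality with the accouplement via the Kummer sequences.

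Since $\mathrm{Car}(k)=0$, multiplication by $n$ is an épimorphisme de faisceaux étales sur $A$ et sur $A^t$, so the suites de Kummer $0 \to {_n}A \to A \xrightarrow{n} A \to 0$ and its analogue for $A^t$ are exactes. Passing to cohomology yields the deux suites exactes courtes
$$
0 \to H^{r-1}(k,A)/n \xrightarrow{\delta_A} H^r(k,{_n}A) \to {_n}H^r(k,A) \to 0, \qquad (*)
$$
$$
0 \to H^{1-r}(k,A^t)/n \xrightarrow{\delta_{A^t}} H^{2-r}(k,{_n}A^t) \to {_n}H^{2-r}(k,A^t) \to 0. \qquad (**)
$$
L'accouplement de Weil identifie ${_n}A^t$ avec $\underline{\text{Hom}}({_n}A,\mu_n)$, de façon compatible avec l'accouplement de Barsotti--Weil $A \otimes^{\mathbf{L}} A^t \to \mathbb{G}_m[1]$ qui définit l'accouplement. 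Comme $d=1$ (donc $\mu_n^{\otimes d}=\mu_n$) et comme $\mathrm{Car}(k_1)=0$ — de sorte que l'hypothèse du théorème de dualité est satisfaite pour tout $n$ — le théorème rappelé dans les notations fournit un accouplement parfait de groupes finis
$$
H^r(k,{_n}A) \times H^{2-r}(k,{_n}A^t) \longrightarrow H^2(k,\mu_n) \cong \mathbb{Z}/n\mathbb{Z}. \qquad (\sharp)
$$

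J'assemblerais ensuite le diagramme de compatibilité. En dualisant $(**)$ (ce qui est exact, tous les groupes étant finis) et en utilisant $(\sharp)$ pour identifier $H^r(k,{_n}A)$ avec $H^{2-r}(k,{_n}A^t)^D$, on obtient une échelle à lignes exactes
$$
\begin{array}{ccccccccc}
0 & \to & H^{r-1}(k,A)/n & \to & H^r(k,{_n}A) & \to & {_n}H^r(k,A) & \to & 0 \\
& & \downarrow & & \cong\downarrow & & \downarrow & & \\
0 & \to & ({_n}H^{2-r}(k,A^t))^D & \to & H^{2-r}(k,{_n}A^t)^D & \to & (H^{1-r}(k,A^t)/n)^D & \to & 0
\end{array}
$$
dont les flèches verticales extérieures sont précisément les deux applications de l'énoncé, induites par les accouplements $H^{r-1}(k,A) \times H^{2-r}(k,A^t) \to \mathbb{Q}/\mathbb{Z}$ et $H^r(k,A) \times H^{1-r}(k,A^t) \to \mathbb{Q}/\mathbb{Z}$. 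Le contenu du lemme est que ce diagramme commute. Le carré de gauche commute dès que l'on sait que, sous $(\sharp)$, les images des deux morphismes de connexion $\delta_A$ et $\delta_{A^t}$ s'annulent mutuellement; cette orthogonalité traduit la compatibilité du cup-produit avec les opérateurs de Bockstein attachés aux mêmes suites de multiplication par $n$, combinée à la compatibilité de l'accouplement de Weil avec celui de Barsotti--Weil du paragraphe précédent. Le carré de droite commute alors pour la même raison.

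La conclusion est ensuite un pur diagram chase. Dans le carré de gauche, le composé $H^{r-1}(k,A)/n \to H^r(k,{_n}A) \xrightarrow{\cong} H^{2-r}(k,{_n}A^t)^D$ est injectif (la première flèche l'est par $(*)$ et la seconde est un isomorphisme) et se factorise par l'inclusion $({_n}H^{2-r}(k,A^t))^D \hookrightarrow H^{2-r}(k,{_n}A^t)^D$; donc l'application verticale de gauche $H^{r-1}(k,A)/n \to ({_n}H^{2-r}(k,A^t))^D$ est injective, ce qui est la première assertion. Dualement, le composé $H^r(k,{_n}A) \xrightarrow{\cong} H^{2-r}(k,{_n}A^t)^D \to (H^{1-r}(k,A^t)/n)^D$ est surjectif et se factorise par la surjection $H^r(k,{_n}A) \to {_n}H^r(k,A)$ de $(*)$; donc l'application verticale de droite ${_n}H^r(k,A) \to (H^{1-r}(k,A^t)/n)^D$ est surjective, ce qui est la seconde assertion. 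Le principal obstacle est la commutativité du diagramme — c'est-à-dire la compatibilité de l'accouplement entier de Barsotti--Weil avec l'accouplement fini de Weil à travers les suites de Kummer, et tout particulièrement l'orthogonalité des images de $\delta_A$ et $\delta_{A^t}$ sous $(\sharp)$; c'est le seul point véritablement non formel, réclamant un décompte soigneux des degrés et des signes pour les cup-produits et les morphismes de connexion, après quoi tout est diagrammatique.
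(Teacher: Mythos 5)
Votre preuve est correcte et suit essentiellement la même démarche que celle de l'article : identification de ${_n}A^t$ avec $\underline{\text{Hom}}({_n}A,\mu_n)$ via l'accouplement de Weil, dualité parfaite pour les modules finis sur le corps $1$-local $k$ donnant l'isomorphisme vertical central, puis chasse au diagramme dans l'échelle formée par les suites de Kummer de $A$ et le dual de celle de $A^t$. Vous explicitez davantage la commutativité du diagramme (que l'article se contente d'affirmer), mais l'argument est identique.
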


\begin{proof}
On remarque que $\text{\underline{Hom}}({_n}A,\mathbb{Q}/\mathbb{Z}(1))= {_n}A^t$. On en déduit que, dans le diagramme commutatif:\\
\centerline{\xymatrix{
0 \ar[r]& H^{r-1}(k,A)/n \ar[d] \ar[r] & H^r(k,{_n}A) \ar[r]\ar[d] & {_n}H^r(k,A) \ar[r]\ar[d] & 0\\
0 \ar[r]& ({_n}H^{2-r}(k,A^t))^D  \ar[r] & H^{2-r}(k,{_n}A^t)^D \ar[r] & (H^{1-r}(k,A^t)/n)^D \ar[r] & 0,
}}
la flèche verticale centrale est un isomorphisme. Par conséquent, $ H^{r-1}(k,A)/n \rightarrow ({_n}H^{2-r}(k,A^t))^D$ est injectif et ${_n}H^{r}(k,A) \rightarrow (H^{1-r}(k,A^t)/n)^D$ est surjectif.
\end{proof}

Notons $\mathcal{A}$ le modèle de Néron de $A$ et $A_0$ sa fibre spéciale. On dispose alors d'une filtration $A_0 \supseteq A_0^{0} \supseteq A_0^1$ de $A_0$,
où:
\begin{itemize} 
\item[$\bullet$] $A_0^0$ est la composante connexe de l'élément neutre de $A_0$, 
\item[$\bullet$] $F=A_0/A_0^0$ est un groupe fini,
\item[$\bullet$] $A_0^1$ est de la forme $U \times T$ où $U$ est un groupe additif (c'est-à-dire une puissance de $\mathbb{G}_a$) et $T$ un tore,
\item[$\bullet$] $B=A_0^0/A_0^1$ est une variété abélienne.
\end{itemize}
On note $\epsilon$ l'entier de Ogg de $B$.\\
De même,  on note $\mathcal{A}^*$ le modèle de Néron de $A^t$ et $A_0^*$ sa fibre spéciale. On dispose alors d'une filtration $A_0^* \supseteq A_0^{0*} \supseteq A_0^{1*}$ de $A_0^*$,
où:
\begin{itemize} 
\item[$\bullet$] $A_0^{0*}$ est la composante connexe de l'élément neutre de $A_0^*$, 
\item[$\bullet$] $F^*=A_0^*/A_0^{0*}$ est un groupe fini,
\item[$\bullet$] $A_0^{1*}$ est de la forme $U^* \times T^*$ où $U^*$ est un groupe additif et $T^*$ un tore,
\item[$\bullet$] $B^*=A_0^{0*}/A_0^{1*}$ est une variété abélienne.
\end{itemize}
On note $\epsilon^*$ l'entier de Ogg de $B^*$.

\begin{remarque}
Ainsi, toutes les variétés indexées par 0 sont définies sur $\kappa$.
\end{remarque}

\begin{theorem} \label{1-local}
On a une suite exacte:
$$0 \rightarrow (\mathbb{Q}/\mathbb{Z})^{m(A)} \rightarrow H^1(k,A) \rightarrow (H^0(k,A^t)^{\wedge})^D \rightarrow 0$$
avec $m(A) = 2\cdot (\text{dim}\; B + \text{dim}\; B^*) - \epsilon - \epsilon^*$.
\end{theorem}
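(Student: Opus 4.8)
The map in the statement is the one deduced from the pairing $H^1(k,A)\times H^0(k,A^t)\to \mathrm{Br}\,k\cong\mathbb{Q}/\mathbb{Z}$: since $H^1(k,A)$ is torsion, this pairing factors through the completion $H^0(k,A^t)^{\wedge}$ and yields $\phi\colon H^1(k,A)\to (H^0(k,A^t)^{\wedge})^D$. The plan is to prove that $\phi$ is surjective, that its kernel is divisible, and finally to identify that kernel by a counting argument. For surjectivity, I would use that $H^1(k,A)=\varinjlim_n {}_nH^1(k,A)$ (as it is torsion) together with the lemma above applied with $r=1$, which gives for each $n$ a surjection ${}_nH^1(k,A)\to (H^0(k,A^t)/n)^D$ compatible with the transition maps; passing to the colimit and using $(H^0(k,A^t)^{\wedge})^D=\varinjlim_n (H^0(k,A^t)/n)^D$ shows $\phi$ is surjective. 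Along the way one checks that ${}_nH^1(k,A)$ is finite, using local duality for the finite module ${}_nA$ over the $1$-local field $k$, so that $H^1(k,A)$ is of type cofini.

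For divisibility of $\ker\phi$, write $T=(H^0(k,A^t)^{\wedge})^D$ and apply the snake lemma to multiplication by $n$ on $0\to\ker\phi\to H^1(k,A)\xrightarrow{\phi} T\to 0$. The lemma with $r=1$ shows that ${}_nH^1(k,A)\to {}_nT=(H^0(k,A^t)/n)^D$ is surjective, so the connecting map ${}_nT\to(\ker\phi)/n$ vanishes and $(\ker\phi)/n$ is identified with the kernel of the reduction $\overline\phi_n\colon H^1(k,A)/n\to T/n$. On the other hand, the lemma with $r=2$ produces an injection $H^1(k,A)/n\hookrightarrow ({}_nH^0(k,A^t))^D$; since it factors through $\overline\phi_n$ (both being induced by the same pairing), $\overline\phi_n$ is injective, whence $(\ker\phi)/n=0$. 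As $\ker\phi$ is torsion, this forces it to be divisible.

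It then remains to count ${}_n\ker\phi$. Divisibility together with the surjectivity of ${}_nH^1(k,A)\to{}_nT$ gives $0\to{}_n\ker\phi\to{}_nH^1(k,A)\to{}_nT\to 0$, so $|{}_n\ker\phi|=|{}_nH^1(k,A)|/|{}_nT|$. For the numerator, the Kummer sequence yields $|{}_nH^1(k,A)|=|H^1(k,{}_nA)|/|A(k)/n|$, and the Euler–Poincaré proposition ($\chi(k,{}_nA)=1$ since $k_0=\mathbb{C}((t))$) together with local duality ($H^2(k,{}_nA)\cong H^0(k,{}_nA^t)^D$, using $\underline{\mathrm{Hom}}({}_nA,\mu_n)={}_nA^t$) gives $|H^1(k,{}_nA)|=|{}_nA(k)|\cdot|{}_nA^t(k)|$. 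Since $|{}_nT|=|A^t(k)/n|$, one gets
$$|{}_n\ker\phi|=\frac{|{}_nA(k)|}{|A(k)/n|}\cdot\frac{|{}_nA^t(k)|}{|A^t(k)/n|}.$$
Each factor reduces to the special fibre: the kernel of reduction $A(k)\to A_0(\kappa)$ consists of the points of a formal group over $\mathbb{C}((t_0))[[t_1]]$, hence is uniquely divisible (residue characteristic $0$), so $|{}_nA(k)|/|A(k)/n|=|{}_nA_0(\kappa)|/|A_0(\kappa)/n|$. Running through the filtration $A_0\supseteq A_0^0\supseteq A_0^1$ over $\kappa$, the finite group $F$, the unipotent part $U$ (uniquely divisible) and the torus $T$ (proposition \ref{tor0} with $d=0$) each contribute $1$, while for the abelian part $B$ the relation $|{}_nB(\kappa)|/|B(\kappa)/n|=|{}_nH^1(\kappa,B)|=n^{2\dim B-\epsilon}$ follows from $\mathrm{Gal}(\kappa^s/\kappa)\cong\hat{\mathbb{Z}}$ and Ogg's theorem. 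Hence $|{}_nA(k)|/|A(k)/n|=n^{2\dim B-\epsilon}$, symmetrically $|{}_nA^t(k)|/|A^t(k)/n|=n^{2\dim B^*-\epsilon^*}$, so $|{}_n\ker\phi|=n^{m(A)}$ for all $n$; a divisible group of type cofini with this property is isomorphic to $(\mathbb{Q}/\mathbb{Z})^{m(A)}$.

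The main obstacle I anticipate is the divisibility step: everything hinges on checking that the injection furnished by the lemma for $r=2$ really is the reduction $\overline\phi_n$ of $\phi$ modulo $n$, i.e.\ on the compatibility between the two avatars of the pairing. Once this is secured, the surjectivity and the counting are essentially formal, the only genuine input there being the unique divisibility of the kernel of reduction and Ogg's theorem applied to $B$ and $B^*$.
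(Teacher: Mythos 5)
Your proof is correct and follows essentially the same route as the paper: surjectivity by passing to the limit in the duality lemma, then a counting argument combining $\chi(k,{_n}A)=1$, local duality $H^2(k,{_n}A)\cong H^0(k,{_n}A^t)^D$, the uniquely divisible kernel of reduction $A(k)\to A_0(\kappa)$, the d\'evissage of $A_0$ and Ogg's theorem, yielding $|{_n}N|=n^{m(A)}$ for all $n$. The only real difference is that your separate divisibility step (via the $r=2$ injection and the compatibility of pairings you flag as the main obstacle) is redundant: for a torsion group of cofinite type, $|{_n}N|=n^{m(A)}$ for all $n$ already forces $N\cong(\mathbb{Q}/\mathbb{Z})^{m(A)}$, which is exactly how the paper concludes.
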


\begin{remarque}
La multiplication par $n$ sur $A^t$ est étale puisque $k$ est de caractéristique 0. Donc, d'après le théorème des fonctions implicites, le groupe $nA^t(k)$ est ouvert dans $A^t(k)$. De plus, il est d'indice fini. On en déduit que $H^0(k,A^t)^{\wedge}$ coïncide avec le complété profini de $H^0(k,A^t)$.
\end{remarque}

\begin{proof}
La surjectivité du morphisme $ H^1(k,A) \rightarrow (H^0(k,A^t)^{\wedge})^D$ découle immédiatement du lemme précédent par passage à la limite inductive et de la remarque précédente. Nous voulons maintenant calculer son noyau $N$. \\
On a l'égalité $A(k) = \mathcal{A}(\mathcal{O}_k)$, ainsi qu'une suite exacte:
$$0 \rightarrow D \rightarrow \mathcal{A}(\mathcal{O}_k) \rightarrow A_0(\kappa) \rightarrow 0,$$
où $D$ désigne un groupe abélien uniquement divisible. Le lemme du serpent impose donc que:
$$\frac{|A(k)/n|}{|{_n}A(k)|}=\frac{|A_0(\kappa)/n|}{|{_n}A_0(\kappa)|}.$$
Nous allons maintenant dévisser $A_0$.
\begin{itemize}
\item[$\bullet$] En exploitant la suite exacte $0 \rightarrow A^0_0(\kappa) \rightarrow A_0(\kappa) \rightarrow F(\kappa)$, le lemme du serpent et la finitude de $F(\kappa)$, on obtient que:
$$\frac{|A_0(\kappa)/n|}{|{_n}A_0(\kappa)|}=\frac{|A_0^0(\kappa)/n|}{|{_n}A_0^0(\kappa)|}.$$
\item[$\bullet$] Comme $H^1(\kappa,U)=0$ et $H^1(\kappa,T)=0$ (puisque d'une part $H^1(\kappa,T)$ est d'exposant fini d'après le théorème de Hilbert 90 et d'autre part c'est un groupe divisible car $\kappa$ est de dimension cohomologique 1), on a une suite exacte: $$0 \rightarrow U(\kappa) \times T(\kappa) \rightarrow A_0^0(\kappa) \rightarrow B(\kappa) \rightarrow 0.$$ 
Par conséquent:
$$\frac{|A_0^0(\kappa)/n|}{|{_n}A_0^0(\kappa)|} = \frac{|B(\kappa)/n|}{|{_n}B(\kappa)|} \cdot \frac{|U(\kappa)/n|}{|{_n}U(\kappa)|} \cdot \frac{|T(\kappa)/n|}{|{_n}T(\kappa)|}.$$
Or:
\begin{itemize}
\item[$\circ$] on a $ \frac{|U(\kappa)/n|}{|{_n}U(\kappa)|} =1$;
\item[$\circ$] comme $\chi (\kappa,{_n}T)=1$ et $H^1(\kappa,T)=0$), on a $\frac{|T(\kappa)/n|}{|{_n}T(\kappa)|}=\frac{1}{|{_n}H^1(\kappa,T)|}=1$;
\item[$\circ$] d'après le théorème de Ogg, on a $H^1(\kappa,B) \cong (\mathbb{Q}/\mathbb{Z})^{2\cdot \text{dim}\; B - \epsilon}$; comme $\chi (\kappa,{_n}B)=1$, on obtient $\frac{|B(\kappa)/n|}{|{_n}B(\kappa)|} = \frac{1}{|{_n}H^1(\kappa,B)|} = \frac{1}{n^{2\cdot \text{dim}\; B - \epsilon}}$. 
\end{itemize}
On en déduit que:
$$\frac{|A(k)/n|}{|{_n}A(k)|}=\frac{|A_0^0(\kappa)/n|}{|{_n}A_0^0(\kappa)|} = \frac{1}{n^{2\cdot \text{dim}\; B - \epsilon}}.$$

\end{itemize}
On montre de même que:
$$\frac{|A^t(k)/n|}{|{_n}A^t(k)|}=\frac{1}{n^{2\cdot \text{dim}\; B^* - \epsilon^*}}.$$
On a alors:
\begin{align*} 
\chi (k,{_n}A)&= \frac{|{_n}A(k)||H^2(k,{_n}A)|}{|A(k)/n||{_n}H^1(k,A)|}\\
&= n^{2\cdot \text{dim}\; B - \epsilon} \frac{|H^2(k,{_n}A)|}{|{_n}H^1(k,A)|}\\
&=  n^{2\cdot \text{dim}\; B - \epsilon} \frac{|{_n}A^t(k))|}{|{_n}H^1(k,A)|}\\
&= n^{2\cdot (\text{dim}\; B + \text{dim}\; B^*) - \epsilon - \epsilon^*}\frac{|A^t(k)/n|}{|{_n}H^1(k,A)|},
\end{align*}
et donc $|{_n}H^1(k,A)| =  n^{2\cdot (\text{dim}\; B + \text{dim}\; B^*) - \epsilon - \epsilon^*}|A^t(k)/n|$. Par conséquent, $|{_n}N|=n^{2\cdot (\text{dim}\; B + \text{dim}\; B^*) - \epsilon - \epsilon^*}$. Cela étant vrai pour tout $n$, comme $N$ est de torsion de type cofini, on a $N \cong (\mathbb{Q}/\mathbb{Z})^{2\cdot (\text{dim}\; B + \text{dim}\; B^*) - \epsilon - \epsilon^*}$, ce qui achève la preuve avec $m(A) = 2\cdot (\text{dim}\; B + \text{dim}\; B^*) - \epsilon - \epsilon^*$.
\end{proof}

\begin{remarque}
Comme $A$ et $A^t$ sont isogènes (paragraphe 10 de \cite{MilAV}), on a:
$$\frac{|A^t(k)/n|}{|{_n}A^t(k)|}=\frac{|A(k)/n|}{|{_n}A(k)|}$$
pour tout $n$. Cela montre que $2\cdot \text{dim}\; B^* - \epsilon^* = 2\cdot \text{dim}\; B - \epsilon$, et donc $m (A) = 2(2\cdot \text{dim}\; B - \epsilon)$
\end{remarque}

\begin{corollary} \label{cor local}
\begin{itemize}
\item[(i)] Si $A$ a bonne réduction, alors $m(A) = 4\cdot \text{dim}\; A - 2\epsilon$. Si de plus $A_0$ a bonne réduction (resp. très mauvaise réduction), alors $m(A) = 4\cdot \text{dim}\; A$ (resp. $m(A) =0$), et il y a donc une suite exacte de groupes de torsion de type cofini:
$$0 \rightarrow (\mathbb{Q}/\mathbb{Z})^{4\cdot \text{dim}\; A} \rightarrow H^1(k,A) \rightarrow (H^0(k,A^t)^{\wedge})^D \rightarrow 0$$
(resp. un isomorphisme $H^1(k,A) \cong (H^0(k,A^t)^{\wedge})^D$).
\item[(ii)] En général, on a un isomorphisme $\overline{H^1(k,A)} \cong \overline{(H^0(k,A^t)^{\wedge})^D}$.
\item[(iii)] Le morphisme $H^1(k,A) \rightarrow (H^0(k,A^t)^{\wedge})^D$ est un isomorphisme si, et seulement si, $B$ a très mauvaise réduction.
\end{itemize}
\end{corollary}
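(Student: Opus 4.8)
The plan is to derive all three parts directly from Théorème \ref{1-local} together with the remarque following it, which uses the isogeny between $A$ and $A^t$ to give $m(A) = 2(2\dim B - \epsilon)$. The only additional input I need is the théorème de Ogg, which I would use to translate reduction hypotheses on the $\kappa$-abelian variety $B$ into numerical statements about its Ogg integer $\epsilon$.

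For (i), I would first unpack the hypothesis that $A$ has good reduction: the special fibre $A_0$ is then an abelian variety over $\kappa$, so in the filtration one has $A_0 = A_0^0 = B$ with $U = T = 0$ and $F = 0$, whence $\dim B = \dim A$ and $m(A) = 4\dim A - 2\epsilon$. Next I would apply the théorème de Ogg to $B = A_0$ over $\kappa$. If $A_0$ has good reduction, its toric and additive parts vanish, so $r = s = 0$ and $\epsilon = 0$, giving $m(A) = 4\dim A$. If $A_0$ has very bad reduction, then in Ogg's décomposition of $B$ both the toric and abelian parts vanish, so the identity component is a power of $\mathbb{G}_a$ of dimension $\dim A$; hence $s = \dim A$, $r = 0$ and $\epsilon = 2\dim A$, giving $m(A) = 0$. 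Substituting these values into the exact sequence of the theorem yields the asserted sequence and isomorphism.

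For (ii), I would invoke the elementary fact that in a short exact sequence $0 \to D \to M \to N \to 0$ of abelian groups with $D$ divisible one has $\overline{M} \cong \overline{N}$: indeed $D \subseteq M_{div}$, the quotient $M_{div}/D$ is divisible and equals $N_{div}$ (the preimage of $N_{div}$ is an extension of divisible groups, hence divisible, hence inside $M_{div}$), and passing to quotients gives $\overline{M} = M/M_{div} \cong N/N_{div} = \overline{N}$. Applying this to the sequence of Théorème \ref{1-local}, whose kernel $(\mathbb{Q}/\mathbb{Z})^{m(A)}$ is divisible, gives the claim. For (iii), since the surjectivity of $H^1(k,A) \to (H^0(k,A^t)^{\wedge})^D$ was already established in the proof of Théorème \ref{1-local}, the map is an isomorphism if and only if its kernel $(\mathbb{Q}/\mathbb{Z})^{m(A)}$ vanishes, i.e. if and only if $m(A) = 0$, i.e. $2\dim B = \epsilon$; by the théorème de Ogg, $H^1(\kappa, B) \cong (\mathbb{Q}/\mathbb{Z})^{2\dim B - \epsilon}$ vanishes precisely when $B$ has très mauvaise réduction, which is the stated equivalence.

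These are all short deductions, so I expect no serious obstacle. The only points requiring care are the correct bookkeeping of Ogg's integer in the two reduction cases of (i) — in particular keeping the décomposition of $A$ over $k$ notationally distinct from that of $B$ over $\kappa$ — and the standard but slightly subtle divisible-subgroup lemma underlying (ii).
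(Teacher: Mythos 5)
Your proof is correct and follows essentially the same route as the paper: the corollary is stated there without any proof, being an immediate consequence of Théorème \ref{1-local}, of the remark following it (which uses the isogeny between $A$ and $A^t$ to get $m(A) = 2(2\dim B - \epsilon)$), and of Ogg's theorem — exactly the ingredients you invoke. Your bookkeeping of Ogg's integer in the two reduction cases of (i) (good reduction giving $\epsilon = 0$, very bad reduction giving $\epsilon = 2\dim A$) and your divisible-kernel lemma for (ii) are both accurate.
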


Dans certains cas, il est possible d'expliciter le noyau de $H^1(k,A) \rightarrow (H^0(k,A^t)^{\wedge})^D$.  Pour ce faire, il convient d'établir quelques résultats préliminaires:

\begin{lemma}\label{cohonr}
Le morphisme naturel $H^1(\mathcal{O}_k,\mathcal{A}) \rightarrow H^1(k,A)$ est injectif d'image le sous-groupe $H^1(k^{nr}/k,A(k^{nr}))$ de $ H^1(k,A)$.
\end{lemma}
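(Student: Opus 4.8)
The plan is to compute $H^1(\mathcal{O}_k,\mathcal{A})$ by Galois descent along the strict henselization and then recognize the natural map to $H^1(k,A)$ as an inflation morphism. Write $\mathcal{O}_k^{nr}$ for the integral closure of $\mathcal{O}_k$ in $k^{nr}$; since $k$ is complete, $\mathcal{O}_k$ is henselian and $\mathcal{O}_k^{nr}$ is its strict henselization, with residue field $\kappa^s$ (which is algebraically closed, $\kappa$ being of characteristic $0$) and with $G := \text{Gal}(k^{nr}/k) \cong \text{Gal}(\kappa^s/\kappa) \cong \hat{\mathbb{Z}}$. The first step is to prove that $H^1(\mathcal{O}_k^{nr},\mathcal{A}) = 0$. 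For this I would argue geometrically: a torsor $P$ under $\mathcal{A}$ over $\mathcal{O}_k^{nr}$ is a smooth $\mathcal{O}_k^{nr}$-scheme whose special fibre $P_0$ is a torsor under $A_0$ over the algebraically closed field $\kappa^s$, hence carries a $\kappa^s$-point; by smoothness of $P$ together with henselianity of $\mathcal{O}_k^{nr}$, this point lifts to a section of $P$, so $P$ is trivial. Thus $H^1(\mathcal{O}_k^{nr},\mathcal{A}) = 0$.

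Next I would feed this into the Hochschild--Serre spectral sequence attached to the pro-(finite étale Galois) cover $\mathcal{O}_k^{nr}/\mathcal{O}_k$, namely $H^p(G, H^q(\mathcal{O}_k^{nr},\mathcal{A})) \Rightarrow H^{p+q}(\mathcal{O}_k,\mathcal{A})$. Because $H^1(\mathcal{O}_k^{nr},\mathcal{A})$ vanishes, the five-term exact sequence in low degrees degenerates into an isomorphism $H^1(\mathcal{O}_k,\mathcal{A}) \cong H^1(G, \mathcal{A}(\mathcal{O}_k^{nr}))$. The Néron mapping property then identifies $\mathcal{A}(\mathcal{O}_k^{nr})$ with $A(k^{nr})$ (one passes to the filtered colimit over finite unramified extensions, on each of which the Néron model base-changes to the Néron model), yielding $H^1(\mathcal{O}_k,\mathcal{A}) \cong H^1(k^{nr}/k, A(k^{nr}))$.

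It remains to check that, under this identification, the natural restriction map $H^1(\mathcal{O}_k,\mathcal{A}) \to H^1(k,A)$ is the inflation morphism $\mathrm{inf}\colon H^1(\mathrm{Gal}(k^{nr}/k), A(k^{nr})) \to H^1(\mathrm{Gal}(k^s/k), A(k^s))$. This follows from the compatibility of restriction to the generic fibre with the two Galois covers $\mathcal{O}_k^{nr}/\mathcal{O}_k$ and $k^{nr}/k$: the relevant square of spectral sequences commutes, so the composite $H^1(\mathcal{O}_k,\mathcal{A}) \cong H^1(G, A(k^{nr})) \to H^1(k,A)$ is exactly $\mathrm{inf}$. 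Since inflation in degree $1$ is injective (by the inflation--restriction exact sequence) and its image consists precisely of the classes whose restriction to $\mathrm{Gal}(k^s/k^{nr})$ vanishes, this image is the subgroup $H^1(k^{nr}/k, A(k^{nr}))$ of $H^1(k,A)$, which is the desired conclusion.

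The main obstacle I anticipate is the first step, the vanishing $H^1(\mathcal{O}_k^{nr},\mathcal{A}) = 0$, since this is where the specific structure of the Néron model and the algebraic closedness of $\kappa^s$ are essential; one must be careful that $A_0$ need not be connected, but this is harmless because over an algebraically closed field every torsor under a smooth group scheme is trivial. A secondary point requiring care is the legitimacy of the Hochschild--Serre spectral sequence for the non-torsion sheaf $\mathcal{A}$ along the pro-étale cover and the verification that the resulting map is genuinely inflation; both are standard but should be stated cleanly.
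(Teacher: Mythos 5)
Your proof is correct, and it reaches the same endpoint as the paper — the identification of $H^1(\mathcal{O}_k,\mathcal{A})$ with $H^1(\mathrm{Gal}(k^{nr}/k),A(k^{nr}))$ mapping into $H^1(k,A)$ by inflation — but by a slightly different technical route. The paper's proof is three lines: it invokes $\mathcal{A}\cong g_*A$ (Néron mapping property, as a sheaf identity on the small étale site of $\mathcal{O}_k$), then the general fact that for a henselian local base $H^1(\mathcal{O}_k,F)\cong H^1(\kappa,i^*F)$ for \emph{any} étale sheaf $F$, and finally computes the stalk $i^*g_*A = A(k^{nr})$ as a $\mathrm{Gal}(\kappa^s/\kappa)$-module. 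What you do is essentially unpack the proof of that general henselian comparison in the special case $F=\mathcal{A}$: vanishing of $H^1$ over the strict henselization plus Hochschild--Serre for the pro-étale cover $\mathcal{O}_{k^{nr}}/\mathcal{O}_k$, with the Néron property entering only through the point-evaluation $\mathcal{A}(\mathcal{O}_{k^{nr}})=A(k^{nr})$. Your version is more self-contained; the paper's is shorter because it quotes the comparison wholesale. One caveat in your step 1: you treat a class of $H^1(\mathcal{O}_{k^{nr}},\mathcal{A})$ as a \emph{scheme}-theoretic torsor, smooth over the base, but étale $H^1$ a priori classifies sheaf torsors, and representability of torsors under a non-affine group scheme such as a Néron model is not automatic (it is a theorem of Raynaud over a DVR). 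This is easily repaired without representability: a sheaf torsor trivializes on some étale cover, and any étale cover of the strictly henselian local scheme $\mathrm{Spec}\,\mathcal{O}_{k^{nr}}$ admits a section, so the torsor has a global section; equivalently, $\Gamma(\mathrm{Spec}\,\mathcal{O}_{k^{nr}},-)$ is the stalk functor at the closed geometric point, hence exact, so \emph{all} higher cohomology vanishes there. With that patch, your argument is complete.
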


\begin{proof}
Notons $g: \text{Spec} \; k \rightarrow \text{Spec} \; \mathcal{O}_k$ et $i: \text{Spec} \; \kappa \rightarrow \text{Spec} \; \mathcal{O}_k$. Comme $\mathcal{A}\cong g_*A$ (d'après la propriété universelle du modèle de Néron), il suffit de remarquer que:
$$H^1(\mathcal{O}_k,\mathcal{A}) \cong H^1(\mathcal{O}_k, g_*A) \cong H^1(\kappa,i^*g_*A) \cong H^1(k^{nr}/k,A(k^{nr})).$$
\end{proof}

\begin{proposition}\label{prelC}
Soit $\ell$ un nombre premier ne divisant pas $|F|$.
\begin{itemize}
\item[(i)] Les groupes $A(k^{nr})$ et $A^t(k^{nr})$ sont $\ell$-divisibles.
\item[(ii)] Il existe un morphisme fonctoriel injectif $$(T_{\ell}H^1(\mathcal{O}_k,\mathcal{A}^*))^D \rightarrow (\varprojlim_r H^1(k^{nr}/k,{_{\ell^r}}A^t(k^{nr})))^D.$$
\end{itemize}
\end{proposition}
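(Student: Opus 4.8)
Pour (i), je commencerais par traiter $A$, le cas de $A^t$ étant identique. La propriété universelle du modèle de Néron donne $A(k^{nr}) = \mathcal{A}(\mathcal{O}_{k^{nr}})$, et la suite exacte de réduction
$$0 \rightarrow D \rightarrow \mathcal{A}(\mathcal{O}_{k^{nr}}) \rightarrow A_0(\kappa^s) \rightarrow 0,$$
où $D$ (noyau de la réduction) est uniquement divisible puisque $\kappa$ est de caractéristique nulle, ramène le problème à la $\ell$-divisibilité de $A_0(\kappa^s)$ — ici $\kappa^s$ est algébriquement clos, étant séparablement clos en caractéristique $0$. Je dévisserais alors $A_0(\kappa^s)$ grâce à la filtration $A_0 \supseteq A_0^0 \supseteq A_0^1 = U \times T$ : sur le corps algébriquement clos $\kappa^s$, toutes les suites de points associées sont exactes (par lissité, ou parce que $H^1(\kappa^s,-)$ s'annule), et chacun des groupes $U(\kappa^s)$, $T(\kappa^s)$, $B(\kappa^s)$ est divisible (respectivement un $\mathbb{Q}$-espace vectoriel, les points d'un tore déployé, et les points d'une variété abélienne sur un corps algébriquement clos), tandis que $F(\kappa^s)$ est d'ordre premier à $\ell$, donc trivialement $\ell$-divisible. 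Comme une extension de groupes $\ell$-divisibles est $\ell$-divisible, on obtient successivement la $\ell$-divisibilité de $A_0^0(\kappa^s)$, de $A_0(\kappa^s)$, puis de $A(k^{nr})$. Pour $A^t$, le même raisonnement fonctionne pourvu que $\ell \nmid |F^*|$ ; or, en caractéristique résiduelle nulle, l'accouplement de Grothendieck met les groupes de composantes $F$ et $F^*$ en dualité parfaite, de sorte que $|F| = |F^*|$ et l'hypothèse $\ell \nmid |F|$ suffit.

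Pour (ii), le lemme \ref{cohonr} appliqué à $(A^t,\mathcal{A}^*)$ identifie $H^1(\mathcal{O}_k,\mathcal{A}^*)$ au sous-groupe $H^1(k^{nr}/k,A^t(k^{nr}))$ de $H^1(k,A^t)$ ; la cible cherchée est donc $(T_{\ell}H^1(k^{nr}/k,A^t(k^{nr})))^D$. Comme $\mathbb{Q}/\mathbb{Z}$ est injectif, le foncteur $M \mapsto M^D$ est exact, et il suffit de construire une surjection fonctorielle
$$\varprojlim_r H^1(k^{nr}/k,{_{\ell^r}}A^t(k^{nr})) \twoheadrightarrow T_{\ell}H^1(k^{nr}/k,A^t(k^{nr}))$$
avant de dualiser.

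La $\ell$-divisibilité établie en (i) rend exacte, pour chaque $r$, la suite de $\text{Gal}(k^{nr}/k)$-modules
$$0 \rightarrow {_{\ell^r}}A^t(k^{nr}) \rightarrow A^t(k^{nr}) \xrightarrow{\ell^r} A^t(k^{nr}) \rightarrow 0.$$
Puisque $\text{Gal}(k^{nr}/k) \cong \text{Gal}(\kappa^s/\kappa) \cong \hat{\mathbb{Z}}$ est de dimension cohomologique $1$, la suite exacte longue associée (dont le terme $H^2$ s'annule) fournit une surjection $H^1(k^{nr}/k,{_{\ell^r}}A^t(k^{nr})) \twoheadrightarrow {_{\ell^r}}H^1(k^{nr}/k,A^t(k^{nr}))$, compatible aux multiplications par $\ell$ lorsque $r$ varie. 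Chacun de ces groupes est fini (car ${_{\ell^r}}A^t(k^{nr})$ est fini et la cohomologie de $\hat{\mathbb{Z}}$ à coefficients finis l'est aussi), si bien que le système projectif vérifie la condition de Mittag-Leffler ; la limite projective de ces surjections est alors surjective et fournit, par définition du module de Tate, la surjection souhaitée. La fonctorialité étant manifeste à chaque étape, on conclut par dualité. Les points qui me semblent demander le plus de soin sont, en (i), la vérification de la $\ell$-divisibilité au niveau de la fibre spéciale (ainsi que le recours à l'égalité $|F|=|F^*|$), et, en (ii), la justification du passage à la limite projective, qui repose de façon cruciale sur la finitude des groupes en présence.
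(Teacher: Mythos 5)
Votre preuve est correcte et suit essentiellement la même démarche que celle de l'article : pour (i), le même dévissage de $A_0(\kappa^s)$ via la filtration du modèle de Néron, l'égalité $|F|=|F^*|$ (accouplement de Grothendieck, SGA 7, IX.11.3) permettant de traiter $A^t$ ; pour (ii), la même surjection $\varprojlim_r H^1(k^{nr}/k,{_{\ell^r}}A^t(k^{nr})) \twoheadrightarrow T_{\ell}H^1(k^{nr}/k,A^t(k^{nr}))$ déduite de la $\ell$-divisibilité, que l'on dualise après identification via le lemme \ref{cohonr}. Vos précisions supplémentaires (dimension cohomologique 1 de $\hat{\mathbb{Z}}$ pour les surjections à niveau fini, condition de Mittag-Leffler par finitude des groupes) ne font qu'expliciter ce que l'article laisse implicite.
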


\begin{proof}
\item[(i)] Montrons d'abord que $A(k^{nr})$ est $\ell$-divisible. Comme $A(k^{nr})=\mathcal{A}(\mathcal{O}_{k^{nr}})$ et comme il existe un morphisme surjectif $\mathcal{A}(k^{nr}) \rightarrow A_0(\kappa^s)$ à noyau divisible, cela revient à montrer que $A_0(\kappa^s)$ est $\ell$-divisible. En exploitant la suite exacte $0 \rightarrow A_0^0 \rightarrow A_0 \rightarrow F \rightarrow 0$ et le fait que $\ell$ ne divise pas $|F|$, on remarque alors qu'il suffit de prouver que $A_0^0(\kappa^s)$ est $\ell$-divisible. Mais cela découle immédiatement de l'exactitude de $0 \rightarrow U \times T \rightarrow A_0^0 \rightarrow B \rightarrow 0$. Donc $A(k^{nr})$ est $\ell$-divisible.\\
D'après le paragraphe IX.11.3 de \cite{SGA7}, on a $|F^*|=|F|$. On en déduit que $\ell$ ne divise pas $|F^*|$ et donc que le groupe $A^t(k^{nr})$ est $\ell$-divisible.
\item[(ii)] D'après (i), on a une suite exacte:
$$0 \rightarrow {_{\ell^r}}A^t(k^{nr}) \rightarrow A^t(k^{nr}) \rightarrow A^t(k^{nr}) \rightarrow 0.$$
Comme ${_{\ell}}H^1(k^{nr}/k,A^t(k^{nr}))$ et $H^1(k^{nr}/k,{_{\ell^r}}A^t(k^{nr}))$ sont finis, en passant à la limite projective, on obtient une surjection $\varprojlim_r H^1(k^{nr}/k,{_{\ell^r}}A^t(k^{nr}))\rightarrow T_{\ell}H^1(k^{nr}/k,A^t(k^{nr}))$. En utilisant \ref{cohonr}, cela permet de réaliser $(T_{\ell}H^1(k^{nr}/k,A^t(k^{nr})))^D = (T_{\ell}H^1(\mathcal{O}_k,\mathcal{A}^*))^D $ comme un sous-groupe de $(\varprojlim_r H^1(k^{nr}/k,{_{\ell^r}}A^t(k^{nr})))^D$.
\end{proof}

\begin{lemma}
On peut obtenir un isomorphisme $\iota_{\ell}: H^0(k^{nr}/k,H^1(k^{nr},A))\{\ell\} \rightarrow (\varprojlim_r H^1(k^{nr}/k,{_{\ell^r}}A^t(k^{nr})))^D$ par composition des isomorphismes naturels:
\begin{align*}
 H^0(k^{nr}/k,H^1(k^{nr},A))\{\ell\} & \xrightarrow{\sim} \varinjlim_r H^0(k^{nr}/k,{_{\ell^r}}H^1(k^{nr},A)) \\
&\xleftarrow{\sim} \varinjlim_r H^0(k^{nr}/k,H^1(k^{nr},{_{\ell^r}}A)) \\
&\xrightarrow{\sim} \varinjlim_r H^0(k^{nr}/k,{_{\ell^r}}A^t(k^{nr})^D)  \\
&\xrightarrow{\sim} (\varprojlim_r H^1(k^{nr}/k,{_{\ell^r}}A^t(k^{nr})))^D .
\end{align*} 
\end{lemma}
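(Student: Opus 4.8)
The plan is to verify that each of the four displayed arrows is an isomorphism — natural in $r$, and $\text{Gal}(k^{nr}/k)$-equivariant wherever a Galois action survives — so that their composite legitimately defines $\iota_{\ell}$. I would treat the four arrows as four essentially independent checks, the genuine arithmetic input being confined to the $\ell$-divisibility of $A(k^{nr})$ (for the second arrow) and to a duality over $k^{nr}$ (for the third); the first and fourth arrows are formal manipulations with colimits and Pontryagin duality.

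First I would dispose of the top two arrows. For the first, recall that $H^1(k^{nr},A)$ is a torsion $\text{Gal}(k^{nr}/k)$-module and that $H^0(k^{nr}/k,-)$, being left exact and commuting with filtered colimits of discrete modules, satisfies ${_{\ell^r}}H^0(k^{nr}/k,M) = H^0(k^{nr}/k,{_{\ell^r}}M)$; passing to the colimit over $r$ and using $M\{\ell\} = \varinjlim_r {_{\ell^r}}M$ gives the first isomorphism for free. For the second arrow, the Kummer sequence $0 \rightarrow {_{\ell^r}}A \rightarrow A \xrightarrow{\ell^r} A \rightarrow 0$ over $k^{nr}$ yields the exact sequence $A(k^{nr})/\ell^r \rightarrow H^1(k^{nr},{_{\ell^r}}A) \rightarrow {_{\ell^r}}H^1(k^{nr},A) \rightarrow 0$; since $\ell \nmid |F|$, Proposition \ref{prelC}(i) shows $A(k^{nr})$ is $\ell$-divisible, whence $A(k^{nr})/\ell^r = 0$ and the surjection $H^1(k^{nr},{_{\ell^r}}A) \rightarrow {_{\ell^r}}H^1(k^{nr},A)$ is an isomorphism for each $r$. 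Applying $H^0(k^{nr}/k,-)$ and passing to the colimit then furnishes the second arrow.

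The two remaining arrows carry the duality content. For the third, I would invoke the duality over the field $k^{nr}$ itself: its residue field $\kappa^s$ is algebraically closed of characteristic $0$, so $k^{nr}$ is a ``$0$-local'' field over the $(-1)$-local field $\kappa^s$ and has cohomological dimension $1$, and for the finite module ${_{\ell^r}}A$ one has the perfect pairing $H^1(k^{nr},{_{\ell^r}}A) \times H^0(k^{nr},\text{\underline{Hom}}({_{\ell^r}}A,\mu_{\ell^r})) \rightarrow H^1(k^{nr},\mu_{\ell^r}) \cong \mathbb{Z}/\ell^r\mathbb{Z}$. Since $\text{\underline{Hom}}({_{\ell^r}}A,\mu_{\ell^r}) = {_{\ell^r}}A^t$ (the Weil pairing, already used in this subsection) and since $\mu_{\ell^r} \subset k$ forces the action of $\text{Gal}(k^{nr}/k)$ on $H^1(k^{nr},\mu_{\ell^r})$ to be trivial — the valuation is Galois-invariant — this pairing is $\text{Gal}(k^{nr}/k)$-equivariant, hence induces a $\text{Gal}(k^{nr}/k)$-equivariant isomorphism $H^1(k^{nr},{_{\ell^r}}A) \cong ({_{\ell^r}}A^t(k^{nr}))^D$; taking $H^0(k^{nr}/k,-)$ and colimits gives the third arrow. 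For the last arrow, $G := \text{Gal}(k^{nr}/k) \cong \hat{\mathbb{Z}}$ is procyclic, so for the finite module $N := {_{\ell^r}}A^t(k^{nr})$ one has $H^1(G,N) \cong N_G$ and $H^0(G,N^D) = (N^D)^G = (N_G)^D = (H^1(G,N))^D$; taking colimits and using that Pontryagin duality converts the projective limit over $r$ into an injective limit, $\varinjlim_r (H^1(G,N))^D = (\varprojlim_r H^1(G,N))^D$, yields the fourth arrow. The main obstacle, and the step I would write out most carefully, is the third arrow: one must pin down precisely which duality is available over $k^{nr}$ — it is the bottom, $(-1)$-local initialization of the $d$-local duality recalled in the introduction, applied to $k^{nr}$ rather than to $k$ — and verify its $\text{Gal}(k^{nr}/k)$-equivariance, since this equivariance is exactly what makes the subsequent passage to invariants legitimate.
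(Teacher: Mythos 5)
Your proposal is correct and takes essentially the same approach as the paper's proof: the first arrow is formal, the second follows from the $\ell$-divisibility of $A(k^{nr})$ given by Proposition \ref{prelC}(i), and the last two are the dualities for finite modules over $\hat{\mathbb{Z}}$, applied to $\text{Gal}(k^s/k^{nr})$ and to $\text{Gal}(k^{nr}/k)$ respectively. The only difference is presentational: the paper simply cites Example 1.10 of \cite{MilADT} for these two dualities, whereas you re-derive them by hand (Kummer sequence, identification $({_{\ell^r}}A)^D \cong {_{\ell^r}}A^t$ via the Weil pairing with trivial action on $\mu_{\ell^r}$, and the coinvariants computation), including the equivariance check that the paper leaves implicit.
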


\begin{proof}
Le premier isomorphisme est évident. Le deuxième découle du fait que $A(k^{nr})$ est $\ell$-divisible. Les deux derniers sont obtenus par dualité sur $\text{Gal}(k^s/k^{nr}) \cong \text{Gal}(k^{nr}/k) \cong \hat{\mathbb{Z}}$ (voir exemple 1.10 de \cite{MilADT}).
\end{proof}

Nous sommes maintenant en mesure d'introduire la définition suivante:

\begin{definition}\label{HnrsC}
Soit $\ell$ un nombre premier ne divisant pas $|F|$. On appelle \textbf{$\ell$-groupe de cohomologie non ramifiée symétrisé de $A$} le groupe:
$$H^1_{nrs}(k,A,\ell) := (\iota_{\ell} \circ \text{Res})^{-1}((T_{\ell}H^1(\mathcal{O}_k,\mathcal{A}^*))^D) \subseteq H^1(k,A)\{\ell\}$$
où $\text{Res}: H^1(k,A) \rightarrow H^0(k^{nr}/k,H^1(k^{nr},A))$ désigne la restriction et $\iota_{\ell}$ l'isomorphisme du lemme précédent. On a alors une suite exacte:
$$0 \rightarrow H^1(\mathcal{O}_k,\mathcal{A})\{\ell\} \rightarrow H^1_{nrs}(k,A,\ell) \rightarrow (T_{\ell}H^1(\mathcal{O}_k,\mathcal{A}^*))^D \rightarrow 0.$$
\end{definition}

\begin{remarque}
\begin{itemize}
\item[$\bullet$] Le $\ell$-groupe de cohomologie non ramifiée symétrisé de $A$ est bien défini quel que soit $\ell$ lorsque $F$ est trivial, c'est-à-dire lorsque $A_0$ est connexe. 
\item[$\bullet$] La suite exacte:
$$0 \rightarrow H^1(\mathcal{O}_k,\mathcal{A})\{\ell\} \rightarrow H^1_{nrs}(k,A,\ell) \rightarrow (T_{\ell}H^1(\mathcal{O}_k,\mathcal{A}^*))^D \rightarrow 0$$
s'identifie à la suite exacte de groupes abstraits:
$$0 \rightarrow (\mathbb{Q}_{\ell}/\mathbb{Z}_{\ell})^{m(A)/2} \rightarrow (\mathbb{Q}_{\ell}/\mathbb{Z}_{\ell})^{m(A)} \rightarrow (\mathbb{Q}_{\ell}/\mathbb{Z}_{\ell})^{m(A)/2} \rightarrow 0.$$
\end{itemize}
\end{remarque}

\begin{theorem}\label{noyau}
Pour $\ell$ premier ne divisant pas $|F|$, la partie $\ell$-primaire du noyau de $H^1(k,A) \rightarrow (H^0(k,A^t)^{\wedge})^D$ est $H^1_{nrs}(k,A,\ell)$.
\end{theorem}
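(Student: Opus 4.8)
The plan is to prove the stronger statement that $N\{\ell\}$ and $H^1_{nrs}(k,A,\ell)$ are cut out inside $H^1(k,A)\{\ell\}$ by \emph{literally the same condition}, so that equality is automatic; here $N$ denotes the kernel of $H^1(k,A) \rightarrow (H^0(k,A^t)^{\wedge})^D$. As a cross-check, both are $\ell$-divisible of corank $m(A)$ — for $N$ by le théorème \ref{1-local}, and for $H^1_{nrs}(k,A,\ell)$ by the exact sequence of la définition \ref{HnrsC} — so that even a single inclusion would suffice to conclude; but matching the defining conditions gives the result outright.

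First I would make the $A^t$-side of the pairing explicit. Using the $\ell$-divisibility of $A^t(k^{nr})$ furnished by la proposition \ref{prelC}(i), the exact sequences $0 \rightarrow {_{\ell^r}}A^t(k^{nr}) \rightarrow A^t(k^{nr}) \xrightarrow{\ell^r} A^t(k^{nr}) \rightarrow 0$ give, after taking $\text{Gal}(k^{nr}/k)$-cohomology and passing to the limit over $r$,
\[
0 \rightarrow H^0(k,A^t)^{(\ell)} \rightarrow \varprojlim_r H^1(k^{nr}/k,{_{\ell^r}}A^t(k^{nr})) \rightarrow T_{\ell}H^1(\mathcal{O}_k,\mathcal{A}^*) \rightarrow 0,
\]
where $H^0(k,A^t)^{(\ell)} = \varprojlim_r A^t(k)/\ell^r$ and we have used $H^1(k^{nr}/k,A^t(k^{nr})) = H^1(\mathcal{O}_k,\mathcal{A}^*)$ (lemme \ref{cohonr}). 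Dualizing, the subgroup $(T_{\ell}H^1(\mathcal{O}_k,\mathcal{A}^*))^D$ of la proposition \ref{prelC}(ii) is \emph{exactly} the set of functionals on $\varprojlim_r H^1(k^{nr}/k,{_{\ell^r}}A^t(k^{nr}))$ vanishing on the natural copy of $H^0(k,A^t)^{(\ell)}$. Hence, by the very definition of $H^1_{nrs}(k,A,\ell)$, a class $x \in H^1(k,A)\{\ell\}$ lies in $H^1_{nrs}(k,A,\ell)$ if and only if $\iota_{\ell}(\text{Res}(x))$ kills $H^0(k,A^t)^{(\ell)}$.

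The heart of the matter is then the compatibility of the cup-product pairing with $\iota_{\ell}$. Working on $\ell^r$-torsion via $\underline{\text{Hom}}({_{\ell^r}}A,\mathbb{Q}/\mathbb{Z}(1)) = {_{\ell^r}}A^t$, the pairing becomes the cup product $H^1(k,{_{\ell^r}}A) \times H^1(k,{_{\ell^r}}A^t) \rightarrow H^2(k,\mu_{\ell^r}) \cong \mathbb{Z}/\ell^r$. I would analyze it through the degenerate Hochschild–Serre spectral sequence for $1 \rightarrow \text{Gal}(k^s/k^{nr}) \rightarrow \text{Gal}(k^s/k) \rightarrow \text{Gal}(k^{nr}/k) \rightarrow 1$. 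Since $k^{nr}$ has algebraically closed residue field of characteristic $0$, one has $H^1(k^{nr},\mu_{\ell^r}) \cong \mathbb{Z}/\ell^r$ and $H^2(k^{nr},\mu_{\ell^r})=0$, so $H^2(k,\mu_{\ell^r}) \cong H^1(\text{Gal}(k^{nr}/k),H^1(k^{nr},\mu_{\ell^r}))$. Now $\text{Res}(x)$ is the image of $x$ in the graded piece $H^0(\text{Gal}(k^{nr}/k),H^1(k^{nr},{_{\ell^r}}A))$, while the image of $\bar y \in A^t(k)/\ell^r$ in $H^1(k,{_{\ell^r}}A^t)$ lies in the bottom filtration level $H^1(\text{Gal}(k^{nr}/k),{_{\ell^r}}A^t(k^{nr}))$. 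By the standard compatibility of cup products with the Hochschild–Serre filtration, their product is computed by the $\text{Gal}(k^{nr}/k) \cong \hat{\mathbb{Z}}$-duality pairing applied to the coefficient cup product $H^1(k^{nr},{_{\ell^r}}A) \times {_{\ell^r}}A^t(k^{nr}) \rightarrow H^1(k^{nr},\mu_{\ell^r})$ over $k^{nr}$ — which is precisely the composite of local duality over $k^{nr}$ and $\hat{\mathbb{Z}}$-duality defining $\iota_{\ell}$. Thus $\langle x,y\rangle$ equals $\iota_{\ell}(\text{Res}(x))$ evaluated at the image of $y$, up to sign, and passing to the limit over $r$ these evaluations assemble into evaluation on $H^0(k,A^t)^{(\ell)}$.

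Combining the steps concludes: as an $\ell$-primary class pairs with $H^0(k,A^t)^{\wedge}$ only through its $\ell$-adic part $H^0(k,A^t)^{(\ell)}$, we get that $x \in N\{\ell\}$ if and only if $\iota_{\ell}(\text{Res}(x))$ annihilates $H^0(k,A^t)^{(\ell)}$, if and only if $x \in H^1_{nrs}(k,A,\ell)$. I expect the main obstacle to be this last compatibility: one must track carefully that the image of $\bar y$ genuinely lands in the bottom Hochschild–Serre level, and verify with the correct sign and compatibly with the transition maps in $r$ that the coefficient cup product over $k^{nr}$ coincides with the duality isomorphism (Milne, exemple 1.10) used to construct $\iota_{\ell}$.
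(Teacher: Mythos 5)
Your proposal is correct and takes essentially the same route as the paper: the paper's proof is a diagram chase through the maps $f_1,\dots,f_9$, whose key square ($f_6\circ f_3=f_8\circ f_5$) is precisely your Hochschild–Serre cup-product compatibility, and whose map $f_9$ has kernel $(T_{\ell}H^1(\mathcal{O}_k,\mathcal{A}^*))^D$ for exactly the reason you give (dualizing the projective limit of the Kummer sequences, using the $\ell$-divisibility of $A^t(k^{nr})$). The only difference is presentational: what the paper dismisses with \emph{on vérifie aisément que ce diagramme est commutatif}, you justify explicitly via the multiplicativity of the Hochschild–Serre spectral sequence and the observation that the Kummer image of $A^t(k)/\ell^r$ lands in the inflated part of $H^1(k,{_{\ell^r}}A^t)$.
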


\begin{proof}
Considérons le diagramme suivant:\\
\centerline{\xymatrix{
& \varinjlim_r H^0(k^{nr}/k,{_{\ell^r}}H^1(k^{nr},A)) & \\
H^1(k,A)\{\ell\} \ar[ru]^{Res} \ar[ddd]^{f_4} & & \varinjlim_r H^0(k^{nr}/k,H^1(k^{nr},{_{\ell^r}}A))\ar[lu]^{\cong}_{f_1}\ar[ddd]^{\cong}_{f_6}\\
& \varinjlim_r H^1(k,{_{\ell^r}}A)  \ar[ru]^{f_3} \ar@{->>}[lu]^{f_2}\ar[d]^{f_5} &\\
& (\varprojlim_rH^1(k,{_{\ell^r}}A^t))^D \ar[dl]^{f_7} \ar[dr]^{f_8} & \\
(H^0(k,A^t)^{(\ell)})^D & & (\varprojlim_r H^1(k^{nr}/k,{_{\ell^r}}A^t(k^{nr})))^D \ar[dl]^{f_9}\\
& (H^0(k^{nr}/k,A^t(k^{nr}))^{(\ell)})^D \ar@{=}[ul] &
}}
où le morphisme $f_6$ est obtenu par composition des isomorphismes $$\varinjlim_r H^0(k^{nr}/k,H^1(k^{nr},{_{\ell^r}}A)) \xrightarrow{\sim} \varinjlim_r H^0(k^{nr}/k,{_{\ell^r}}A^t(k^{nr})) \xrightarrow{\sim} \varinjlim_r (H^1(k^{nr}/k,{_{\ell^r}}A^t(k^{nr})))^D$$ provenant de la dualité pour la cohomologie du groupe profini $\hat{\mathbb{Z}}$. On vérifie aisément que ce diagramme est commutatif.\\
Soit maintenant $x \in {_{\ell^r}}H^1(k,A)$. Comme $f_2$ est surjectif, on peut relever $x$ en $z \in H^1(k,{_{\ell^r}}A)$. On remarque alors que $f_3(z)=f_1^{-1}(Res(x))$. Donc, par définition de $\iota_{\ell}$: 
\begin{align*}
f_9(\iota_{\ell}(Res(x)))&=f_9(f_6(f_1^{-1}(Res(x)))=f_9(f_6(f_3(z)))\\ &=f_9(f_8(f_5(z)))=f_7(f_5(z))=f_4(x).
\end{align*}
On en déduit que $x \in \text{Ker}(f_4)$ si, et seulement si,
$$\iota_{\ell}(Res(x)) \in \text{Ker}(f_9) = (T_{\ell}H^1(\mathcal{O}_k,\mathcal{A}^*))^D.$$
\end{proof}

\begin{remarque}
Pour $\ell$ divisant $|F|$, le noyau de $H^1(k,A) \rightarrow (H^0(k,A^t)^{\wedge})^D$ ne contient pas forcément $H^1(\mathcal{O}_k,\mathcal{A})\{\ell\} $: par exemple, si $\ell$ divise $|F(\kappa)|$ et $A$ a très mauvaise réduction, le groupe  $H^1(\mathcal{O}_k,\mathcal{A})\{\ell\} $ est non trivial alors que le morphisme $H^1(k,A) \rightarrow (H^0(k,A^t)^{\wedge})^D$ est injectif. En fait, pour $\ell$ divisant $|F|$, il semble difficile de caractériser la partie $\ell$-primaire du noyau de $H^1(k,A) \rightarrow (H^0(k,A^t)^{\wedge})^D$: en particulier, il serait intéressant de déterminer si elle contient forcément $H^1(\mathcal{O}_k,\mathcal{A})\{\ell\}_{div}$.
\end{remarque}

Pour alléger les notations dans la section suivante, nous noterons:
$$H^1_{nrs}(k,A) := \bigoplus_{\ell \wedge |F|=1} H^1_{nrs}(k,A,\ell).$$
C'est le groupe de torsion dont la partie $\ell$-primaire est $H^1_{nrs}(k,A,\ell)$ si $\ell$ ne divise pas $|F|$, triviale sinon.

\subsection{Étude globale}\label{globC((t))}

Supposons maintenant que $k=\mathbb{C}((t))$ (et donc que $d=0$). Soient $A$ une variété abélienne sur $K=k(X)$ et $A^t$ sa variété abélienne duale. Le but de ce paragraphe est d'établir un théorème de dualité à la Cassels-Tate pour $A$: plus précisément, nous voulons déterminer, sous certaines hypothèses géométriques et modulo divisibles, le dual du groupe de Tate-Shafarevich $\Sha^1(A)$.

Pour chaque $v \in X^{(1)}$, notons:
\begin{itemize}
\item[$\bullet$] $\mathcal{A}_v$ le modèle de Néron de $A$ sur $\mathcal{O}_v$,
\item[$\bullet$] $F_v$ le groupe algébrique fini des composantes connexes de la fibre spéciale de $\mathcal{A}_v$,
\item[$\bullet$] $B_v$ la variété abélienne qui apparaît comme composante de la fibre spéciale de $\mathcal{A}_v$. 
\end{itemize}
Notons aussi $U$ l'ouvert de bonne réduction de $A$, de sorte que le modèle de Néron $\mathcal{A}$ de $A$ sur $U$ est un schéma abélien. Soit $\mathcal{A}^t$ le schéma abélien dual. \\
Fixons maintenant un nombre premier $\ell$ et faisons l'hypothèse suivante:

\begin{hypol}\label{HC}
\begin{minipage}[t]{12.72cm}
pour chaque $v \in X \setminus U$, au moins l'une des deux affirmations suivantes est vérifiée:
\begin{itemize}
\item[$\bullet$] $\ell$ ne divise pas $|F_v|$,
\item[$\bullet$] $B_v$ a très mauvaise réduction.
\end{itemize}
\end{minipage}
\end{hypol}

\begin{remarque}
Étant donnée une variété abélienne $A$, l'hypothèse précédente est vérifiée pour presque tout $\ell$. Par conséquent, les résultats que nous allons montrer sont vrais pour presque tout $\ell$.
\end{remarque}

Soit $Z$ l'ensemble des $v \in X^{(1)}$ tels que $B_v$ a très mauvaise réduction. Pour chaque ouvert $V$ de $U$, on introduit les groupes suivants:
\begin{multline*}
\Sha^1_{nr}(V,A):= \text{Ker} \left(  H^1(K,A) \rightarrow \prod_{v \in X \setminus V} H^1(K_v,A) \times \prod_{v \in V^{(1)}} H^1(K_v,A)/ H^1_{nr}(K_v,A))\right) ,  
\end{multline*}
\begin{multline*}
\Sha^1_{nrs}(V,A^t):= \text{Ker} \left(  H^1(K,A^t) \rightarrow \prod_{v \in Z \setminus V} H^1(K_v,A^t) \times \prod_{v \in X \setminus (V \cup Z)} H^1(K_v,A^t)/ H^1_{nrs}(K_v,A^t) \right. \\ \times \left. \prod_{v \in V^{(1)}} H^1(K_v,A^t)/ H^1_{nr}(K_v,A^t))\right) , 
\end{multline*}
\begin{multline*}
\Sha^1_{nrs}(A^t):= \text{Ker} \left(  H^1(K,A^t) \rightarrow \prod_{v \in Z} H^1(K_v,A^t) \times \prod_{v \in X^{(1)} \setminus Z} H^1(K_v,A^t)/ H^1_{nrs}(K_v,A^t) \right).
\end{multline*}

Ici, $H^1_{nr}(K_v,A^t)$ désigne $H^1(\mathcal{O}_v,\mathcal{A}_v)=H^1(K_v^{nr}/K_v,A(K_v^{nr}))$.

\begin{remarque}
\begin{itemize}
\item[$\bullet$] L'intersection $Z \cap U$ n'est pas forcément vide.
\item[$\bullet$] La définition de $\Sha^1_{nrs}(A^t)$ est prise de telle sorte que $\Sha^1_{nrs}(A^t) = \bigcup_{V \subseteq U} \Sha^1_{nrs}(V,A^t)$.
\end{itemize}
\end{remarque}

Fixons $V$ un ouvert non vide de $U$.

\begin{lemma}\label{cofC}
\begin{itemize}
\item[(i)] Pour $r>0$, le groupe $H^r(V,\mathcal{A})$ est de torsion de type cofini.
\item[(ii)] Le groupe $H^2_c(V,\mathcal{A})$ est de torsion de type cofini.
\end{itemize}
\end{lemma}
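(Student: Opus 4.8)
Le plan est de traiter (i) d'abord, puis d'en déduire (ii) par une suite de localisation. Pour (i), comme $k=\mathbb{C}((t))$ est de caractéristique $0$, la multiplication par $n$ sur le schéma abélien $\mathcal{A}$ est étale et surjective, de sorte que l'on dispose pour chaque $n\geq 1$ d'une suite exacte de faisceaux étales $0 \to {_n}\mathcal{A} \to \mathcal{A} \xrightarrow{n} \mathcal{A} \to 0$. La suite exacte longue associée fournit, pour tout $r$, une surjection $H^r(V,{_n}\mathcal{A}) \twoheadrightarrow {_n}H^r(V,\mathcal{A})$. Il me suffira donc d'établir deux faits: que $H^r(V,{_n}\mathcal{A})$ est fini pour tous $n$ et $r$ (ce qui entraînera que $H^r(V,\mathcal{A})$ est de $n$-torsion finie pour tout $n$), et que $H^r(V,\mathcal{A})$ est de torsion dès que $r\geq 1$.

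Pour la finitude de $H^r(V,{_n}\mathcal{A})$, je remarquerais que ${_n}\mathcal{A}$ est un faisceau étale fini localement constant sur $V$. En posant $\overline{V}=V\times_k k^s$, la suite spectrale de Hochschild--Serre $H^p(k,H^q(\overline{V},{_n}\mathcal{A})) \Rightarrow H^{p+q}(V,{_n}\mathcal{A})$ ramène le problème à deux points standards: les groupes $H^q(\overline{V},{_n}\mathcal{A})$ sont finis, car $\overline{V}$ est une courbe sur le corps séparablement clos $k^s=\overline{\mathbb{C}((t))}$ et ${_n}\mathcal{A}$ est fini localement constant (finitude de la cohomologie étale des courbes à coefficients constructibles); et $k$ vérifie $\text{cd}(k)=1$ avec $H^p(k,M)$ fini pour tout module fini $M$. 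La suite spectrale n'ayant que des termes finis, $H^r(V,{_n}\mathcal{A})$ est fini.

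Le point délicat sera de montrer que $H^r(V,\mathcal{A})$ est de torsion pour $r\geq 1$. Je noterais $g:\text{Spec}\,K \to V$ l'inclusion du point générique. Comme $\mathcal{A}$ est un schéma abélien sur $V$, donc propre, l'extension des morphismes depuis un schéma normal vers un schéma propre (équivalente ici à la propriété de Néron) donne $\mathcal{A}\cong g_*A$ sur le petit site étale de $V$. La suite spectrale de Leray $H^p(V,R^q g_*A) \Rightarrow H^{p+q}(K,A)$ a alors pour termes de base $E_2^{p,0}=H^p(V,\mathcal{A})$. Or, pour $q\geq 1$, le faisceau $R^q g_*A$ est de torsion: ses fibres sont des groupes de cohomologie galoisienne $H^q(\mathcal{O}_{V,\bar{x}}^{sh}\times_V \text{Spec}\,K,A)$ en degré $\geq 1$, donc de torsion, d'où $H^p(V,R^q g_*A)$ de torsion pour $q\geq 1$. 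Les différentielles aboutissant à la ligne $q=0$ provenant de tels termes, $E_{\infty}^{r,0}$ est le quotient de $H^r(V,\mathcal{A})$ par un sous-groupe de torsion; et comme $E_{\infty}^{r,0}$ s'injecte dans $H^r(K,A)$, qui est de torsion pour $r\geq 1$ (cohomologie galoisienne en degré strictement positif), on conclut que $H^r(V,\mathcal{A})$ est extension d'un groupe de torsion par un groupe de torsion. C'est l'argument qui me paraît le plus susceptible de demander des vérifications soigneuses (identification $\mathcal{A}\cong g_*A$ et contrôle des fibres de $R^q g_*A$).

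Pour (ii), je prendrais $j:V\hookrightarrow X$ l'immersion ouverte et $Z=X\setminus V$, qui est fini; avec $i:Z\hookrightarrow X$, le triangle $j_!\mathcal{A} \to Rj_*\mathcal{A} \to i_*i^*Rj_*\mathcal{A}$ fournit une suite exacte
$$\bigoplus_{v\in Z} H^1(K_v,A) \to H^2_c(V,\mathcal{A}) \to H^2(V,\mathcal{A}).$$
Le terme de droite est de type cofini par (i). Le terme de gauche est une somme directe finie de groupes $H^1(K_v,A)$; or chaque $K_v$ est un corps $1$-local du type étudié dans le théorème \ref{1-local}, lequel montre que $H^1(K_v,A)$ est de type cofini (extension de $(H^0(K_v,A^t)^{\wedge})^D$ par une puissance finie de $\mathbb{Q}/\mathbb{Z}$). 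Ainsi $H^2_c(V,\mathcal{A})$ sera extension d'un sous-groupe de $H^2(V,\mathcal{A})$ par un quotient de $\bigoplus_{v\in Z} H^1(K_v,A)$, deux groupes de type cofini, donc sera lui-même de type cofini.
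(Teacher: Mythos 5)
Votre démonstration est correcte et suit essentiellement la même démarche que l'article : pour (i), la suite spectrale de Leray pour $g:\text{Spec}\,K \to V$ avec l'identification $\mathcal{A} \cong g_*A$ et la torsion des $R^qg_*A$ ($q\geq 1$) donne la torsion, puis la suite de Kummer $0 \to {_n}\mathcal{A} \to \mathcal{A} \to \mathcal{A} \to 0$ et la finitude de $H^r(V,{_n}\mathcal{A})$ donnent le type cofini ; pour (ii), c'est la même suite de localisation $\bigoplus_{v} H^1(K_v,A) \to H^2_c(V,\mathcal{A}) \to H^2(V,\mathcal{A})$. Vous ne faites qu'expliciter des points laissés implicites dans l'article (Hochschild--Serre pour la finitude de $H^r(V,{_n}\mathcal{A})$, recours au théorème \ref{1-local} pour le type cofini des $H^1(K_v,A)$), ce qui est tout à fait correct.
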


\begin{proof}
\begin{itemize}
\item[(i)] On note $g: \text{Spec} \; K \rightarrow V$. On sait que $\mathcal{A}$ représente le faisceau $g_*A$ sur $V$. On peut alors écrire la suite spectrale de Leray:
$$H^r(V,R^sg_*A) \Rightarrow H^{r+s}(K,A).$$
En calculant les tiges de $R^sg_*A$, on prouve aisément que, pour $s>0$, $R^sg_*A$ est un faisceau de torsion. Comme $V$ est quasi-compact, cela entraîne que $H^r(V,R^sg_*A)$ est de torsion pour $r \geq 0$ et $s >0$. De plus, $H^r(K,A)$ est de torsion pour $r>0$. La suite spectrale entraîne alors que $H^r(V,\mathcal{A})$ est bien de torsion.\\
Reste à prouver que ${_n}H^r(V,\mathcal{A})$ est fini pour chaque $n \geq 1$. La suite exacte:
$$0 \rightarrow {_n}\mathcal{A} \rightarrow \mathcal{A} \rightarrow \mathcal{A} \rightarrow 0.$$
montre que ${_n}H^r(V,\mathcal{A})$ est un quotient de $H^r(V,{_n}\mathcal{A})$. Or ce dernier est fini. Donc ${_n}H^r(V,\mathcal{A})$ est fini, et $H^r(V,\mathcal{A})$ est de torsion de type cofini.
\item[(ii)] On a une suite exacte:
$$...  \rightarrow \bigoplus_{v \in X^{(1)}\setminus V} H^1(K_v,A) \rightarrow H^2_c(V,\mathcal{A}) \rightarrow H^2(V,\mathcal{A}) \rightarrow ...$$
Comme les $H^1(K_v,A)$ sont de torsion de type cofini, grâce à (i), on conclut que $H^2_c(V,\mathcal{A})$ est de torsion de type cofini.
\end{itemize}
\end{proof}

\begin{lemma} \label{exact}
Il existe des suites exactes:
$$ 0 \rightarrow H^0(V,\mathcal{A}^t) \otimes_{\mathbb{Z}} (\mathbb{Q}/\mathbb{Z})\{\ell\}\rightarrow H^1(V,\mathcal{A}^t\{\ell\}) \rightarrow H^1(V,\mathcal{A}^t)\{\ell\} \rightarrow 0 ,$$
$$0 \rightarrow H^1_c(V,\mathcal{A})^{(\ell)}\rightarrow H^2_c(V,T_{\ell}\mathcal{A}) \rightarrow  T_{\ell}H^2_c(V,\mathcal{A}) \rightarrow 0.$$
Ici, $H^1(V,\mathcal{A}^t\{\ell\})$ et $H^2_c(V,T_{\ell}\mathcal{A})$ désignent $\varinjlim_n H^1(V,{_{\ell^n}}\mathcal{A}^t)$ et $\varprojlim_n H^2_c(V,{_{\ell^n}}\mathcal{A})$ respectivement.
\end{lemma}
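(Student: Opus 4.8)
Les deux suites proviennent de la suite de Kummer. Comme $K$ est de caractéristique $0$ et que $\mathcal{A}$ et $\mathcal{A}^t$ sont des schémas abéliens sur $V$, la multiplication par $\ell^n$ y est un morphisme étale surjectif de faisceaux, d'où une suite exacte de faisceaux étales sur $V$:
\[ 0 \rightarrow {_{\ell^n}}\mathcal{A}^t \rightarrow \mathcal{A}^t \xrightarrow{\ell^n} \mathcal{A}^t \rightarrow 0. \]
La suite exacte longue de cohomologie associée fournit, pour chaque $n$, une suite exacte courte
\[ 0 \rightarrow H^0(V,\mathcal{A}^t)/\ell^n \rightarrow H^1(V,{_{\ell^n}}\mathcal{A}^t) \rightarrow {_{\ell^n}}H^1(V,\mathcal{A}^t) \rightarrow 0, \]
et de même, en cohomologie à support compact,
\[ 0 \rightarrow H^1_c(V,\mathcal{A})/\ell^n \rightarrow H^2_c(V,{_{\ell^n}}\mathcal{A}) \rightarrow {_{\ell^n}}H^2_c(V,\mathcal{A}) \rightarrow 0. \]

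Pour la première suite, je passerais à la limite inductive sur $n$, les applications de transition étant induites par les inclusions ${_{\ell^n}}\mathcal{A}^t \hookrightarrow {_{\ell^{n+1}}}\mathcal{A}^t$. Un examen du morphisme de suites de Kummer correspondant montre que ces transitions induisent la multiplication par $\ell$ sur les termes $H^0(V,\mathcal{A}^t)/\ell^n$ et les inclusions naturelles sur les termes ${_{\ell^n}}H^1(V,\mathcal{A}^t)$. Comme la limite inductive est exacte, on obtient une suite exacte courte dont on identifie les termes: $\varinjlim_n H^0(V,\mathcal{A}^t)/\ell^n = H^0(V,\mathcal{A}^t)\otimes_{\mathbb{Z}}(\mathbb{Q}/\mathbb{Z})\{\ell\}$ (limite des $M/\ell^n$ pour les multiplications par $\ell$), $\varinjlim_n {_{\ell^n}}H^1(V,\mathcal{A}^t) = H^1(V,\mathcal{A}^t)\{\ell\}$, et $\varinjlim_n H^1(V,{_{\ell^n}}\mathcal{A}^t) = H^1(V,\mathcal{A}^t\{\ell\})$ par définition. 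C'est exactement la première suite.

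Pour la seconde, je passerais cette fois à la limite projective, les transitions étant induites par les multiplications ${_{\ell^{n+1}}}\mathcal{A} \xrightarrow{\ell} {_{\ell^n}}\mathcal{A}$. Le point-clé, et la principale difficulté, est le contrôle du terme $\varprojlim^1$. En relevant ces multiplications en un morphisme de suites de Kummer, on constate que le morphisme $H^2_c(V,{_{\ell^{n+1}}}\mathcal{A}) \rightarrow H^2_c(V,{_{\ell^n}}\mathcal{A})$ induit la projection canonique $H^1_c(V,\mathcal{A})/\ell^{n+1} \twoheadrightarrow H^1_c(V,\mathcal{A})/\ell^n$ sur les termes de gauche (et la multiplication par $\ell$ sur les ${_{\ell^n}}H^2_c$). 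Le système projectif $(H^1_c(V,\mathcal{A})/\ell^n)_n$ est donc à transitions surjectives; il vérifie la condition de Mittag-Leffler et son $\varprojlim^1$ est nul. La suite exacte longue de la limite projective se réduit alors à une suite exacte courte; en identifiant $\varprojlim_n H^1_c(V,\mathcal{A})/\ell^n = H^1_c(V,\mathcal{A})^{(\ell)}$, $\varprojlim_n {_{\ell^n}}H^2_c(V,\mathcal{A}) = T_{\ell}H^2_c(V,\mathcal{A})$ et $\varprojlim_n H^2_c(V,{_{\ell^n}}\mathcal{A}) = H^2_c(V,T_{\ell}\mathcal{A})$ par définition, on obtient la seconde suite. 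La finitude cofinie de $H^2_c(V,\mathcal{A})$ établie au lemme \ref{cofC} assure le bon comportement du membre de droite, mais l'exactitude elle-même ne repose que sur la surjectivité des transitions du système de gauche.
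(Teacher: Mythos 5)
Votre preuve est correcte et suit essentiellement la même démarche que celle de l'article: suites de Kummer pour $\mathcal{A}^t$ et $\mathcal{A}$, suites exactes courtes $0 \rightarrow H^0(V,\mathcal{A}^t)/\ell^n \rightarrow H^1(V,{_{\ell^n}}\mathcal{A}^t) \rightarrow {_{\ell^n}}H^1(V,\mathcal{A}^t) \rightarrow 0$ et $0 \rightarrow H^1_c(V,\mathcal{A})/\ell^n \rightarrow H^2_c(V,{_{\ell^n}}\mathcal{A}) \rightarrow {_{\ell^n}}H^2_c(V,\mathcal{A}) \rightarrow 0$, puis passage à la limite inductive (resp. projective). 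Vous explicitez en plus ce que l'article laisse implicite, à savoir l'identification des morphismes de transition et l'annulation du $\varprojlim^1$ du système $(H^1_c(V,\mathcal{A})/\ell^n)_n$ via la condition de Mittag-Leffler, ce qui est exactement la justification attendue.
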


\begin{proof}
\begin{itemize}
\item[$\bullet$] En utilisant la suite de Kummer, pour chaque entier naturel $r$ on dispose d'une suite exacte: $$0 \rightarrow H^0(V,\mathcal{A}^t)/\ell^r \rightarrow H^1(V,{_{\ell^r}}\mathcal{A}^t) \rightarrow {_{\ell^r}}H^1(V,\mathcal{A}^t) \rightarrow 0.$$ En prenant la limite inductive, on obtient la suite exacte:
$$ 0 \rightarrow H^0(V,\mathcal{A}^t) \otimes_{\mathbb{Z}} (\mathbb{Q}/\mathbb{Z})\{\ell\}\rightarrow H^1(V,\mathcal{A}^t\{\ell\}) \rightarrow H^1(V,\mathcal{A}^t)\{\ell\} \rightarrow 0.$$
\item[$\bullet$] Pour chaque entier naturel $r$ on dispose d'une suite exacte:
 $$0 \rightarrow H^1_c(V,\mathcal{A})/{{\ell^r}} \rightarrow H^2_c(V,{_{\ell^r}}\mathcal{A}) \rightarrow {_{\ell^r}}H^2_c(V,\mathcal{A}) \rightarrow 0.$$
Par passage à la limite projective, on dispose d'une suite exacte:
$$0 \rightarrow H^1_c(V,\mathcal{A})^{(\ell)}\rightarrow H^2_c(V,T_{\ell}\mathcal{A}) \rightarrow  T_{\ell}H^2_c(V,\mathcal{A}) \rightarrow 0.$$
\end{itemize}
\end{proof}

\begin{lemma} \label{acc}
Il existe un accouplement canonique:
$$ H^1(V,\mathcal{A}^t\{\ell\}) \times H^2_c(V,T_{\ell}\mathcal{A}) \rightarrow \mathbb{Q}/\mathbb{Z}$$
qui est non dégénéré.
\end{lemma}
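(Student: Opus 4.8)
The plan is to build the pairing one finite level at a time out of the Weil pairing and then pass to the limit. For each $n$, since $V \subseteq U$ the group scheme $\mathcal{A}$ is an abelian scheme over $V$, and the Weil pairing identifies ${_{\ell^n}}\mathcal{A}^t$ with $\underline{\text{Hom}}({_{\ell^n}}\mathcal{A},\mu_{\ell^n})$; by biduality one also has $\underline{\text{Hom}}({_{\ell^n}}\mathcal{A}^t,\mu_{\ell^n}) = {_{\ell^n}}\mathcal{A}$. Cup-producting and applying this identification together with the trace map yields a pairing
$$H^1(V,{_{\ell^n}}\mathcal{A}^t) \times H^2_c(V,{_{\ell^n}}\mathcal{A}) \rightarrow H^3_c(V,\mu_{\ell^n}) \cong \mathbb{Z}/\ell^n\mathbb{Z}.$$
Here everything lives in characteristic $0$ (both $k=\mathbb{C}((t))$ and its residue field $\mathbb{C}$ are of characteristic $0$), so $\ell$ is automatically invertible on $V$ and ${_{\ell^n}}\mathcal{A}$, ${_{\ell^n}}\mathcal{A}^t$ are locally constant constructible étale sheaves on $V$; in particular there is no restriction on the prime $\ell$.

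First I would prove perfectness at each finite level. The curve $V$ sits over the $0$-local field $k=\mathbb{C}((t))$, so it falls within the scope of the arithmetic Poincaré duality for smooth curves over $d$-local fields in the case $d=0$ (\cite{Izq1}): for a locally constant constructible sheaf $\mathcal{F}$ on $V$ of order invertible one has a perfect pairing of finite groups
$$H^r(V,\mathcal{F}) \times H^{3-r}_c(V,\underline{\text{Hom}}(\mathcal{F},\mu_{\ell^n})) \rightarrow H^3_c(V,\mu_{\ell^n}) \cong \mathbb{Z}/\ell^n\mathbb{Z}.$$
Taking $\mathcal{F}={_{\ell^n}}\mathcal{A}^t$ and $r=1$ identifies the right-hand factor with $H^2_c(V,{_{\ell^n}}\mathcal{A})$ and shows that the pairing constructed above is a perfect pairing of finite groups. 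The same reference supplies the computation of the trace group $H^3_c(V,\mu_{\ell^n})\cong\mathbb{Z}/\ell^n\mathbb{Z}$.

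The hard part is the passage to the limit. I would check that the finite-level pairings are compatible with the transition maps: on the left the inclusions ${_{\ell^n}}\mathcal{A}^t \hookrightarrow {_{\ell^{n+1}}}\mathcal{A}^t$ defining the colimit $H^1(V,\mathcal{A}^t\{\ell\})=\varinjlim_n H^1(V,{_{\ell^n}}\mathcal{A}^t)$, and on the right the multiplication-by-$\ell$ maps ${_{\ell^{n+1}}}\mathcal{A}\rightarrow {_{\ell^n}}\mathcal{A}$ defining the limit $H^2_c(V,T_{\ell}\mathcal{A})=\varprojlim_n H^2_c(V,{_{\ell^n}}\mathcal{A})$, these two families being adjoint for the pairings, with compatibility on the target realized by $\mu_{\ell^n}\hookrightarrow\mu_{\ell^{n+1}}$, i.e. $\mathbb{Z}/\ell^n\mathbb{Z}\hookrightarrow\mathbb{Z}/\ell^{n+1}\mathbb{Z}$. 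Granting this, taking the direct limit on the left and the inverse limit on the right of a compatible system of perfect pairings of finite groups produces a non-degenerate pairing
$$H^1(V,\mathcal{A}^t\{\ell\}) \times H^2_c(V,T_{\ell}\mathcal{A}) \rightarrow \varinjlim_n \mathbb{Z}/\ell^n\mathbb{Z} = \mathbb{Q}_{\ell}/\mathbb{Z}_{\ell} \subseteq \mathbb{Q}/\mathbb{Z},$$
under which $\varprojlim_n H^2_c(V,{_{\ell^n}}\mathcal{A})$ is the continuous Pontryagin dual of $\varinjlim_n H^1(V,{_{\ell^n}}\mathcal{A}^t)$ and conversely. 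Verifying the adjunction of the transition maps and that non-degeneracy survives this mixed direct/inverse limit is where the real care is needed; the cohomological-degree bookkeeping and the appeal to finite-coefficient duality are otherwise formal.
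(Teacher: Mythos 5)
Votre preuve est correcte et suit essentiellement la même démarche que celle de l'article : l'accouplement parfait de groupes finis $H^1(V,{_{\ell^n}}\mathcal{A}^t) \times H^2_c(V,{_{\ell^n}}\mathcal{A}) \rightarrow \mathbb{Z}/\ell^n\mathbb{Z}$ fourni par le théorème 2.1 de \cite{Izq1} (que vous reconstruisez explicitement via l'accouplement de Weil et le cup-produit), suivi d'un passage à la limite inductive d'un côté et projective de l'autre. Les détails que vous ajoutez — identification ${_{\ell^n}}\mathcal{A}^t \cong \underline{\text{Hom}}({_{\ell^n}}\mathcal{A},\mu_{\ell^n})$, adjonction des flèches de transition, dualité de Pontryagin entre limite inductive et limite projective — sont exactement ceux que l'article laisse implicites.
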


\begin{proof}
Le théorème 2.1 de \cite{Izq1} fournit pour chaque $r \geq 0$ un accouplement parfait de groupes finis:
$$ H^1(V,{_{\ell^r}}\mathcal{A}^t) \times H^2_c(V,{_{\ell^r}}\mathcal{A}) \rightarrow \mathbb{Q}/\mathbb{Z}.$$
Il suffit alors de passer à la limite pour obtenir un accouplement non dégénéré:
$$ H^1(V,\mathcal{A}^t\{\ell\}) \times H^2_c(V,T_{\ell}\mathcal{A}) \rightarrow \mathbb{Q}/\mathbb{Z}.$$
\end{proof}

De plus, d'après la formule de Barsotti-Weil et la nullité de $\underline{\text{Hom}}_V(\mathcal{A},\mathbb{G}_m)$, on a un accouplement canonique $\mathcal{A}^t \otimes^{\mathbf{L}} \mathcal{A} \rightarrow \mathbb{G}_m[1]$ qui induit donc un accouplement:
$$ H^1(V,\mathcal{A}^t) \times H^1_c(V,\mathcal{A}) \rightarrow H^3_c(V,\mathbb{G}_m) \cong \mathbb{Q}/\mathbb{Z}.$$
Posons maintenant: $$D^1(V,\mathcal{A}) = \text{Im}(H^1_c(V,\mathcal{A}) \rightarrow H^1(V,\mathcal{A})) = \text{Ker}(H^1(V,\mathcal{A}) \rightarrow \bigoplus_{v \in X \setminus V} H^1(K_v,A)),$$
$$D^1_{nrs}(V,\mathcal{A}^t) =  \text{Ker}\left( H^1(V,\mathcal{A}^t) \rightarrow \bigoplus_{v \in Z \setminus V} H^1(K_v,A^t) \oplus \bigoplus_{v \in X \setminus (V \cup Z)} H^1(K_v,A^t)/ H^1_{nrs}(K_v,A^t))\right).$$
 Ce sont bien sûr des groupes de torsion de type cofini.

\begin{lemma}\label{Sha}
\begin{itemize}
\item[(i)] L'application naturelle $H^1(V,\mathcal{A}) \rightarrow H^1(K,A)$ induit un isomorphisme $D^1(V,\mathcal{A}) \cong \Sha^1_{nr}(V,A)$.
\item[(ii)] L'application naturelle $H^1(V,\mathcal{A}^t) \rightarrow H^1(K,A^t)$ induit un isomorphisme $$D^1_{nrs}(V,\mathcal{A}^t) \cong \Sha^1_{nrs}(V,A).$$
\end{itemize}
\end{lemma}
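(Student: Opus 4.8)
The plan is to reduce both assertions to a single local--global comparison statement, after which everything becomes a matter of matching the defining conditions of the various kernels. The statement I would isolate is: the natural map $H^1(V,\mathcal{A}) \rightarrow H^1(K,A)$ (and, identically, $H^1(V,\mathcal{A}^t) \rightarrow H^1(K,A^t)$) is injective, and its image consists \emph{exactly} of the classes $\xi \in H^1(K,A)$ whose local restriction $\xi_v$ lies in $H^1_{nr}(K_v,A)$ for every $v \in V^{(1)}$. This is natural because, $V$ being contained in the good-reduction locus $U$, the restriction of $\mathcal{A}$ (resp. $\mathcal{A}^t$) to $V$ is an abelian scheme.

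To prove the comparison I would use that $\mathcal{A}$ represents $g_*A$, where $g : \operatorname{Spec} K \rightarrow V$ is the inclusion of the generic point (as already exploited in the proof of the lemma \ref{cofC}). The Leray spectral sequence $H^r(V,R^s g_* A) \Rightarrow H^{r+s}(K,A)$ gives the low-degree exact sequence
$$0 \rightarrow H^1(V,\mathcal{A}) \rightarrow H^1(K,A) \rightarrow H^0(V,R^1 g_* A),$$
which already yields injectivity. To identify the image I would compute the stalk of $R^1 g_* A$ at a geometric point above a closed point $v$: it is $H^1(K_v^{nr},A)$ (cohomology over the fraction field of the strict hensélisation of $\mathcal{O}_v$), and the edge map is stalkwise the composite $H^1(K,A) \rightarrow H^1(K_v,A) \rightarrow H^1(K_v^{nr},A)$. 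Since a section of a sheaf vanishes if and only if all its stalks vanish, a class $\xi$ lies in the image of $H^1(V,\mathcal{A})$ if and only if $\xi_v \mapsto 0$ in $H^1(K_v^{nr},A)$ for all $v \in V^{(1)}$; by the inflation--restriction sequence together with le lemme \ref{cohonr} (which identifies $H^1_{nr}(K_v,A) = H^1(\mathcal{O}_v,\mathcal{A}_v)$ as the kernel of $H^1(K_v,A) \rightarrow H^1(K_v^{nr},A)$), this is precisely the condition $\xi_v \in H^1_{nr}(K_v,A)$.

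With the comparison in hand, (i) is immediate. An element of $D^1(V,\mathcal{A}) = \text{Ker}\bigl(H^1(V,\mathcal{A}) \rightarrow \bigoplus_{v \in X \setminus V} H^1(K_v,A)\bigr)$ maps to a class of $H^1(K,A)$ that is unramified at every $v \in V^{(1)}$ (automatic, since it comes from $H^1(V,\mathcal{A})$) and vanishes at every $v \in X \setminus V$; these are exactly the conditions cutting out $\Sha^1_{nr}(V,A)$. Conversely, any class in $\Sha^1_{nr}(V,A)$ is unramified on $V$, hence lifts, uniquely by injectivity, to $H^1(V,\mathcal{A})$, and the vanishing conditions at $X \setminus V$ place the lift in $D^1(V,\mathcal{A})$; finiteness of $X \setminus V$ lets me freely interchange $\bigoplus$ and $\prod$. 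This gives the desired isomorphism.

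Part (ii) proceeds identically with $A^t$ and the dual abelian scheme $\mathcal{A}^t$: the only change is that at the points $v \in X \setminus (V \cup Z)$ the local condition involves $H^1_{nrs}(K_v,A^t)$ rather than plain vanishing, while at $v \in Z \setminus V$ one asks for vanishing. Tracking these through the comparison shows that the image of $D^1_{nrs}(V,\mathcal{A}^t)$ is cut out by exactly the conditions defining $\Sha^1_{nrs}(V,A^t)$, the supplementary unramified conditions at $v \in V^{(1)}$ being automatic for classes coming from $H^1(V,\mathcal{A}^t)$. The main obstacle is the comparison statement itself --- concretely, the computation of the stalks of $R^1 g_* A$ and the verification that the edge map is the local restriction --- since once that is secured the remainder is a formal bookkeeping of support conditions.
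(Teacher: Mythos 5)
Your proposal is correct and follows essentially the same route as the paper: the Leray spectral sequence for $g:\operatorname{Spec} K \rightarrow V$ together with $\mathcal{A}\cong g_*A$ gives the exact sequence $0 \rightarrow H^1(V,\mathcal{A}) \rightarrow H^1(K,A) \rightarrow H^0(V,R^1g_*A)$, the embedding of $H^0(V,R^1g_*A)$ into $\prod_{v \in V^{(1)}} H^1(K_v^{nr},A)$ (your stalkwise-vanishing argument is exactly what the paper's appeal to the first term of the Godement resolution provides), and the inflation--restriction sequences identifying $H^1_{nr}(K_v,\cdot)$ as the kernel of restriction to $K_v^{nr}$. The only cosmetic difference is that you spell out the verification that the edge map is the local restriction and the bookkeeping of support conditions, which the paper leaves implicit.
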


\begin{proof}
Soit $g: \text{Spec} \; K \rightarrow V$. La suite spectrale de Leray s'écrit:
$$H^r(V,R^sg_*A) \Rightarrow H^{r+s}(K,A).$$
Cela fournit alors une suite exacte courte:
$$0 \rightarrow H^1(V,\mathcal{A}) \rightarrow H^1(K,A) \rightarrow H^0(V,R^1g_*A).$$
Soit $P$ un ensemble de points géométriques tels que, pour tout $v \in U$, il existe un unique élément de $P$ d'image $v$. On sait alors que $R^1g_*A$ s'injecte dans $\prod_{u \in P} u_*u^*R^1g_*A$ (c'est le premier terme de la résolution de Godement). On en déduit que $H^0(U,R^1g_*A)$ s'injecte dans $\prod_{u \in P} u_*u^*R^1g_*A(U) = \prod_{v \in U} (R^1g_*A)_{\overline{v}} = \prod_{v \in U \setminus \{ \eta \}} H^1(K_v^{nr},A)$. On obtient donc une suite exacte:
$$0 \rightarrow H^1(V,\mathcal{A}) \rightarrow H^1(K,A) \rightarrow \prod_{v \in V^{(1)}} H^1(K_v^{nr},A).$$
De même, on a une suite exacte:
$$0 \rightarrow H^1(V,\mathcal{A}^t) \rightarrow H^1(K,A^t) \rightarrow \prod_{v \in V^{(1)}} H^1(K_v^{nr},A^t).$$
Le lemme découle alors aisément des deux suites précédentes et des suites d'inflation-restriction:
\begin{gather*}
0 \rightarrow H^1_{nr}(K_v,A) \rightarrow H^1(K_v,A) \rightarrow H^1(K_v^{nr},A),\\
0 \rightarrow H^1_{nr}(K_v,A) \rightarrow H^1(K_v,A^t) \rightarrow H^1(K_v^{nr},A^t),
\end{gather*}
pour $v \in V^{(1)}$.
\end{proof}

Afin d'établir un théorème de dualité pour les groupes de Tate-Shafarevich, il convient donc d'établir un théorème de dualité pour les groupes $D^1(U,\mathcal{A})$ et $D^1_{nrs}(V,\mathcal{A}^t)$:

\begin{proposition}\label{d1}
Il existe un accouplement canonique:
$$ \overline{D^1_{nrs}(V,\mathcal{A}^t)}\{\ell\} \times \overline{D^1(V,\mathcal{A})}\{\ell\} \rightarrow  \mathbb{Q}/\mathbb{Z}$$
qui est non dégénéré.
\end{proposition}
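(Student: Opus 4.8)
The plan is to obtain the pairing by restricting the global Barsotti--Weil pairing $H^1(V,\mathcal{A}^t)\times H^1_c(V,\mathcal{A})\to H^3_c(V,\mathbb{G}_m)\cong\mathbb{Q}/\mathbb{Z}$ introduced just above the statement, and to feed its non-dégénérescence from the perfect $\ell$-adic pairing of Lemme \ref{acc}. Concretely, given $a\in D^1(V,\mathcal{A})$ I would pick a lift $\tilde a\in H^1_c(V,\mathcal{A})$ (possible since $D^1(V,\mathcal{A})=\mathrm{Im}(H^1_c(V,\mathcal{A})\to H^1(V,\mathcal{A}))$) and, for $b\in D^1_{nrs}(V,\mathcal{A}^t)$, set $\langle a,b\rangle:=\langle b,\tilde a\rangle$, then restrict to $\ell$-primary parts. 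All groups involved are de torsion de type cofini by Lemme \ref{cofC}, so both sides become finite after quotienting by their maximal divisible subgroups.

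The first step is the independence of the lift. Two lifts differ by an element of $\ker(H^1_c(V,\mathcal{A})\to H^1(V,\mathcal{A}))=\mathrm{Im}\bigl(\bigoplus_{v\in X\setminus V}H^0(K_v,A)\xrightarrow{\partial}H^1_c(V,\mathcal{A})\bigr)$, and the compatibility of the Barsotti--Weil pairing with the local pairings $H^1(K_v,A^t)\times H^0(K_v,A)\to\mathrm{Br}(K_v)\cong\mathbb{Q}/\mathbb{Z}$ (through the agreement of the trace $H^3_c(V,\mathbb{G}_m)\cong\mathbb{Q}/\mathbb{Z}$ with the sum of local invariants) gives $\langle b,\partial c\rangle=\sum_{v\in X\setminus V}\langle\mathrm{res}_v b,c_v\rangle_v$. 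For $b\in D^1_{nrs}(V,\mathcal{A}^t)$ every term vanishes: at $v\in Z\setminus V$ one has $\mathrm{res}_v b=0$ by definition, while at $v\in X\setminus(V\cup Z)$ one has $\ell\nmid|F_v|$ (by the standing hypothesis $(H)_\ell$ of \ref{HC} if $v$ is a place of bad reduction, since $v\notin Z$ means $B_v$ n'a pas très mauvaise réduction, and trivially otherwise), so $\mathrm{res}_v b\in H^1_{nrs}(K_v,A^t,\ell)$ is the left kernel of the local pairing by Théorème \ref{noyau} and hence annihilates $H^0(K_v,A)$. Thus the pairing descends to $D^1_{nrs}(V,\mathcal{A}^t)\{\ell\}\times D^1(V,\mathcal{A})\{\ell\}$; since each factor is torsion it annihilates the divisible subgroup of the other, so it descends further to the quotients by the maximal divisible subgroups.

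For the non-dégénérescence I would first refine Lemme \ref{acc}: passing to the limit over the perfect finite pairings and splicing in the two exact sequences of Lemme \ref{exact} identifies $H^2_c(V,T_\ell\mathcal{A})$ with $(H^1(V,\mathcal{A}^t\{\ell\}))^D$ and produces a pairing $H^1(V,\mathcal{A}^t)\{\ell\}\times H^1_c(V,\mathcal{A})^{(\ell)}\to\mathbb{Q}/\mathbb{Z}$ that is non-dégénéré modulo divisibles and refines the Barsotti--Weil pairing. The kernel on the $D^1_{nrs}$ side is then easy: if $b\in D^1_{nrs}(V,\mathcal{A}^t)\{\ell\}$ pairs trivially with all of $D^1(V,\mathcal{A})\{\ell\}$, then, because $D^1(V,\mathcal{A})$ is the \emph{full} image of $H^1_c(V,\mathcal{A})$ and $b$ already annihilates $\ker(H^1_c\to H^1)$, the homomorphism $\langle b,-\rangle$ vanishes on $H^1_c(V,\mathcal{A})$, hence on $H^1_c(V,\mathcal{A})^{(\ell)}$ (as $b$ is torsion); non-dégénérescence modulo divisibles forces $b$ to be divisible, i.e. trivial in $\overline{D^1_{nrs}(V,\mathcal{A}^t)}\{\ell\}$.

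The main obstacle is the opposite kernel, where the asymmetry of the construction bites: one must show that $a\in D^1(V,\mathcal{A})\{\ell\}$ orthogonal to the whole subgroup $D^1_{nrs}(V,\mathcal{A}^t)\{\ell\}$ is divisible. Orthogonality to this proper subgroup is strictly weaker than orthogonality to all of $H^1(V,\mathcal{A}^t)\{\ell\}$, so perfectness cannot be invoked directly; instead I would compute the annihilator of the local-conditions subgroup $D^1_{nrs}(V,\mathcal{A}^t)$ by a Poitou--Tate type argument. This amounts to arranging the localization sequences for $\mathcal{A}$ and $\mathcal{A}^t$ into a commutative ladder that is dual under the global and local pairings, and checking place by place, via local duality (Théorème \ref{noyau} and Corollaire \ref{cor local}), that the condition defining $D^1_{nrs}$ at $v$ (lying in $H^1_{nrs}$, respectively vanishing at the places of $Z$) is exactly orthogonal to the condition defining $D^1(V,\mathcal{A})$ (vanishing in $H^1(K_v,A)$ for $v\in X\setminus V$). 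Matching these orthogonal complements, together with the finiteness provided by Lemme \ref{cofC}, should force the annihilator of $D^1_{nrs}(V,\mathcal{A}^t)$ to meet $D^1(V,\mathcal{A})$ only in its divisible subgroup, yielding $a\in(D^1(V,\mathcal{A})\{\ell\})_{div}$. Keeping track of the completions and divisible subgroups uniformly throughout this diagram chase is the delicate technical point.
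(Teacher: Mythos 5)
Your construction of the pairing and the independence-of-lift argument are correct, and they are exactly what the paper encodes as the commutativity of its second diagram; so far the two proofs coincide. The genuine gap is at the step you yourself flag as the main obstacle, the kernel on the $D^1(V,\mathcal{A})$ side, and your plan for it cannot work as stated. Matching local orthogonal complements "place by place" only tells you \emph{which} global classes satisfy the conditions defining $D^1_{nrs}(V,\mathcal{A}^t)$; it cannot produce the \emph{existence} of enough such classes, and existence is exactly what is at stake: one must show that every character of $D^1(V,\mathcal{A})^{(\ell)}\cong\overline{D^1(V,\mathcal{A})}\{\ell\}$ is realized by some element of $D^1_{nrs}(V,\mathcal{A}^t)\{\ell\}$. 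The input that does this is strictly stronger than the "non-dégénérescence modulo divisibles" you extract from Lemme \ref{acc}: since Lemme \ref{acc} is a duality \emph{parfaite} between a discrete torsion group and a profinite group (limit of perfect finite pairings), splicing it with the two suites exactes of Lemme \ref{exact} shows that the map $H^1(V,\mathcal{A}^t)\{\ell\} \rightarrow (H^1_c(V,\mathcal{A})^{(\ell)})^D$ is \emph{surjective}, with kernel isomorphic to a quotient of the divisible group $(T_{\ell}H^2_c(V,\mathcal{A}))^D$. Given this, the argument is a diagram chase: a character $\phi$ of $D^1(V,\mathcal{A})^{(\ell)}$ pulls back to $(H^1_c(V,\mathcal{A})^{(\ell)})^D$, lifts by surjectivity to a class $x \in H^1(V,\mathcal{A}^t)\{\ell\}$, and since $\phi$ kills the images of the $H^0(K_v,A)^{(\ell)}$, local duality (Théorème \ref{noyau} at places where $\ell \nmid |F_v|$, Corollaire \ref{cor local}(iii) at places of $Z$, under (H \ref{HC})$_\ell$) forces $x \in D^1_{nrs}(V,\mathcal{A}^t)\{\ell\}$. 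Hence $D^1_{nrs}(V,\mathcal{A}^t)\{\ell\} \rightarrow (D^1(V,\mathcal{A})^{(\ell)})^D$ is onto, which is precisely the missing non-degeneracy since the target is the dual of a finite group. Without this surjectivity, finiteness and orthogonality give at best the injection $\overline{D^1_{nrs}(V,\mathcal{A}^t)}\{\ell\} \hookrightarrow (\overline{D^1(V,\mathcal{A})}\{\ell\})^D$, and trying to get the reverse injection by the same symmetry is circular; "should force" hides the entire point.

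There is also a smaller, fixable slip in your treatment of the $D^1_{nrs}$-side kernel: non-degeneracy modulo divisibles gives that $b$ lies in the maximal divisible subgroup of the \emph{ambient} group $H^1(V,\mathcal{A}^t)\{\ell\}$, whereas triviality in $\overline{D^1_{nrs}(V,\mathcal{A}^t)}\{\ell\}$ requires $b \in (D^1_{nrs}(V,\mathcal{A}^t)\{\ell\})_{div}$, divisibility \emph{inside the subgroup} (think of $\mathbb{Z}/\ell \subset \mathbb{Q}_{\ell}/\mathbb{Z}_{\ell}$). The repair uses the same local input once more: the kernel of $H^1(V,\mathcal{A}^t)\{\ell\} \rightarrow (H^1_c(V,\mathcal{A})^{(\ell)})^D$ is itself a divisible group that is entirely \emph{contained} in $D^1_{nrs}(V,\mathcal{A}^t)\{\ell\}$ (its elements kill the images of the $H^0(K_v,A)$, so their restrictions lie in the local kernels, which are $H^1_{nrs}(K_v,A^t)\{\ell\}$, resp.\ $0$ at the places of $Z$, again by Théorème \ref{noyau} and Corollaire \ref{cor local}); a divisible subgroup of $D^1_{nrs}(V,\mathcal{A}^t)\{\ell\}$ then lies in its maximal divisible subgroup. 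This containment is the same statement as the injectivity of the right vertical map in the paper's ladder, so the ingredient is available to you — but it must be invoked here, not only in the well-definedness step.
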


Il convient d'établir préalablement le lemme suivant:

\begin{lemma}\label{7aux}
La suite:
$$ \bigoplus_{v \in X^{(1)} \setminus V} H^0(K_v,A)^{(\ell)} \rightarrow H^1_c(V,\mathcal{A})^{(\ell)} \rightarrow  D^1(V,\mathcal{A})^{(\ell)} \rightarrow 0$$
est exacte.
\end{lemma}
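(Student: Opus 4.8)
Le plan est de déduire la suite voulue de la suite exacte analogue \emph{non complétée}, puis de vérifier que la complétion $\ell$-adique en préserve l'exactitude grâce à des arguments de finitude. Je partirais de la suite exacte de localisation reliant la cohomologie à support compact, la cohomologie de $V$ et les termes locaux, déjà employée dans la preuve du lemme \ref{cofC}. En degré $1$, celle-ci fournit
$$\bigoplus_{v \in X^{(1)} \setminus V} H^0(K_v,A) \rightarrow H^1_c(V,\mathcal{A}) \rightarrow H^1(V,\mathcal{A}) \rightarrow \bigoplus_{v \in X^{(1)} \setminus V} H^1(K_v,A).$$
Comme, par définition, $D^1(V,\mathcal{A}) = \text{Im}(H^1_c(V,\mathcal{A}) \rightarrow H^1(V,\mathcal{A})) = \text{Ker}(H^1(V,\mathcal{A}) \rightarrow \bigoplus_{v} H^1(K_v,A))$, le morphisme $H^1_c(V,\mathcal{A}) \rightarrow D^1(V,\mathcal{A})$ est surjectif et son noyau coïncide avec l'image de $\bigoplus_v H^0(K_v,A)$. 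On obtient donc aussitôt l'exactitude de la suite (non complétée)
$$\bigoplus_{v \in X^{(1)} \setminus V} H^0(K_v,A) \rightarrow H^1_c(V,\mathcal{A}) \rightarrow D^1(V,\mathcal{A}) \rightarrow 0,$$
que je noterai $(\star)$. On remarquera que la somme directe est finie, puisque $X^{(1)} \setminus V$ l'est, de sorte que la complétion $\ell$-adique commute à cette somme.

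Il reste alors à montrer que l'application du foncteur de complétion $\ell$-adique à $(\star)$ préserve l'exactitude. La clef est d'établir que, pour tout $n \geq 1$, les réductions modulo $\ell^n$ des trois groupes figurant dans $(\star)$ sont \emph{finies}. Pour $D^1(V,\mathcal{A})$, cela résulte du fait, déjà signalé, que c'est un groupe de torsion de type cofini (dont le quotient par $\ell^n$ est la réduction de la seule partie finie de sa composante $\ell$-primaire). Pour $\bigoplus_v H^0(K_v,A)$, la suite de Kummer $0 \rightarrow A(K_v)/\ell^n \rightarrow H^1(K_v,{_{\ell^n}}A) \rightarrow {_{\ell^n}}H^1(K_v,A) \rightarrow 0$ et la finitude de la cohomologie galoisienne des modules finis sur le corps $1$-local $K_v$ entraînent la finitude de $A(K_v)/\ell^n$. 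Pour $H^1_c(V,\mathcal{A})$, la suite exacte $0 \rightarrow H^1_c(V,\mathcal{A})/\ell^n \rightarrow H^2_c(V,{_{\ell^n}}\mathcal{A}) \rightarrow {_{\ell^n}}H^2_c(V,\mathcal{A}) \rightarrow 0$ de la preuve du lemme \ref{exact}, conjuguée à la finitude de $H^2_c(V,{_{\ell^n}}\mathcal{A})$ (qui entre dans un accouplement parfait de groupes finis au lemme \ref{acc}, d'après le théorème 2.1 de \cite{Izq1}), donne la finitude de $H^1_c(V,\mathcal{A})/\ell^n$.

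La réduction modulo $\ell^n$ étant exacte à droite, on tire de $(\star)$ une suite exacte de groupes finis pour chaque $n$. Je la décomposerais en deux suites exactes courtes à l'aide de l'image intermédiaire $B_n := \text{Im}\bigl(\bigl(\bigoplus_v H^0(K_v,A)\bigr)/\ell^n \rightarrow H^1_c(V,\mathcal{A})/\ell^n\bigr)$, ce qui fournit les suites exactes courtes de systèmes projectifs $0 \rightarrow B_n \rightarrow H^1_c(V,\mathcal{A})/\ell^n \rightarrow D^1(V,\mathcal{A})/\ell^n \rightarrow 0$ et $0 \rightarrow D_n \rightarrow \bigl(\bigoplus_v H^0(K_v,A)\bigr)/\ell^n \rightarrow B_n \rightarrow 0$. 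Comme tout système projectif de groupes finis vérifie la condition de Mittag-Leffler, les $\varprojlim^1$ correspondants sont nuls, et le passage à la limite projective préserve l'exactitude de ces deux suites courtes. En recollant les suites obtenues sur les limites, on récupère exactement l'exactitude de $(\star)^{(\ell)}$, soit l'énoncé du lemme.

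La principale difficulté réside dans l'étape de finitude, en particulier celle des quotients $H^1_c(V,\mathcal{A})/\ell^n$ et $A(K_v)/\ell^n$: c'est elle qui garantit la propriété de Mittag-Leffler et donc l'annulation des $\varprojlim^1$. Une fois cette finitude acquise, le reste n'est qu'une chasse aux diagrammes sur des limites projectives exactes de systèmes de groupes finis, sans obstacle véritable.
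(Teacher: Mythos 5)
Your proof is correct and follows essentially the same route as the paper's: obtain the uncompleted exact sequence $\bigoplus_v H^0(K_v,A) \rightarrow H^1_c(V,\mathcal{A}) \rightarrow D^1(V,\mathcal{A}) \rightarrow 0$ from the localization sequence and the definition of $D^1$, reduce modulo $\ell^r$ by right exactness, and pass to the projective limit. The only difference is that you spell out why the limit preserves exactness (finiteness of the mod-$\ell^n$ quotients, hence Mittag--Leffler and vanishing of the $\varprojlim^1$ terms), a verification the paper leaves implicit.
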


\begin{proof} 
Nous disposons d'une suite exacte:
$$ \bigoplus_{v \in X^{(1)} \setminus V} H^0(K_v,A) \rightarrow H^1_c(U,\mathcal{A}) \rightarrow  D^1(U,\mathcal{A}) \rightarrow 0,$$
d'où des suites exactes pour tout $r$:
$$ \bigoplus_{v \in X^{(1)} \setminus V} H^0(K_v,A)/{{\ell^r}} \rightarrow H^1_c(U,\mathcal{A})/\ell^r \rightarrow  D^1(U,\mathcal{A})/\ell^r \rightarrow 0.$$
En passant à la limite projective on obtient l'exactitude de:
$$ \bigoplus_{v \in X^{(1)} \setminus V} H^0(K_v,A)^{(\ell)} \rightarrow H^1_c(V,\mathcal{A})^{(\ell)} \rightarrow  D^1(V,\mathcal{A})^{(\ell)} \rightarrow 0.$$
\end{proof}

\begin{proof} \textit{(De la proposition \ref{d1})}
\begin{itemize}
\item[$\bullet$] Rappelons que, d'après le lemme \ref{acc}, nous disposons d'un accouplement non dégénéré:
$$ H^1(V,\mathcal{A}^t\{\ell\}) \times H^2_c(V,T_{\ell}\mathcal{A}) \rightarrow \mathbb{Q}/\mathbb{Z},$$
d'où un isomorphisme $H^1(V,\mathcal{A}^t\{\ell\}) \rightarrow H^2_c(V,T_{\ell}\mathcal{A})^*$. On dispose aussi d'un accouplement:
$$H^1(V,\mathcal{A}^t)  \times H^1_c(V,\mathcal{A}) \rightarrow \mathbb{Q}/\mathbb{Z}$$
qui induit pour chaque entier naturel $n$ un accouplement:
$${_{\ell^n}}H^1(V,\mathcal{A}^t)  \times H^1_c(V,\mathcal{A})/\ell^n \rightarrow \mathbb{Q}/\mathbb{Z}$$
d'où un accouplement obtenu par passage à la limite:
$$H^1(V,\mathcal{A}^t)\{\ell\}  \times H^1_c(V,\mathcal{A})^{(\ell)} \rightarrow \mathbb{Q}/\mathbb{Z}.$$
Ainsi on obtient un diagramme commutatif à lignes exactes (que l'on appellera diagramme (1)):\\
\centerline{\xymatrix{
 0 \ar[r]& H^0(V,\mathcal{A}^t) \otimes_{\mathbb{Z}} (\mathbb{Q}/\mathbb{Z})\{\ell\} \ar[r]\ar[d]& H^1(V,\mathcal{A}^t\{\ell\}) \ar[r]\ar[d]^{\cong}& H^1(V,\mathcal{A}^t)\{\ell\} \ar[r]\ar[d]& 0\\
 0 \ar[r]& (T_{\ell}H^2_c(V,\mathcal{A}))^D\ar[r]& (H^2_c(V,T_{\ell}\mathcal{A}))^D \ar[r]&  (H^1_c(V,\mathcal{A})^{(\ell)})^D \ar[r]& 0
}} 

De plus, nous disposons aussi d'un autre diagramme commutatif à lignes exactes (que l'on appellera diagramme (2)):\\
\centerline{\xymatrix{
0 \ar[r] & D^1_{nrs}(V,\mathcal{A}^t)\{\ell\}  \ar[d]\ar[r] & H^1(V,\mathcal{A}^t)\{\ell\} \ar[d] \ar[r] & W \ar[d] \\
0 \ar[r] & (D^1(V,\mathcal{A})^{(\ell)})^D   \ar[r] & (H^1_c(V,\mathcal{A})^{(\ell)})^D  \ar[r] & \bigoplus_{v \in X \setminus V} (H^0(K_v,A)^{(\ell)})^D, 
}}
où $W = \bigoplus_{v \in Z \setminus V} H^1(K_v,A^t)\{\ell\} \oplus \bigoplus_{v \in X \setminus (V \cup Z)} H^1(K_v,A^t)\{\ell\}/ H^1_{nrs}(K_v,A^t)\{\ell\}$.
La flèche verticale de droite est un isomorphisme d'après le corollaire \ref{cor local}, le théorème \ref{noyau} et l'hypothèse (H \ref{HC})$_{\ell}$.
\item[$\bullet$] Montrons que $\text{Ker}(D^1_{nrs}(V,\mathcal{A}^t)\{\ell\} \rightarrow (D^1(V,\mathcal{A})^{(\ell)})^D)$ est divisible. En utilisant les diagrammes (1) et (2) et le lemme du serpent, on obtient:
\begin{align*}
\text{Ker}(D^1_{nrs}(V,\mathcal{A}^t)\{\ell\} \rightarrow &(D^1(V,\mathcal{A})^{(\ell)})^D) \cong \text{Ker}( H^1(V,\mathcal{A}^t)\{\ell\} \rightarrow (H^1_c(V,\mathcal{A})^{(\ell)})^D)\\& \cong \text{Coker}(H^0(V,\mathcal{A}^t) \otimes_{\mathbb{Z}} (\mathbb{Q}/\mathbb{Z})\{\ell\} \rightarrow (T_{\ell}H^2_c(V,\mathcal{A}))^D).
\end{align*}
Or le groupe $(T_{\ell}H^2_c(V,\mathcal{A}))^D$ est divisible (puisque $T_{\ell}H^2_c(V,\mathcal{A})$ est un $\mathbb{Z}_{\ell}$-module de type fini sans torsion), et il en est donc de même de $\text{Ker}(D^1_{nrs}(V,\mathcal{A}^t)\{\ell\} \rightarrow (D^1(V,\mathcal{A})^{(\ell)})^D)$.
\item[$\bullet$] Remarquons maintenant que $D^1(V,\mathcal{A})$ est de torsion de type cofini. Cela entraîne que le morphisme naturel $D^1(V,\mathcal{A})\{\ell\} \rightarrow D^1(V,\mathcal{A})^{(\ell)}$ induit un isomorphisme $\overline{D^1(V,\mathcal{A})}\{\ell\} \cong D^1(V,\mathcal{A})^{(\ell)}$. Ce groupe étant fini, le noyau de $D^1_{nrs}(V,\mathcal{A}^t)\{\ell\} \rightarrow (D^1(V,\mathcal{A})^{(\ell)})^D$ est $(D^1_{nrs}(V,\mathcal{A}^t)\{\ell\})_{\text{div}}$, et on a bien un accouplement non dégénéré:
$$ \overline{D^1_{nrs}(V,\mathcal{A}^t)}\{\ell\} \times \overline{D^1(V,\mathcal{A})}\{\ell\} \rightarrow  \mathbb{Q}/\mathbb{Z}$$
\end{itemize}
\end{proof}

Nous sommes maintenant en mesure d'établir le théorème suivant:

\begin{theorem} \label{C((t))(X)} On rappelle que $k=\mathbb{C}((t))$, que $K=k(X)$ est le corps des fonctions de la courbe $X$ et que $V$ est un ouvert non vide de $X$ On suppose (H \ref{HC})$_{\ell}$. Alors il existe un accouplement non dégénéré de groupes de torsion:
$$\overline{\Sha^1_{nrs}(V,A^t)}\{\ell\} \times \overline{\Sha^1_{nr}(V,A)}\{\ell\} \rightarrow \mathbb{Q}/\mathbb{Z}.$$
De plus, $\Sha^1_{nrs}(V,A^t)$ et $\Sha^1_{nr}(V,A)$ sont de torsion de type cofini.
\end{theorem}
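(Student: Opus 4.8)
The plan is to observe that essentially all of the substantive work has already been carried out in the preparatory lemmas, so that the theorem follows by a transport of structure. First I would invoke Lemma \ref{Sha}, which provides abstract group isomorphisms $\Sha^1_{nr}(V,A) \cong D^1(V,\mathcal{A})$ and $\Sha^1_{nrs}(V,A^t) \cong D^1_{nrs}(V,\mathcal{A}^t)$, induced respectively by the natural maps $H^1(V,\mathcal{A}) \rightarrow H^1(K,A)$ and $H^1(V,\mathcal{A}^t) \rightarrow H^1(K,A^t)$. Since passing to the $\ell$-primary part and then quotienting by the maximal divisible subgroup are functorial operations, these isomorphisms restrict to isomorphisms $\overline{\Sha^1_{nr}(V,A)}\{\ell\} \cong \overline{D^1(V,\mathcal{A})}\{\ell\}$ and $\overline{\Sha^1_{nrs}(V,A^t)}\{\ell\} \cong \overline{D^1_{nrs}(V,\mathcal{A}^t)}\{\ell\}$.

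Next I would appeal to Proposition \ref{d1}, which — precisely under the hypothesis (H \ref{HC})$_{\ell}$ — furnishes a canonical non-dégénéré pairing
$$\overline{D^1_{nrs}(V,\mathcal{A}^t)}\{\ell\} \times \overline{D^1(V,\mathcal{A})}\{\ell\} \rightarrow \mathbb{Q}/\mathbb{Z}.$$
Transporting this pairing along the two isomorphisms of the previous paragraph yields at once the desired pairing
$$\overline{\Sha^1_{nrs}(V,A^t)}\{\ell\} \times \overline{\Sha^1_{nr}(V,A)}\{\ell\} \rightarrow \mathbb{Q}/\mathbb{Z},$$
and non-degeneracy is preserved since one is merely precomposing and postcomposing a perfect pairing with group isomorphisms on each factor.

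It then remains to verify the cofinite-type assertions. Here I would again use Lemma \ref{Sha} to identify $\Sha^1_{nr}(V,A)$ with the subgroup $D^1(V,\mathcal{A}) \subseteq H^1(V,\mathcal{A})$ and $\Sha^1_{nrs}(V,A^t)$ with $D^1_{nrs}(V,\mathcal{A}^t) \subseteq H^1(V,\mathcal{A}^t)$. By Lemma \ref{cofC}(i) the groups $H^1(V,\mathcal{A})$ and $H^1(V,\mathcal{A}^t)$ are of torsion de type cofini; and since any subgroup $B'$ of a cofinite-type torsion group $B$ is again of cofinite type — its $n$-torsion ${}_nB'$ being contained in the finite group ${}_nB$ — both groups are of torsion de type cofini, as claimed. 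That they are torsion at all is clear, since they are subgroups of $H^1(K,A)$ and $H^1(K,A^t)$.

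I do not expect any genuine obstacle at this stage: the analytic heart of the argument — the local duality of Théorème \ref{noyau} and Corollaire \ref{cor local}, the compact-support duality of Lemme \ref{acc}, and the snake-lemma diagram chase controlling the divisible part — is entirely absorbed into Proposition \ref{d1}. The only point requiring a modicum of care is bookkeeping: one must check that the abstract isomorphisms of Lemme \ref{Sha} are compatible with the formation of $\overline{(-)}\{\ell\}$, which is automatic by functoriality, and that the typographical discrepancy in the statement of Lemme \ref{Sha} (where the map $H^1(V,\mathcal{A}^t) \rightarrow H^1(K,A^t)$ indeed targets $\Sha^1_{nrs}(V,A^t)$) is read correctly. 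The passage to the genuinely global statement of the introduction will subsequently be obtained by letting $V$ shrink along the directed system of open subsets of $U$ and summing over the admissible primes $\ell$, but that refinement lies beyond the present theorem.
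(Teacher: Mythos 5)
Your proposal is correct and takes essentially the same approach as the paper, whose entire proof reads: ``Cela découle immédiatement de la proposition \ref{d1} et du lemme \ref{Sha}.'' You merely make explicit what the paper leaves implicit — the transport of the non-degenerate pairing along the isomorphisms of Lemma \ref{Sha}, and the cofinite-type claim via Lemma \ref{cofC}(i) together with the fact that a subgroup of a torsion group of cofinite type is again of cofinite type — including the correct reading of the typo in Lemma \ref{Sha}(ii).
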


\begin{proof}
Cela découle immédiatement de la proposition \ref{d1} et du lemme \ref{Sha}.
\end{proof}

\begin{corollary}\label{cornrs}
On rappelle que $k=\mathbb{C}((t))$ et que $K=k(X)$ est le corps des fonctions de la courbe $X$. On suppose (H \ref{HC})$_{\ell}$. Alors il existe un accouplement non dégénéré de groupes finis:
$$\overline{\Sha^1_{nrs}(A^t)}\{\ell\} \times \overline{\Sha^1(A)}\{\ell\} \rightarrow \mathbb{Q}/\mathbb{Z}.$$
\end{corollary}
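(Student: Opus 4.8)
The plan is to deduce the corollary from Theorem \ref{C((t))(X)} by passing to the limit over the nonempty open subsets $V \subseteq U$, ordered by reverse inclusion. This is a filtered system, and the key point is that the two families of groups in Theorem \ref{C((t))(X)} converge to the global Tate--Shafarevich groups: as $V$ shrinks, one imposes more local triviality conditions on the $A$-side and fewer on the $A^t$-side, so that $\Sha^1_{nr}(V,A)$ decreases while $\Sha^1_{nrs}(V,A^t)$ increases. The corollary should then be obtained by transporting the perfect pairing of Theorem \ref{C((t))(X)} across suitable limit isomorphisms.

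First I would record the two limit identities. On the $A^t$-side, the remark preceding the lemmas already gives $\Sha^1_{nrs}(A^t) = \bigcup_{V \subseteq U} \Sha^1_{nrs}(V,A^t)$, hence $\Sha^1_{nrs}(A^t)\{\ell\} = \varinjlim_V \Sha^1_{nrs}(V,A^t)\{\ell\}$. On the $A$-side, one always has $\Sha^1(A) \subseteq \Sha^1_{nr}(V,A)$, since a class trivial at every place is a fortiori unramified on $V$; conversely, a class lying in every $\Sha^1_{nr}(V,A)$ is trivial at each $v \in X^{(1)}$, as one sees by choosing $V$ avoiding $v$ (recall that $U$ has infinitely many closed points, so $U \setminus \{v\}$ is still a nonempty open). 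Thus $\Sha^1(A) = \bigcap_{V \subseteq U} \Sha^1_{nr}(V,A) = \varprojlim_V \Sha^1_{nr}(V,A)$, and the same holds for $\ell$-primary parts. In particular $\Sha^1(A)\{\ell\}$, being a subgroup of the cofinite-type group $\Sha^1_{nr}(V,A)\{\ell\}$, is itself of cofinite type, so $\overline{\Sha^1(A)}\{\ell\}$ is finite.

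The heart of the argument is to show that $\overline{(\cdot)}\{\ell\}$ commutes with these limits, i.e. that the inverse system of finite groups $\overline{\Sha^1_{nr}(V,A)}\{\ell\}$ and the direct system $\overline{\Sha^1_{nrs}(V,A^t)}\{\ell\}$ stabilize, with limits $\overline{\Sha^1(A)}\{\ell\}$ and $\overline{\Sha^1_{nrs}(A^t)}\{\ell\}$ respectively. The crucial input is that for a place $v$ of good reduction the group $H^1_{nr}(K_v,A) = H^1(\mathcal{O}_v,\mathcal{A}_v)$ is \emph{divisible}: indeed, by Lemma \ref{cohonr} it is isomorphic to $H^1(\kappa(v),A_0)$ for the abelian variety $A_0$ over $\kappa(v) \cong \mathbb{C}((s))$ reducing $A$ at $v$, and this is a power of $\mathbb{Q}/\mathbb{Z}$ by the théorème de Ogg. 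Comparing $\Sha^1(A)\{\ell\}$ with $\Sha^1_{nr}(V,A)\{\ell\}$ through the exact sequence
$$0 \to \Sha^1(A)\{\ell\} \to \Sha^1_{nr}(V,A)\{\ell\} \to \prod_{v \in V^{(1)}} H^1_{nr}(K_v,A)\{\ell\},$$
whose right-hand term is divisible, shows that shrinking $V$ modifies $\Sha^1_{nr}(V,A)\{\ell\}$ only by $\ell$-divisible contributions. Since the coranks of the cofinite-type groups $\Sha^1_{nr}(V,A)\{\ell\}$ are non-increasing and bounded below as $V$ shrinks, they stabilize; one then checks that for $V$ small enough the inclusion $\Sha^1(A)\{\ell\} \hookrightarrow \Sha^1_{nr}(V,A)\{\ell\}$ induces an isomorphism $\overline{\Sha^1(A)}\{\ell\} \xrightarrow{\sim} \overline{\Sha^1_{nr}(V,A)}\{\ell\}$. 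Dually, using that the finite-level pairings force $|\overline{\Sha^1_{nrs}(V,A^t)}\{\ell\}| = |\overline{\Sha^1_{nr}(V,A)}\{\ell\}|$, the inclusions induce an isomorphism $\overline{\Sha^1_{nrs}(V,A^t)}\{\ell\} \xrightarrow{\sim} \overline{\Sha^1_{nrs}(A^t)}\{\ell\}$ for $V$ small, which in particular shows the latter group is finite.

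Granting this stabilization, the corollary follows at once: fixing a sufficiently small nonempty open $V \subseteq U$, Theorem \ref{C((t))(X)} furnishes a non-degenerate pairing $\overline{\Sha^1_{nrs}(V,A^t)}\{\ell\} \times \overline{\Sha^1_{nr}(V,A)}\{\ell\} \to \mathbb{Q}/\mathbb{Z}$ of finite groups, which transports along the two isomorphisms above into the desired perfect pairing $\overline{\Sha^1_{nrs}(A^t)}\{\ell\} \times \overline{\Sha^1(A)}\{\ell\} \to \mathbb{Q}/\mathbb{Z}$; the compatibility of the finite-level pairings with the transition maps (they are adjoint with respect to the inclusions) ensures this limit pairing is canonical. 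I expect the main obstacle to be exactly the stabilization step, namely verifying that $\overline{(\cdot)}\{\ell\}$ commutes with the inverse and direct limits: taking the maximal divisible subgroup does not commute with filtered limits in general, and one must leverage the divisibility of $H^1_{nr}(K_v,A)\{\ell\}$ at the good places together with a Mittag-Leffler / corank-stabilization argument to get around this.
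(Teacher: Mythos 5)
Your skeleton coincides with the paper's (shrink $V$, stabilize, then transport the pairing of Theorem \ref{C((t))(X)}), and your treatment of the $A^t$-side is the paper's as well: compatibility of the pairings makes the inclusions induce injections on the barred groups, equality of cardinalities (from non-degeneracy on both sides) upgrades them to isomorphisms, and one passes to the direct limit using $\Sha^1_{nrs}(A^t)\{\ell\}=\bigcup_V \Sha^1_{nrs}(V,A^t)\{\ell\}$. The genuine gap is in the stabilization on the $A$-side, which you yourself single out as the main obstacle. First, the inference that, since $\Sha^1_{nr}(V,A)\{\ell\}/\Sha^1(A)\{\ell\}$ injects into the divisible group $\prod_{v\in V^{(1)}}H^1_{nr}(K_v,A)\{\ell\}$, shrinking $V$ changes $\Sha^1_{nr}(V,A)\{\ell\}$ only by divisible contributions, is false: a subgroup of a divisible group need not be divisible (think of $\mathbb{Z}/\ell\subset\mathbb{Q}_{\ell}/\mathbb{Z}_{\ell}$), so this exact sequence gives no control on the finite quotients $\overline{\Sha^1_{nr}(V,A)}\{\ell\}$; the divisibility of $H^1_{nr}(K_v,A)$ at good places (which is correct, via Ogg) is a red herring that a correct proof never needs. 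Second, corank stabilization alone does not yield the isomorphism $\overline{\Sha^1(A)}\{\ell\}\xrightarrow{\sim}\overline{\Sha^1_{nr}(V,A)}\{\ell\}$: it controls divisible parts, not finite parts, and the sentence ``one then checks'' is exactly where the content of the proof lies.

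What is missing is the group-theoretic input the paper invokes, namely Lemma 3.7 of \cite{HS1}: every decreasing chain of subgroups of an $\ell$-primary torsion group of cofinite type is stationary. Applied to the decreasing filtered family $\Sha^1_{nr}(V,A)\{\ell\}$ of subgroups of $\Sha^1_{nr}(U,A)\{\ell\}$, it gives a nonempty open $V_0$ with $\Sha^1_{nr}(V,A)\{\ell\}=\Sha^1_{nr}(V_0,A)\{\ell\}$ for every nonempty open $V\subseteq V_0$; removing one closed point at a time then yields the equality of groups $\Sha^1_{nr}(V_0,A)\{\ell\}=\Sha^1(A)\{\ell\}$ on the nose, not merely modulo divisible subgroups, after which your transport argument is immediate. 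Your corank idea can in fact be repaired so as to reprove this lemma in the case at hand: once the coranks have stabilized, the divisible parts all coincide (a decreasing chain of divisible subgroups of constant finite corank is constant, a divisible subgroup being a direct summand), so the quotients $\overline{\Sha^1_{nr}(V,A)}\{\ell\}$ become a decreasing chain of finite subgroups of a fixed finite group, which is stationary, and hence the groups $\Sha^1_{nr}(V,A)\{\ell\}$ themselves stabilize and equal their intersection $\Sha^1(A)\{\ell\}$. But some argument of this kind must be supplied; as written, the stabilization step is not proved.
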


\begin{proof}
Pour $V \subseteq V'$ deux ouverts de $U$, on remarque que $\Sha^1_{nr}(V,A)\{\ell\}$ et $ \Sha^1_{nr}(V',A)\{\ell\}$ sont des sous-groupes du groupe de torsion de type cofini $\Sha^1_{nr}(U,A)\{\ell\}$ tels que $\Sha^1_{nr}(V,A)\{\ell\} \subseteq \Sha^1_{nr}(V',A)\{\ell\}$. Comme toute suite décroissante de sous-groupes d'un groupe de torsion de type cofini $\ell$-primaire est stationnaire (Lemme 3.7 de \cite{HS1}), on en déduit qu'il existe un ouvert non vide $V_0$ de $U$ tel que, pour tout ouvert non vide $V$ de $V_0$, on a $\Sha^1_{nr}(V,A)\{\ell\} = \Sha^1_{nr}(V_0,A)\{\ell\}$. Cela implique que $\Sha^1_{nr}(V_0,A)\{\ell\}=\Sha^1(A)\{\ell\}$.\\
Par ailleurs, on remarque que, pour $V \subseteq V'$ deux ouverts non vides de $V_0$, on a un diagramme commutatif:\\
\centerline{\xymatrix{
0 \ar[r] & \Sha^1_{nrs}(V',A^t)\{\ell\}_{div} \ar[r]\ar@{^{(}->}[d] & \Sha^1_{nrs}(V',A^t)\{\ell\} \ar[r] \ar@{^{(}->}[d] & \overline{\Sha^1_{nrs}(V',A^t)}\{\ell\} \ar[d]^{\cong} \ar[r] & 0\\
0 \ar[r] & \Sha^1_{nrs}(V,A^t)\{\ell\}_{div} \ar[r] & \Sha^1_{nrs}(V,A^t)\{\ell\} \ar[r]  & \overline{\Sha^1_{nrs}(V,A^t)}\{\ell\}  \ar[r] & 0
}}
Comme $\Sha^1_{nrs}(A^t)\{\ell\} = \bigcup_{V \subseteq V_0} \Sha^1_{nrs}(V,A^t)\{\ell\}$, en passant à la limite inductive, on obtient que l'injection naturelle $\Sha^1_{nrs}(V_0,A^t)\{\ell\} \hookrightarrow \Sha^1_{nrs}(A^t)\{\ell\}$ induit un isomorphisme $\overline{\Sha^1_{nrs}(V_0,A^t)}\{\ell\} \xrightarrow{\sim} \overline{\Sha^1_{nrs}(A^t)}\{\ell\}$. Par conséquent, d'après le théorème \ref{C((t))(X)}, il existe un accouplement non dégénéré de groupes finis:
$$\overline{\Sha^1_{nrs}(A^t)}\{\ell\} \times \overline{\Sha^1(A)}\{\ell\} \rightarrow \mathbb{Q}/\mathbb{Z}.$$
\end{proof}

On peut aussi obtenir un énoncé symétrique en $A$ et $A^t$:\

\begin{corollary}\label{corglob}
On rappelle que $k=\mathbb{C}((t))$ et que $K=k(X)$ est le corps des fonctions de la courbe $X$. On suppose (H \ref{HC})$_{\ell}$ et on note $i: \Sha^1(A) \hookrightarrow \Sha^1_{nrs}(A)$ (resp. $i^t: \Sha^1(A^t) \hookrightarrow \Sha^1_{nrs}(A^t)$) l'injection canonique. Alors il existe un accouplement non dégénéré de groupes finis:
$$\Sha^1(A^t)\{\ell\}/(i^t)^{-1}(\Sha^1_{nrs}(A^t)\{\ell\}_{div}) \times \Sha^1(A)\{\ell\}/i^{-1}(\Sha^1_{nrs}(A)\{\ell\}_{div}) \rightarrow \mathbb{Q}/\mathbb{Z}.$$
\end{corollary}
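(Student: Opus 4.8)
L'idée est de déduire l'énoncé du corollaire \ref{cornrs} appliqué à la fois à $A$ et à sa duale $A^t$. Appliqué à $A$, ce corollaire fournit un accouplement non dégénéré
$$\overline{\Sha^1_{nrs}(A^t)}\{\ell\} \times \overline{\Sha^1(A)}\{\ell\} \rightarrow \mathbb{Q}/\mathbb{Z}, \quad (\star)$$
et, appliqué à $A^t$ (en utilisant $A^{tt}=A$), il fournit un accouplement non dégénéré
$$\overline{\Sha^1_{nrs}(A)}\{\ell\} \times \overline{\Sha^1(A^t)}\{\ell\} \rightarrow \mathbb{Q}/\mathbb{Z}. \quad (\star\star)$$
Comme $i: \Sha^1(A) \hookrightarrow \Sha^1_{nrs}(A)$ est injectif, l'image de $\Sha^1(A)\{\ell\}$ dans $\overline{\Sha^1_{nrs}(A)}\{\ell\}$ est précisément $\Sha^1(A)\{\ell\}/i^{-1}(\Sha^1_{nrs}(A)\{\ell\}_{div})$, c'est-à-dire le facteur de droite de l'accouplement cherché; de même, l'image de $\Sha^1(A^t)\{\ell\}$ dans $\overline{\Sha^1_{nrs}(A^t)}\{\ell\}$ est le facteur de gauche $\Sha^1(A^t)\{\ell\}/(i^t)^{-1}(\Sha^1_{nrs}(A^t)\{\ell\}_{div})$.

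Je commencerais par construire l'accouplement du corollaire \ref{corglob} en restreignant $(\star)$ le long de l'application naturelle $\Sha^1(A^t)\{\ell\} \rightarrow \overline{\Sha^1_{nrs}(A^t)}\{\ell\}$ (à savoir $i^t$ suivi du passage au quotient par le sous-groupe divisible maximal). On obtient ainsi un accouplement $\Sha^1(A^t)\{\ell\} \times \overline{\Sha^1(A)}\{\ell\} \rightarrow \mathbb{Q}/\mathbb{Z}$. Puisque $(\star)$ est non dégénéré et que l'application de réduction $\Sha^1(A)\{\ell\} \rightarrow \overline{\Sha^1(A)}\{\ell\}$ est surjective, le noyau à gauche de cet accouplement restreint est exactement le noyau de $\Sha^1(A^t)\{\ell\} \rightarrow \overline{\Sha^1_{nrs}(A^t)}\{\ell\}$, c'est-à-dire $(i^t)^{-1}(\Sha^1_{nrs}(A^t)\{\ell\}_{div})$, comme voulu à gauche.

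Le c\oe ur de l'argument est l'identification du noyau à droite. Pour cela, j'établirais l'antisymétrie reliant $(\star)$ et $(\star\star)$: pour $a^t \in \Sha^1(A^t)\{\ell\}$ et $a \in \Sha^1(A)\{\ell\}$, on devrait avoir
$$\langle \overline{i^t a^t}, \overline a\rangle_{(\star)} = \pm \langle \overline{ia}, \overline{a^t}\rangle_{(\star\star)}.$$
Cela provient de la construction de l'accouplement de la proposition \ref{d1}: les deux accouplements proviennent tous deux, au niveau d'un ouvert $V$, du cup-produit $H^1(V,\mathcal{A}^t) \times H^1_c(V,\mathcal{A}) \rightarrow H^3_c(V,\mathbb{G}_m) \cong \mathbb{Q}/\mathbb{Z}$ induit par l'accouplement de Barsotti-Weil $\mathcal{A}^t \otimes^{\mathbf{L}} \mathcal{A} \rightarrow \mathbb{G}_m[1]$, et les versions $(\star)$ et $(\star\star)$ ne diffèrent que par l'échange des deux facteurs. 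La commutativité du cup-produit et la symétrie de l'accouplement de Barsotti-Weil (jointe à la bidualité $A^{tt}=A$) donneraient alors l'identité ci-dessus, au signe près, lequel est sans incidence sur le calcul des noyaux. Admettant ceci, le noyau à droite de l'accouplement restreint est constitué des $\overline a$ tels que $\langle \overline{ia}, \overline{a^t}\rangle_{(\star\star)} = 0$ pour tout $a^t$; comme $\Sha^1(A^t)\{\ell\} \rightarrow \overline{\Sha^1(A^t)}\{\ell\}$ est surjective et $(\star\star)$ non dégénéré, ceci force $\overline{ia} = 0$ dans $\overline{\Sha^1_{nrs}(A)}\{\ell\}$, c'est-à-dire $a \in i^{-1}(\Sha^1_{nrs}(A)\{\ell\}_{div})$.

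En combinant ces calculs, l'accouplement restreint se factorise en un accouplement non dégénéré
$$\Sha^1(A^t)\{\ell\}/(i^t)^{-1}(\Sha^1_{nrs}(A^t)\{\ell\}_{div}) \times \Sha^1(A)\{\ell\}/i^{-1}(\Sha^1_{nrs}(A)\{\ell\}_{div}) \rightarrow \mathbb{Q}/\mathbb{Z},$$
ce qui est l'énoncé voulu (la finitude des deux quotients résulte de ce qu'ils sont des sous-quotients des groupes finis du corollaire \ref{cornrs}). Le principal obstacle que j'anticipe est la compatibilité d'antisymétrie entre $(\star)$ et $(\star\star)$: elle est formellement plausible mais demande de dérouler soigneusement les définitions de la proposition \ref{d1} et du lemme \ref{acc}, et de vérifier que l'échange de $A$ et $A^t$ dans la construction par cup-produit n'introduit qu'un signe inoffensif.
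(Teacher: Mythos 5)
Votre proposition est correcte et suit essentiellement la même voie que la preuve du papier : celui-ci définit lui aussi les deux accouplements restreints $\text{CT}$ et $\text{CT}^t$ à partir du corollaire \ref{cornrs} appliqué à $A$ et à $A^t$, ramène tout à montrer qu'ils coïncident, puis — exactement comme vous l'anticipez comme « principal obstacle » — descend à un ouvert $V$ convenable (avec $D^1(V,\mathcal{A})\{\ell\}=\Sha^1(A)\{\ell\}$ et $D^1(V,\mathcal{A}^t)\{\ell\}=\Sha^1(A^t)\{\ell\}$) et établit un lemme en catégorie dérivée exprimant que l'accouplement $H^r_c(V,\mathcal{A}) \times H^s(V,\mathcal{A}^t) \to H^{r+s}_c(V,\mathcal{A}\otimes^{\mathbf{L}}\mathcal{A}^t)$ est compatible au transfert du support compact d'un facteur à l'autre. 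La seule différence est cosmétique : le papier obtient l'égalité exacte $\text{CT}=\text{CT}^t$ (sans signe), ce qui, comme vous le notez vous-même, est sans incidence sur le calcul des noyaux.
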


\begin{proof}
Il suffit de montrer que le diagramme:\\
\centerline{\xymatrix{
\overline{\Sha^1_{nrs}(A^t)}\{\ell\} \ar@{}[r]|{\times}& \overline{\Sha^1(A)}\{\ell\}\ar[d]^i \ar[r]& \mathbb{Q}/\mathbb{Z}\ar@{=}[d]\\
\overline{\Sha^1(A^t)}\{\ell\}\ar[u]^{i^t} \ar@{}[r]|{\times}& \overline{\Sha^1_{nrs}(A)}\{\ell\} \ar[r]& \mathbb{Q}/\mathbb{Z}
}}
commute. On définit des accouplements $\text{CT}$ et $\text{CT}^t$ par les diagrammes suivants:\\
\centerline{\xymatrix{
\text{CT}: & \overline{\Sha^1(A^t)}\{\ell\} \ar@{}[r]|{\times}& \overline{\Sha^1(A)}\{\ell\}\ar[d]^i \ar[r]& \mathbb{Q}/\mathbb{Z}\ar@{=}[d]\\
&\overline{\Sha^1(A^t)}\{\ell\}\ar@{=}[u] \ar@{}[r]|{\times}& \overline{\Sha^1_{nrs}(A)}\{\ell\} \ar[r]& \mathbb{Q}/\mathbb{Z},
}}
\centerline{\xymatrix{
& \overline{\Sha^1_{nrs}(A^t)}\{\ell\} \ar@{}[r]|{\times}& \overline{\Sha^1(A)}\{\ell\}\ar@{=}[d] \ar[r]& \mathbb{Q}/\mathbb{Z}\ar@{=}[d]\\
\text{CT}^t: &\overline{\Sha^1(A^t)}\{\ell\}\ar[u]^{i^t} \ar@{}[r]|{\times}& \overline{\Sha^1(A)}\{\ell\} \ar[r]& \mathbb{Q}/\mathbb{Z}.
}}
Pour établir le corollaire, il suffit de montrer que $\text{CT}$ et $\text{CT}^t$ coïncident. En procédant comme dans \ref{cornrs}, on choisit un ouvert $V$ de $U$ tel que $D^1(V,\mathcal{A})\{\ell\}=\Sha^1(A)\{\ell\}$ et $D^1(V,\mathcal{A}^t)\{\ell\}=\Sha^1(A^t)\{\ell\}$. Puis en procédant comme pour \ref{d1}, on a des diagrammes commutatifs:\\
\centerline{\xymatrix{
0 \ar[r] & D^1(V,\mathcal{A})\{\ell\}  \ar[d]^j\ar[r] & H^1(V,\mathcal{A})\{\ell\} \ar[d] \ar[r] & \bigoplus_{v \in X \setminus V} H^1(K_v,A)\{\ell\} \ar[d] \\
0 \ar[r] & (D^1(V,\mathcal{A}^t)^{(\ell)})^D   \ar[r] & (H^1_c(V,\mathcal{A}^t)^{(\ell)})^D  \ar[r] & \bigoplus_{v \in X \setminus V} (H^0(K_v,A^t)^{(\ell)})^D, 
}}
\centerline{\xymatrix{
0 \ar[r] & D^1(V,\mathcal{A}^t)\{\ell\}  \ar[d]^{j^t}\ar[r] & H^1(V,\mathcal{A}^t)\{\ell\} \ar[d] \ar[r] & \bigoplus_{v \in X \setminus V} H^1(K_v,A^t)\{\ell\} \ar[d] \\
0 \ar[r] & (D^1(V,\mathcal{A})^{(\ell)})^D   \ar[r] & (H^1_c(V,\mathcal{A})^{(\ell)})^D  \ar[r] & \bigoplus_{v \in X \setminus V} (H^0(K_v,A)^{(\ell)})^D. 
}}
On vérifie alors aisément que $\text{CT}$ est induit par $j$ et que $\text{CT}^t$ est induit par $j^t$. Il suffit donc d'établir le lemme qui suit.
\end{proof}

\begin{lemma}
Soient $r,s \geq 0$. On a un diagramme commutatif:
\begin{equation}
\begin{gathered}
\xymatrix{
H^r_c(V,\mathcal{A}) \ar[d]\ar@{}[r]|{\times} & H^s(V,\mathcal{A}^t)\ar[r] & H^{r+s}_c(V,\mathcal{A}\otimes^{{\mathbf{L}}} \mathcal{A}^t) \ar@{=}[d]  \\
H^r(V,\mathcal{A})  \ar@{}[r]|{\times} & H^s_c(V,\mathcal{A}^t)\ar[u] \ar[r] & H^{r+s}_c(V,\mathcal{A}\otimes^{{\mathbf{L}}} \mathcal{A}^t).
}
\end{gathered}\tag{$\ast$}\label{*}
\end{equation}
\end{lemma}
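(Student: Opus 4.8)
\textit{(De l'énoncé ($\ast$))}
The plan is to deduce the commutativity of $(\ast)$ from a single cup-product pairing on $X$ together with the projection formula for the open immersion $j \colon V \hookrightarrow X$. Recall that, by definition, $H^r_c(V,\mathcal{F}) = H^r(X,j_!\mathcal{F})$ while $H^r(V,\mathcal{F}) = H^r(X,Rj_*\mathcal{F})$, and that the natural map $H^r_c(V,\mathcal{F}) \to H^r(V,\mathcal{F})$ is the one induced on cohomology by the canonical morphism $\theta_{\mathcal{F}} \colon j_!\mathcal{F} \to Rj_*\mathcal{F}$ (adjoint to the identity $j^*j_!\mathcal{F} = \mathcal{F} = j^*Rj_*\mathcal{F}$, so that $j^*\theta_{\mathcal{F}} = \mathrm{id}_{\mathcal{F}}$). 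Thus the two vertical arrows of $(\ast)$ are induced by $\theta_{\mathcal{A}}$ and $\theta_{\mathcal{A}^t}$, and both horizontal pairings are instances of the cup product $H^r(X,\mathcal{M}) \times H^s(X,\mathcal{N}) \to H^{r+s}(X,\mathcal{M}\otimes^{\mathbf{L}}\mathcal{N})$ on the small étale site of $X$, composed with a projection-formula identification of the target.

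First I would record the projection formula: since $j$ is an open immersion, there is a natural isomorphism $j_!\mathcal{F} \otimes^{\mathbf{L}} \mathcal{G} \cong j_!(\mathcal{F} \otimes^{\mathbf{L}} j^*\mathcal{G})$, functorial in $\mathcal{F}$ and $\mathcal{G}$. Applying it with $\mathcal{F}=\mathcal{A}$, $\mathcal{G} = Rj_*\mathcal{A}^t$ (and using $j^*Rj_* = \mathrm{id}$) identifies the target of the top pairing with $H^{r+s}(X,j_!(\mathcal{A}\otimes^{\mathbf{L}}\mathcal{A}^t)) = H^{r+s}_c(V,\mathcal{A}\otimes^{\mathbf{L}}\mathcal{A}^t)$; applying it with $\mathcal{F} = \mathcal{A}^t$, $\mathcal{G} = Rj_*\mathcal{A}$ does the same for the bottom pairing; and applying it with $\mathcal{F} = \mathcal{A}$, $\mathcal{G} = j_!\mathcal{A}^t$ yields the monoidal isomorphism $j_!\mathcal{A} \otimes^{\mathbf{L}} j_!\mathcal{A}^t \cong j_!(\mathcal{A}\otimes^{\mathbf{L}}\mathcal{A}^t)$, which I use to define a \emph{fully compact} pairing $H^r_c(V,\mathcal{A}) \times H^s_c(V,\mathcal{A}^t) \to H^{r+s}_c(V,\mathcal{A}\otimes^{\mathbf{L}}\mathcal{A}^t)$ directly from the cup product on $X$.

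The key step is then to show that both pairings of $(\ast)$, evaluated on classes coming from $H_c$, factor through this fully compact pairing. Fix $a \in H^r_c(V,\mathcal{A}) = H^r(X,j_!\mathcal{A})$ and $b \in H^s_c(V,\mathcal{A}^t) = H^s(X,j_!\mathcal{A}^t)$, and write $a \smile b \in H^{r+s}(X, j_!\mathcal{A}\otimes^{\mathbf{L}} j_!\mathcal{A}^t)$ for their cup product. By naturality of the cup product with respect to a morphism of the second (resp. first) argument, the image of $(a,\theta_{\mathcal{A}^t}(b))$ under the top pairing is $(\mathrm{id}\otimes\theta_{\mathcal{A}^t})_*(a\smile b)$, and the image of $(\theta_{\mathcal{A}}(a),b)$ under the bottom pairing is $(\theta_{\mathcal{A}}\otimes\mathrm{id})_*(a\smile b)$, each followed by the relevant projection-formula identification. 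Hence the commutativity of $(\ast)$ reduces to the equality, in the derived category $D(X)$ of étale sheaves on $X$, of the two composite morphisms
\begin{gather*}
j_!\mathcal{A} \otimes^{\mathbf{L}} j_!\mathcal{A}^t \xrightarrow{\mathrm{id}\otimes\theta_{\mathcal{A}^t}} j_!\mathcal{A} \otimes^{\mathbf{L}} Rj_*\mathcal{A}^t \xrightarrow{\sim} j_!(\mathcal{A}\otimes^{\mathbf{L}}\mathcal{A}^t), \\
j_!\mathcal{A} \otimes^{\mathbf{L}} j_!\mathcal{A}^t \xrightarrow{\theta_{\mathcal{A}}\otimes\mathrm{id}} Rj_*\mathcal{A} \otimes^{\mathbf{L}} j_!\mathcal{A}^t \xrightarrow{\sim} j_!(\mathcal{A}\otimes^{\mathbf{L}}\mathcal{A}^t),
\end{gather*}
the second arrows being the projection-formula isomorphisms.

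I expect this last identity to be the main obstacle, since it is a coherence statement for the projection formula rather than a computation. I would settle it by observing that both source and target of these composites lie in the essential image of the fully faithful functor $j_!$, so a morphism between them is determined by its image under $j^*$; applying $j^*$ turns $\theta_{\mathcal{A}}$ and $\theta_{\mathcal{A}^t}$ into identities (as $j^*\theta = \mathrm{id}$) and turns each projection-formula isomorphism into the identity of $\mathcal{A}\otimes^{\mathbf{L}}\mathcal{A}^t$, so that both composites restrict to the identity and therefore coincide with the canonical monoidal isomorphism of the second paragraph. Concretely this is a diagram chase with the units and counits of the adjunctions $(j_!,j^*)$ and $(j^*,Rj_*)$; no input specific to $\mathcal{A}$ or $\mathcal{A}^t$ enters, so the statement holds for all $r,s \geq 0$ and for the abstract $\otimes^{\mathbf{L}}$-pairing, the $\mathbb{G}_m[1]$-valued version being recovered afterwards by functoriality.
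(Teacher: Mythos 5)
Your proposal is correct and takes essentially the same route as the paper: both proofs interpret the compact-support classes through $j_!$ in the derived category $D(X)$, identify the common target via the projection formula $j_!\mathcal{A}\otimes^{\mathbf{L}}j_!\mathcal{A}^t \cong j_!(\mathcal{A}\otimes^{\mathbf{L}}\mathcal{A}^t)$, and reduce the commutativity of $(\ast)$ to a purely formal coherence statement in $D(X)$. The only difference is one of presentation: the paper works elementwise (viewing classes as morphisms $\mathbb{Z}\to j_!\mathcal{A}[r]$, $\mathbb{Z}\to j_!\mathcal{A}^t[s]$) and declares the resulting diagram \emph{évidente}, whereas you isolate the class-independent equality of the two composites $j_!\mathcal{A}\otimes^{\mathbf{L}}j_!\mathcal{A}^t \rightarrow j_!(\mathcal{A}\otimes^{\mathbf{L}}\mathcal{A}^t)$ and actually justify it, by noting that both sides lie in the essential image of the fully faithful functor $j_!$ and restrict to the identity under $j^*$ --- a welcome precision of the step the paper leaves implicit.
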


\begin{proof}
On note $j: V \rightarrow X$ l'immersion ouverte et on fait les identifications suivantes:
\begin{gather*}
H^r_c(V,\mathcal{A}) = \text{Hom}_{D(X)}(\mathbb{Z},j_!\mathcal{A}[r]), \;\;\;\;\; H^s_c(V,\mathcal{A}^t) = \text{Hom}_{D(X)}(\mathbb{Z},j_!\mathcal{A}^t[s]),\\
H^r(V,\mathcal{A}) = \text{Hom}_{D(V)}(\mathbb{Z},\mathcal{A}[r]), \;\;\;\;\; H^s(V,\mathcal{A}^t) = \text{Hom}_{D(V)}(\mathbb{Z},\mathcal{A}^t[s]),\\
H^{r+s}_c(V,\mathcal{A}\otimes^{\mathbf{L}} \mathcal{A}^t)=\text{Hom}_{D(X)}(\mathbb{Z},j_!(\mathcal{A}\otimes^{{\mathbf{L}}} \mathcal{A}^t)[r+s]),
\end{gather*}
où $D(U)$ et $D(X)$ désignent les catégories dérivées de faisceaux étales sur $U$ et sur $X$ respectivement. La commutativité de (\ref{*}) revient alors à montrer que, si $\alpha \in \text{Hom}_{D(X)}(\mathbb{Z},j_!\mathcal{A}[r])$ et $\beta \in  \text{Hom}_{D(X)}(\mathbb{Z},j_!\mathcal{A}^t[s])$, alors le diagramme suivant commute dans $D(X)$:\\
\centerline{\xymatrix{
\mathbb{Z} \ar[r]^-{\alpha}\ar[d]^{\beta} & j_!\mathcal{A}[r] \ar[r]^-{\cong} & (j_!\mathcal{A} \otimes^{{\mathbf{L}}} j_!\mathbb{Z})[r] \ar[d]^{j_!j^*\beta} \\
j_!{\mathcal{A}}^t[s] \ar[r]^-{\cong} & (j_!\mathcal{A}^t \otimes^{{\mathbf{L}}} j_!\mathbb{Z})[s] \ar[r]^-{j_!j^*\alpha} & (j_!\mathcal{A} \otimes^{{\mathbf{L}}} j_!\mathcal{A}^t)[r+s].
}}
Mais cette commutativité est évidente, ce qui achève la preuve.
\end{proof}

\begin{example}
\begin{itemize}
\item[$\bullet$] Les variétés abéliennes ayant bonne réduction partout vérifient les hypothèses des corollaires précédents. C'est par exemple le cas des variétés abéliennes isotriviales.
\item[$\bullet$] Supposons que $X=\mathbb{P}^1_k$, c'est-à-dire que $K=\mathbb{C}((t))(u)$. La courbe elliptique d'équation $y^2=x^3+u$ vérifie les hypothèses des corollaires.
\end{itemize}
\end{example}

\section{\textsc{Variétés abéliennes sur $\mathbb{C}((t_0))...((t_d))$}}\label{Cbis}

On suppose dans cette section que $d \geq 2$ et que $k= \mathbb{C}((t_0))...((t_d))$. Soit $A$ une variété abélienne sur $k$ de variété abélienne duale $A^t$. 

\subsection{Dualité modulo divisibles}

La formule de Barsotti-Weil impose que $A^t=\underline{\text{Ext}}^1_k(A,\mathbb{G}_m)$. De plus, $\underline{\text{Hom}}_k(A,\mathbb{G}_m)=0$. On dispose donc d'un morphisme dans la catégorie dérivée:
$$A \otimes^{\mathbf{L}} A^t \rightarrow \mathbb{G}_m[1],$$
induisant un accouplement:
$$H^r(k,A) \times H^{d-r}(k,A^t) \rightarrow H^{d+1}(k,\mathbb{G}_m) \cong H^{d+1}(k,\mathbb{Q}/\mathbb{Z}(d))\cong \mathbb{Q}/\mathbb{Z}.$$

\begin{lemma}\label{surj}
Pour chaque entier naturel $n$ et chaque entier $r$, le morphisme $ H^{r-1}(k,A)/n \rightarrow ({_n}H^{d+1-r}(k,A^t))^D$ est injectif et le morphisme ${_n}H^{r}(k,A) \rightarrow (H^{d-r}(k,A^t)/n)^D$ est surjectif.
\end{lemma}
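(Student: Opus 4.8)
Le plan est de reproduire exactement la stratégie du cas $d=1$ (le premier lemme de la sous-section \textit{Étude locale}), en y remplaçant le degré $2-r$ par $d+1-r$ et en invoquant cette fois le théorème de dualité sur les corps $d$-locaux rappelé dans les notations.

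Je commencerais par identifier le module galoisien des points de $n$-torsion de $A^t$. Puisque $A^t = \underline{\text{Ext}}^1_k(A,\mathbb{G}_m)$ par Barsotti-Weil et que $\underline{\text{Hom}}_k(A,\mathbb{G}_m)=0$, en appliquant $\underline{\text{Hom}}(-,\mathbb{G}_m)$ à la suite de multiplication par $n$ sur $A$ on récupère l'accouplement de Weil, d'où $\underline{\text{Hom}}({_n}A,\mu_n) = {_n}A^t$. Le point essentiel propre à notre situation est que $k$ contient $\mathbb{C}$ (car $k_0=\mathbb{C}((t_0))$): le module $\mu_n$ est donc isomorphe à $\mathbb{Z}/n\mathbb{Z}$ et tous les twists à la Tate sont triviaux, si bien que $\underline{\text{Hom}}({_n}A,\mu_n^{\otimes d}) = {_n}A^t$ comme $\text{Gal}(k^s/k)$-modules. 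Comme $\text{Car}(k_1)=0$, le théorème de dualité sur le corps $d$-local $k$ s'applique au module fini ${_n}A$ et fournit un accouplement parfait de groupes finis $H^r(k,{_n}A) \times H^{d+1-r}(k,{_n}A^t) \rightarrow H^{d+1}(k,\mu_n^{\otimes d}) \cong \mathbb{Z}/n\mathbb{Z}$, c'est-à-dire un isomorphisme $H^r(k,{_n}A) \xrightarrow{\sim} H^{d+1-r}(k,{_n}A^t)^D$.

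J'insérerais ensuite cet isomorphisme dans le diagramme commutatif à lignes exactes (la ligne du haut provenant de la suite de Kummer pour $A$, celle du bas du dual par $(-)^D$ de la suite de Kummer pour $A^t$, ce dual restant exact puisque $\mathbb{Q}/\mathbb{Z}$ est injectif):

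\centerline{\xymatrix{
0 \ar[r] & H^{r-1}(k,A)/n \ar[d] \ar[r] & H^r(k,{_n}A) \ar[r] \ar[d]^{\cong} & {_n}H^r(k,A) \ar[r] \ar[d] & 0 \\
0 \ar[r] & ({_n}H^{d+1-r}(k,A^t))^D \ar[r] & H^{d+1-r}(k,{_n}A^t)^D \ar[r] & (H^{d-r}(k,A^t)/n)^D \ar[r] & 0.
}}

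La flèche verticale centrale étant un isomorphisme, une chasse au diagramme montre alors immédiatement que la flèche verticale de gauche $H^{r-1}(k,A)/n \rightarrow ({_n}H^{d+1-r}(k,A^t))^D$ est injective et que la flèche verticale de droite ${_n}H^r(k,A) \rightarrow (H^{d-r}(k,A^t)/n)^D$ est surjective, ce qui est exactement l'énoncé voulu. La principale difficulté technique n'est pas cette chasse au diagramme mais la commutativité du diagramme lui-même: il faut s'assurer que l'isomorphisme de dualité est compatible avec les morphismes de Bockstein issus de la multiplication par $n$. Cela résulte de la compatibilité de l'accouplement de Barsotti-Weil $A \otimes^{\mathbf{L}} A^t \rightarrow \mathbb{G}_m[1]$ avec l'accouplement de Weil au niveau fini ${_n}A \otimes^{\mathbf{L}} {_n}A^t \rightarrow \mu_n[1]$ et de la fonctorialité du cup-produit, c'est-à-dire de la vérification laissée implicite dans le cas $d=1$, que je mènerais ici de la même façon.
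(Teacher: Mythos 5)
Votre preuve est correcte et suit essentiellement la même démarche que celle de l'article: le même diagramme commutatif à lignes exactes issu des suites de Kummer, avec pour flèche verticale centrale l'isomorphisme fourni par le théorème de dualité sur les corps $d$-locaux appliqué à ${_n}A$, via l'identification ${_n}A^t \cong \underline{\text{Hom}}_k({_n}A,\mathbb{Z}/n\mathbb{Z}(d))$ (accouplement de Weil et trivialité des twists de Tate puisque $k$ contient $\mathbb{C}$). Votre explicitation de la trivialité des twists et de la compatibilité avec l'accouplement de Barsotti-Weil ne fait que détailler ce que l'article laisse implicite.
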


\begin{proof}
On a un diagramme commutatif à lignes exactes:\\
\centerline{\xymatrix{
0 \ar[r]& H^{r-1}(k,A)/n \ar[d] \ar[r] & H^r(k,{_n}A) \ar[r]\ar[d] & {_n}H^r(k,A) \ar[r]\ar[d] & 0\\
0 \ar[r]& ({_n}H^{d+1-r}(k,A^t))^D  \ar[r] & H^{d+1-r}(k,{_n}A^t)^D \ar[r] & (H^{d-r}(k,A^t)/n)^D \ar[r] & 0,
}}
où le morphisme vertical central est un isomorphisme d'après le théorème 2.17 de \cite{MilADT} car ${_n}A^t \cong \underline{\text{Hom}}_k({_n}A, \mathbb{Z}/n\mathbb{Z}(d))$. On en déduit que le morphisme $ H^{r-1}(k,A)/n \rightarrow ({_n}H^{d+1-r}(k,A^t))^D$ est injectif et le morphisme ${_n}H^{r}(k,A) \rightarrow (H^{d-r}(k,A^t)/n)^D$ est surjectif.
\end{proof}

\begin{notation}
Pour $r \in \mathbb{Z}$, $n \in \mathbb{N}^*$ et $M$ un $\text{Gal}(k^s/k)$-module discret tel que ${_n}H^r(k,M)$ et $H^r(k,M)/n$ sont finis, on note $\lambda_r(k,n,M) = \frac{|{_n}H^r(k,M)|}{|H^r(k,M)/n|}$.
\end{notation}

\begin{proposition}
Pour $r \geq 0$, il existe des familles d'entiers naturels $(\beta_{r,{\ell}})_{\ell}$, $(\beta_{r,{\ell}}^*)_{\ell}$, $(\beta_{r,{\ell}}^t)_{\ell}$, $(\beta_{0,{\ell}}^{tors})_{\ell}$ et $(\beta_{0,{\ell}}^{t,tors})_{\ell}$ indexées par les nombres premiers telles que, pour tout $n \in \mathbb{N}^*$, on a:
\begin{gather*}
\lambda_r(k,n,A) = \prod_{\ell} {\ell}^{\beta_{r,{\ell}}v_{\ell}(n)}, \\
\lambda_r(k,n,A^t)=\prod_{\ell} {\ell}^{\beta_{r,{\ell}}^tv_{\ell}(n)},\\
\lambda_0(k,n,A(k^s)_{tors})= \prod_{\ell} {\ell}^{\beta_{0,{\ell}}^{tors}v_{\ell}(n)},\\
\lambda_0(k,n,A^t(k^s)_{tors})= \prod_{\ell} {\ell}^{\beta_{0,{\ell}}^{t,tors}v_{\ell}(n)}.
 \end{gather*}
\end{proposition}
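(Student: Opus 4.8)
The plan is to deduce everything from the elementary structure lemma recalled at the beginning of \S1.1, which asserts that for a torsion group $B$ of cofinite type one has $\frac{|{_n}B|}{|B/n|}=\prod_{\ell}\ell^{r_{\ell}v_{\ell}(n)}$, where $r_{\ell}$ is the $\ell$-corang de $B$. First I would note that each $\lambda_r(k,\cdot,M)$ is multiplicative for coprime arguments: if $n=n_1n_2$ with $n_1\wedge n_2=1$ then ${_n}H^r(k,M)={_{n_1}}H^r(k,M)\oplus{_{n_2}}H^r(k,M)$ and similarly for $H^r(k,M)/n$, so $\lambda_r(k,n,M)=\lambda_r(k,n_1,M)\lambda_r(k,n_2,M)$. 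It therefore suffices to treat $n=\ell^m$ and to prove that $\lambda_r(k,\ell^m,M)=\ell^{\beta m}$ for a fixed exponent $\beta$ depending only on $\ell$; the announced families are then obtained prime by prime.

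For the torsion coefficients the argument is immediate. The groups $H^0(k,A(k^s)_{tors})=A(k)_{tors}$ and $A^t(k)_{tors}$ are torsion of cofinite type, since their $\ell^m$-torsion is ${_{\ell^m}}A(k)=H^0(k,{_{\ell^m}}A)$, a subgroup of a finite group; the structure lemma gives the formulas for $\beta_{0,\ell}^{tors}$ and $\beta_{0,\ell}^{t,tors}$. For $r\geq 1$ I would first check that $H^r(k,A)$ is again torsion of cofinite type. Writing $0\rightarrow A(k^s)_{tors}\rightarrow A(k^s)\rightarrow W\rightarrow 0$ with $W$ uniquely divisible, and using that a uniquely divisible discrete Galois module has vanishing cohomology in positive degrees, one gets that $H^r(k,A)$ is un sous-quotient de $H^r(k,A(k^s)_{tors})=\varinjlim_n H^r(k,{_n}A)$, hence torsion; its $\ell^m$-torsion is a quotient of the finite group $H^r(k,{_{\ell^m}}A)$, hence finite. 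The structure lemma then yields $\lambda_r(k,n,A)=\prod_{\ell}\ell^{\beta_{r,\ell}v_{\ell}(n)}$ with $\beta_{r,\ell}$ the $\ell$-corang de $H^r(k,A)$, and the same for $A^t$.

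The only genuinely non-torsion case, which is the main obstacle, is $r=0$ applied to $A$ (and symmetrically to $A^t$). Here I would use $0\rightarrow A(k)_{tors}\rightarrow A(k)\rightarrow Q\rightarrow 0$ with $Q$ sans torsion. Applying the snake lemma to multiplication by $\ell^m$ and using ${_{\ell^m}}Q=0$ gives ${_{\ell^m}}A(k)={_{\ell^m}}A(k)_{tors}$ together with une suite exacte $0\rightarrow A(k)_{tors}/\ell^m\rightarrow A(k)/\ell^m\rightarrow Q/\ell^m\rightarrow 0$, whence $\lambda_0(k,\ell^m,A)=\lambda_0(k,\ell^m,A(k^s)_{tors})/|Q/\ell^m|$. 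The first factor is $\ell^{\beta_{0,\ell}^{tors}m}$ by the previous paragraph, so the whole point is to show $|Q/\ell^m|=\ell^{c_{\ell}m}$ for a fixed $c_{\ell}$.

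The delicate step is exactly this. The key input is that $A(k)/\ell$ embeds into the finite group $H^1(k,{_{\ell}}A)$, hence is finite; by topological Nakayama this forces the completion $\ell$-adique $A(k)^{(\ell)}$ to be a finitely generated $\mathbb{Z}_{\ell}$-module. Consequently $Q^{(\ell)}$ is a finitely generated torsion-free $\mathbb{Z}_{\ell}$-module, hence free of some rank $c_{\ell}$, and $|Q/\ell^m|=|Q^{(\ell)}/\ell^m|=\ell^{c_{\ell}m}$ \emph{exactly} (and not merely eventually constant), which is precisely what produces an exponent strictly linear in $m$ with no correction term. This gives $\lambda_0(k,n,A)=\prod_{\ell}\ell^{\beta_{0,\ell}v_{\ell}(n)}$ with $\beta_{0,\ell}=\beta_{0,\ell}^{tors}-c_{\ell}$. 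I would close by remarking that the exponents so obtained are integers (coranks, and differences of a corang and a rang sur $\mathbb{Z}_{\ell}$), and that their explicit values can be read off from the réduction du modèle de Néron exactly as in the proof of Theorem \ref{1-local}.
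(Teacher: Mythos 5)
Your proof is correct, and for the only delicate case ($r=0$) it takes a genuinely different route from the paper. The easy parts coincide: for the torsion coefficients and for $r\geq 1$ both you and the paper invoke cofinite-type torsion plus the counting lemma of \S 1.1 (the paper simply declares these cases \og évidentes \fg{}, you supply the unique-divisibility dévissage, which is fine). For $r=0$, however, the paper does not touch $A(k)$ directly: it bootstraps the case $r=0$ from the cases $r\geq 1$ via the Euler--Poincaré characteristic, writing $1=\chi(k,{_n}A)=\prod_{r=0}^{d+1}\lambda_r(k,n,A)^{(-1)^r}$ (the telescoping identity coming from the Kummer sequences), so that $\lambda_0$ is a ratio of quantities already known to be of the form $\prod_\ell \ell^{\beta v_\ell(n)}$. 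You instead analyse $A(k)$ structurally: finiteness of $A(k)/\ell^m$ (embedding into $H^1(k,{_{\ell^m}}A)$), topological Nakayama to get that $A(k)^{(\ell)}$ --- hence its quotient $Q^{(\ell)}$ --- is a finitely generated $\mathbb{Z}_\ell$-module, then torsion-freeness and hence freeness of $Q^{(\ell)}$, giving $|Q/\ell^m|=\ell^{c_\ell m}$ exactly. The two unproved completion facts you use (that $Q^{(\ell)}$ is torsion-free when $Q$ is, and that $Q^{(\ell)}/\ell^m\cong Q/\ell^m$) are both true here and have short proofs, but they do require the finiteness of all $Q/\ell^m$ and a small compatibility argument in the inverse limit, so it would be worth writing them out. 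As for what each approach buys: the paper's argument is shorter given that $\chi(k,F)=1$ has already been established in \S 1.2, and the identical bookkeeping is recycled immediately afterwards in the proof of Théorème \ref{modulo divisibles}; your argument is more self-contained (it never uses $\chi=1$, only finiteness of cohomology of finite modules) and is more robust --- it would work verbatim over a base like $\mathbb{Q}_p$ where $\chi\neq 1$ and where the $p$-part of $\lambda_0$ carries the extra factor visible in the remarque \ref{tor0qp}, and it exhibits $\beta_{0,\ell}$ concretely as $\beta_{0,\ell}^{tors}-c_\ell$ rather than as an alternating sum. Finally, note (as you do) that the exponents produced by either proof are a priori relative integers rather than \og entiers naturels \fg{}; this looseness is in the statement itself, not in your argument.
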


\begin{proof}
Les deux dernières égalités sont évidentes car $A(k)_{tors}$ et $A(k)_{tors}$ sont de torsion de type cofini. Montrons les deux premières. Pour $r \geq 1$, elles sont évidentes, puisque les groupes $H^r(k,A)$ et $H^r(k,A^t)$ sont de torsion de type cofini. Le cas $r=0$ découle alors des formules suivantes:
\begin{gather*}
1 = \chi (k, {_n}A) = \prod_{r=0}^{d+1} \lambda_r(k,n,A)^{(-1)^r},\\
1 = \chi (k, {_n}A^t) = \prod_{r=0}^{d+1} \lambda_r(k,n,A^t)^{(-1)^r}.
\end{gather*}
\end{proof}

\begin{theorem}\label{modulo divisibles}
Pour $r \geq 1$, le noyau du morphisme $H^{r}(k,A) \rightarrow (H^{d-r}(k,A^t)^{\wedge})^D$ est un groupe de torsion de type cofini divisible.
\end{theorem}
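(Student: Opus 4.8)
The plan is to prove the stronger and more precise statement that the kernel $N$ of the map $\Phi\colon H^r(k,A) \to (H^{d-r}(k,A^t)^\wedge)^D$ is exactly the maximal divisible subgroup $H^r(k,A)_{\mathrm{div}}$. Since $r \geq 1$, the group $H^r(k,A)$ is of cofinite type torsion, hence so is its subgroup $N$; thus it will be enough to establish the two inclusions $N \subseteq H^r(k,A)_{\mathrm{div}}$ and $H^r(k,A)_{\mathrm{div}} \subseteq N$, for then $N = H^r(k,A)_{\mathrm{div}}$ is a divisible group of cofinite type torsion, as asserted.

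For the inclusion $N \subseteq H^r(k,A)_{\mathrm{div}}$, the decisive input is the injectivity half of Lemma \ref{surj}. Applying it with $r$ replaced by $r+1$ yields, for every $n$, an injection $j_n\colon H^r(k,A)/n \hookrightarrow ({_n}H^{d-r}(k,A^t))^D$ which is induced, by reduction modulo $n$, by the very pairing $A \otimes^{\mathbf{L}} A^t \to \mathbb{G}_m[1]$ defining $\Phi$. Now let $x \in N$. Then $\langle x, y\rangle = 0$ for all $y \in H^{d-r}(k,A^t)$, so in particular the functional $j_n(x \bmod n)$, which is $y \mapsto \langle x, y\rangle$ restricted to ${_n}H^{d-r}(k,A^t)$, vanishes; injectivity of $j_n$ then forces $x \in n\,H^r(k,A)$. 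As this holds for every $n$ and $H^r(k,A)$ is of cofinite type torsion, one gets $x \in \bigcap_n n\,H^r(k,A) = H^r(k,A)_{\mathrm{div}}$, using the elementary fact that for such a group the intersection of all subgroups $nM$ is exactly $M_{\mathrm{div}}$.

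For the reverse inclusion $H^r(k,A)_{\mathrm{div}} \subseteq N$, I would show that the target of $\Phi$ is reduced, i.e. has trivial maximal divisible subgroup: then the restriction of $\Phi$ to the divisible group $H^r(k,A)_{\mathrm{div}}$ has divisible image sitting inside a reduced group, hence is zero, which gives $H^r(k,A)_{\mathrm{div}} \subseteq \ker\Phi = N$. When $1 \leq r \leq d-1$ this is immediate: $H^{d-r}(k,A^t)$ is then of cofinite type torsion, so each $H^{d-r}(k,A^t)/n$ is finite and the completion $H^{d-r}(k,A^t)^\wedge = \varprojlim_n H^{d-r}(k,A^t)/n$ is a product of finite groups carrying no free pro-$\ell$ part; its dual $(H^{d-r}(k,A^t)^\wedge)^D$ is therefore a direct sum of finite groups, which is reduced.

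The main obstacle is the boundary range $r \geq d$. For $r = d$ the target $(H^0(k,A^t)^\wedge)^D = (A^t(k)^\wedge)^D$ need no longer be reduced, because $A^t(k)$ is not torsion and its completion may carry free pro-$\ell$ parts; the clean reducedness argument for $H^r(k,A)_{\mathrm{div}} \subseteq N$ then fails and must be replaced by a direct determination of the $\ell$-coranks of $N$, for instance through the cardinality identities coming from the perfect duality $H^i(k,{_n}A) \cong H^{d+1-i}(k,{_n}A^t)^D$ and the Kummer sequences, combined with $\chi(k,{_n}A)=1$. For $r > d$ the target vanishes and the statement reduces to the divisibility (and, for $r > d+1$, the vanishing) of $H^r(k,A)$ itself, obtained from the cohomological dimension of $k$. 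By contrast, the inclusion $N \subseteq H^r(k,A)_{\mathrm{div}}$ above, which rests only on Lemma \ref{surj}, remains valid for all $r \geq 1$, so only the reverse inclusion requires this extra care.
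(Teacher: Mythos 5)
Your proposal is sound away from the critical case $r=d$: the inclusion $N \subseteq H^r(k,A)_{div}$ via the injective half of Lemma \ref{surj} (together with $\bigcap_n nM = M_{div}$ for a torsion group of cofinite type) is correct for every $r\geq 1$; the reducedness of the target for $1 \leq r \leq d-1$ is also correct, since $H^{d-r}(k,A^t)$ is then torsion of cofinite type, so its completion is a product of finite groups and its dual is a direct sum of finite groups; and the range $r \geq d+1$ is indeed handled by cohomological dimension. The problem is precisely the boundary case $r=d$, which is the case the paper actually needs later (it is the morphism $H^d(k,A) \to (H^0(k,A^t)^{\wedge})^D$ whose kernel is computed in Théorème \ref{noyaubis}), and there your proposal contains both an error and a gap.

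The error: the ``stronger and more precise statement'' $N = H^d(k,A)_{div}$ that your whole plan is organized around is false in general at $r=d$. By the surjective half of Lemma \ref{surj} and passage to the direct limit (exactly as in the proof of Théorème \ref{1-local}), the map $H^d(k,A) \to (H^0(k,A^t)^{\wedge})^D$ is \emph{surjective}, so $H^d(k,A)/N \cong (H^0(k,A^t)^{\wedge})^D$; if one had $N = H^d(k,A)_{div}$, this quotient would be reduced. It is not whenever $H^0(k,A^t)^{\wedge}$ has a free part: take $A = A^t$ a Tate curve with $A(k) \cong k^\times/q^{\mathbb{Z}}$, $q$ a uniformizer; since $k^\times/(k^\times)^n \cong (\mathbb{Z}/n\mathbb{Z})^{d+1}$, one gets $H^0(k,A^t)^{\wedge} \cong \hat{\mathbb{Z}}^{d}$, whose dual $(\mathbb{Q}/\mathbb{Z})^{d}$ is divisible and nonzero. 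The paper's own $d=1$ results already exhibit the phenomenon: by Corollaire \ref{cor local}(iii), for a Tate curve the map $H^1(k,A) \to (H^0(k,A^t)^{\wedge})^D$ is an isomorphism, i.e. $N=0$, while $H^1(k,A) \cong (H^0(k,A^t)^{\wedge})^D$ has nontrivial divisible part; the same happens for every $d\geq 2$. So no argument can establish your reverse inclusion at $r=d$. The gap: what you offer instead for $r=d$ --- ``a direct determination of the $\ell$-coranks of $N$ through the cardinality identities \ldots combined with $\chi(k,{_n}A)=1$'' --- is not carried out, and it is in substance the paper's entire proof: surjectivity in Lemma \ref{surj} gives $|{_n}N| = |{_n}H^{r}(k,A)|\,/\,|H^{d-r}(k,A^t)/n|$, and the Euler--Poincaré computation shows this equals $\prod_{\ell} \ell^{\gamma_{\ell} v_{\ell}(n)}$ with $\gamma_{\ell}$ independent of $n$, forcing the cofinite-type group $N$ to be divisible (note the paper never claims $N = H^r(k,A)_{div}$, consistently with the counterexample above). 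As written, the crucial case of the theorem remains unproved in your proposal.
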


\begin{proof}
Soit $s \in \{-1,0,...,d+1\}$. On note $N_s$ le noyau de $H^{s}(k,A) \rightarrow (H^{d-s}(k,A^t)^{\wedge})^D$. On calcule la caractéristique d'Euler-Poincaré de ${_n}A$ pour chaque entier naturel $n$:
\begin{align*}
1 & = \chi (k, {_n}A)\\
& = \prod_{r=0}^{s-1} \lambda_r(k,n,A)^{(-1)^r} \cdot \prod_{r=s+1}^{d+1} |H^r(k,{_n}A)|^{(-1)^r} \cdot |{_n}H^{s}(k,A)|^{(-1)^{s}}\\
& = \prod_{r=0}^{s-1} \lambda_r(k,n,A)^{(-1)^r} \cdot \prod_{r=0}^{d-s} |H^{r}(k,{_n}A^t)|^{(-1)^{d+1-r}} \cdot |{_n}H^{s}(k,A)|^{(-1)^{s}}\\
& = \prod_{r=0}^{s-1} \lambda_r(k,n,A)^{(-1)^r} \cdot \prod_{r=0}^{d-s} \lambda_r(k,n,A^t)^{(-1)^{d+1-r}}\cdot |{_n}H^{s}(k,A)|^{(-1)^{s}}|H^{d-s}(k,A^t)/n|^{(-1)^{s+1}}\\
& = |{_n}H^{s}(k,A)|^{(-1)^{s}}|H^{d-s}(k,A^t)/n|^{(-1)^{s+1}} \cdot \prod_{\ell} {\ell}^{(-1)^{s+1}\gamma_{s,{\ell}} v_{\ell}(n)},
\end{align*}
où $\gamma_{s,{\ell}} = \sum_{r=0}^{s-1}(-1)^{r+s+1}\beta_{r,{\ell}}+\sum_{r=0}^{d-s}(-1)^{d+s-r}\beta_{r,{\ell}}^t$. On obtient donc, pour tout $s \in \{-1,0,...,d+1\}$:
$$ |{_n}N_s| = \frac{|{_n}H^{s}(k,A)|}{|H^{d-s}(k,A^t)/n|} = \prod_{\ell} {\ell}^{\gamma_{s,{\ell}} v_{\ell}(n)},$$
ce qui prouve que $N_s$ est divisible à condition que $s\neq 0$.
\end{proof}

En reprenant les notations de la preuve précédente, on a alors:
$$N_s \cong \bigoplus_{\ell} (\mathbb{Q}_{\ell}/\mathbb{Z}_{\ell})^{\gamma_{s,{\ell}}},$$
et nous voulons calculer les $\gamma_{s,{\ell}} = \sum_{r=0}^{s-1}(-1)^{r+s+1}\beta_{r,{\ell}}+\sum_{r=0}^{d-s}(-1)^{d+s-r}\beta_{r,{\ell}}^t$. Avant de passer à la suite, il est utile d'établir des équations reliant les différentes variables que nous avons introduites ($\beta_{r,{\ell}}, \beta_{r,{\ell}}^t,\beta_{0,{\ell}}^{tors},\beta_{0,{\ell}}^{t,tors},\gamma_{r,{\ell}}$).

\begin{proposition}\label{equa0}
Soit ${\ell}$ un nombre premier. Les entiers $(\beta_{r,{\ell}})_r$ et $(\beta^t_{r,{\ell}})_r$ vérifient les équations:\\
$$\left
\{
\begin{array}{c}
\gamma_{r,{\ell}}=\beta_{r,{\ell}} \;\;\;\; \forall r \in \{-1\} \cup \{1,2,...,d-1\} \cup \{d+1\}\\
\gamma_{0,{\ell}} = \beta_{0,{\ell}}^{tors}\\
\beta_{r,{\ell}}=\beta^t_{d+1-r,{\ell}} \;\;\;\; \forall r \in \{2,3,...,d-1\}\\ 
\beta_{1,{\ell}} - \beta_{0,{\ell}} = \beta_{d,{\ell}}^t - \beta_{d+1,{\ell}}^t\\
\beta_{0,{\ell}}^{tors}=\beta_{d+1,{\ell}}^t\\
\sum_{r=0}^{d+1} (-1)^r\beta_{r,{\ell}}=0\\
\beta_{r,{\ell}}=\beta^t_{r,{\ell}}=0\;\;\;\; \forall r \geq d+2 
\end{array}
\right.$$
\end{proposition}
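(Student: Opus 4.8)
Le plan est de tout ramener à des comptages de cardinaux en ne retenant, dans chaque quantité, que le coefficient de $v_{\ell}(n)$ (que j'appellerai sa \emph{pente}). Je fixe donc un nombre premier $\ell$. Les ingrédients sont : la suite de Kummer $0 \to {_n}A \to A \to A \to 0$ (licite car $A(k^s)$ est divisible, $k$ étant de caractéristique $0$), qui donne $|H^r(k,{_n}A)| = |H^{r-1}(k,A)/n| \cdot |{_n}H^r(k,A)|$, et de même pour $A^t$ ; la dualité $|H^r(k,{_n}A)| = |H^{d+1-r}(k,{_n}A^t)|$, conséquence du fait que la flèche verticale centrale du diagramme de la preuve du lemme \ref{surj} est un isomorphisme ; et la nullité $H^r(k,A) = H^r(k,A^t) = 0$ pour $r < 0$ et $r > d+1$ (car $k$ est de dimension cohomologique $d+1$ et $H^r(k,A)$ est de torsion pour $r \geq 1$). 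Enfin, pour $1 \leq r \leq d+1$, le groupe $H^r(k,A)$ est de torsion de type cofini, donc la $\ell$-partie de $|H^r(k,A)/n|$ est bornée indépendamment de $n$ (pente nulle), tandis que $v_{\ell}(|{_n}H^r(k,A)|)$ a pour pente $\beta_{r,\ell}$ ; pour $r = 0$, on a $|{_n}A(k)| = |{_n}A(k)_{tors}|$ de pente $\beta_{0,\ell}^{tors}$ et $|A(k)/n|$ de pente $\beta_{0,\ell}^{tors} - \beta_{0,\ell}$.

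Les équations formelles s'en déduisent aussitôt. La dernière ($\beta_{r,\ell} = \beta_{r,\ell}^t = 0$ pour $r \geq d+2$) traduit la nullité de $H^r(k,A)$ et $H^r(k,A^t)$ en ces degrés. L'équation $\sum_r (-1)^r \beta_{r,\ell} = 0$ s'obtient en prenant la pente dans l'identité $1 = \chi(k,{_n}A) = \prod_{r=0}^{d+1} \lambda_r(k,n,A)^{(-1)^r}$, elle-même issue de la trivialité de la caractéristique d'Euler-Poincaré et de la suite de Kummer ci-dessus. Pour les deux premières équations, je pars de la relation $\gamma_{s,\ell}\, v_{\ell}(n) = v_{\ell}(|{_n}H^s(k,A)|) - v_{\ell}(|H^{d-s}(k,A^t)/n|)$ établie dans la preuve du théorème \ref{modulo divisibles} : pour $s \in \{-1\} \cup \{1,\dots,d-1\} \cup \{d+1\}$ le second terme est de pente nulle (que ce soit parce que $d-s \geq 1$, parce que $H^{-1} = 0$, ou parce que $H^{d+1}(k,A^t)/n$ est borné), d'où $\gamma_{s,\ell} = \beta_{s,\ell}$ ; et pour $s = 0$ le premier terme a pour pente $\beta_{0,\ell}^{tors}$ et le second une pente nulle, d'où $\gamma_{0,\ell} = \beta_{0,\ell}^{tors}$.

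Les trois équations restantes sont de nature duale : je réécris $|H^r(k,{_n}A)| = |H^{d+1-r}(k,{_n}A^t)|$ sous la forme $|H^{r-1}(k,A)/n|\cdot|{_n}H^r(k,A)| = |H^{d-r}(k,A^t)/n|\cdot|{_n}H^{d+1-r}(k,A^t)|$, puis je prends les pentes. Pour $2 \leq r \leq d-1$, les quatre termes ont des pentes bien définies ($0$, $\beta_{r,\ell}$, $0$, $\beta_{d+1-r,\ell}^t$), d'où $\beta_{r,\ell} = \beta_{d+1-r,\ell}^t$. Le cas $r = 0$ fournit $\beta_{0,\ell}^{tors} = \beta_{d+1,\ell}^t$ (puisque $|H^{-1}(k,A)/n| = 1$ et $|H^d(k,A^t)/n|$ est de pente nulle). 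Le cas $r = 1$ fournit $(\beta_{0,\ell}^{tors} - \beta_{0,\ell}) + \beta_{1,\ell} = \beta_{d,\ell}^t$ (la pente de $|H^{d-1}(k,A^t)/n|$ étant nulle car $d \geq 2$) ; en y reportant l'équation $\beta_{0,\ell}^{tors} = \beta_{d+1,\ell}^t$, on obtient $\beta_{1,\ell} - \beta_{0,\ell} = \beta_{d,\ell}^t - \beta_{d+1,\ell}^t$.

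Le seul point délicat est le traitement des degrés extrêmes $r \in \{0,1,d,d+1\}$, et surtout de $H^0(k,A) = A(k)$ qui n'est pas de torsion : c'est lui qui force la distinction entre $\beta_{0,\ell}$ et $\beta_{0,\ell}^{tors}$ et qui explique la forme dissymétrique des quatrième et cinquième équations. Le reste n'est qu'une comptabilité soigneuse des pentes.
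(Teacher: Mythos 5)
Votre démonstration est correcte et repose exactement sur les mêmes ingrédients que la preuve de l'article : trivialité de la caractéristique d'Euler-Poincaré, dualité parfaite $H^r(k,{_n}A) \cong (H^{d+1-r}(k,{_n}A^t))^D$ (la flèche verticale centrale du lemme \ref{surj}), suites de Kummer, dimension cohomologique, et comptage des pentes en $v_{\ell}(n)$. La seule différence, purement organisationnelle, est que vous extrayez les équations symétriques (troisième, quatrième et cinquième) directement de l'identité de dualité degré par degré, là où l'article les déduit formellement des relations $\gamma_{r,\ell}=\beta_{r,\ell}$, $\gamma_{0,\ell}=\beta^{tors}_{0,\ell}$ (établies comme dans le théorème \ref{modulo divisibles}) et de la définition des $\gamma_{r,\ell}$ comme sommes alternées — deux mises en forme équivalentes du même calcul, la vôtre étant un peu plus explicite sur ce dernier point que l'article ne le fait.
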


\begin{proof}
Exactement comme dans la démonstration de \ref{modulo divisibles}, on a pour chaque $r \in \{-1,0,1,...,d+1\}$ la relation:
$$1  = |{_n}H^{r}(k,A)|^{(-1)^{r}}|H^{d-r}(k,\tilde{A})/n|^{(-1)^{r+1}} \cdot \prod_p p^{(-1)^{r+1}\gamma_{r,{\ell}} v_p(n)}$$
avec $\gamma_{r,{\ell}} = \sum_{s=0}^{r-1}(-1)^{r+s+1}\beta_{s,{\ell}}+\sum_{s=0}^{d-r}(-1)^{d+r-s}\beta_{s,{\ell}}^t$. Cela montre immédiatement que:
\begin{itemize}
\item[$\bullet$] $\gamma_{{\ell},p}=\beta_{{\ell},p}$ pour $r \in \{1,2,...,d-1\}\cup\{-1,d+1\}$,
\item[$\bullet$] $\beta_{0,{\ell}}^{tors} =\gamma_{0,{\ell}}$,
\item[$\bullet$] $\beta_{d,{\ell}} - \beta_{0,\ell} - \beta_0^{t,tors} =\gamma_{d,{\ell}}$,
\end{itemize}
ce qui achève la preuve.
\end{proof}

\begin{proposition}\label{isogénie}
Pour tout premier ${\ell}$, pour tout entier $r$, on a $\beta_{r,{\ell}} = \beta_{r,{\ell}}^t$. On a aussi $\beta_{0,{\ell}}^{tors} = \beta_{0,{\ell}}^{t,tors}$.
\end{proposition}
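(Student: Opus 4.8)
The plan is to exploit the fact, recalled in the remark following Theorem~\ref{1-local}, that every abelian variety is isogenous to its dual: $A$ and $A^t$ are isogenous (paragraphe~10 de \cite{MilAV}). I would fix an isogeny $\phi \colon A \to A^t$; since $k$ is of characteristic $0$, its kernel $C$ is a finite étale $k$-group scheme, so there is a short exact sequence of Galois modules (equivalently of étale sheaves)
$$0 \to C \to A \xrightarrow{\phi} A^t \to 0.$$
Because $C$ is finite, all the groups $H^r(k,C)$ are finite, and the long exact cohomology sequence then shows that for every $r$ the induced map $\phi_* \colon H^r(k,A) \to H^r(k,A^t)$ has finite kernel and finite cokernel.

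The second ingredient is the elementary behaviour of the quantity $\lambda$: writing $\lambda(M) = |{}_nM|/|M/n|$ for a group $M$ with ${}_nM$ and $M/n$ finite, the snake lemma applied to multiplication by $n$ gives $\lambda(M) = \lambda(M')\lambda(M'')$ for any short exact sequence $0 \to M' \to M \to M'' \to 0$ of such groups, while $\lambda(F) = 1$ for any finite group $F$ since $|{}_nF| = |F/n|$. I would factor $\phi_*$ through its image and apply this to the two sequences $0 \to \ker\phi_* \to H^r(k,A) \to \operatorname{Im}\phi_* \to 0$ and $0 \to \operatorname{Im}\phi_* \to H^r(k,A^t) \to \operatorname{coker}\phi_* \to 0$, whose end terms are finite. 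This yields $\lambda_r(k,n,A) = \lambda_r(k,n,A^t)$ for all $n$ and $r$, and comparing the exponent of each prime $\ell$ gives $\beta_{r,\ell} = \beta_{r,\ell}^t$.

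For the torsion statement I would argue identically with the induced sequence of torsion modules. Over $k^s$ the map $\phi$ is surjective with kernel $C(k^s)$, and the preimage of a torsion point is torsion (a lift $x$ of a point killed by $m$ satisfies $mx \in C(k^s)$, hence $x$ is torsion); thus there is a short exact sequence of Galois modules $0 \to C(k^s) \to A(k^s)_{tors} \to A^t(k^s)_{tors} \to 0$. Taking $H^0(k,-)$ and using once more the finiteness of $H^0(k,C)$ and $H^1(k,C)$, the map $A(k)_{tors} \to A^t(k)_{tors}$ has finite kernel and cokernel, whence $\lambda_0(k,n,A(k^s)_{tors}) = \lambda_0(k,n,A^t(k^s)_{tors})$ and $\beta_{0,\ell}^{tors} = \beta_{0,\ell}^{t,tors}$.

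The only point requiring care is to make sure every $\lambda$ that appears is well defined, i.e. that all the $n$-torsion and mod-$n$ groups involved are finite. This is automatic for $r \geq 1$, where the $H^r(k,A)$ are of cofinite type, and for $r=0$ and the torsion modules it follows from the finiteness already used to define the $\beta$'s in the preceding proposition. Everything else reduces to the multiplicativity of $\lambda$ in exact sequences and its triviality on finite groups, so I do not expect a genuine obstacle: the crux of the argument is simply the isogeny between $A$ and $A^t$.
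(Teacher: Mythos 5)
Your proof is correct and follows essentially the same route as the paper: both exploit an isogeny $A \to A^t$ with finite kernel, deduce that the induced maps $H^r(k,A) \to H^r(k,A^t)$ and $A(k)_{tors} \to A^t(k)_{tors}$ have finite kernel and cokernel, and conclude by the multiplicativity of $\lambda$ (snake lemma) together with its triviality on finite groups. The paper's proof is simply a terser version of yours, leaving implicit the points you spell out (the exact sequence of torsion modules and the bookkeeping with $\lambda$).
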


\begin{proof}
Les variétés abéliennes $A$ et $A^t$ sont isogènes. Il existe donc une suite exacte $0 \rightarrow F \rightarrow A \rightarrow A^t \rightarrow 0$ où $F$ est un schéma en groupes abélien fini. En passant à la cohomologie, on obtient donc un morphisme $H^r(k,A) \rightarrow H^r(k,A^t)$ de noyau et conoyau finis pour chaque $r$. Cela montre grâce au lemme du serpent que $\beta_{r,{\ell}} = \beta_{r,{\ell}}^t$ pour tout $r$.\\
Pour montrer que $\beta_{0,{\ell}}^{tors} = \beta_{0,{\ell}}^{t,tors}$, on procède de la même façon en remarquant que l'on a un morphisme $A(k)_{tors} \rightarrow A^t(k)_{tors}$ de noyau et conoyau finis.
\end{proof}

Des deux propositions précédentes, on déduit:

\begin{corollary}\label{equa}
Soit ${\ell}$ un nombre premier. Les entiers $(\beta_{r,{\ell}})_r$ vérifient les équations:\\
$$\left
\{
\begin{array}{c}
\beta_{r,{\ell}}=\beta_{d+1-r,{\ell}} \;\;\;\; \forall r \in \{2,3,...,d-1\}\\ 
\beta_{1,{\ell}} - \beta_{0,{\ell}} = \beta_{d,{\ell}} - \beta_{d+1,{\ell}}\\
\beta_{0,{\ell}}^{tors}=\beta_{d+1,{\ell}}\\
\beta_{r,{\ell}}=0\;\;\;\; \forall r \geq d+2 
\end{array}
\right.$$
\end{corollary}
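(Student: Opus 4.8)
Le plan est d'obtenir le système voulu par simple substitution, en injectant les relations d'isogénie de la proposition \ref{isogénie} dans le système mixte (portant à la fois sur les $\beta_{r,\ell}$ et les $\beta_{r,\ell}^t$) fourni par la proposition \ref{equa0}. Autrement dit, toute la substance arithmétique — le calcul des caractéristiques d'Euler-Poincaré des modules ${}_nA$ et ${}_nA^t$ ainsi que la comparaison des divers groupes de cohomologie par dualité — est déjà encapsulée dans ces deux énoncés, et il ne reste qu'à éliminer les termes duaux.

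Concrètement, je partirais des relations de la proposition \ref{equa0} faisant intervenir les termes duaux $\beta_{\bullet,\ell}^t$, à savoir $\beta_{r,\ell} = \beta_{d+1-r,\ell}^t$ pour $r \in \{2,\ldots,d-1\}$, puis $\beta_{1,\ell}-\beta_{0,\ell} = \beta_{d,\ell}^t - \beta_{d+1,\ell}^t$, puis $\beta_{0,\ell}^{tors} = \beta_{d+1,\ell}^t$, et enfin la nullité $\beta_{r,\ell}^t = 0$ pour $r \geq d+2$. La proposition \ref{isogénie} garantit que $\beta_{r,\ell} = \beta_{r,\ell}^t$ pour tout entier $r$ (ainsi que $\beta_{0,\ell}^{tors} = \beta_{0,\ell}^{t,tors}$, dont on n'a pas besoin ici), ce qui découle de l'existence d'une isogénie $A \to A^t$ à noyau fini et du lemme du serpent appliqué à la suite longue de cohomologie. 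En remplaçant dans chacune de ces relations le $\beta_{\bullet,\ell}^t$ correspondant par $\beta_{\bullet,\ell}$, on obtient exactement les quatre équations de l'énoncé.

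Il n'y a donc pas de véritable obstacle: le seul point à vérifier soigneusement est que la relation d'isogénie s'applique bien à tous les indices intervenant (y compris $r=0$, $r=d+1$ et les indices $\geq d+2$), ce qui est assuré par la formulation « pour tout entier $r$ » de la proposition \ref{isogénie}. Les relations restantes de la proposition \ref{equa0}, à savoir $\gamma_{r,\ell}=\beta_{r,\ell}$, $\gamma_{0,\ell}=\beta_{0,\ell}^{tors}$ et la relation de cohérence $\sum_{r=0}^{d+1}(-1)^r\beta_{r,\ell}=0$, sont soit de simples définitions des $\gamma_{r,\ell}$, soit des conséquences de $\chi(k,{}_nA)=1$ superflues pour le présent énoncé.
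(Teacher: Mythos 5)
Votre démonstration est correcte et suit exactement la démarche du texte: le corollaire y est introduit par « Des deux propositions précédentes, on déduit », c'est-à-dire précisément par substitution des relations $\beta_{r,\ell}=\beta^t_{r,\ell}$ de la proposition \ref{isogénie} dans le système de la proposition \ref{equa0}. Rien à redire.
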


\subsection{Étude de $\beta_{0,{\ell}}$ et de $\beta_{0,{\ell}}^{tors}$}\label{notations}

Pour chaque $i \in \{0,1,...,d\}$, on considère $\mathcal{A}_i$ (resp. $A_i$, $F_i$, $U_i$, $T_i$, $B_i$) un schéma en groupes commutatifs sur $\text{Spec} \; \mathcal{O}_{k_i}$ (resp. sur $\text{Spec} \; k_i$) tels que:
\begin{itemize}
\item[$\bullet$] on a $A_d=A$, $U_d=0$ et $T_d=0$,
\item[$\bullet$] pour $i \in \{0,1,...,d\}$, $B_i$ est une variété abélienne, 
\item[$\bullet$] pour $i \in \{0,1,...,d\}$, $\mathcal{A}_i$ est le modèle de Néron de $B_i$,
\item[$\bullet$] pour $i \in \{0,1,...,d-1\}$, $A_i$ est la fibre spéciale de $\mathcal{A}_{i+1}$,
\item[$\bullet$] pour $i \in \{0,1,...,d\}$, $F_i$ (resp. $U_i$, $T_i$, $B_i$) est le groupe fini (resp. le groupe additif, le tore, la variété abélienne) apparaissant dans la filtration de $A_{i}$.
\end{itemize}
On note aussi $A_{-1}$ la fibre spéciale de $\mathcal{A}_{0}$, et $F_{-1}$, $U_{-1}$, $T_{-1}$, $B_{-1}$ les parties finie, unipotente, torique et abélienne apparaissant dans la filtration de $A_{-1}$. 
De même, pour chaque $i \in \{0,1,...,d\}$, on considère $\mathcal{A}_i^*$ (resp. $A_i^*$, $F_i^*$, $U_i^*$, $T_i^*$, $B_i^*$) un schéma en groupes commutatifs sur $\text{Spec} \; \mathcal{O}_{k_i}$ (resp. sur $\text{Spec} \; k_i$) tels que:
\begin{itemize}
\item[$\bullet$] on a $A_d^*=A^t$, $U_d^*=0$ et $T_d^*=0$,
\item[$\bullet$] pour $i \in \{0,1,...,d\}$, $B_i^*$ est une variété abélienne, 
\item[$\bullet$] pour $i \in \{0,1,...,d\}$, $\mathcal{A}_i^*$ est le modèle de Néron de $B_i^*$,
\item[$\bullet$] pour $i \in \{0,1,...,d-1\}$, $A_i^*$ est la fibre spéciale de $\mathcal{A}_{i+1}^*$,
\item[$\bullet$] pour $i \in \{0,1,...,d\}$, $F_i^*$ (resp. $U_i^*$, $T_i^*$, $B_i^*$) est le groupe fini (resp. le groupe additif, le tore, la variété abélienne) apparaissant dans la filtration de $A_{i}^*$.
\end{itemize}
On note aussi $A_{-1}^*$ la fibre spéciale de $\mathcal{A}_{0}^*$, et $F_{-1}^*$, $U_{-1}^*$, $T_{-1}^*$, $B_{-1}^*$ les parties finie, unipotente, torique et abélienne apparaissant dans la filtration de $A_{-1}^*$.

\begin{proposition} \label{beta0}
\begin{itemize}
\item[(i)] On a $\lambda_0(k,n,A) = \lambda_0(k_{-1},n,B_{-1}) \cdot \prod_{r=-1}^{d-1} \lambda_0(k_r,n,T_r)$.
\item[(ii)] Le nombre $\frac{\lambda_0(k_{-1},n,B_{-1}(k_{-1}^s)_{tors}) \cdot \prod_{r=-1}^{d-1} \lambda_0(k_r,n,T_r(k_r^s)_{tors})}{\lambda_0(k,n,A(k^s)_{tors})}$ est entier. Si $B_i$ est à réduction scindée pour $i \in \{1,...,d\}$, alors $$\lambda_0(k,n,A(k^s)_{tors})=\lambda_0(k_{-1},n,B_{-1}(k_{-1}^s)_{tors}) \cdot \prod_{r=-1}^{d-1} \lambda_0(k_r,n,T_r(k_r^s)_{tors}).$$
\end{itemize}
\end{proposition}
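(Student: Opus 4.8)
Le plan est de dévisser le long de la tour $A=B_d, A_{d-1}, B_{d-1}, \dots, B_0, A_{-1}, B_{-1}$ introduite ci-dessus. Pour un groupe abélien $M$ dont la $n$-torsion et le conoyau de multiplication par $n$ sont finis, posons $\lambda(M)=|{}_nM|/|M/n|$; ce symbole vaut $1$ sur les groupes finis et sur les groupes uniquement divisibles, et il est multiplicatif dans les suites exactes courtes et à quatre termes (via le lemme du serpent). On a $\lambda_0(k',n,M)=\lambda(M(k'))$ pour un groupe $M$ sur un corps $k'$. Toute la preuve repose sur ce symbole, combiné aux calculs des propositions \ref{tor0}, \ref{tor0tors}, \ref{torr} et \ref{tor1tors}, la finitude des $\lambda$ en jeu se propageant par la récurrence jusqu'à $\mathbb{C}$.

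Pour (i), deux briques suffisent. D'abord la \emph{réduction}: pour $i\in\{0,\dots,d\}$ (avec $B_d=A$), la propriété universelle du modèle de Néron donne $B_i(k_i)=\mathcal{A}_i(\mathcal{O}_{k_i})$ et une suite exacte $0\to D\to B_i(k_i)\to A_{i-1}(k_{i-1})\to 0$ de noyau $D$ uniquement divisible (la caractéristique résiduelle étant nulle), d'où $\lambda_0(k_i,n,B_i)=\lambda_0(k_{i-1},n,A_{i-1})$. Ensuite le \emph{dévissage} de $A_i$ pour $i\in\{-1,\dots,d-1\}$: la finitude de $F_i$ donne $\lambda_0(k_i,n,A_i)=\lambda_0(k_i,n,A_i^0)$, et la suite $0\to U_i\times T_i\to A_i^0\to B_i\to 0$ fournit, par passage à la cohomologie, une suite exacte $0\to (U_i\times T_i)(k_i)\to A_i^0(k_i)\to B_i(k_i)\to W_i\to 0$ avec $W_i\subseteq H^1(k_i,U_i\times T_i)$. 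Comme $U_i$ est une puissance de $\mathbb{G}_a$ (donc $\lambda_0(k_i,n,U_i)=1$ et $H^1(k_i,U_i)=0$) et comme $H^1(k_i,T_i)$ est \emph{fini} d'après la proposition \ref{torr} (car $c_{1,i}=0$), le groupe $W_i$ est fini, $\lambda(W_i)=1$, et l'on obtient $\lambda_0(k_i,n,A_i^0)=\lambda_0(k_i,n,T_i)\cdot\lambda_0(k_i,n,B_i)$. En composant ces relations de $i=d$ jusqu'à $i=-1$ (le dévissage s'arrêtant sur $B_{-1}$ défini sur $\mathbb{C}$), on aboutit à la formule annoncée.

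Pour (ii), la même stratégie s'applique, mais au niveau des modules galoisiens de torsion géométrique. La réduction fournit cette fois un isomorphisme abstrait $B_i(k_i)_{tors}\cong A_{i-1}(k_{i-1})_{tors}$ (le noyau uniquement divisible est sans torsion et toute classe de torsion se relève en torsion), donc $\lambda_0(k_i,n,B_i(k^s)_{tors})=\lambda_0(k_{i-1},n,A_{i-1}(k^s)_{tors})$. Le point décisif est le dévissage: puisque $U_i(k^s)_{tors}=0$ et que le noyau de $A_i^0(k^s)\to B_i(k^s)$ est divisible, on obtient une suite exacte de modules galoisiens $0\to T_i(k^s)_{tors}\to A_i^0(k^s)_{tors}\to B_i(k^s)_{tors}\to 0$; en prenant $H^0(k_i,-)$ il vient $0\to T_i(k_i)_{tors}\to A_i^0(k_i)_{tors}\to B_i(k_i)_{tors}\to W_i'\to 0$ avec $W_i'\subseteq H^1(k_i,T_i(k^s)_{tors})$. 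Contrairement au cas (i), ce dernier groupe n'est plus fini: la proposition \ref{tor1tors} montre qu'il est de torsion de type cofini, de sorte que $\lambda(W_i')=\prod_\ell \ell^{r_\ell v_\ell(n)}$ avec $r_\ell\geq 0$, c'est-à-dire un entier. On en déduit $\lambda_0(k_i,n,A_i^0(k^s)_{tors})=\lambda_0(k_i,n,T_i(k^s)_{tors})\cdot\lambda_0(k_i,n,B_i(k^s)_{tors})/\lambda(W_i')$, et en composant le long de la tour (avec $W_{-1}'=0$ puisque $H^1(\mathbb{C},-)=0$), le quotient de l'énoncé s'identifie à $\prod_{r=-1}^{d-1}\lambda(W_r')$: produit d'entiers $\geq 1$, d'où l'intégralité.

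La principale difficulté concerne le cas d'égalité. L'hypothèse de réduction scindée de $B_i$ pour $i\in\{1,\dots,d\}$ porte exactement sur les tores $T_0,\dots,T_{d-1}$ apparaissant dans les dévissages des $A_0,\dots,A_{d-1}$ (le cas $r=-1$, sur $\mathbb{C}$, étant automatique), et elle doit entraîner l'annulation des morphismes de liaison $\delta_r\colon H^0(k_r,B_r(k^s)_{tors})\to H^1(k_r,T_r(k^s)_{tors})$, donc $W_r'=0$ et $\lambda(W_r')=1$ pour tout $r$. C'est le seul endroit où les parties (i) et (ii) diffèrent réellement, l'écart provenant du remplacement de $H^1(k_r,T_r)$ (fini) par $H^1(k_r,T_r(k^s)_{tors})$ (de type cofini, à partie divisible non triviale); il faudra donc vérifier avec soin, d'une part l'exactitude des suites de torsion géométrique, d'autre part le lien précis entre la notion de réduction scindée et l'annulation (ou la finitude) de l'image $W_r'$ du morphisme de liaison.
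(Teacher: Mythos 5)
Your proposal is correct and takes essentially the same approach as the paper: reduction along the Néron models (surjections with uniquely divisible kernel), dévissage through $F_r$, $U_r\times T_r$ and $B_r$ — with the finiteness of $H^1(k_r,T_r)$ (proposition \ref{torr}) absorbing the connecting maps in part (i) — and, for part (ii), the exact sequence of geometric torsion modules whose connecting maps land in the cofinite-type groups $H^1(k_r,T_r(k_r^s)_{tors})$, the integrality being precisely the product of the $\lambda(W_r')$ along the tower, with $W_{-1}'=0$ over $\mathbb{C}$. The single step you flag rather than prove (split reduction forces $W_r'=0$) is also dispatched in one line by the paper; it is immediate once one reads \emph{réduction scindée} as saying that the extension $0\to U_r\times T_r\to A_r^0\to B_r\to 0$ admits a group-scheme section, which induces a Galois-equivariant splitting of the torsion sequence and hence kills the connecting maps $\delta_r$ — note that splitness of the torus $T_r$ alone would \emph{not} suffice here, since a nonsplit extension of $B_r$ by a split torus can still have nonzero connecting map on torsion.
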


\begin{proof}
\begin{itemize}
\item[(i)] Exactement comme dans le théorème \ref{1-local}, on montre que:
$$\lambda_0(k,n,A) = \lambda_0(k_{d-1},n,B_{d-1})\lambda_0(k_{d-1},n,T_{d-1})\lambda_0(k_{d-1},n,U_{d-1}).$$
Bien sûr, $\lambda_0(k_{d-1},n,U_{d-1}) = 1$, et donc:
$$\lambda_0(k,n,A) = \lambda_0(k_{d-1},n,B_{d-1})\lambda_0(k_{d-1},n,T_{d-1}).$$
Il suffit alors de procéder par récurrence.
\item[(ii)] Comme le morphisme $\mathcal{A}_d(\mathcal{O}_k) \rightarrow A_{d-1}(k_{d-1})$ est surjectif à noyau uniquement divisible, on a $\lambda_0(k,n,A(k^s)_{tors}) = \lambda_0(k_{d-1},n,A_{d-1}(k_{d-1}^s)_{tors})$. On procède maintenant par dévissage.
\begin{itemize}
\item[$\bullet$] Comme $0 \rightarrow A^0_{d-1} \rightarrow A_{d-1} \rightarrow F_{d-1} \rightarrow 0$ est exacte, le morphisme $A^0_{d-1}(k_{d-1})_{tors} \rightarrow A_{d-1}(k_{d-1})_{tors}$ est injectif à conoyau fini, et donc: $$\lambda_0(k_{d-1},n,A_{d-1}(k_{d-1}^s)_{tors}) = \lambda_0(k_{d-1},n,A_{d-1}^0(k_{d-1}^s)_{tors}).$$
\item[$\bullet$] On a une suite exacte $0 \rightarrow U_{d-1} \times T_{d-1} \rightarrow A^0_{d-1} \rightarrow B_{d-1} \rightarrow 0$. Comme $U_{d-1}(k_{d-1}^s)$ et $T_{d-1}(k_{d-1}^s)$ sont divisibles, on en déduit l'exactitude de $0 \rightarrow T_{d-1}(k_{d-1}^s)_{tors} \rightarrow A^0_{d-1}(k_{d-1}^s)_{tors} \rightarrow B_{d-1}(k_{d-1}^s)_{tors} \rightarrow 0$. En passant à la cohomologie, on obtient une suite exacte:
$$0 \rightarrow T_{d-1}(k_{d-1})_{tors} \rightarrow A^0_{d-1}(k_{d-1})_{tors} \rightarrow B_{d-1}(k_{d-1})_{tors} \rightarrow H^1(k_{d-1},T_{d-1}(k_{d-1}^s)_{tors}). $$
Donc $\lambda_0(k_{d-1},n,A_{d-1}^0(k_{d-1}^s)_{tors})$ divise $\lambda_0(k_{d-1},n,T_{d-1}(k_{d-1}^s)_{tors}) \lambda_0(k_{d-1},n,B_{d-1}(k_{d-1}^s)_{tors})$. 
\end{itemize}
En procédant par récurrence, $\lambda_0(k_{d-1},n,A_{d-1}^0(k_{d-1}^s)_{tors})$ divise $\lambda_0(k_{-1},n,B_{-1}(k_{-1}^s)_{tors}) \cdot \prod_{r=-1}^{d-1} \lambda_0(k_r,n,T_r(k_{r}^s)_{tors})$.\\
Si $A=B_d$ est à réduction scindée, on remarque que la flèche $B_{d-1}(k_{d-1})_{tors} \rightarrow H^1(k_{d-1},T_{d-1}(k_{d-1}^s)_{tors})$ est nulle, et on a donc une suite exacte courte:
$$0 \rightarrow T_{d-1}(k_{d-1})_{tors} \rightarrow A^0_{d-1}(k_{d-1})_{tors} \rightarrow B_{d-1}(k_{d-1})_{tors} \rightarrow 0.$$\\
Donc en procédant par récurrence, si $B_i$ est à réduction scindée pour $i \in \{1,...,d\}$, on obtient l'égalité: $$\lambda_0(k,n,A(k^s)_{tors})=\lambda_0(k_{-1},n,B_{-1}(k_{-1}^s)_{tors}) \cdot \prod_{r=-1}^{d-1} \lambda_0(k_r,n,T_r(k_r^s)_{tors}).$$
\end{itemize}
\end{proof}

\begin{corollary}\label{0}
Notons $\rho_r$ le rang du tore $T_r$ pour $r \in \{-1,0,...,d-1\}$. On a pour tout premier ${\ell}$:
$$\beta_{0,{\ell}} = 2\text{dim} B_{-1} - \sum_{r=-1}^{d-1} r\rho_r,$$
$$\beta_{0,{\ell}}^{tors}=\beta_{d+1,{\ell}}\leq 2\text{dim} B_{-1} + \sum_{r=-1}^{d-1} \rho_r.$$
Si $B_i$ est à réduction scindée pour $i \in \{1,...,d\}$, alors pour tout premier $p$: $$\beta_{0,{\ell}}^{tors} = \beta_{d+1,{\ell}}= 2\text{dim} B_{-1} + \sum_{r=-1}^{d-1} \rho_r.$$
\end{corollary}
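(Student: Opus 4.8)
Le plan consiste à évaluer séparément les deux quantités $\beta_{0,\ell}$ et $\beta_{0,\ell}^{tors}$ en exploitant la proposition \ref{beta0}, qui les exprime à l'aide des quantités analogues associées à la variété abélienne résiduelle $B_{-1}$ sur $k_{-1}=\mathbb{C}$ et aux tores $T_r$ apparaissant dans les dévissages successifs. Il s'agira donc de calculer chacun des facteurs $\lambda_0$ intervenant dans les formules de \ref{beta0}, puis de comparer les exposants de $n$ terme à terme. Je noterai à chaque fois que les $\lambda_0$ rencontrés sont des puissances de $n$ dont l'exposant ne dépend pas de $\ell$, ce qui force aussitôt $\beta_{0,\ell}$ et $\beta_{0,\ell}^{tors}$ à être indépendants de $\ell$ et égaux à ces exposants.

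Pour $\beta_{0,\ell}$, je partirais de l'égalité (i) de la proposition \ref{beta0}. D'une part, $B_{-1}$ étant une variété abélienne sur $\mathbb{C}$, le groupe $B_{-1}(\mathbb{C})$ est divisible de $n$-torsion $(\mathbb{Z}/n\mathbb{Z})^{2\dim B_{-1}}$, de sorte que $\lambda_0(k_{-1},n,B_{-1}) = n^{2\dim B_{-1}}$. D'autre part, la proposition \ref{tor0} appliquée au corps $r$-local $k_r$ donne $\lambda_0(k_r,n,T_r) = n^{-r\rho_r}$ pour $r \in \{0,...,d-1\}$, le cas $r=-1$ (un tore sur $\mathbb{C}$, donc déployé) se traitant directement et fournissant encore $\lambda_0(k_{-1},n,T_{-1}) = n^{\rho_{-1}} = n^{-(-1)\rho_{-1}}$. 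En multipliant ces facteurs, on obtient $\lambda_0(k,n,A) = n^{2\dim B_{-1} - \sum_{r=-1}^{d-1} r\rho_r}$, d'où la valeur annoncée de $\beta_{0,\ell}$.

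Pour $\beta_{0,\ell}^{tors}$, je rappellerais d'abord que le corollaire \ref{equa} donne $\beta_{0,\ell}^{tors} = \beta_{d+1,\ell}$, ce qui justifie l'égalité des deux membres. J'utiliserais ensuite (ii) de la proposition \ref{beta0}. Le calcul des facteurs de torsion est analogue: l'isomorphisme $B_{-1}(\mathbb{C})_{tors} \cong (\mathbb{Q}/\mathbb{Z})^{2\dim B_{-1}}$ donne $\lambda_0(k_{-1},n,B_{-1}(k_{-1}^s)_{tors}) = n^{2\dim B_{-1}}$, et la proposition \ref{tor0tors} (complétée à la main pour $r=-1$) donne $\lambda_0(k_r,n,T_r(k_r^s)_{tors}) = n^{\rho_r}$. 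Le produit vaut donc $n^{2\dim B_{-1} + \sum_{r=-1}^{d-1} \rho_r}$. Comme, d'après \ref{beta0}(ii), le nombre $\lambda_0(k,n,A(k^s)_{tors})$ divise ce produit pour tout $n$, la comparaison des valuations $\ell$-adiques (en prenant $n=\ell$) donne la majoration $\beta_{0,\ell}^{tors} \leq 2\dim B_{-1} + \sum_{r=-1}^{d-1} \rho_r$. Lorsque les $B_i$ sont à réduction scindée pour $i \in \{1,...,d\}$, la proposition \ref{beta0}(ii) fournit l'égalité $\lambda_0(k,n,A(k^s)_{tors}) = n^{2\dim B_{-1} + \sum_{r=-1}^{d-1} \rho_r}$, ce qui donne l'égalité voulue pour $\beta_{0,\ell}^{tors}$.

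La principale subtilité me semble être le traitement du cas limite $r=-1$, où le corps $k_{-1}=\mathbb{C}$ n'entre pas dans le cadre des propositions \ref{tor0} et \ref{tor0tors} (énoncées pour $k_0=\mathbb{C}((t))$) et où les calculs doivent être refaits directement pour une variété abélienne et pour un tore sur $\mathbb{C}$; il faudra aussi vérifier soigneusement que toutes ces quantités sont des puissances de $n$ à exposant indépendant de $\ell$, afin que la comparaison des exposants fournisse les $\beta_{0,\ell}$ et la majoration de $\beta_{0,\ell}^{tors}$ uniformément en $\ell$.
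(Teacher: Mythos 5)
Votre preuve est correcte et suit exactement la même démarche que celle du papier, dont la preuve se réduit d'ailleurs à la citation des mêmes ingrédients (proposition \ref{beta0}, corollaire \ref{equa}, propositions \ref{tor0} et \ref{tor0tors}). Les calculs explicites que vous détaillez — en particulier le traitement direct du cas $r=-1$ sur $k_{-1}=\mathbb{C}$, où $\lambda_0(k_{-1},n,B_{-1})=n^{2\dim B_{-1}}$ et $\lambda_0(k_{-1},n,T_{-1})=n^{\rho_{-1}}$, non couvert formellement par \ref{tor0} et \ref{tor0tors} — sont précisément ce que le papier laisse implicite.
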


\begin{proof}
Cela découle de la proposition \ref{beta0}, du corollaire \ref{equa} et des propositions \ref{tor0} et \ref{tor0tors}.
\end{proof}

\begin{corollary}
On a $2\text{dim} B_{-1} - \sum_{r=-1}^{d-1} r\rho_r = 2\text{dim} B_{-1}^* - \sum_{r=-1}^{d-1} r\rho_r^*$. Si $B_i$ et $B_i^*$ sont à réduction scindée pour $i \in \{1,...,d\}$, alors $2\text{dim} B_{-1} + \sum_{r=-1}^{d-1} \rho_r = 2\text{dim} B_{-1}^* + \sum_{r=-1}^{d-1} \rho_r^*$.
\end{corollary}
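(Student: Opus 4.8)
The plan is to deduce both identities immediately by applying Corollary \ref{0} twice---once to $A$ and once to its dual $A^t$---and then invoking the isogeny-invariance of the invariants $\beta_{0,\ell}$ and $\beta_{0,\ell}^{tors}$ proved in Proposition \ref{isogénie}. All the genuine work has been carried out upstream (the dévissage of Proposition \ref{beta0} together with the torus computations of Propositions \ref{tor0} and \ref{tor0tors}), so the argument is a short bookkeeping step; the only point requiring a moment's attention is to check that the starred filtration data $(B_r^*,T_r^*)$ feed into exactly the same formulas, and that the split-reduction hypothesis is what upgrades the relevant inequality to an equality.

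For the first equality, I would start from the unconditional formula of Corollary \ref{0} applied to $A$, namely $\beta_{0,\ell} = 2\dim B_{-1} - \sum_{r=-1}^{d-1} r\rho_r$. Since the whole apparatus of subsection \ref{notations} (Néron models, filtrations, tori $T_r^*$ and abelian varieties $B_r^*$) is set up verbatim for $A^t$, the same corollary gives $\beta_{0,\ell}^t = 2\dim B_{-1}^* - \sum_{r=-1}^{d-1} r\rho_r^*$. Proposition \ref{isogénie} furnishes $\beta_{0,\ell} = \beta_{0,\ell}^t$, and equating the two closed-form expressions yields the claimed identity. I would emphasize that no hypothesis on the reduction is needed here, because the formula for $\beta_{0,\ell}$ in Corollary \ref{0} is itself unconditional.

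For the second equality, I would invoke the conditional part of Corollary \ref{0}: under the hypothesis that the $B_i$ have split reduction for $i \in \{1,\dots,d\}$, the inequality becomes the equality $\beta_{0,\ell}^{tors} = 2\dim B_{-1} + \sum_{r=-1}^{d-1} \rho_r$, and symmetrically, assuming the $B_i^*$ have split reduction, $\beta_{0,\ell}^{t,tors} = 2\dim B_{-1}^* + \sum_{r=-1}^{d-1} \rho_r^*$. The split-reduction hypothesis is precisely what kills the connecting map $B_{r-1}(k_{r-1})_{tors} \to H^1(k_{r-1},T_{r-1}(k_{r-1}^s)_{tors})$ in the dévissage of Proposition \ref{beta0}, so that the divisibility relation becomes an equality. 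Then the second assertion of Proposition \ref{isogénie}, $\beta_{0,\ell}^{tors} = \beta_{0,\ell}^{t,tors}$, lets me equate the two expressions and conclude.

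I do not anticipate any real obstacle: the proof is a one-line combination of Corollary \ref{0} (applied to $A$ and to $A^t$) with Proposition \ref{isogénie}. If anything, the only subtlety worth flagging explicitly is that the first identity holds for every prime $\ell$ and is therefore independent of $\ell$---consistent with the fact that the right-hand side is a purely geometric quantity attached to the reduction of $A$---whereas the second identity additionally requires the split-reduction hypotheses on both $A$ and $A^t$ to replace the inequalities of Corollary \ref{0} by equalities.
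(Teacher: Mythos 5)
Your proof is correct and is exactly the paper's argument: the paper's proof reads \og Cela découle immédiatement du corollaire \ref{0} et du lemme \ref{isogénie} \fg, i.e.\ apply Corollaire \ref{0} to $A$ and to $A^t$ (the starred filtration data being set up symmetrically in la section \ref{notations}) and equate via $\beta_{0,\ell}=\beta_{0,\ell}^t$ and $\beta_{0,\ell}^{tors}=\beta_{0,\ell}^{t,tors}$ from la proposition \ref{isogénie}. Your additional remarks --- that the first identity is unconditional while the second needs the split-reduction hypothesis on both sides to turn the divisibility of la proposition \ref{beta0} into an equality --- are accurate and faithful to the upstream results.
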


\begin{proof}
Cela découle immédiatement du corollaire \ref{0} et du lemme \ref{isogénie}.
\end{proof}

\begin{remarque}
Plus généralement, la quantité $2\text{dim} B_{-1} - \sum_{r=-1}^{d-1} r\rho_r$ est invariante par isogénie.
\end{remarque}

\subsection{Majorations des $\beta_{r,\ell}$}

\begin{lemma}\label{tech}
Les parties divisibles de $(\varprojlim_m H^{d-r}(k_{d-1},{_m}A^t(k^{nr})))^D$ et de $H^{d-r}(k_{d-1},A(k^{nr})_{tors})$ sont (non canoniquement) isomorphes.
\end{lemma}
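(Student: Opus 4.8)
The plan is to reduce the statement to a comparison of $\ell$-adic coranks, and to extract the degree symmetry it hides from the combination of the duality theorem over the residue field $k_{d-1}$ and the isogeny between $A$ and $A^t$. First I would record the standing identifications. Writing $G = \text{Gal}(k^{nr}/k) \cong \text{Gal}(k_{d-1}^s/k_{d-1})$, the coefficient modules ${_m}A(k^{nr})$ and ${_m}A^t(k^{nr})$ are finite $G$-modules, and the Weil pairing gives ${_m}A^t(k^{nr}) \cong \underline{\text{Hom}}({_m}A(k^{nr}),\mu_m)$. Since $\mathbb{C} \subseteq k_0 \subseteq k_{d-1}$, the group $G$ acts trivially on all roots of unity, so every twist $\mu_m^{\otimes j}$ is (non-canonically) isomorphic to $\mathbb{Z}/m\mathbb{Z}$ as a $G$-module; in particular $\underline{\text{Hom}}({_m}A^t(k^{nr}),\mu_m^{\otimes(d-1)}) \cong {_m}A(k^{nr})$. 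As $k_{d-1}$ is $(d-1)$-local, the duality theorem recalled in the introduction (théorème 2.17 de \cite{MilADT}) then yields, for every $n$ and every $j$, a perfect pairing of finite groups
\[ H^j(k_{d-1},{_{\ell^n}}A^t(k^{nr})) \times H^{d-j}(k_{d-1},{_{\ell^n}}A(k^{nr})) \to \mathbb{Z}/\ell^n\mathbb{Z}. \]
Both groups occurring in the lemma are of torsion of cofinite type, so it suffices to show that their maximal divisible subgroups have the same $\ell$-corank for each prime $\ell$.

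Next I would compute the two coranks. For the left-hand group, the $\ell$-primary part of the profinite group $\varprojlim_m H^{d-r}(k_{d-1},{_m}A^t(k^{nr}))$ is $H^{d-r}(k_{d-1},T_\ell A^t(k^{nr}))$, a finitely generated $\mathbb{Z}_\ell$-module; dualizing shows that $(\varprojlim_m H^{d-r}(k_{d-1},{_m}A^t(k^{nr})))^D$ is of cofinite type with $\ell$-corank $\text{rk}_{\mathbb{Z}_\ell}\, H^{d-r}(k_{d-1},T_\ell A^t(k^{nr}))$. For the right-hand group one has $H^{d-r}(k_{d-1},A(k^{nr})_{tors})\{\ell\} = \varinjlim_n H^{d-r}(k_{d-1},{_{\ell^n}}A(k^{nr}))$; applying the pairing above with $j=r$ and passing to the limit identifies its continuous dual with $\varprojlim_n H^r(k_{d-1},{_{\ell^n}}A^t(k^{nr})) = H^r(k_{d-1},T_\ell A^t(k^{nr}))$, so its $\ell$-corank equals $\text{rk}_{\mathbb{Z}_\ell}\, H^r(k_{d-1},T_\ell A^t(k^{nr}))$. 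It therefore remains to prove the equality $\text{rk}_{\mathbb{Z}_\ell}\, H^r(k_{d-1},T_\ell A^t(k^{nr})) = \text{rk}_{\mathbb{Z}_\ell}\, H^{d-r}(k_{d-1},T_\ell A^t(k^{nr}))$.

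Finally I would establish this degree symmetry. Taking projective limits in the finite-level pairing, whose two factors are the degree-$j$ cohomology of $A^t$ and the degree-$(d-j)$ cohomology of $A$, gives $\text{rk}_{\mathbb{Z}_\ell}\, H^{d-j}(k_{d-1},T_\ell A^t(k^{nr})) = \text{rk}_{\mathbb{Z}_\ell}\, H^{j}(k_{d-1},T_\ell A(k^{nr}))$ for all $j$. On the other hand $A$ and $A^t$ are isogenous (proposition \ref{isogénie}), and an isogeny induces an isomorphism $T_\ell A \otimes_{\mathbb{Z}_\ell} \mathbb{Q}_\ell \cong T_\ell A^t \otimes_{\mathbb{Z}_\ell} \mathbb{Q}_\ell$ of $\mathbb{Q}_\ell[G]$-modules, whence $\text{rk}_{\mathbb{Z}_\ell}\, H^{j}(k_{d-1},T_\ell A(k^{nr})) = \text{rk}_{\mathbb{Z}_\ell}\, H^{j}(k_{d-1},T_\ell A^t(k^{nr}))$. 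Combining the two equalities gives $\text{rk}\, H^{d-j}(k_{d-1},T_\ell A^t(k^{nr})) = \text{rk}\, H^{j}(k_{d-1},T_\ell A^t(k^{nr}))$; taking $j=r$ yields precisely the required equality of coranks. Since the two maximal divisible subgroups are of the form $\bigoplus_\ell (\mathbb{Q}_\ell/\mathbb{Z}_\ell)^{c_\ell}$ with identical coranks $c_\ell$, they are (non-canonically) isomorphic.

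The crux is this last step. The arithmetic duality over $k_{d-1}$ swaps both the cohomological degree $j \leftrightarrow d-j$ and the abelian variety $A \leftrightarrow A^t$, so duality alone cannot produce a symmetry in the degree with the coefficients held fixed; the isogeny $A \sim A^t$ is exactly what supplies the missing coefficient swap, while the rationality of the roots of unity over $k_{d-1}$ is what lets one discard all Tate twists throughout. These are the two inputs that close the loop, and keeping the bookkeeping of degrees and coefficients straight across the passage between finite coefficients, Tate modules and their duals is where the care is needed.
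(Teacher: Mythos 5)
Your reduction of the statement to an equality of $\ell$-coranks is fine, but the two pillars your argument rests on both give way. The first is the very first identification: the Weil pairing does \emph{not} give ${_m}A^t(k^{nr}) \cong \text{Hom}({_m}A(k^{nr}),\mu_m)$. It gives ${_m}A^t(k^s) \cong \text{Hom}({_m}A(k^s),\mu_m)$ as $\text{Gal}(k^s/k)$-modules, and taking invariants under the inertia group $I=\text{Gal}(k^s/k^{nr})$ (which acts trivially on $\mu_m$) yields
$${_m}A^t(k^{nr}) \;=\; \text{Hom}_I({_m}A(k^s),\mu_m) \;=\; \text{Hom}\bigl(({_m}A(k^s))_I,\,\mu_m\bigr),$$
the dual of the $I$-\emph{coinvariants}, whereas ${_m}A(k^{nr}) = ({_m}A(k^s))^I$ is the group of \emph{invariants}. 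Invariants and coinvariants have the same order, but they need not be isomorphic as $\text{Gal}(k_{d-1}^s/k_{d-1})$-modules, and in any case they are not canonically dual to one another; so the perfect pairing $H^j(k_{d-1},{_{\ell^n}}A^t(k^{nr})) \times H^{d-j}(k_{d-1},{_{\ell^n}}A(k^{nr})) \rightarrow \mathbb{Z}/\ell^n\mathbb{Z}$ on which everything afterwards is built is unjustified. (Compare with the paper's lemma \ref{lemme1}, where duality over $k_{d-1}$ is only ever applied to the pair consisting of ${_m}A^t(k^{nr})$ and its honest Pontryagin dual $({_m}A^t(k^{nr}))^D$ — never to the pair $({_m}A^t(k^{nr}), {_m}A(k^{nr}))$.)

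The second gap is the step you yourself call the crux. Even granting the pairing, ``taking projective limits'' in it does not yield $\text{rk}\,\varprojlim_n H^{d-j}(k_{d-1},{_{\ell^n}}A^t(k^{nr})) = \text{rk}\,\varprojlim_n H^{j}(k_{d-1},{_{\ell^n}}A(k^{nr}))$. A perfect pairing of finite groups turns a projective system into an injective one: the transition maps induced by $\ell\colon {_{\ell^{n+1}}}A^t \rightarrow {_{\ell^n}}A^t$ are adjoint to those induced by the inclusions ${_{\ell^n}}A \hookrightarrow {_{\ell^{n+1}}}A$, so what actually follows is that $\text{rk}\,\varprojlim_n H^{d-j}(k_{d-1},{_{\ell^n}}A^t(k^{nr}))$ equals the \emph{corank} of $\varinjlim_n H^{j}(k_{d-1},{_{\ell^n}}A(k^{nr}))$ — a colimit, not a limit, on the right. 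Converting the corank of the colimit cohomology into the rank of the limit cohomology of the \emph{same} coefficients in the \emph{same} degree is precisely the non-formal content of the lemma: it is what the paper proves, by a dévissage through $Q_{\ell^s} = \ell^s A^t(k^{nr})_{tors}$, using that $A^t(k^{nr})_{tors}/\ell^s$ takes only finitely many isomorphism classes to get kernels and cokernels bounded uniformly in $s$, hence a comparison map $\varprojlim_s H^{d-r}(k_{d-1},{_{\ell^s}}A^t(k^{nr})) \rightarrow T_\ell H^{d-r}(k_{d-1},A^t(k^{nr})_{tors})$ with finite kernel and cokernel; after that, only Pontryagin duality and the isogeny $A \sim A^t$ are needed. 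Your proposal assumes this comparison silently and spends its effort on duality machinery that the statement does not require — note that both sides of the lemma live in the same degree $d-r$, which is already a hint that no degree-swapping duality theorem should be needed, and indeed the paper's proof uses none. The isogeny step is the one ingredient you share with the paper, and it is the only one of your three inputs that survives scrutiny.
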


\begin{proof}
Fixons un nombre premier $\ell$. Pour chaque entier naturel $s$, on a une suite exacte:
$$0 \rightarrow {_{\ell^s}}A^t(k^{nr}) \rightarrow A^t(k^{nr})_{tors} \rightarrow A^t(k^{nr})_{tors} \rightarrow A^t(k^{nr})_{tors}/\ell^s \rightarrow 0.$$
En notant $Q_{\ell^s}$ le groupe $\ell^sA^t(k^{nr})_{tors}$, on a des suites exactes:
$$0 \rightarrow {_{\ell^s}}A^t(k^{nr}) \rightarrow A^t(k^{nr})_{tors} \rightarrow Q_{\ell^s} \rightarrow 0,$$
$$0\rightarrow Q_{\ell^s} \rightarrow A^t(k^{nr})_{tors} \rightarrow A^t(k^{nr})_{tors}/\ell^s \rightarrow 0.$$
On a alors un diagramme commutatif à colonne exacte dont les flèches diagonales sont la multiplication par $\ell^s$:\\
\centerline{\xymatrix{
H^{d-r-1}(k_{d-1},A^t(k^{nr})_{tors}) \ar[d]\ar[dr]^{\ell^s} & \\
H^{d-r-1}(k_{d-1},Q_{\ell^s}) \ar[d]\ar[r] & H^{d-r-1}(k_{d-1},A^t(k^{nr})_{tors})\\
H^{d-r}(k_{d-1}, {_{\ell^s}}A^t(k^{nr})) \ar[d]&\\
H^{d-r}(k_{d-1},A^t(k^{nr})_{tors}) \ar[d]\ar[dr]^{\ell^s}&\\
H^{d-r}(k_{d-1},Q_{\ell^s})\ar[r] & H^{d-r}(k_{d-1},A^t(k^{nr})_{tors}).
}}
Quand on fait varier $s$, le module galoisien $A^t(k^{nr})_{tors}/\ell^s$ ne peut prendre qu'un nombre fini de valeurs à isomorphisme près. Par conséquent, il existe une constante entière $C_{\ell}>0$ telle que, pour tout $s \geq 0$:
\begin{gather*}
|\text{Ker}(H^{d-r-1}(k_{d-1},Q_{\ell^s}) \rightarrow H^{d-r-1}(k_{d-1},A^t(k^{nr})_{tors}))| <C_{\ell},\\
|\text{Coker}(H^{d-r-1}(k_{d-1},Q_{\ell^s}) \rightarrow H^{d-r-1}(k_{d-1},A^t(k^{nr})_{tors}))| <C_{\ell},\\
|\text{Ker}(H^{d-r}(k_{d-1},Q_{\ell^s}) \rightarrow H^{d-r}(k_{d-1},A^t(k^{nr})_{tors}))| <C_{\ell},\\
|\text{Coker}(H^{d-r}(k_{d-1},Q_{\ell^s}) \rightarrow H^{d-r}(k_{d-1},A^t(k^{nr})_{tors}))| <C_{\ell}.
\end{gather*}
On en déduit que:
\begin{gather*} |\text{Coker}(H^{d-r}(k_{d-1}, {_{\ell^s}}A^t(k^{nr})) \rightarrow {_{\ell^s}}H^{d-r}(k_{d-1},A^t(k^{nr})_{tors}))| \leq |{_{\ell^s}}H^{d-r}(k_{d-1},A^t(k^{nr})_{tors})/C_{\ell}!|,\\
|\text{Ker}(H^{d-r}(k_{d-1}, {_{\ell^s}}A^t(k^{nr})) \rightarrow {_{\ell^s}}H^{d-r}(k_{d-1},A^t(k^{nr})_{tors}))| \leq C_{\ell}|H^{d-r-1}(k_{d-1},A^t(k^{nr})_{tors})/\ell^s|,
\end{gather*}
d'où l'existence d'une contante $D_{\ell}$ telle que, pour tout $s \geq 0$:
\begin{gather*}
|\text{Coker}(H^{d-r}(k_{d-1}, {_{\ell^s}}A^t(k^{nr})) \rightarrow {_{\ell^s}}H^{d-r}(k_{d-1},A^t(k^{nr})_{tors}))| \leq D_{\ell},\\
|\text{Ker}(H^{d-r}(k_{d-1}, {_{\ell^s}}A^t(k^{nr})) \rightarrow {_{\ell^s}}H^{d-r}(k_{d-1},A^t(k^{nr})_{tors}))| \leq D_{\ell}.
\end{gather*}
On en déduit que les groupes: 
\begin{gather*}
\text{Ker}(\varprojlim_s H^{d-r}(k_{d-1}, {_{\ell^s}}A^t(k^{nr})) \rightarrow \varprojlim_s{_{\ell^s}}H^{d-r}(k_{d-1},A^t(k^{nr})_{tors})),\\
\text{Coker}(\varprojlim_s H^{d-r}(k_{d-1}, {_{\ell^s}}A^t(k^{nr})) \rightarrow \varprojlim_s{_{\ell^s}}H^{d-r}(k_{d-1},A^t(k^{nr})_{tors})),
\end{gather*}
sont finis. Cela montre que les parties divisibles de $(\varprojlim_m H^{d-r}(k_{d-1},{_m}A^t(k^{nr})))^D$ et de $H^{d-r}(k_{d-1},A^t(k^{nr})_{tors})$ sont isomorphes. Comme $A$ et $A^t$ sont isogènes, les parties divisibles de $H^{d-r}(k_{d-1},A^t(k^{nr})_{tors})$ et de $H^{d-r}(k_{d-1},A(k^{nr})_{tors})$ sont isomorphes, ce qui achève la preuve.
\end{proof}

\begin{lemma}\label{lemme1}
Pour chaque entier naturel $r <d-1$, les parties divisibles de groupes de type cofini $H^{r}(k_{d-1},H^1(k^{nr},A))$ et $H^{d-r}(k_{d-1},A_{d-1}^0(k_{d-1}^s))$ sont (non canoniquement) isomorphes. Pour $r = d-1$ et $r=d$, les parties divisibles de groupes de type cofini $H^{r}(k_{d-1},H^1(k^{nr},A))$ et $H^{d-r}(k_{d-1},A_{d-1}^0(k_{d-1}^s)_{tors})$ sont (non canoniquement) isomorphes.
\end{lemma}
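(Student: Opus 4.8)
Le plan est de ramener l'énoncé au lemme \ref{tech} par une double dualité, puis de comparer le groupe de torsion $A_{d-1}^0(k_{d-1}^s)_{tors}$ au groupe entier $A_{d-1}^0(k_{d-1}^s)$. On fixe un nombre premier $\ell$ et l'on raisonne sur la partie $\ell$-primaire, le cas général s'obtenant en sommant sur $\ell$. On observe d'abord que $H^1(k^{nr},A)$ est un $\text{Gal}(k^{nr}/k)$-module de torsion de type cofini (puisque $\text{Gal}(k^s/k^{nr}) \cong \hat{\mathbb{Z}}$, ${_n}H^1(k^{nr},A)$ est un quotient du groupe fini $H^1(k^{nr},{_n}A)$), et que $\text{Gal}(k^{nr}/k) \cong \text{Gal}(k_{d-1}^s/k_{d-1})$, de sorte que $H^r(k_{d-1},H^1(k^{nr},A))$ a bien un sens et est de type cofini.

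\emph{Première étape: identification à un dual.} Par dualité sur $\text{Gal}(k^s/k^{nr}) \cong \hat{\mathbb{Z}}$ (exemple 1.10 de \cite{MilADT}), en reprenant la construction de l'isomorphisme $\iota_{\ell}$ du paragraphe \ref{C} mais de façon $\text{Gal}(k^{nr}/k)$-équivariante, on dispose d'un isomorphisme de $\text{Gal}(k_{d-1}^s/k_{d-1})$-modules entre $H^1(k^{nr},A)\{\ell\}$ et le dual de $\varprojlim_m {_{\ell^m}}A^t(k^{nr})$ (à une torsion à la Tate près, provenant de ce que $\mu_{\ell^m}$ n'est trivial que sur $k^{nr}$). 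On applique alors $H^r(k_{d-1},-)$ et la dualité pour le corps $(d-1)$-local $k_{d-1}$ (théorème 2.17 de \cite{MilADT} rappelé dans l'introduction), qui accouple les degrés $r$ et $d-r$: la torsion à la Tate introduite ci-dessus se compense avec le module dualisant $\mathbb{Q}/\mathbb{Z}(d-1)$ de $k_{d-1}$, et l'on obtient, à un groupe fini près,
$$H^r(k_{d-1},H^1(k^{nr},A)\{\ell\}) \cong \left(\varprojlim_m H^{d-r}(k_{d-1},{_{\ell^m}}A^t(k^{nr}))\right)^D.$$

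\emph{Deuxième étape: lemme \ref{tech} et réduction.} Le membre de droite est exactement la partie $\ell$-primaire de l'objet du lemme \ref{tech}; sa partie divisible est donc celle de $H^{d-r}(k_{d-1},A(k^{nr})_{tors})$. La surjection de réduction $A(k^{nr}) = \mathcal{A}(\mathcal{O}_{k^{nr}}) \rightarrow A_{d-1}(k_{d-1}^s)$, de noyau uniquement divisible, fournit un isomorphisme de modules galoisiens $A(k^{nr})_{tors} \cong A_{d-1}(k_{d-1}^s)_{tors}$, et la suite exacte $0 \rightarrow A_{d-1}^0 \rightarrow A_{d-1} \rightarrow F_{d-1} \rightarrow 0$ (avec $F_{d-1}$ fini) montre que $H^{d-r}(k_{d-1},A(k^{nr})_{tors})$ et $H^{d-r}(k_{d-1},A_{d-1}^0(k_{d-1}^s)_{tors})$ ont même partie divisible. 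Les parties divisibles de $H^r(k_{d-1},H^1(k^{nr},A))$ et de $H^{d-r}(k_{d-1},A_{d-1}^0(k_{d-1}^s)_{tors})$ coïncident donc, ce qui est exactement l'énoncé voulu pour $r=d-1$ et $r=d$.

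\emph{Troisième étape: passage au groupe entier pour $r<d-1$.} On pose $V = A_{d-1}^0(k_{d-1}^s) \otimes_{\mathbb{Z}} \mathbb{Q}$, uniquement divisible, et on considère la suite exacte $0 \rightarrow A_{d-1}^0(k_{d-1}^s)_{tors} \rightarrow A_{d-1}^0(k_{d-1}^s) \rightarrow V \rightarrow 0$. Comme $A_{d-1}^0$ est extension de la variété abélienne $B_{d-1}$ par $U_{d-1} \times T_{d-1}$ et que la cohomologie de $\mathbb{G}_a$, des tores et des variétés abéliennes sur $k_{d-1}$ est de torsion en degré $\geq 1$, le groupe $H^j(k_{d-1},A_{d-1}^0(k_{d-1}^s))$ est de torsion pour $j \geq 1$, d'où $H^j(k_{d-1},V) = H^j(k_{d-1},A_{d-1}^0(k_{d-1}^s)) \otimes_{\mathbb{Z}} \mathbb{Q} = 0$ pour tout $j \geq 1$. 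Pour $r<d-1$ on a $d-r \geq 2$, donc $H^{d-r}(k_{d-1},V) = H^{d-r-1}(k_{d-1},V) = 0$, et la suite exacte longue fournit un isomorphisme $H^{d-r}(k_{d-1},A_{d-1}^0(k_{d-1}^s)_{tors}) \cong H^{d-r}(k_{d-1},A_{d-1}^0(k_{d-1}^s))$. Cet argument tombe en défaut pour $r \in \{d-1,d\}$, car $H^0(k_{d-1},V)$ n'est plus nul: c'est précisément ce qui impose l'emploi de la torsion dans ces deux cas. Le point le plus délicat est la première étape: il faut vérifier que la torsion à la Tate issue de la dualité sur $\hat{\mathbb{Z}}$ se compense exactement avec le twist de la dualité sur $k_{d-1}$, et contrôler que dualité et passages aux limites commutent à des groupes finis ou uniquement divisibles près, termes invisibles pour la partie divisible; le reste est un dévissage élémentaire reposant sur le lemme \ref{tech}.
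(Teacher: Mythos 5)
Your proof is correct and follows essentially the same route as the paper's: where the paper directly cites Ogg's equivariant isomorphism $H^1(k^{nr},A) \cong (\varprojlim_m {_m}A^t(k^{nr}))^D$, you reconstruct it from finite-level duality over $\hat{\mathbb{Z}}$, and both arguments then proceed identically through duality over $k_{d-1}$ plus passage to the limit, Lemma \ref{tech}, the component-group dévissage via $0 \rightarrow A_{d-1}^0 \rightarrow A_{d-1} \rightarrow F_{d-1} \rightarrow 0$, and finally (for $d-r\geq 2$) the vanishing in positive degrees of the cohomology of $A_{d-1}^0(k_{d-1}^s)/A_{d-1}^0(k_{d-1}^s)_{tors}$, which is exactly your uniquely divisible $V$. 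One remark: the Tate-twist bookkeeping you flag as the delicate point is immediate in this section rather than a genuine cancellation, since $k_{d-1} \supset \mathbb{C}$ contains all roots of unity with trivial Galois action, so every twist $M(i)$ is isomorphic to $M$ as a Galois module (this is also why the paper can ignore twists here, whereas in the $\mathbb{Q}_p$-based case of section \ref{Qp} they must be tracked via $\tilde{A}$).
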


\begin{proof}
Soit $r\geq 0$. D'après \cite{Ogg}, on a un isomorphisme $H^1(k^{nr},A) \cong (\varprojlim_m {_m}A^t(k^{nr}))^D$.
On calcule alors:
\begin{align*}
H^{r}(k_{d-1},H^1(k^{nr},A)) &= H^{r}(k_{d-1},(\varprojlim_m {_m}A^t(k^{nr}))^D)\\
& \cong \varinjlim_m H^{r}(k_{d-1}, {_m}A^t(k^{nr})^D)\\
& \cong \varinjlim_m H^{d-r}(k_{d-1},{_m}A^t(k^{nr}))^D\\
& \cong (\varprojlim_m H^{d-r}(k_{d-1},{_m}A^t(k^{nr})))^D.
\end{align*}
Par conséquent, les parties divisibles de $H^{r}(k_{d-1},H^1(k^{nr},A))$ et de $H^{d-r}(k_{d-1},A(k^{nr})_{tors}) \cong H^{d-r}(k_{d-1},A_{d-1}(k_{d-1}^s)_{tors})$ sont isomorphes. \\
On remarque maintenant que l'on a la suite exacte:
$$0 \rightarrow A_{d-1}^0(k_{d-1}^s)  \rightarrow A_{d-1}(k_{d-1}^s) \rightarrow F_{d-1}(k_{d-1}^s) \rightarrow 0,$$
où $A_{d-1}^0$ désigne la composante connexe du neutre dans $A_{d-1}$. Il existe donc un $\text{Gal}(k_{d-1}^s/k_{d-1})$-module fini $F$ tel que:
$$0 \rightarrow A_{d-1}^0(k_{d-1}^s)_{tors}  \rightarrow A_{d-1}(k_{d-1}^s)_{tors} \rightarrow F \rightarrow 0.$$
On en déduit que les parties divisibles des groupes de type cofini $H^{d-r}(k_{d-1},A_{d-1}(k_{d-1}^s)_{tors})$ et $H^{d-r}(k_{d-1},A_{d-1}^0(k_{d-1}^s)_{tors})$ sont (non canoniquement) isomorphes. \\
Supposons maintenant que $r < d-1$. On a une suite exacte:
$$0 \rightarrow U_{d-1}(k_{d-1}^s) \times T(k_{d-1}^s) \rightarrow A_{d-1}^0(k_{d-1}^s) \rightarrow B_{d-1}(k_{d-1}^s) \rightarrow 0,$$
qui montre que $A_{d-1}^0(k_{d-1}^s)$ est divisible. On en déduit que $A_{d-1}^0(k_{d-1}^s)/A_{d-1}^0(k_{d-1}^s)_{tors}$ est uniquement divisible, et donc que $H^{d-r}(k_{d-1},A_{d-1}^0(k_{d-1}^s)_{tors})\cong H^{d-r}(k_{d-1},A_{d-1}^0(k_{d-1}^s))$, ce qui achève la preuve.
\end{proof}

\begin{lemma}\label{lemme2}
Pour chaque entier $r>0$, on a un isomorphisme $H^r(k_{d-1},A(k^{nr})) \cong H^r(k_{d-1},A_{d-1})$.
\end{lemma}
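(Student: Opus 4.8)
The plan is to compare $A(k^{nr})$ with the $k_{d-1}^s$-points of the special fibre through the reduction map, and to annihilate the kernel cohomologically because it is uniquely divisible.

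First I would record the reduction sequence. Néron models commute with unramified base change, so $\mathcal{A} \times_{\mathcal{O}_k} \mathcal{O}_{k^{nr}}$ is the Néron model of $A \times_k k^{nr}$; by the Néron mapping property this gives $A(k^{nr}) = \mathcal{A}(\mathcal{O}_{k^{nr}})$ as $\text{Gal}(k^{nr}/k)$-modules, and reduction modulo the maximal ideal yields a short exact sequence of $\text{Gal}(k_{d-1}^s/k_{d-1})$-modules
$$0 \to D \to A(k^{nr}) \to A_{d-1}(k_{d-1}^s) \to 0,$$
where I use the identification $\text{Gal}(k^{nr}/k) \cong \text{Gal}(k_{d-1}^s/k_{d-1})$ and where $D$ is the kernel of reduction. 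Exactly as in the proof of the théorème \ref{1-local} and de la proposition \ref{beta0}, the group $D$ is uniquely divisible: the residue characteristic is $0$, so the formal group attached to $\mathcal{A}$ admits a logarithm and $D$ is a $\mathbb{Q}$-vector space.

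Next I would pass to Galois cohomology over $k_{d-1}$ and use the long exact sequence
$$\cdots \to H^r(k_{d-1},D) \to H^r(k_{d-1},A(k^{nr})) \to H^r(k_{d-1},A_{d-1}(k_{d-1}^s)) \to H^{r+1}(k_{d-1},D) \to \cdots .$$
The key point, and essentially the only thing to verify, is that $H^r(k_{d-1},D) = 0$ for every $r \geq 1$. Since $D$ is uniquely divisible, multiplication by any $n \geq 1$ is an isomorphism of $D$, hence of $H^r(k_{d-1},D)$, so this group is uniquely divisible and in particular torsion-free; on the other hand the cohomology of a profinite group with discrete coefficients is torsion in every degree $r \geq 1$. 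A group that is both torsion and torsion-free is trivial, so $H^r(k_{d-1},D) = 0$ for $r \geq 1$.

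Finally, since $r > 0$ forces both $r$ and $r+1$ to be $\geq 1$, the two outer terms of the long exact sequence vanish and I obtain $H^r(k_{d-1},A(k^{nr})) \cong H^r(k_{d-1},A_{d-1}(k_{d-1}^s)) = H^r(k_{d-1},A_{d-1})$, the last identification being the standard comparison between the étale cohomology of the smooth commutative group scheme $A_{d-1}$ and the Galois cohomology of its points. I do not expect a genuine obstacle here; the only mild subtlety is ensuring that the reduction kernel is truly \emph{uniquely} divisible rather than merely divisible, which is exactly where the residue characteristic $0$ enters and which has already been used earlier in the text.
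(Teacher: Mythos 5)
Your proof is correct and follows essentially the same route as the paper: identify $A(k^{nr})$ with $\mathcal{A}_d(\mathcal{O}_{k^{nr}})$ via the Néron property over the unramified extension, use that the reduction map onto $A_{d-1}(k_{d-1}^s)$ is surjective with uniquely divisible kernel $D$, and kill the cohomology of $D$ in positive degrees. The paper states this in one line; you have merely supplied the details (the torsion-versus-divisible argument for $H^r(k_{d-1},D)=0$) that the paper leaves implicit.
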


\begin{proof}
Comme l'extension $k^{nr}/k$ est non ramifiée par définition, on sait que $A(k^{nr}) = \mathcal{A}_d(\mathcal{O}_{k^{nr}})$ et que le morphisme $\mathcal{A}_d(\mathcal{O}_{k^{nr}}) \rightarrow A_{d-1}(k_{d-1}^s)$ est surjectif de noyau uniquement divisible. On en déduit que $H^r(k_{d-1},A(k^{nr})) \cong H^r(k_{d-1},A_{d-1})$.
\end{proof}

\begin{theorem}\label{majoration}
On a, pour $r \geq 2$:
$$\beta_{r,{\ell}} \leq  \binom{d+1}{r} \left( \sum_{e \geq 0} \rho_{e-1} + 2 \dim B_{-1}\right) .$$
Pour $r=1$:
$$\beta_{r,{\ell}} \leq \sum_{e\geq 0} (d+1-e)\rho_{e-1} + 2(d+1)\dim B_{-1}.$$
\end{theorem}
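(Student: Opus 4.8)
The plan is to argue by induction on the locality degree $d$, the base case $d=1$ being supplied by Théorème \ref{1-local} for $\beta_{1,\ell}$ and by $\text{cd}(k_1)=2$ together with Corollaire \ref{0} for the remaining degrees. The engine of the induction step ($d\ge 2$) is the Hochschild–Serre spectral sequence attached to the unramified extension $k^{nr}/k$, namely $H^r(k_{d-1},H^s(k^{nr},A))\Rightarrow H^{r+s}(k,A)$. Since $k^{nr}$ has cohomological dimension $1$, the terms $H^s(k^{nr},A)$ vanish for $s\ge 2$, so the spectral sequence degenerates into a long exact sequence exhibiting $H^r(k,A)$ as an extension of a subgroup of $H^{r-1}(k_{d-1},H^1(k^{nr},A))$ by a quotient of $H^r(k_{d-1},A(k^{nr}))$. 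As the corank (the number of $\mathbb{Q}_\ell/\mathbb{Z}_\ell$-summands, i.e. the exponent $\beta_{r,\ell}$) is additive in short exact sequences of groups de torsion de type cofini and cannot increase on passing to sub- or quotient groups, this gives
$$\beta_{r,\ell}\;\le\;\mathrm{cork}\,H^r(k_{d-1},A(k^{nr}))+\mathrm{cork}\,H^{r-1}(k_{d-1},H^1(k^{nr},A)).$$

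Next I would evaluate the two terms by dévissage along the Néron filtration. For the first, Lemme \ref{lemme2} identifies $H^r(k_{d-1},A(k^{nr}))$ with $H^r(k_{d-1},A_{d-1})$ for $r>0$; filtering $A_{d-1}$ by its finite part $F_{d-1}$ (no corank), the unipotent part $U_{d-1}$ (a power of $\mathbb{G}_a$, acyclic in positive degree over the characteristic-zero field $k_{d-1}$), the torus $T_{d-1}$ and the abelian variety $B_{d-1}$, Proposition \ref{torr} yields corank $\le c_{r,d-1}\rho_{d-1}+\beta_{r,\ell}(B_{d-1})$, with $c_{r,d-1}=\binom{d}{r}$ for $r\ge 2$ and $c_{1,d-1}=0$. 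For the second term, Lemme \ref{lemme1} replaces the divisible part of $H^{r-1}(k_{d-1},H^1(k^{nr},A))$ by that of $H^{d-r+1}(k_{d-1},A_{d-1}^0(k_{d-1}^s))$ (its torsion analogue when $r\in\{d,d+1\}$). A further dévissage of $A_{d-1}^0(k_{d-1}^s)$ shows that $U_{d-1}$ contributes nothing (uniquely divisible), that the torus contributes corank $\binom{d}{d-r+1}\rho_{d-1}$ — here Propositions \ref{tor1tors} and \ref{tor0tors} furnish the boundary degrees, precisely where the torsion bookkeeping of Lemme \ref{tech} is needed to guarantee the uniform value $\binom{d}{i}\rho_{d-1}$ in every degree $i$ — and that the abelian part contributes $\beta_{d-r+1,\ell}(B_{d-1})$, or $\beta_{0,\ell}^{tors}(B_{d-1})$ in degree $0$.

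Then I would close the induction by applying the hypothesis to $B_{d-1}$ over the $(d-1)$-local field $k_{d-1}$, whose own reduction tower has toric ranks $\rho_{-1},\dots,\rho_{d-2}$ and bottom abelian variety $B_{-1}$, so that $S':=\sum_{e=0}^{d-1}\rho_{e-1}+2\dim B_{-1}=S-\rho_{d-1}$ where $S:=\sum_{e\ge 0}\rho_{e-1}+2\dim B_{-1}$. For $2\le r\le d$, summing the two estimates and inserting $\beta_{r,\ell}(B_{d-1})\le\binom{d}{r}S'$ and $\beta_{d-r+1,\ell}(B_{d-1})\le\binom{d}{d-r+1}S'=\binom{d}{r-1}S'$ gives
$$\beta_{r,\ell}\le\Big(\tbinom{d}{r}+\tbinom{d}{r-1}\Big)S'+\Big(\tbinom{d}{r}+\tbinom{d}{r-1}\Big)\rho_{d-1}=\tbinom{d+1}{r}\big(S'+\rho_{d-1}\big)=\tbinom{d+1}{r}S,$$
the decisive point being Pascal's identity $\binom{d}{r}+\binom{d}{r-1}=\binom{d+1}{r}$, which is exactly what produces the binomial coefficient of the statement. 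The case $r=1$ is the same computation, now with $c_{1,d-1}=0$ in the first term and the top-degree contribution $\beta_{d,\ell}(B_{d-1})=\beta_{0,\ell}^{tors}(B_{d-1})\le S'$ coming from Corollaire \ref{0}; reassembling the weights yields precisely $\sum_{e\ge 0}(d+1-e)\rho_{e-1}+2(d+1)\dim B_{-1}$. Finally $r=d+1$ is Corollaire \ref{0} directly (the first term vanishes since $\text{cd}\,k_{d-1}=d$), and $r\ge d+2$ is trivial.

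The main obstacle, and the delicate part, is the appearance of the transposed degree $d-r+1$ in the second term: the recursion does not stay within a single cohomological degree, so one must feed both $\beta_{r,\ell}(B_{d-1})$ and $\beta_{d-r+1,\ell}(B_{d-1})$ into the induction and check that the mismatched hypotheses are dominated by the correct binomial multiples of $S'$ — in particular that the $r=1$ bound for $B_{d-1}$ is $\le\binom{d}{d-1}S'=dS'$ when $r=d$, and that the coranks of the torsion points of $T_{d-1}$ in the extreme degrees really equal $\binom{d}{i}\rho_{d-1}$, which is where Lemme \ref{tech} is indispensable. Verifying these boundary inequalities is the careful bookkeeping on which the clean cancellation via Pascal's rule depends.
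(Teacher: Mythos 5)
Your proposal is correct, and its mathematical content is the same as the paper's: the Hochschild--Serre sequence for $k^{nr}/k$ degenerating (via $\mathrm{cd}(k^{nr})=1$) into a long exact sequence, the dévissage of $A_{d-1}$ (resp. of $A^0_{d-1}(k_{d-1}^s)_{tors}$ in the boundary degrees) through Lemmes \ref{lemme1} and \ref{lemme2} and Propositions \ref{torr}, \ref{tor0tors}, \ref{tor1tors}, and the final assembly by Pascal's identity. Where you genuinely differ is the organization of the induction, and your choice is in fact the more robust one. The paper runs a \emph{récurrence sur $d+r$} starting at $(d,r)=(-1,1)$; but the recursion couples degree $r$ at level $d$ with degrees $r$ \emph{and} $d+1-r$ at level $d-1$, and the input $(d-1,\,d+1-r)$ sits at index $2d-r$, which exceeds $d+r-1$ whenever $2\le r<(d+1)/2$. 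For those $(d,r)$ the paper's induction hypothesis, read literally, does not cover the instance it invokes (the cases $r=1$ and $r\in\{d,d+1\}$ are safe, because there the transposed degree is the top one and Corollaires \ref{equa} and \ref{0} give the bound $\beta_{d,\ell}(B_{d-1})=\beta_{0,\ell}^{tors}(B_{d-1})\le S'$ unconditionally). Your scheme --- induction on $d$ alone, carrying all degrees $r$ simultaneously --- absorbs both needed inputs into a single hypothesis at level $d-1$ and closes without this defect. Two small corrections of detail. First, your base case $d=1$ needs more than Théorème \ref{1-local}: that theorem only gives the corank of the kernel $(\mathbb{Q}/\mathbb{Z})^{m(A)}$, and to bound $\beta_{1,\ell}$ you must also bound the corank of $(H^0(k,A^t)^{\wedge})^D$, which requires Corollaire \ref{0} applied to $A^t$ together with the isogeny invariance of the reduction data (Proposition \ref{isogénie}), or equivalently the relations of Corollaire \ref{equa}; starting at $d=0$ with the théorème de Ogg would be cleaner. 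Second, Lemme \ref{tech} intervenes inside the proof of Lemme \ref{lemme1}, to pass from $(\varprojlim_m H^{d-r}(k_{d-1},{_m}A^t(k^{nr})))^D$ to the torsion module $A(k^{nr})_{tors}$; the uniform coranks $\binom{d}{i}\rho_{d-1}$ of $H^i(k_{d-1},T_{d-1}(k_{d-1}^s)_{tors})$ come from \ref{tor0tors}, \ref{tor1tors} and \ref{torr} alone (using that $T_{d-1}(k_{d-1}^s)/T_{d-1}(k_{d-1}^s)_{tors}$ is uniquely divisible). Neither point affects the validity of your argument.
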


\begin{proof}
Procédons par récurrence sur $d+r$. \\
Pour $d+r=0$ (c'est-à-dire $d=-1$ et $r=1$), le théorème est évident.\\
Soit $s \geq 0$ tel que le théorème est vrai pour $r$ et $d$ tels que $r+d\leq s$. Soient $r\geq 1$ et $d \geq -1$ des entiers tels que $d+r=s+1$. La suite spectrale $H^r(k_{d-1},H^s(k^{nr},A)) \Rightarrow H^{r+s}(k,A)$ dégénère en une suite exacte longue:
$$... \rightarrow H^r(k_{d-1},A(k^{nr})) \rightarrow H^r(k,A) \rightarrow H^{r-1}(k_{d-1},H^1(k^{nr},A)) \rightarrow ... \text{.}$$
Étudions les termes $H^r(k_{d-1},A(k^{nr}))$ et $H^{r-1}(k_{d-1},H^1(k^{nr},A))$.
\begin{itemize}
\item[$\bullet$] D'après \ref{lemme2}, la partie divisible de $H^r(k_{d-1},A(k^{nr}))$ est isomorphe à celle de $H^r(k_{d-1},A^0_{d-1})$. De plus, la suite exacte $0 \rightarrow U_{d-1} \times T_{d-1} \rightarrow A_{d-1}^0 \rightarrow B_{d-1} \rightarrow 0$ montre l'exactitude de:
$$ H^{r}(k_{d-1},T_{d-1}) \rightarrow H^{r}(k_{d-1},A_0^{d-1}) \rightarrow H^{r}(k_{d-1},B_{d-1}),$$
et on a vu dans \ref{torr} que $\lambda_r(k_{d-1},n,T_{d-1}))$ vaut $n^{\binom{d}{r}\rho_{d-1}}$ si $r>1$ et $1$ si $r=1$.
\item[$\bullet$] Supposons que $r<d$. Alors, d'après \ref{lemme1}, la partie divisible de $H^{r-1}(k_{d-1},H^1(k^{nr},A))$ est isomorphe à celle de $H^{d-r+1}(k_{d-1},A^0_{d-1})$, et  $H^{d-r+1}(k_{d-1},A^0_{d-1})$ s'insère dans une suite exacte:
$$ H^{d-r+1}(k_{d-1},T_{d-1}) \rightarrow H^{d-r+1}(k_{d-1},A_{d-1}^0) \rightarrow H^{d-r+1}(k_{d-1},B_{d-1}),$$
où $\lambda_{d-r+1}(k_{d-1},n,T_{d-1}))=n^{\binom{d}{r-1}\rho_{d-1}}$ d'après \ref{torr}.
\item[$\bullet$] D'après \ref{lemme1}, la partie divisible de $H^{d-1}(k_{d-1},H^1(k^{nr},A))$ est isomorphe à celle de $H^{1}(k_{d-1},A^0_{d-1}(k_{d-1}^s)_{tors})$. L'exactitude de la suite $0 \rightarrow T_{d-1}(k_{d-1}^s)_{tors} \rightarrow A^0_{d-1}(k_{d-1}^s)_{tors} \rightarrow B_{d-1}(k_{d-1}^s)_{tors} \rightarrow 0$ entraîne l'exactitude de la suite:
$$H^1(k_{d-1}, T_{d-1}(k_{d-1}^s)_{tors}) \rightarrow H^1(k_{d-1},A^0_{d-1}(k_{d-1}^s)_{tors}) \rightarrow H^1(k_{d-1},B_{d-1}(k_{d-1}^s)_{tors}),$$
où on a $\lambda_{1}(k_{d-1},n,T_{d-1}(k_{d-1}^s)_{tors}))=n^{d\rho_{d-1}}$ d'après \ref{tor1tors}. Pour calculer $\lambda_{1}(k_{d-1},n,B_{d-1}(k_{d-1}^s)_{tors}))$, on remarque que l'on a une suite exacte:
$$0 \rightarrow B_{d-1}(k_{d-1})_{tors} \rightarrow B_{d-1}(k_{d-1}) \rightarrow D \rightarrow H^1(k_{d-1}, B_{d-1}(k_{d-1}^s)_{tors}) \rightarrow H^1(k_{d-1}, B_{d-1}) \rightarrow 0,$$
où $D$ est uniquement divisible. Donc, en utilisant \ref{0}, le nombre $\lambda_{1}(k_{d-1},n,B_{d-1}(k_{d-1}^s)_{tors}))$ divise $n^{\sum_{e=0}^{d-1} e\rho_{e-1}}\lambda_1(k_{d-1},n,B_{d-1})$.
\item[$\bullet$] D'après \ref{lemme1}, la partie divisible de $H^{d}(k_{d-1},H^1(k^{nr},A))$ est isomorphe à celle de $A^0_{d-1}(k_{d-1})_{tors}$. L'exactitude de la suite $0 \rightarrow T_{d-1}(k_{d-1}^s)_{tors} \rightarrow A^0_{d-1}(k_{d-1}^s)_{tors} \rightarrow B_{d-1}(k_{d-1}^s)_{tors} \rightarrow 0$ entraîne l'exactitude de la suite:
$$0 \rightarrow T_{d-1}(k_{d-1})_{tors} \rightarrow A^0_{d-1}(k_{d-1})_{tors} \rightarrow B_{d-1}(k_{d-1})_{tors},$$
où on a $\lambda_{0}(k_{d-1},n,T_{d-1}(k_{d-1}^s)_{tors}))=n^{\rho_{d-1}}$ d'après \ref{tor0tors} et $\lambda_{0}(k_{d-1},n,B_{d-1}(k_{d-1}^s)_{tors}))$ divise $n^{\sum_{e =0}^{d-1} \rho_{e-1} + 2 \dim B_{-1}}$ d'après \ref{0}.
\end{itemize}
On obtient donc, par hypothèse de récurrence:
\begin{itemize}
\item[$\bullet$] si $r=1 $:
\begin{align*}
\beta_{1,{\ell}} \leq   \sum_{e = 0}^{d-1} (d-e)\rho_{e-1} + 2d \dim B_{-1}+ \rho_{d-1}+ \sum_{e = 0}^{d-1} \rho_{e-1} + 2 \dim B_{-1}
\end{align*}
et donc:
$$\beta_{1,{\ell}} \leq  \sum_{e\geq 0} (d+1-e)\rho_{e-1} + 2(d+1)\dim B_{-1}.$$
\item[$\bullet$] si $1<r<d $:
\begin{align*}
\beta_{r,{\ell}} \leq & \binom{d}{r}\rho_{d-1} + \binom{d}{r} \left( \sum_{e = 0}^{d-1} \rho_{e-1} + 2 \dim B_{-1}\right)\\
& + \binom{d}{r-1}\rho_{d-1} + \binom{d}{r-1} \left( \sum_{e = 0}^{d-1} \rho_{e-1} + 2 \dim B_{-1}\right)
\end{align*}
et donc:
$$\beta_{r,{\ell}} \leq  \binom{d+1}{r} \left( \sum_{e \geq 0} \rho_{e-1} + 2 \dim B_{-1}\right).$$
\item[$\bullet$] pour $r=d$:
\begin{align*}
\beta_{d,{\ell}} \leq & \rho_{d-1} + \left( \sum_{e = 0}^{d-1} \rho_{e-1} + 2 \dim B_{-1}\right)\\
& + d\rho_{d-1} + \sum_{e=0}^{d-1} e\rho_{e-1} + \sum_{e= 0}^{d-1} (d-e)\rho_{e-1} + 2d\dim B_{-1}
\end{align*}
 et donc:
 $$\beta_{d,{\ell}} \leq (d+1) \left( \sum_{e \geq 0} \rho_{e-1} + 2 \dim B_{-1}\right).$$
\item[$\bullet$] pour $r=d+1$:
\begin{align*}
\beta_{d+1,{\ell}} &\leq \rho_{d-1} + \sum_{e =0}^{d-1} \rho_{e-1} + 2 \dim B_{-1}\\
&\leq \sum_{e\geq 0} \rho_{e-1} + 2\dim B_{-1}.
\end{align*}
\end{itemize}
\end{proof}

\begin{corollary}\label{cormaj}
Si $\dim B_{-1}=\rho_{-1}=\rho_0=...=\rho_{d-1}=0$, alors $\beta_{0,{\ell}}^{tors}=0$ et $\beta_{r,{\ell}} = 0$ pour tout $r \geq 0$ et tout premier ${\ell}$.
\end{corollary}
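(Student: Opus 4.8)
\textit{(Du corollaire \ref{cormaj})} Le résultat découlera immédiatement des majorations du théorème \ref{majoration} et des calculs du corollaire \ref{0}, une fois exploité le fait, garanti par la proposition introduisant ces quantités, que les $\beta_{r,{\ell}}$ et $\beta_{0,{\ell}}^{tors}$ sont des \emph{entiers naturels}: il suffira de constater que toutes les bornes supérieures dont on dispose s'annulent sous l'hypothèse $\dim B_{-1}=\rho_{-1}=\cdots=\rho_{d-1}=0$, puis de conclure par positivité.

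Concrètement, je procéderais ainsi. Pour $r=0$, j'invoquerais directement la formule du corollaire \ref{0}, à savoir $\beta_{0,{\ell}} = 2\dim B_{-1} - \sum_{r=-1}^{d-1} r\rho_r$, qui vaut $0$ puisque chacun de ses termes est nul. Pour $1 \leq r \leq d+1$ (le cas $r=d+1$ étant bien couvert car $d\geq 2$ entraîne $d+1\geq 2$), j'utiliserais le théorème \ref{majoration}: que l'on soit dans le cas $r=1$ ou $r\geq 2$, la borne obtenue est une combinaison linéaire à coefficients positifs de $\dim B_{-1}$ et des $\rho_{e-1}$, donc elle s'annule sous l'hypothèse; comme $\beta_{r,{\ell}} \geq 0$, on obtient $\beta_{r,{\ell}}=0$. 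Pour $r \geq d+2$, la nullité est déjà contenue dans le corollaire \ref{equa}. Enfin, pour la partie de torsion, j'emploierais la majoration $\beta_{0,{\ell}}^{tors} \leq 2\dim B_{-1} + \sum_{r=-1}^{d-1} \rho_r$ du corollaire \ref{0} (ou, de façon équivalente, l'égalité $\beta_{0,{\ell}}^{tors}=\beta_{d+1,{\ell}}$ jointe au cas $r=d+1$ ci-dessus), dont le membre de droite est nul, ce qui force $\beta_{0,{\ell}}^{tors}=0$.

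Il n'y a à vrai dire aucun obstacle réel ici: toute la difficulté a été absorbée par le théorème \ref{majoration}, et le présent corollaire n'est qu'une spécialisation de ses bornes au cas où les invariants géométriques s'annulent. Le seul point auquel veiller est d'ordre logique — les énoncés antérieurs ne fournissent que des inégalités, et c'est uniquement la positivité des entiers $\beta_{r,{\ell}}$ et $\beta_{0,{\ell}}^{tors}$ qui permet de transformer ces majorations nulles en véritables annulations.
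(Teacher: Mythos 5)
Votre démonstration est correcte et suit essentiellement la même voie que celle de l'article, dont la preuve se réduit à invoquer le théorème \ref{majoration} et le corollaire \ref{0}, la positivité des entiers naturels $\beta_{r,{\ell}}$ et $\beta_{0,{\ell}}^{tors}$ jouant exactement le rôle que vous lui attribuez. Le seul écart est cosmétique: pour $r \geq d+2$, vous citez le corollaire \ref{equa}, alors que la majoration du théorème \ref{majoration} couvre déjà ce cas puisque $\binom{d+1}{r}=0$ dès que $r>d+1$.
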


\begin{proof}
Cela découle immédiatement du théorème \ref{majoration} et du corollaire \ref{0}.
\end{proof}

\subsection{Nullité des $\beta_{r,\ell}$}

\begin{theorem}\label{nullité}
Soit ${\ell}$ un nombre premier. Supposons que $\beta_{0,{\ell}}^{tors}=0$. Alors $\dim B_{-1}=\rho_{-1}=\rho_0=\rho_1=...=\rho_{d-1}=0$. 
\end{theorem}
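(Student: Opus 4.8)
Le plan est de procéder par récurrence sur $d$, en interprétant $\beta_{0,{\ell}}^{tors}$ comme le $\mathbb{Q}_{\ell}/\mathbb{Z}_{\ell}$-corang (le nombre de facteurs $\mathbb{Q}_{\ell}/\mathbb{Z}_{\ell}$) du groupe de torsion de type cofini $A(k)_{tors}$: c'est exactement ce que donnent les lemmes du paragraphe 1.1 appliqués à la définition $\lambda_0(k,n,A(k^s)_{tors}) = \prod_{\ell} {\ell}^{\beta_{0,{\ell}}^{tors}v_{\ell}(n)}$, puisque $H^0(k,A(k^s)_{tors}) = A(k)_{tors}$. L'hypothèse $\beta_{0,{\ell}}^{tors}=0$ signifie donc que $A(k)\{{\ell}\}$ est fini, et l'idée directrice est que chaque étage de la tour de réduction contribue positivement à ce corang, de sorte que sa nullité force la nullité de toutes les données géométriques.

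D'abord, je me ramènerais à la fibre spéciale. Comme dans la preuve de la proposition \ref{beta0}, le morphisme de réduction $\mathcal{A}_d(\mathcal{O}_k) \rightarrow A_{d-1}(k_{d-1})$ est surjectif à noyau uniquement divisible, et le morphisme $A^0_{d-1}(k_{d-1})_{tors} \rightarrow A_{d-1}(k_{d-1})_{tors}$ est injectif à conoyau fini; par conséquent le corang de $A(k)_{tors}$ est égal à celui de $A^0_{d-1}(k_{d-1})_{tors}$. Ensuite, la suite exacte $0 \rightarrow U_{d-1}\times T_{d-1} \rightarrow A^0_{d-1} \rightarrow B_{d-1} \rightarrow 0$, jointe à la divisibilité des points de $U_{d-1}$ et $T_{d-1}$ sur $k_{d-1}^s$ (et à l'absence de torsion de $U_{d-1}$ en caractéristique $0$), fournit la suite exacte
$$0 \rightarrow T_{d-1}(k_{d-1})_{tors} \rightarrow A^0_{d-1}(k_{d-1})_{tors} \rightarrow B_{d-1}(k_{d-1})_{tors} \xrightarrow{\delta} H^1(k_{d-1},T_{d-1}(k_{d-1}^s)_{tors}).$$
Par additivité du corang dans les suites exactes et d'après la proposition \ref{tor0tors} (qui donne que le corang de $T_{d-1}(k_{d-1})_{tors}$ vaut $\rho_{d-1}$), la nullité de $\beta_{0,{\ell}}^{tors}$ entraîne à la fois $\rho_{d-1}=0$ et la nullité du corang de $\text{Ker}\,\delta$.

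Le point délicat est alors de conclure que le corang de $B_{d-1}(k_{d-1})_{tors}$ \emph{tout entier} est nul, et non seulement celui de $\text{Ker}\,\delta$: a priori l'image de $\delta$ pourrait absorber une partie divisible. C'est ici qu'interviendrait de façon cruciale la proposition \ref{tor1tors}: elle affirme que le corang de $H^1(k_{d-1},T_{d-1}(k_{d-1}^s)_{tors})$ vaut $d\,\rho_{d-1}$ (le corps $k_{d-1}$ étant $(d-1)$-local), donc est nul dès que $\rho_{d-1}=0$. On notera qu'on ne peut pas simplement invoquer $T_{d-1}=0$, car un tore de rang nul peut être anisotrope non trivial; c'est bien la finitude de ce $H^1$, et non celle du tore, qui est le bon énoncé. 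Ainsi $\text{Im}\,\delta$ est fini, d'où le corang de $B_{d-1}(k_{d-1})_{tors}$ est égal à celui de $\text{Ker}\,\delta$, c'est-à-dire $0$; autrement dit $\beta_{0,{\ell}}^{tors}(B_{d-1})=0$.

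Il ne resterait plus qu'à appliquer l'hypothèse de récurrence à la variété abélienne $B_{d-1}$ sur le corps $(d-1)$-local $k_{d-1}$: sa tour de réduction produisant exactement les mêmes $B_{-1}$ et $\rho_{-1},\dots,\rho_{d-2}$ que celle de $A$, on obtiendrait $\dim B_{-1}=0$ et $\rho_{-1}=\dots=\rho_{d-2}=0$, ce qui, joint à $\rho_{d-1}=0$, achève la récurrence. L'initialisation $d=0$ est plus simple encore: la fibre spéciale $A_{-1}$ vit sur $k_{-1}=\mathbb{C}$, où le $H^1$ s'annule, si bien que $\delta=0$ et que le corang de $A(k_0)_{tors}$ se lit directement comme $\rho_{-1}+2\dim B_{-1}$, forcément nul sous l'hypothèse. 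Le seul ingrédient vraiment subtil de cette preuve reste le contrôle de $\text{Im}\,\delta$ via la proposition \ref{tor1tors}, c'est-à-dire la gestion du défaut de scindage de la réduction.
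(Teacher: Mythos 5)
Votre preuve est correcte et suit essentiellement la même démarche que celle de l'article : récurrence sur $d$, réduction à la fibre spéciale via le modèle de Néron, obtention de $\rho_{d-1}=0$ par la proposition \ref{tor0tors}, puis contrôle de l'image de la flèche de bord $\delta$ vers $H^1(k_{d-1},T_{d-1}(k_{d-1}^s)_{tors})$ grâce à la proposition \ref{tor1tors} pour conclure à la finitude de $B_{d-1}(k_{d-1})\{\ell\}$ et appliquer l'hypothèse de récurrence à $B_{d-1}$. Votre initialisation en $d=0$ (via l'annulation du $H^1$ sur $\mathbb{C}$) ne fait que re-démontrer le cas particulier du corollaire \ref{0} que cite l'article, ce qui n'est pas une différence de fond.
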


\begin{proof}
On procède par récurrence sur $d$.
\begin{itemize}
\item[$\bullet$] Dans le cas $d=0$, le théorème est évident puisqu'on a $\beta_{0,\ell}^{tors}=2\dim B_{-1}+\rho_{-1}$ d'après le corollaire \ref{0}.
\item[$\bullet$] Supposons maintenant la propriété démontrée au rang $d-1$. Montrons-la au rang $d$. On suppose donc que $k$ est $d$-local et que $\beta_{0,\ell}^{tors}=0$. On a $A(k)_{tors} \cong \mathcal{A}_d(\mathcal{O}_k)_{tors} \cong A_{d-1}(k_{d-1})_{tors}$ puisque le morphisme $\mathcal{A}_d(\mathcal{O}_k) \rightarrow A_{d-1}(k_{d-1})$ est surjectif à noyau uniquement divisible. Par conséquent, la partie $p$-primaire de $ A_{d-1}(k_{d-1})_{tors}$ est finie. Comme la suite $0 \rightarrow T_{d-1}(k_{d-1}^s)_{tors} \rightarrow A_0^{d-1}(k_{d-1}^s)_{tors} \rightarrow B_{d-1}(k_{d-1}^s)_{tors} \rightarrow 0$ est exacte, on a une suite de cohomologie:
$$0 \rightarrow T_{d-1}(k_{d-1})_{tors} \rightarrow A_{d-1}^0(k_{d-1})_{tors} \rightarrow B_{d-1}(k_{d-1})_{tors} \rightarrow H^1(k_{d-1},T_{d-1}(k_{d-1}^s)_{tors}).$$
On en déduit que la partie $\ell$-primaire de $T_{d-1}(k_{d-1})_{tors}$ et le noyau de $B_{d-1}(k_{d-1})\{{\ell}\} \rightarrow H^1(k_{d-1},T_{d-1}(k_{d-1}^s)_{tors})\{{\ell}\}$ sont finis. Comme $\lambda_0(k_{d-1},n,T_{d-1}(k_{d-1}^s)_{tors})=n^{\rho_{d-1}}$ d'après \ref{tor0tors}, on en déduit que $\rho_{d-1}=0$. En utilisant que le noyau de $B_{d-1}(k_{d-1})\{{\ell}\} \rightarrow H^1(k_{d-1},T_{d-1}(k_{d-1}^s)_{tors})\{{\ell}\}$ est fini et en remarquant que $\lambda_1(k_{d-1},n,T_{d-1}(k_{d-1}^s)_{tors}) = n^{d\rho_{d-1}}=1$ d'après \ref{tor1tors}, on obtient que $B_{d-1}(k_{d-1})\{{\ell}\}$ est fini. Par hypothèse de récurrence, cela impose que $\dim B_{-1}=\rho_{-1}=\rho_0=\rho_1=...=\rho_{d-2}=0$, ce qui achève la preuve.
\end{itemize}
\end{proof}

\begin{corollary}
\begin{itemize}
\item[(i)] Soit ${\ell}$ un nombre premier. Supposons que $\beta_{d+1,{\ell}}=0$. Alors $\beta_{r,l}=0$ pour tout entier $r$ et tout premier $l$.
\item[(ii)] Supposons que $\dim B_{-1} = 0$ et que $\rho_r = 0$ pour tout entier $r$. Alors $\dim B_{-1}^*=0$ et $\rho_r^* = 0$ pour tout $r$.
\end{itemize}
\end{corollary}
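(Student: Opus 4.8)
Le plan est de déduire ce corollaire des résultats déjà établis, en enchaînant les implications avec soin et en surveillant la dépendance en $\ell$; aucun calcul nouveau ne devrait être nécessaire.

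Pour le point (i), je partirais de l'hypothèse $\beta_{d+1,\ell}=0$ et j'utiliserais d'abord l'égalité $\beta_{0,\ell}^{tors}=\beta_{d+1,\ell}$ fournie par le corollaire \ref{0}, ce qui donne $\beta_{0,\ell}^{tors}=0$. Le théorème \ref{nullité} permet alors de conclure que $\dim B_{-1}=\rho_{-1}=\rho_0=\cdots=\rho_{d-1}=0$. Le point crucial est que ces invariants sont de nature géométrique et ne dépendent aucunement du premier $\ell$: leur nullité une fois acquise, le corollaire \ref{cormaj} s'applique et livre $\beta_{r,l}=0$ pour tout entier $r$ et \emph{tout} premier $l$, et non pour le seul $\ell$. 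C'est exactement ce passage d'un unique premier à la totalité des premiers qui fait l'intérêt de l'énoncé, et il repose entièrement sur l'indépendance en $\ell$ des quantités $\dim B_{-1}$ et $\rho_r$.

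Pour le point (ii), je supposerais $\dim B_{-1}=0$ et $\rho_r=0$ pour tout $r$. Le corollaire \ref{cormaj} donne immédiatement $\beta_{0,\ell}^{tors}=0$ pour tout premier $\ell$. L'idée est ensuite de transférer cette annulation à la variété abélienne duale $A^t$, dont les invariants sont les versions étoilées. J'invoquerais la proposition \ref{isogénie}, qui assure $\beta_{0,\ell}^{tors}=\beta_{0,\ell}^{t,tors}$ grâce à l'isogénie entre $A$ et $A^t$, pour obtenir $\beta_{0,\ell}^{t,tors}=0$. Il ne resterait plus qu'à appliquer le théorème \ref{nullité} à $A^t$ (les mêmes arguments valant pour toute variété abélienne sur $k$, donc pour la duale) afin d'en déduire $\dim B_{-1}^*=\rho_{-1}^*=\cdots=\rho_{d-1}^*=0$, soit $\dim B_{-1}^*=0$ et $\rho_r^*=0$ pour tout $r$, le cas $r=d$ étant trivial puisque $T_d^*=0$.

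La difficulté, plus conceptuelle que technique, tient donc uniquement à l'articulation correcte de ces lemmes: reconnaître qu'une hypothèse portant sur un seul premier annule des invariants géométriques universels (d'où la conclusion pour tous les premiers), puis exploiter l'invariance par isogénie pour passer de $A$ à $A^t$ dans le point (ii). Tout le travail analytique ayant déjà été accompli dans les théorèmes \ref{majoration} et \ref{nullité}, je ne m'attends à aucun obstacle sérieux au-delà de cette mise en ordre.
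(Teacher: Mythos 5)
Votre preuve est correcte et suit essentiellement la même voie que l'article : pour (i), l'identité $\beta_{0,\ell}^{tors}=\beta_{d+1,\ell}$ (énoncée dans le corollaire \ref{0} et issue du corollaire \ref{equa}), puis le théorème \ref{nullité} et le corollaire \ref{cormaj}, dont l'indépendance en $\ell$ des invariants géométriques donne bien la conclusion pour tous les premiers; pour (ii), le corollaire \ref{cormaj}, l'invariance par isogénie (proposition \ref{isogénie}) et le théorème \ref{nullité} appliqué à $A^t$. La seule différence, négligeable, est qu'en (ii) vous utilisez directement l'égalité $\beta_{0,\ell}^{tors}=\beta_{0,\ell}^{t,tors}$ de la proposition \ref{isogénie}, là où l'article transfère $\beta_{d+1,\ell}=0$ en $\beta_{d+1,\ell}^t=0$ puis invoque le corollaire \ref{equa} pour $A^t$ — les deux chemins sont équivalents.
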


\begin{proof}
\begin{itemize}
\item[(i)] Cela découle immédiatement du théorème \ref{nullité} et des corollaires \ref{cormaj} et \ref{equa}.
\item[(ii)] D'après le corollaire \ref{cormaj}, $\beta_{d+1,{\ell}}$ est nul pour tout premier ${\ell}$. On déduit alors du lemme \ref{isogénie} que $\beta_{d+1,{\ell}}^t$ est nul pour tout premier ${\ell}$. Le théorème \ref{nullité} et le corollaire \ref{equa} permettent donc de conclure.
\end{itemize}
\end{proof}

\begin{remarque}
Plus généralement, la propriété que $\dim B_{-1}=\rho_{-1}=\rho_0=...=\rho_{d-1}=0$ est préservée par les isogénies.
\end{remarque}

\subsection{Le noyau de $H^d(k,A) \rightarrow (H^0(k,A^t)^{\wedge})^D$}

Dans certains cas, il est possible d'expliciter le noyau de $H^d(k,A) \rightarrow (H^0(k,A^t)^{\wedge})^D$. Pour ce faire, il convient d'établir quelques propriétés préliminaires:

\begin{lemma}\label{cohonrbis}
Pour chaque $r\geq 0$, on a un isomorphisme $H^r(\mathcal{O}_k,\mathcal{A}_d) \rightarrow H^r(k^{nr}/k,A(k^{nr}))$ faisant commuter le diagramme:\\
\centerline{\xymatrix{
H^r(\mathcal{O}_k,\mathcal{A}_d) \ar[r]^{\text{Res}} \ar[d]^{\cong} & H^r(k,A)\\
H^r(k^{nr}/k,A(k^{nr})) \ar[ru]_{\text{Inf}} & .
}}
\end{lemma}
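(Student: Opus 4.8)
The plan is to generalize the argument of Lemma~\ref{cohonr} (which treats $r=1$) by combining the universal property of the Néron model with the henselian nature of $\mathcal{O}_k$, and then to identify the two maps in the triangle as edge homomorphisms of one and the same spectral sequence. As in \ref{cohonr}, write $g:\text{Spec}\,k\to\text{Spec}\,\mathcal{O}_k$ and $i:\text{Spec}\,k_{d-1}\to\text{Spec}\,\mathcal{O}_k$ for the generic and closed points of the trait $S=\text{Spec}\,\mathcal{O}_k$, so that $\mathcal{A}_d\cong g_*A$. Since $k$ is $d$-local, $\mathcal{O}_k$ is a complete, hence henselian, discrete valuation ring; I would first record the general fact that for such a ring restriction to the closed point gives an isomorphism
$$H^r(\mathcal{O}_k,\mathcal{F})\xrightarrow{\ \sim\ }H^r(k_{d-1},i^*\mathcal{F})\qquad(r\geq 0)$$
for \emph{every} abelian étale sheaf $\mathcal{F}$ on $S$. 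The quickest way to see this is that $S^{sh}\to S$ is a pro-Galois cover with group $G:=\text{Gal}(k_{d-1}^s/k_{d-1})$ and that $H^{>0}(S^{sh},\mathcal{F})=0$ because a strictly henselian local scheme is a point for the étale topology; the Hochschild--Serre spectral sequence then yields $H^r(S,\mathcal{F})\cong H^r(G,\mathcal{F}_{\bar s})\cong H^r(k_{d-1},i^*\mathcal{F})$.

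Applying this to $\mathcal{F}=g_*A=\mathcal{A}_d$ and computing the stalk exactly as in \ref{cohonr}, namely $(g_*A)_{\bar s}=A(k^{nr})$ as a module over $G\cong\text{Gal}(k^{nr}/k)$, I obtain the desired isomorphism
$$H^r(\mathcal{O}_k,\mathcal{A}_d)\cong H^r(k_{d-1},i^*g_*A)\cong H^r(k^{nr}/k,A(k^{nr})).$$

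It then remains to check that this isomorphism is compatible with the restriction and inflation maps into $H^r(k,A)$, which is the genuinely new point compared with \ref{cohonr}. For this I would use the Leray spectral sequence of $g$,
$$E_2^{p,q}=H^p(S,R^qg_*A)\Rightarrow H^{p+q}(k,A).$$
Here the edge homomorphism $E_2^{r,0}=H^r(S,g_*A)=H^r(\mathcal{O}_k,\mathcal{A}_d)\to H^r(k,A)$ is, by definition, the restriction $\text{Res}$. On the other hand, computing $i^*R^qg_*A$ again through the strict henselization gives $i^*R^qg_*A=H^q(k^{nr},A)$ as a $G$-module, so that the henselian identification of the previous step rewrites this spectral sequence as
$$E_2^{p,q}=H^p(k^{nr}/k,H^q(k^{nr},A))\Rightarrow H^{p+q}(k,A),$$
i.e. as the Hochschild--Serre spectral sequence of $k^{nr}/k$ already used throughout this section; its edge homomorphism $E_2^{r,0}=H^r(k^{nr}/k,A(k^{nr}))\to H^r(k,A)$ is exactly the inflation $\text{Inf}$. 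Since the two arrows are the same edge map read in the two descriptions, the triangle commutes. The main obstacle is not any single computation but rather checking cleanly that the Leray spectral sequence of $g$ and the Hochschild--Serre spectral sequence of $k^{nr}/k$ coincide under the henselian identification, so that their $q=0$ edge maps may legitimately be compared; once this matching is in place, both the isomorphism and the commutativity of the triangle follow formally.
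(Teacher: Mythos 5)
Your proof is correct and follows essentially the same route as the paper: the paper's proof of Lemma \ref{cohonrbis} simply says it is analogous to that of Lemma \ref{cohonr}, whose argument is exactly your chain $H^r(\mathcal{O}_k,\mathcal{A}_d)\cong H^r(\mathcal{O}_k,g_*A)\cong H^r(k_{d-1},i^*g_*A)\cong H^r(k^{nr}/k,A(k^{nr}))$ obtained from the Néron mapping property, the henselian comparison with the closed point, and the stalk computation $(g_*A)_{\bar{s}}=A(k^{nr})$. Your extra step identifying the Leray spectral sequence of $g$ with the Hochschild--Serre spectral sequence of $k^{nr}/k$ so as to compare the two edge maps is precisely the compatibility that the paper leaves implicit, and it is carried out correctly.
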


\begin{proof}
La preuve est analogue à celle de \ref{cohonr}.
\end{proof}

\begin{proposition}\label{prelCbis}
Soit $\ell$ un nombre premier ne divisant pas $|F_{d-1}|$.
\begin{itemize}
\item[(i)] Les groupes $A(k^{nr})$ et $A^t(k^{nr})$ sont $\ell$-divisibles.
\item[(ii)] Il existe un morphisme fonctoriel injectif $$(T_{\ell}H^1(\mathcal{O}_k,\mathcal{A}^*_d))^D \rightarrow (\varprojlim_r H^1(k^{nr}/k,{_{\ell^r}}A^t(k^{nr})))^D.$$
\end{itemize}
\end{proposition}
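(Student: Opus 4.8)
The plan is to transpose the proof of Proposition \ref{prelC} almost word for word, replacing the residue field $\kappa$ by the $(d-1)$-local field $k_{d-1}$, the component group $F$ by $F_{d-1}$, and Lemma \ref{cohonr} by Lemma \ref{cohonrbis}. The only point that is not purely formal is that the finiteness statements, which were automatic in the $1$-local case because $\text{Gal}(k^{nr}/k) \cong \hat{\mathbb{Z}}$, must now be deduced from the duality theorem over $(d-1)$-local fields.

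For (i), I would start from the equality $A(k^{nr}) = \mathcal{A}_d(\mathcal{O}_{k^{nr}})$ and the reduction map $\mathcal{A}_d(\mathcal{O}_{k^{nr}}) \rightarrow A_{d-1}(k_{d-1}^s)$, which is surjective with uniquely divisible kernel since $k$ has residue characteristic $0$; hence it suffices to show that $A_{d-1}(k_{d-1}^s)$ is $\ell$-divisible. The devissage is then identical to that of \ref{prelC}: the sequence $0 \rightarrow A_{d-1}^0 \rightarrow A_{d-1} \rightarrow F_{d-1} \rightarrow 0$ together with $\ell \nmid |F_{d-1}|$ reduces the claim to $A_{d-1}^0(k_{d-1}^s)$, and the sequence $0 \rightarrow U_{d-1} \times T_{d-1} \rightarrow A_{d-1}^0 \rightarrow B_{d-1} \rightarrow 0$ settles it, since a unipotent group, a torus and an abelian variety all have divisible groups of points over the separably (hence algebraically, as $\text{Car}(k_{d-1}) = 0$) closed field $k_{d-1}^s$. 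Finally $|F_{d-1}^*| = |F_{d-1}|$ by paragraph IX.11.3 of \cite{SGA7}, so the same argument applied to $A^t$ gives the $\ell$-divisibility of $A^t(k^{nr})$.

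For (ii), the $\ell$-divisibility from (i) yields for each $r$ the exact sequence $0 \rightarrow {_{\ell^r}}A^t(k^{nr}) \rightarrow A^t(k^{nr}) \rightarrow A^t(k^{nr}) \rightarrow 0$, hence the Kummer sequence $0 \rightarrow A^t(k)/\ell^r \rightarrow H^1(k^{nr}/k, {_{\ell^r}}A^t(k^{nr})) \rightarrow {_{\ell^r}}H^1(k^{nr}/k, A^t(k^{nr})) \rightarrow 0$, where I use $H^0(k^{nr}/k, A^t(k^{nr})) = A^t(k)$. I would then pass to the projective limit over $r$ and dualize, exactly as in \ref{prelC}; combined with the identification $H^1(\mathcal{O}_k, \mathcal{A}_d^*) \cong H^1(k^{nr}/k, A^t(k^{nr}))$ furnished by the analogue of Lemma \ref{cohonrbis} for $A^t$, this realizes $(T_{\ell}H^1(\mathcal{O}_k, \mathcal{A}_d^*))^D$ as a subgroup of $(\varprojlim_r H^1(k^{nr}/k, {_{\ell^r}}A^t(k^{nr})))^D$, and functoriality is clear from the construction.

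The step I expect to be the crux is justifying that passing to the projective limit preserves surjectivity onto $T_{\ell}H^1(k^{nr}/k, A^t(k^{nr}))$, i.e. the relevant Mittag-Leffler condition. Here the key observation is that $\text{Gal}(k^{nr}/k) \cong \text{Gal}(k_{d-1}^s/k_{d-1})$, so that $H^1(k^{nr}/k, {_{\ell^r}}A^t(k^{nr}))$ is the Galois cohomology of the finite module ${_{\ell^r}}A^t(k^{nr})$ over the $(d-1)$-local field $k_{d-1}$; since $\text{Car}(k_1) = 0$ every prime $\ell$ is admissible, so this group is finite by the duality theorem recalled in the introduction. Finiteness of ${_{\ell}}H^1(k^{nr}/k, A^t(k^{nr}))$ then follows as a quotient, and $A^t(k)/\ell^r$ is finite by the same duality theorem applied to ${_{\ell^r}}A^t$ over $k$ itself. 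These finiteness facts kill the $\varprojlim^1$ term, give the desired surjection, and dualizing turns it into the required injection.
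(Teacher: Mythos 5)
Your proposal is correct and takes essentially the same approach as the paper, whose proof of Proposition \ref{prelCbis} is literally ``analogue à celle de \ref{prelC}'': you carry out exactly that transposition, with $\kappa$ replaced by $k_{d-1}$, $F$ by $F_{d-1}$, and Lemme \ref{cohonr} by Lemme \ref{cohonrbis}. Your identification of the crux --- the finiteness of $H^1(k^{nr}/k,{_{\ell^r}}A^t(k^{nr}))$ and of $A^t(k)/\ell^r$, no longer automatic from $\text{Gal}(k^{nr}/k)\cong\hat{\mathbb{Z}}$ but obtained from the duality theorem over the $(d-1)$-local field $k_{d-1}$ (valid for every $\ell$ since $\text{Car}(k_1)=0$) --- is precisely the only non-formal point of the transposition, and your justification of it is sound.
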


\begin{proof}
La preuve est analogue à celle de \ref{prelC}.
\end{proof}

Comme dans \ref{HnrsC}, nous sommes maintenant en mesure d'introduire la définition suivante:

\begin{definition}\label{HnrsCbis}
Soit $\ell$ un nombre premier ne divisant pas $|F_{d-1}|$. On appelle \textbf{$\ell$-groupe de cohomologie non ramifiée symétrisé de $A$} le groupe:
$$H^d_{nrs}(k,A,\ell) := (\iota_{\ell} \circ \varphi)^{-1}((T_{\ell}H^1(\mathcal{O}_k,\mathcal{A}^*_d))^D) \subseteq H^d(k,A)\{\ell\}$$
où $\varphi: H^d(k,A) \rightarrow H^{d-1}(k^{nr}/k,H^1(k^{nr},A))$ désigne le morphisme induit par la suite spectrale $H^{r}(k^{nr}/k,H^s(k^{nr},A)) \Rightarrow H^{r+s}(k,A)$ et $\iota_{\ell}$ l'isomorphisme composé: 
\begin{align*}
 H^{d-1}(k^{nr}/k,H^1(k^{nr},A))\{\ell\} &\xrightarrow{\sim} \varinjlim_r H^{d-1}(k^{nr}/k,{_{\ell^r}}H^1(k^{nr},A)) \\
&\xleftarrow{\sim} \varinjlim_r H^{d-1}(k^{nr}/k,H^1(k^{nr},{_{\ell^r}}A)) \\
&\xrightarrow{\sim} \varinjlim_r H^{d-1}(k^{nr}/k,{_{\ell^r}}A^t(k^{nr})^D) \\
&\xrightarrow{\sim} (\varprojlim_r H^1(k^{nr}/k,{_{\ell^r}}A^t(k^{nr})))^D.
\end{align*}  
On a alors une suite exacte:
$$0 \rightarrow \frac{H^d(\mathcal{O}_k,\mathcal{A}_d)}{\delta (H^{d-1}(\mathcal{O}_k,R^1g_*A))}\{\ell\} \rightarrow H^d_{nrs}(k,A,\ell) \rightarrow (T_{\ell}H^1(\mathcal{O}_k,\mathcal{A}^*_d))^D \rightarrow 0$$
où $\delta: H^{d-1}(\mathcal{O}_k,R^1g_*A)\rightarrow H^d(\mathcal{O}_k,\mathcal{A}_d)$ est le morphisme de bord provenant de la suite spectrale $H^{r}(\mathcal{O}_k,R^sg_*A)\Rightarrow H^{r+s}(k,A)$.
\end{definition}

\begin{theorem}\label{noyaubis}
Pour $\ell$ premier ne divisant pas $|F_{d-1}|$, la partie $\ell$-primaire du noyau de $H^d(k,A) \rightarrow (H^0(k,A^t)^{\wedge})^D$ est $H^d_{nrs}(k,A,\ell)$.
\end{theorem}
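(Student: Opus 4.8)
The plan is to reproduce, mutatis mutandis, the proof of Theorem \ref{noyau}, shifting every cohomological degree upward by $d-1$: the group $H^1(k,-)$ is replaced by $H^d(k,-)$, the group $H^0(k^{nr}/k,-)$ by $H^{d-1}(k^{nr}/k,-)$, the restriction map by the edge map $\varphi$ of Definition \ref{HnrsCbis}, and the local duality used for $d=1$ by the $d$-local duality theorem recalled in the preliminaries (already invoked in Lemma \ref{surj}). Concretely, I would rebuild the large commutative diagram of \ref{noyau}, whose left vertical arrow $f_4\colon H^d(k,A)\{\ell\}\to (H^0(k,A^t)^{(\ell)})^D$ is the $\ell$-primary part of the map whose kernel we want, whose central column is the perfect-duality map
$$f_5\colon \varinjlim_r H^d(k,{_{\ell^r}}A)\longrightarrow \bigl(\varprojlim_r H^1(k,{_{\ell^r}}A^t)\bigr)^D$$
coming from the duality $H^d(k,{_{\ell^r}}A)\times H^1(k,{_{\ell^r}}A^t)\to\mathbb{Q}/\mathbb{Z}$ (valid since ${_{\ell^r}}A^t\cong\underline{\mathrm{Hom}}({_{\ell^r}}A,\mathbb{Z}/\ell^r\mathbb{Z}(d))$, exactly as in \ref{surj}), and whose right-hand maps $f_1$ and $f_6$ are the isomorphisms entering the definition of $\iota_\ell$. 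Note that, since $\mathrm{cd}(k^{nr})=1$, the spectral sequence $H^r(k^{nr}/k,H^s(k^{nr},A))\Rightarrow H^{r+s}(k,A)$ has only the two rows $s=0,1$ and hence still degenerates into a long exact sequence, so $\varphi\colon H^d(k,A)\to H^{d-1}(k^{nr}/k,H^1(k^{nr},A))$ is well defined and plays the exact role that $\mathrm{Res}$ played for $d=1$.

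First I would record the three structural facts that make the chase run. (a) The map $f_2\colon\varinjlim_r H^d(k,{_{\ell^r}}A)\twoheadrightarrow H^d(k,A)\{\ell\}$ is surjective, level by level, because the Kummer sequence $0\to{_{\ell^r}}A\to A\xrightarrow{\ell^r}A\to0$ yields $H^d(k,{_{\ell^r}}A)\twoheadrightarrow{_{\ell^r}}H^d(k,A)$. (b) The arrows $f_1,f_6$ are isomorphisms: $f_1$ uses the $\ell$-divisibility of $A(k^{nr})$ (Proposition \ref{prelCbis}(i)), which gives ${_{\ell^r}}H^1(k^{nr},A)\cong H^1(k^{nr},{_{\ell^r}}A)$, while $f_6$ is the composite of the last two maps of $\iota_\ell$, obtained from local duality over $k^{nr}$ (whose absolute Galois group is procyclic) together with the $(d-1)$-local duality of the residue field $\kappa=k_{d-1}$; the latter is an honest duality precisely because $\mu_n^{\otimes(d-1)}\cong\mathbb{Z}/n\mathbb{Z}$ over $\kappa$, since $k_0=\mathbb{C}((t))$ contains all roots of unity. (c) The identification $\mathrm{Ker}(f_9)=(T_\ell H^1(\mathcal{O}_k,\mathcal{A}^*_d))^D$ comes from dualising the exact sequence
$$0\to A^t(k)^{(\ell)}\to\varprojlim_r H^1(k^{nr}/k,{_{\ell^r}}A^t(k^{nr}))\to T_\ell H^1(\mathcal{O}_k,\mathcal{A}^*_d)\to0,$$
which follows from the $\ell$-divisibility of $A^t(k^{nr})$ and the identification $H^1(k^{nr}/k,A^t(k^{nr}))=H^1(\mathcal{O}_k,\mathcal{A}^*_d)$ of Lemma \ref{cohonrbis} (this is Proposition \ref{prelCbis}(ii)).

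The chase is then formal. Given $x\in{_{\ell^r}}H^d(k,A)$, lift it through $f_2$ to $z\in H^d(k,{_{\ell^r}}A)$; by the edge-map compatibility one has $f_3(z)=f_1^{-1}(\varphi(x))$, hence $\iota_\ell(\varphi(x))=f_6(f_3(z))$, and chasing the diagram gives
$$f_9(\iota_\ell(\varphi(x)))=f_9(f_6(f_3(z)))=f_9(f_8(f_5(z)))=f_7(f_5(z))=f_4(x),$$
where the middle equalities use the commutativity $f_6\circ f_3=f_8\circ f_5$, the bottom identification $f_9\circ f_8=f_7$, and the left-hand commutativity $f_7\circ f_5=f_4\circ f_2$. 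Thus $x$ lies in $\mathrm{Ker}(f_4)$ if and only if $\iota_\ell(\varphi(x))\in\mathrm{Ker}(f_9)=(T_\ell H^1(\mathcal{O}_k,\mathcal{A}^*_d))^D$, i.e. if and only if $x\in H^d_{nrs}(k,A,\ell)$ by Definition \ref{HnrsCbis}. Passing to the colimit over $r$ identifies the whole $\ell$-primary part of the kernel.

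The main obstacle is not the diagram chase but the commutativity of the enlarged diagram—in particular the edge-map compatibility $\varphi(f_2(z))=f_1(f_3(z))$ and the identity $f_6\circ f_3=f_8\circ f_5$. The former says that the Hochschild–Serre edge map commutes with the Kummer reduction maps, and the latter expresses the compatibility of the $d$-local Tate duality over $k$ with the composition of the duality over $k^{nr}$ and the $(d-1)$-local duality over $\kappa$ along the unramified filtration. Both hold, but they rest on the functoriality of the spectral sequence $H^r(k^{nr}/k,H^s(k^{nr},-))\Rightarrow H^{r+s}(k,-)$ with respect to the cup product induced by $A\otimes^{\mathbf{L}}A^t\to\mathbb{G}_m[1]$, and they require the careful bookkeeping of edge maps and twists that already underlies Definition \ref{HnrsCbis}; this is where I would concentrate the verification.
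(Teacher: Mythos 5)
Your proposal is correct and follows essentially the same route as the paper: the paper's proof of Theorem \ref{noyaubis} consists precisely in declaring the argument of Theorem \ref{noyau} transposable and exhibiting the degree-shifted commutative diagram (with $H^d(k,A)\{\ell\}$, the edge map $\varphi$ in place of $\mathrm{Res}$, and the composite isomorphism $\varinjlim_r H^{d-1}(k^{nr}/k,H^1(k^{nr},{_{\ell^r}}A)) \xrightarrow{\sim} (\varprojlim_r H^1(k^{nr}/k,{_{\ell^r}}A^t(k^{nr})))^D$), followed by the same chase $f_9(\iota_\ell(\varphi(x)))=f_4(x)$ and the identification $\mathrm{Ker}(f_9)=(T_\ell H^1(\mathcal{O}_k,\mathcal{A}^*_d))^D$. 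Your additional bookkeeping (surjectivity of $f_2$ via Kummer, $\ell$-divisibility from Proposition \ref{prelCbis}, the role of $\mu_n^{\otimes(d-1)}\cong\mathbb{Z}/n\mathbb{Z}$ over $k_{d-1}$) only makes explicit what the paper leaves implicit.
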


\begin{proof}
La preuve est très similaire à celle de \ref{noyau}. Il suffit de remarquer que le diagramme suivant est commutatif:\\
\centerline{\xymatrix{
& \varinjlim_r H^{d-1}(k^{nr}/k,{_{\ell^r}}H^1(k^{nr},A)) & \\
H^d(k,A)\{\ell\} \ar[ru] \ar[ddd] & & \varinjlim_r H^{d-1}(k^{nr}/k,H^1(k^{nr},{_{\ell^r}}A))\ar[lu]^{\cong}\ar[ddd]^{\cong}\\
& \varinjlim_r H^d(k,{_{\ell^r}}A)  \ar[ru] \ar@{->>}[lu]\ar[d] &\\
& (\varprojlim_r H^1(k,{_{\ell^r}}A^t))^D \ar[dl] \ar[dr] & \\
(H^0(k,A^t)^{(\ell)})^D & & (\varprojlim_r H^1(k^{nr}/k,{_{\ell^r}}A^t(k^{nr})))^D \ar[dl]\\
& (H^0(k^{nr}/k,A^t(k^{nr}))^{(\ell)})^D \ar@{=}[ul] &
}}
où le morphisme $ \varinjlim_r H^{d-1}(k^{nr}/k,H^1(k^{nr},{_{\ell^r}}A)) \rightarrow  \varinjlim_r H^1(k^{nr}/k,{_{\ell^r}}A^t(k^{nr}))^D$ est obtenu par composition des isomorphismes $$ \varinjlim_r H^{d-1}(k^{nr}/k,H^1(k^{nr},{_{\ell^r}}A)) \xrightarrow{\sim}  \varinjlim_r H^{d-1}(k^{nr}/k,{_{\ell^r}}A^t(k^{nr})) \xrightarrow{\sim}  \varinjlim_r H^1(k^{nr}/k,{_{\ell^r}}A^t(k^{nr}))^D.$$
\end{proof}

Pour alléger les notations dans la section suivante, nous noterons:
$$H^d_{nrs}(k,A) := \bigoplus_{\ell \wedge |F_{d-1}|=1} H^d_{nrs}(k,A,\ell).$$
C'est le groupe de torsion dont la partie $\ell$-primaire est $H^d_{nrs}(k,A,\ell)$ si $\ell$ ne divise pas $|F_{d-1}|$, triviale sinon.

\section{\textsc{Variétés abéliennes sur $\mathbb{C}((t_0))...((t_d))(u)$}}\label{Cfonctions}

Supposons dans cette partie que $d\geq 1$ et que $k=\mathbb{C}((t_0))...((t_d))$. Soient $A$ une variété abélienne sur $K=k(X)$ et $A^t$ sa variété abélienne duale. Le but de ce paragraphe est d'établir un théorème de dualité à la Cassels-Tate pour $A$: plus précisément, nous voulons déterminer, sous certaines hypothèses géométriques et modulo divisibles, le dual du groupe de Tate-Shafarevich $\Sha^1(A)$.

Pour chaque $v \in X^{(1)}$, on adopte des notations analogues à celles de la section \ref{notations} pour la variété abélienne $A_v=A \times_K K_v$ sur le corps $d+1$-local $K_v$ (on prendra garde au fait que $K_v$ n'est pas $d$-local). Ainsi, on introduit les schémas en groupes $\mathcal{A}_{v,i}, A_{v,i},F_{v,i},U_{v,i},T_{v,i},B_{v,i}$ pour $i \in \{0,1,...,d+1\}$ et les schémas en groupes $A_{v,-1},F_{v,-1},U_{v,-1},T_{v,-1},B_{v,-1}$.\\
Notons aussi $U$ l'ouvert de bonne réduction de $A$, de sorte que le modèle de Néron $\mathcal{A}$ de $A$ sur $U$ est un schéma abélien. Soit $\mathcal{A}^t$ le schéma abélien dual. \\
Fixons maintenant un nombre premier $\ell$ et faisons l'hypothèse suivante:

\begin{hypol}\label{HCbis}
\begin{minipage}[t]{12.72cm}
pour chaque $v \in X \setminus U$, au moins l'une des deux affirmations suivantes est vérifiée:
\begin{itemize}
\item[$\bullet$] $\ell$ ne divise pas $|F_{v,d}|$,
\item[$\bullet$] la variété abélienne $B_{v,-1}$ est nulle et les tores $T_{v,d},...,T_{v,-1}$ sont anisotropes.
\end{itemize}
\end{minipage}
\end{hypol}

Soit $Z$ l'ensemble des $v \in X^{(1)}$ tels que la variété abélienne $B_{v,-1}$ est nulle et les tores $T_{v,d},...,T_{v,-1}$ sont anisotropes. Pour chaque ouvert $V$ de $U$, on introduit les groupes suivants:
\begin{multline*}
\Sha^1_{nr}(V,A):= \text{Ker} \left(  H^1(K,A) \rightarrow \prod_{v \in X \setminus V} H^1(K_v,A) \times \prod_{v \in V^{(1)}} H^1(K_v,A)/ H^1_{nr}(K_v,A))\right) ,  
\end{multline*}
\begin{multline*}
\Sha^{d+1}_{nrs}(A^t):= \text{Ker} \left(  H^{d+1}(K,A^t) \rightarrow \prod_{v \in Z} H^{d+1}(K_v,A^t) \times \prod_{v \in X^{(1)} \setminus Z} H^{d+1}(K_v,A^t)/ H^{d+1}_{nrs}(K_v,A^t) \right),
\end{multline*}

où $H^1_{nr}(K_v,A^t)$ désigne $H^1(\mathcal{O}_v,\mathcal{A}_v)=H^1(K_v^{nr}/K_v,A(K_v^{nr}))$.\\

Fixons $V$ un ouvert non vide de $U$.

\begin{lemma}
\begin{itemize}
\item[(i)] Pour $r>0$, le groupe $H^r(V,\mathcal{A})$ est de torsion de type cofini.
\item[(ii)] Le groupe $H^2_c(V,\mathcal{A})$ est de torsion de type cofini.
\end{itemize}
\end{lemma}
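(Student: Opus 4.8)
The plan is to mimic almost verbatim the proof of Lemma \ref{cofC}, the only substantive differences being that the base field $k$ is now $d$-local instead of $\mathbb{C}((t))$ and that the residue fields $K_v$ at the places $v\in X^{(1)}$ are $(d+1)$-local. For part (i), I would write $g:\text{Spec}\,K\to V$ for the inclusion of the generic point and use that, since $V\subseteq U$ lies in the open of good reduction, the Néron model $\mathcal{A}$ represents $g_*A$ on $V$. I would then feed this into the Leray spectral sequence
\[
H^r(V,R^sg_*A)\Rightarrow H^{r+s}(K,A).
\]
Computing the stalk of $R^sg_*A$ at a geometric point above $v\in V^{(1)}$ as $H^s(K_v^{nr},A)$ shows that $R^sg_*A$ is a torsion sheaf for every $s>0$; because $V$ is quasi-compact, this forces $H^r(V,R^sg_*A)$ to be torsion for all $r\geq 0$ and $s>0$. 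Since $H^r(K,A)$ is torsion for $r>0$ as well, the spectral sequence yields that $H^r(V,\mathcal{A})$ is torsion for $r>0$.

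To upgrade this to cofinite torsion type I would invoke the Kummer sequence
\[
0\to {_n}\mathcal{A}\to \mathcal{A}\xrightarrow{\ n\ }\mathcal{A}\to 0
\]
(multiplication by $n$ being an epimorphism of étale sheaves since $\mathcal{A}$ is smooth over $V$), which exhibits ${_n}H^r(V,\mathcal{A})$ as a quotient of $H^r(V,{_n}\mathcal{A})$. The whole weight of the argument then rests on the \textbf{finiteness of $H^r(V,{_n}\mathcal{A})$}: this is a finiteness statement for étale cohomology with finite coefficients on a curve over the $d$-local field $k$, and I expect it to follow from the finiteness results available in this setting (as in \cite{Izq1}). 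Granting it, ${_n}H^r(V,\mathcal{A})$ is finite for every $n$, whence $H^r(V,\mathcal{A})$ is of cofinite torsion type.

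For part (ii), I would use the excision long exact sequence
\[
\cdots\to\bigoplus_{v\in X^{(1)}\setminus V}H^1(K_v,A)\to H^2_c(V,\mathcal{A})\to H^2(V,\mathcal{A})\to\cdots,
\]
in which the direct sum is finite. Each term $H^1(K_v,A)$ is of cofinite torsion type by the local analysis of Section \ref{Cbis} applied to the $(d+1)$-local field $K_v$, so the finite sum is of cofinite torsion type, and $H^2(V,\mathcal{A})$ is of cofinite torsion type by part (i). Since subquotients and extensions of cofinite-torsion-type groups are again of cofinite torsion type, realizing $H^2_c(V,\mathcal{A})$ as an extension of a subgroup of $H^2(V,\mathcal{A})$ by a quotient of $\bigoplus_v H^1(K_v,A)$ gives the claim. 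The only genuine obstacle, as indicated, is the finiteness of $H^r(V,{_n}\mathcal{A})$; the torsion and extension steps are purely formal once that input is secured.
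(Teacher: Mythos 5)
Your proof is correct and is essentially the paper's own argument: the paper disposes of this lemma with "la preuve est analogue à celle de \ref{cofC}", and what you have written is precisely the proof of Lemma \ref{cofC} transported to the $d$-local base, including the same two pillars (the Leray spectral sequence for $g_*A=\mathcal{A}$ plus the Kummer sequence for (i), and the localization sequence for compact support cohomology for (ii)). The finiteness of $H^r(V,{_n}\mathcal{A})$ that you flag as the key input is exactly the ingredient the paper also uses without further comment, and it does hold over higher local fields of characteristic $0$ (via Hochschild--Serre and finiteness of Galois cohomology of finite modules, as in \cite{Izq1}), so your reliance on it is legitimate.
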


\begin{proof}
La preuve est analogue à celle de \ref{cofC}.
\end{proof}

\begin{lemma} \label{exactbis}
Il existe des suites exactes:
$$ 0 \rightarrow H^d(V,\mathcal{A}^t) \otimes_{\mathbb{Z}} (\mathbb{Q}/\mathbb{Z})\{\ell\}\rightarrow H^{d+1}(V,\mathcal{A}^t\{\ell\}) \rightarrow H^{d+1}(V,\mathcal{A}^t)\{\ell\} \rightarrow 0 ,$$
$$0 \rightarrow H^{1}_c(V,\mathcal{A})^{(\ell)}\rightarrow H^{2}_c(V,T_{\ell}\mathcal{A}) \rightarrow  T_{\ell}H^{2}_c(V,\mathcal{A}) \rightarrow 0.$$
Ici, $H^{d+1}(V,\mathcal{A}^t\{\ell\})$ et $H^2_c(V,T_{\ell}\mathcal{A})$ désignent $\varinjlim_n H^{d+1}(V,{_{\ell^n}}\mathcal{A}^t)$ et $\varprojlim_n H^2_c(V,{_{\ell^n}}\mathcal{A})$ respectivement.
\end{lemma}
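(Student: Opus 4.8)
The plan is to reproduce the argument of Lemma \ref{exact} essentially unchanged, the only difference being a shift of cohomological degree in the first sequence (from $0,1$ to $d,d+1$); the second sequence is literally identical.

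For the first sequence, I would start from the Kummer sequence $0 \to {_{\ell^r}}\mathcal{A}^t \to \mathcal{A}^t \xrightarrow{\ell^r} \mathcal{A}^t \to 0$ on $V$ and write down the associated long exact sequence in étale cohomology. Extracting the connecting piece in degrees $d$ and $d+1$ yields, for every $r$, the short exact sequence
$$0 \to H^d(V,\mathcal{A}^t)/\ell^r \to H^{d+1}(V,{_{\ell^r}}\mathcal{A}^t) \to {_{\ell^r}}H^{d+1}(V,\mathcal{A}^t) \to 0.$$
I would then pass to the direct limit over $r$. Since filtered colimits are exact, the limit sequence stays exact, and the three terms are identified using the conventions fixed in the introduction: $\varinjlim_r H^d(V,\mathcal{A}^t)/\ell^r = H^d(V,\mathcal{A}^t)\otimes_{\mathbb{Z}}(\mathbb{Q}/\mathbb{Z})\{\ell\}$ (colimit along multiplication by $\ell$), $\varinjlim_r H^{d+1}(V,{_{\ell^r}}\mathcal{A}^t) = H^{d+1}(V,\mathcal{A}^t\{\ell\})$ by definition, and $\varinjlim_r {_{\ell^r}}H^{d+1}(V,\mathcal{A}^t) = H^{d+1}(V,\mathcal{A}^t)\{\ell\}$.

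For the second sequence, the construction is unchanged from \ref{exact}. The Kummer sequence for $\mathcal{A}$ gives, for each $r$, the short exact sequence
$$0 \to H^1_c(V,\mathcal{A})/\ell^r \to H^2_c(V,{_{\ell^r}}\mathcal{A}) \to {_{\ell^r}}H^2_c(V,\mathcal{A}) \to 0,$$
and I would pass to the projective limit over $r$. The three limits are then $H^1_c(V,\mathcal{A})^{(\ell)}$, $H^2_c(V,T_{\ell}\mathcal{A})$ and $T_{\ell}H^2_c(V,\mathcal{A})$, again by the definitions recalled in the notation section.

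The one point requiring care — and the only place where the argument is not purely formal — is the exactness of the second sequence after passing to $\varprojlim$, since projective limits are merely left exact in general. Here the transition maps of the system $\bigl(H^1_c(V,\mathcal{A})/\ell^r\bigr)_r$ are surjective, so the system satisfies the Mittag-Leffler condition and the corresponding $\varprojlim^1$ vanishes, which preserves right exactness; the cofinite-type statement of the preceding lemma (analogous to \ref{cofC}) guarantees that the groups involved are well-behaved. I expect this $\varprojlim^1$-vanishing to be the main — indeed essentially the only — obstacle, and it is dealt with exactly as in the proof of \ref{exact}.
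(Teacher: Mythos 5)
Your proof is correct and follows essentially the same route as the paper: the paper's own proof of this lemma just says it is analogous to Lemma \ref{exact}, whose argument is exactly what you describe — the Kummer sequence for $\mathcal{A}^t$ (resp. $\mathcal{A}$) in the relevant degrees, then passage to the direct (resp. projective) limit. Your Mittag-Leffler justification for exactness of the projective limit, via surjectivity of the transition maps of $\bigl(H^1_c(V,\mathcal{A})/\ell^r\bigr)_r$, is a detail the paper leaves implicit, and it is the correct one.
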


\begin{proof}
La preuve est analogue à celle de \ref{exact}.
\end{proof}

\begin{lemma} \label{accbis}
Il existe un accouplement canonique:
$$ H^1(V,\mathcal{A}\{\ell\}) \times H^{d+2}_c(V,T_{\ell}\mathcal{A}^t) \rightarrow \mathbb{Q}/\mathbb{Z}$$
qui est non dégénéré.
\end{lemma}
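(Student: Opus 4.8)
The plan is to mimic the proof of Lemma \ref{acc} essentially verbatim, adjusting only the cohomological degrees to account for the fact that $V$ now sits over the $d$-local field $k=\mathbb{C}((t_0))...((t_d))$ rather than over $\mathbb{C}((t))$. Since everything is in characteristic $0$, the prime $\ell$ is invertible on $V$, and the Weil pairing furnishes for each $n$ a perfect duality of finite étale sheaves ${_{\ell^n}}\mathcal{A}^t \cong \underline{\text{Hom}}({_{\ell^n}}\mathcal{A}, \mu_{\ell^n})$ on $V$, exactly as in the identification used in the proof of Lemma \ref{surj}. Thus ${_{\ell^n}}\mathcal{A}$ and ${_{\ell^n}}\mathcal{A}^t$ are Cartier dual to one another up to the relevant Tate twist.

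First I would invoke the arithmetic duality theorem over the function field $K=k(X)$, namely théorème 2.1 de \cite{Izq1}, which for any finite sheaf $F$ on $V$ with Cartier dual $F'$ provides a perfect pairing of finite groups $H^r(V, F) \times H^{d+3-r}_c(V, F') \to \mathbb{Q}/\mathbb{Z}$. The total top degree is $d+3$ because $k$ has cohomological dimension $d+1$, to which $V$ adds one dimension (an open curve over $k$) together with the shift coming from $\mathbb{G}_m$; concretely the trace lands in $H^{d+3}_c(V,\mathbb{G}_m) \cong \mathbb{Q}/\mathbb{Z}$, matching the degree $3$ that appeared in the case $d=0$ of section \ref{C}. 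Applying this with $F = {_{\ell^n}}\mathcal{A}$, $F' = {_{\ell^n}}\mathcal{A}^t$ and $r=1$ yields, for every $n \geq 0$, a perfect pairing of finite groups
$$H^1(V, {_{\ell^n}}\mathcal{A}) \times H^{d+2}_c(V, {_{\ell^n}}\mathcal{A}^t) \to \mathbb{Q}/\mathbb{Z},$$
since $d+3-1 = d+2$.

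Next I would pass to the limit. By the functoriality of the duality of \cite{Izq1}, these pairings are compatible with the transition maps, the inclusions ${_{\ell^n}}\mathcal{A} \hookrightarrow {_{\ell^{n+1}}}\mathcal{A}$ on the left being adjoint to the multiplication maps ${_{\ell^{n+1}}}\mathcal{A}^t \to {_{\ell^n}}\mathcal{A}^t$ on the right. Taking the colimit on the first factor and the limit on the second, and recalling that $H^1(V, \mathcal{A}\{\ell\}) = \varinjlim_n H^1(V, {_{\ell^n}}\mathcal{A})$ and $H^{d+2}_c(V, T_{\ell}\mathcal{A}^t) = \varprojlim_n H^{d+2}_c(V, {_{\ell^n}}\mathcal{A}^t)$, the elementary fact that $(\varinjlim_n G_n)^D \cong \varprojlim_n G_n^D$ for a direct system of finite groups shows that the resulting pairing
$$H^1(V, \mathcal{A}\{\ell\}) \times H^{d+2}_c(V, T_{\ell}\mathcal{A}^t) \to \mathbb{Q}/\mathbb{Z}$$
is non-degenerate, which is precisely the assertion.

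I expect the only genuine point requiring care to be the degree bookkeeping, namely the verification that the dualizing datum of théorème 2.1 de \cite{Izq1} over this $V$ places the top trace in degree $d+3$ and identifies the Cartier dual of ${_{\ell^n}}\mathcal{A}$ with ${_{\ell^n}}\mathcal{A}^t$ in degree $d+2$ with the correct twist; once this is in place, everything else is the same formal passage to the limit already carried out in Lemma \ref{acc}.
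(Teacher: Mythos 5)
Votre proposition est correcte et suit exactement la même voie que l'article : la preuve de \ref{accbis} y est déclarée « analogue à celle de \ref{acc} », laquelle consiste précisément à invoquer le théorème 2.1 de \cite{Izq1} à chaque niveau fini ${_{\ell^n}}$, puis à passer à la limite (inductive d'un côté, projective de l'autre) en utilisant que le dual d'une limite inductive de groupes finis est la limite projective des duaux. Le seul point que vous signalez comme délicat — le twist de Tate dans le faisceau dualisant $\mathbb{Z}/\ell^n\mathbb{Z}(d+1)$, qui identifie a priori le dual de ${_{\ell^n}}\mathcal{A}$ à ${_{\ell^n}}\mathcal{A}^t \otimes \mathbb{Z}/\ell^n\mathbb{Z}(d)$ — est ici inoffensif car $\mathbb{C} \subset k$, de sorte que toutes les racines de l'unité sont dans le corps de base et les twists se trivialisent, ce qui explique que la section \ref{Cfonctions} puisse énoncer la dualité avec $\mathcal{A}^t$ lui-même alors que la section \ref{Qp} doit introduire $\tilde{A}$.
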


\begin{proof}
La preuve est analogue à celle de \ref{acc}.
\end{proof}

De plus, d'après la formule de Barsotti-Weil et la nullité de $\underline{\text{Hom}}_V(\mathcal{A},\mathbb{G}_m)$, on a un accouplement canonique $\mathcal{A}^t \otimes^{\mathbf{L}} \mathcal{A} \rightarrow \mathbb{G}_m[1]$ qui induit donc un accouplement:
$$ H^1(V,\mathcal{A}) \times H^{d+1}_c(V,\mathcal{A}^t) \rightarrow H^{d+3}_c(V,\mathbb{G}_m) \cong \mathbb{Q}/\mathbb{Z}.$$
Posons maintenant: $$D^1(V,\mathcal{A}) = \text{Im}(H^1_c(V,\mathcal{A}) \rightarrow H^1(V,\mathcal{A})) = \text{Ker}(H^1(V,\mathcal{A}) \rightarrow \bigoplus_{v \in X \setminus V} H^1(K_v,A)),$$
$$D^{d+1}_{nrs}(V,\mathcal{A}^t) =  \text{Ker}\left( H^{d+1}(V,\mathcal{A}^t) \rightarrow \bigoplus_{v \in Z \setminus V} H^{d+1}(K_v,A^t) \oplus \bigoplus_{v \in X \setminus (V \cup Z)} H^{d+1}(K_v,A^t)/ H^{d+1}_{nrs}(K_v,A^t))\right).$$
 Ce sont bien sûr des groupes de torsion de type cofini.

\begin{lemma}\label{Shabis}
L'application naturelle $H^1(V,\mathcal{A}) \rightarrow H^1(K,A)$ induit un isomorphisme $D^1(V,\mathcal{A}) \cong \Sha^1_{nr}(V,A)$.
\end{lemma}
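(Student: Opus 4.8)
Le plan est de reprendre la démonstration du lemme \ref{Sha}(i), le passage au cadre $d$-local n'affectant en rien l'argument, qui est de nature purement locale-étale. Je noterais d'abord $g: \text{Spec}\; K \rightarrow V$ le morphisme canonique et j'exploiterais le fait que $\mathcal{A}$ représente le faisceau $g_*A$ sur $V$. La suite spectrale de Leray $H^r(V,R^sg_*A) \Rightarrow H^{r+s}(K,A)$ fournit alors la suite exacte courte
$$0 \rightarrow H^1(V,\mathcal{A}) \rightarrow H^1(K,A) \rightarrow H^0(V,R^1g_*A),$$
qui donne en particulier l'injectivité de $H^1(V,\mathcal{A}) \rightarrow H^1(K,A)$.

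Ensuite, je décrirais le terme $H^0(V,R^1g_*A)$ à l'aide de la résolution de Godement. En choisissant un ensemble $P$ de points géométriques au-dessus des points fermés de $V$, le faisceau $R^1g_*A$ s'injecte dans le premier terme $\prod_{u \in P} u_*u^*R^1g_*A$ de cette résolution, ce qui permet, par calcul des tiges, de réaliser $H^0(V,R^1g_*A)$ comme un sous-groupe de $\prod_{v \in V^{(1)}} H^1(K_v^{nr},A)$. J'obtiendrais ainsi la suite exacte
$$0 \rightarrow H^1(V,\mathcal{A}) \rightarrow H^1(K,A) \rightarrow \prod_{v \in V^{(1)}} H^1(K_v^{nr},A).$$

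Enfin, je conclurais en comparant cette suite aux suites d'inflation-restriction $0 \rightarrow H^1_{nr}(K_v,A) \rightarrow H^1(K_v,A) \rightarrow H^1(K_v^{nr},A)$ pour $v \in V^{(1)}$: l'image de $H^1(V,\mathcal{A})$ dans $H^1(K,A)$ est exactement constituée des classes qui sont non ramifiées (c'est-à-dire dans $H^1_{nr}(K_v,A)$) en toute place $v \in V^{(1)}$. En intersectant avec la condition d'annulation aux places de $X \setminus V$, qui définit $D^1(V,\mathcal{A})$ comme noyau de $H^1(V,\mathcal{A}) \rightarrow \bigoplus_{v \in X \setminus V} H^1(K_v,A)$, on retrouve précisément le groupe $\Sha^1_{nr}(V,A)$, d'où l'isomorphisme annoncé. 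Il n'y a ici aucune difficulté véritable, l'unique point à vérifier étant que le calcul des tiges de $R^1g_*A$ et la description via Godement demeurent valables dans le cadre $d$-local, ce qui est immédiat puisque ces énoncés sont indépendants de la dimension cohomologique du corps de base $k$.
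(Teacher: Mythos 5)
Votre preuve est correcte et suit exactement la démarche du texte: le papier renvoie pour le lemme \ref{Shabis} à la preuve du lemme \ref{Sha}, laquelle repose précisément sur l'identification $\mathcal{A} \cong g_*A$, la suite spectrale de Leray donnant $0 \rightarrow H^1(V,\mathcal{A}) \rightarrow H^1(K,A) \rightarrow H^0(V,R^1g_*A)$, l'injection de $H^0(V,R^1g_*A)$ dans $\prod_{v \in V^{(1)}} H^1(K_v^{nr},A)$ via la résolution de Godement, puis les suites d'inflation-restriction. Votre remarque finale est également juste: aucun de ces arguments ne dépend de la dimension cohomologique de $k$, ce qui est exactement la raison pour laquelle le texte se contente de dire que la preuve est analogue.
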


\begin{proof}
La preuve est analogue à celle de \ref{Sha}.
\end{proof}

Afin d'établir un théorème de dualité pour les groupes de Tate-Shafarevich, il convient donc d'établir un théorème de dualité pour les groupes $D^1(V,\mathcal{A})$ et $D^{d+1}_{nrs}(V,\mathcal{A}^t)$:

\begin{proposition}\label{d1bis}
Il existe un accouplement canonique:
$$ \overline{D^{d+1}_{nrs}(V,\mathcal{A}^t)}\{\ell\} \times \overline{D^1(V,\mathcal{A})}\{\ell\} \rightarrow  \mathbb{Q}/\mathbb{Z}$$
qui est non dégénéré.
\end{proposition}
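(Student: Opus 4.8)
The plan is to transpose the proof of Proposition \ref{d1} to the higher-dimensional setting, replacing every degree-$1$ occurrence of $\mathcal{A}^t$ by degree $d+1$ and every $1$-local duality input of Section \ref{C} by its $(d+1)$-local counterpart from Section \ref{Cbis}. First I would record the two pairings on which everything rests. From the master duality of \cite{Izq1} (théorème 2.1), taking $\mathcal{A}^t$ in degree $d+1$ — whose dual degree is $(d+3)-(d+1)=2$ — and passing to the limit exactly as in Lemma \ref{accbis}, I obtain a pairing $H^{d+1}(V,\mathcal{A}^t\{\ell\}) \times H^2_c(V,T_\ell\mathcal{A}) \to \mathbb{Q}/\mathbb{Z}$ non dégénéré, hence an isomorphism $H^{d+1}(V,\mathcal{A}^t\{\ell\}) \cong (H^2_c(V,T_\ell\mathcal{A}))^D$. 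The Barsotti-Weil pairing $\mathcal{A}^t \otimes^{\mathbf{L}}\mathcal{A} \to \mathbb{G}_m[1]$ yields in turn $H^{d+1}(V,\mathcal{A}^t) \times H^1_c(V,\mathcal{A}) \to H^{d+3}_c(V,\mathbb{G}_m) \cong \mathbb{Q}/\mathbb{Z}$, whence, after passing to $\{\ell\}$-torsion and $\ell$-adic completion, a pairing $H^{d+1}(V,\mathcal{A}^t)\{\ell\} \times H^1_c(V,\mathcal{A})^{(\ell)} \to \mathbb{Q}/\mathbb{Z}$.

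Next I would assemble the two commutative diagrams used in \ref{d1}. Diagram (1) has as top row the first exact sequence of Lemma \ref{exactbis} (the suite de Kummer for $\mathcal{A}^t$ in degrees $d$ and $d+1$) and as bottom row the $\mathbb{Q}/\mathbb{Z}$-dual of the second exact sequence of \ref{exactbis} (for $T_\ell\mathcal{A}$), the central vertical arrow being the isomorphism just obtained and the rightmost one the completed Barsotti-Weil map $H^{d+1}(V,\mathcal{A}^t)\{\ell\} \to (H^1_c(V,\mathcal{A})^{(\ell)})^D$. Diagram (2) relates the localization sequence defining $D^{d+1}_{nrs}(V,\mathcal{A}^t)\{\ell\}$ inside $H^{d+1}(V,\mathcal{A}^t)\{\ell\}$ to the dual of the sequence defining $D^1(V,\mathcal{A})^{(\ell)}$ inside $H^1_c(V,\mathcal{A})^{(\ell)}$, with $W = \bigoplus_{v \in Z\setminus V} H^{d+1}(K_v,A^t)\{\ell\} \oplus \bigoplus_{v \in X\setminus(V\cup Z)} H^{d+1}(K_v,A^t)\{\ell\}/H^{d+1}_{nrs}(K_v,A^t)\{\ell\}$.

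The heart of the argument, and the step I expect to be the main obstacle, is verifying that the rightmost vertical arrow of diagram (2) is an isomorphism; this is purely local over each $(d+1)$-local field $K_v$ and is where the hypothèse (H\ref{HCbis})$_{\ell}$ is consumed. At a place $v \in X\setminus(V\cup Z)$ one has $\ell\nmid|F_{v,d}|$, so Theorem \ref{noyaubis} (appliqué sur $K_v$, avec $A\rightsquigarrow A^t$) identifies the kernel of $H^{d+1}(K_v,A^t)\to (H^0(K_v,A)^{\wedge})^D$ with $H^{d+1}_{nrs}(K_v,A^t,\ell)$, making the map on the quotient injective, while Theorem \ref{modulo divisibles} gives surjectivity modulo divisibles; at a place $v\in Z$ the tores are anisotropes and $B_{v,-1}=0$, so Corollary \ref{cormaj} forces all $\beta_{r,\ell}$ and $\beta_{0,\ell}^{tors}$ to vanish, and the local duality $H^{d+1}(K_v,A^t)\to (H^0(K_v,A)^{\wedge})^D$ becomes an honest isomorphism of groupes finis. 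Granting this, a diagram chase by the lemme du serpent through (1) and (2) identifies $\text{Ker}(D^{d+1}_{nrs}(V,\mathcal{A}^t)\{\ell\}\to (D^1(V,\mathcal{A})^{(\ell)})^D)$ with $\text{Coker}(H^d(V,\mathcal{A}^t)\otimes_{\mathbb{Z}}(\mathbb{Q}/\mathbb{Z})\{\ell\}\to (T_\ell H^2_c(V,\mathcal{A}))^D)$, and this cokernel is divisible because $(T_\ell H^2_c(V,\mathcal{A}))^D$ is divisible, $T_\ell H^2_c(V,\mathcal{A})$ being a $\mathbb{Z}_\ell$-module de type fini sans torsion (here the cofinite-type property of $H^2_c(V,\mathcal{A})$, proved as in Lemma \ref{cofC}, is needed). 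Finally, since $D^1(V,\mathcal{A})$ is de torsion de type cofini, $\overline{D^1(V,\mathcal{A})}\{\ell\}\cong D^1(V,\mathcal{A})^{(\ell)}$ is finite, so that kernel is exactly the sous-groupe divisible maximal of $D^{d+1}_{nrs}(V,\mathcal{A}^t)\{\ell\}$; passing to the quotients by divisibles then yields the non-degenerate pairing $\overline{D^{d+1}_{nrs}(V,\mathcal{A}^t)}\{\ell\}\times\overline{D^1(V,\mathcal{A})}\{\ell\}\to\mathbb{Q}/\mathbb{Z}$.
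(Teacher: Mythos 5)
Your proposal is correct and coincides with the paper's approach: the paper proves Proposition \ref{d1bis} by the one-line remark that the argument of Proposition \ref{d1} carries over, and your write-up supplies precisely that transposition — the limit pairings in degrees $(d+1,2)$ and $(d+1,1)$, the two diagrams, the identification of the right-hand vertical arrow of diagram (2) via Theorem \ref{noyaubis} and Corollary \ref{cormaj} applied over the $(d+1)$-local fields $K_v$ under (H \ref{HCbis})$_{\ell}$, and the same snake-lemma/divisibility conclusion. The only slip is a citation: surjectivity of the local maps $H^{d+1}(K_v,A^t)\rightarrow (H^0(K_v,A)^{\wedge})^D$ comes from Lemma \ref{surj} (passed to the limit), not from Theorem \ref{modulo divisibles}, which only controls the kernel.
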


\begin{proof} 
La preuve est analogue à celle de \ref{d1}.
\end{proof}

Nous sommes maintenant en mesure d'établir le théorème suivant:

\begin{theorem}\label{cornrsbis}
On rappelle que $k=\mathbb{C}((t_0))...((t_d))$ et que $K=k(X)$ est le corps des fonctions de la courbe $X$. On suppose (H \ref{HCbis})$_{\ell}$. Alors il existe un accouplement non dégénéré de groupes finis:
$$\overline{\Sha^{d+1}_{nrs}(A^t)}\{\ell\} \times \overline{\Sha^1(A)}\{\ell\} \rightarrow \mathbb{Q}/\mathbb{Z}.$$
\end{theorem}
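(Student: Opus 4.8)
The plan is to transcribe, in degrees $1$ and $d+1$, the three-step argument \ref{d1} $\rightarrow$ \ref{C((t))(X)} $\rightarrow$ \ref{cornrs} of the surface case. Two of the three steps are already in place: Proposition~\ref{d1bis} provides, for each nonempty open $V \subseteq U$, a non-degenerate pairing of finite groups $\overline{D^{d+1}_{nrs}(V,\mathcal{A}^t)}\{\ell\} \times \overline{D^1(V,\mathcal{A})}\{\ell\} \rightarrow \mathbb{Q}/\mathbb{Z}$, and Lemma~\ref{Shabis} identifies $D^1(V,\mathcal{A}) \cong \Sha^1_{nr}(V,A)$. What is left is to identify the dual-side group $D^{d+1}_{nrs}(V,\mathcal{A}^t)$ with a Tate--Shafarevich group, and then to let $V$ shrink.

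First I would introduce $\Sha^{d+1}_{nrs}(V,A^t)$ by the obvious analogue of the surface definition (symmetrised-unramified condition off $V \cup Z$, full local triviality on $Z$, unramified condition on $V^{(1)}$), so that $\Sha^{d+1}_{nrs}(A^t) = \bigcup_{V} \Sha^{d+1}_{nrs}(V,A^t)$, and prove $D^{d+1}_{nrs}(V,\mathcal{A}^t) \cong \Sha^{d+1}_{nrs}(V,A^t)$ modulo divisible subgroups. Writing $g\colon \mathrm{Spec}\,K \rightarrow V$ and using $\mathcal{A}^t \cong g_*A^t$, this rests on comparing $H^{d+1}(V,\mathcal{A}^t)$ with $H^{d+1}(K,A^t)$ through the Leray spectral sequence $H^r(V,R^sg_*A^t) \Rightarrow H^{r+s}(K,A^t)$ and then matching the local conditions via the inflation--restriction sequences, as in Lemma~\ref{Sha}. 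Granting this, Proposition~\ref{d1bis} and Lemma~\ref{Shabis} give the $V$-level statement: a non-degenerate pairing of finite groups $\overline{\Sha^{d+1}_{nrs}(V,A^t)}\{\ell\} \times \overline{\Sha^1_{nr}(V,A)}\{\ell\} \rightarrow \mathbb{Q}/\mathbb{Z}$, which is the exact analogue of Theorem~\ref{C((t))(X)}.

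Then I would pass to the limit verbatim as in Corollary~\ref{cornrs}. The family $\Sha^1_{nr}(V,A)\{\ell\}$ is a decreasing family of subgroups of the $\ell$-primary cofinite-type group $\Sha^1_{nr}(U,A)\{\ell\}$, so by stationarity of such families (Lemme~3.7 de \cite{HS1}) there is a nonempty open $V_0 \subseteq U$ with $\Sha^1_{nr}(V,A)\{\ell\} = \Sha^1(A)\{\ell\}$ for every nonempty $V \subseteq V_0$. Dually, from $\Sha^{d+1}_{nrs}(A^t)\{\ell\} = \bigcup_{V \subseteq V_0}\Sha^{d+1}_{nrs}(V,A^t)\{\ell\}$ and the ladder of short exact sequences $0 \rightarrow (\cdot)_{div} \rightarrow (\cdot) \rightarrow \overline{(\cdot)} \rightarrow 0$, the inclusion $\Sha^{d+1}_{nrs}(V_0,A^t)\{\ell\} \hookrightarrow \Sha^{d+1}_{nrs}(A^t)\{\ell\}$ induces an isomorphism $\overline{\Sha^{d+1}_{nrs}(V_0,A^t)}\{\ell\} \xrightarrow{\sim} \overline{\Sha^{d+1}_{nrs}(A^t)}\{\ell\}$. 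Substituting both identifications into the $V_0$-level pairing yields the desired non-degenerate pairing of finite groups.

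The hard part will be the top-degree comparison $H^{d+1}(V,\mathcal{A}^t) \to H^{d+1}(K,A^t)$ underlying the identification $D^{d+1}_{nrs}(V,\mathcal{A}^t) \cong \Sha^{d+1}_{nrs}(V,A^t)$. In Lemma~\ref{Sha} the analogous map $H^1(V,\mathcal{A}) \to H^1(K,A)$ was injective because the edge term $E_2^{1,0}$ receives no differential; in degree $d+1$ the term $E_2^{d+1,0} = H^{d+1}(V,\mathcal{A}^t)$ receives differentials from $E_2^{d-1,1} = H^{d-1}(V,R^1g_*A^t)$ and higher terms, so the map is only an edge map onto the bottom filtration piece and need not be injective. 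I would therefore have to check that the kernel of this edge map is divisible, or at any rate vanishes after passing to $\overline{(\cdot)}\{\ell\}$, controlling $R^1g_*A^t$ via its stalks $H^1(K_v^{nr},A^t)$ as in \ref{cofC} and \ref{Sha}. This bookkeeping, rather than the limiting argument of the previous paragraph, is where the real work sits; the local duality inputs of Section~\ref{Cbis} (Theorem~\ref{noyaubis} and the vanishing of the $\beta_{r,\ell}$ from Corollary~\ref{cormaj} and Theorem~\ref{nullité} under hypothesis (H~\ref{HCbis})$_{\ell}$) are already absorbed into Proposition~\ref{d1bis}.
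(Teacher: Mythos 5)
Your skeleton — the pairing of Proposition~\ref{d1bis} combined with Lemma~\ref{Shabis} at the level of an open $V$, then a stationarity argument to shrink $V$ — is the paper's, and your final limiting paragraph is essentially its second half. But the middle step you insert is a genuine gap, and it is precisely the step the paper is careful never to take. You propose to define finite-level groups $\Sha^{d+1}_{nrs}(V,A^t)$ and to prove $D^{d+1}_{nrs}(V,\mathcal{A}^t)\cong\Sha^{d+1}_{nrs}(V,A^t)$ modulo divisibles; you then flag this identification as ``the hard part'' and leave it undone. It is not mere bookkeeping: in top degree you would need both (a) that the kernel of the edge map $H^{d+1}(V,\mathcal{A}^t)\to H^{d+1}(K,A^t)$ — hit by differentials coming from $H^{d+1-r}(V,R^{r-1}g_*A^t)$ for \emph{all} $r\geq 2$, not only from $R^1g_*A^t$ — dies after applying $\overline{(\cdot)}\{\ell\}$, and (b) that every class of $H^{d+1}(K,A^t)$ satisfying your local conditions (in particular unramifiedness on $V$) actually lies in the bottom Leray filtration step $E_\infty^{d+1,0}$, i.e. lifts to $H^{d+1}(V,\mathcal{A}^t)$. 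Your sketch addresses only (a), and neither point is clear. This is exactly why the surface-case template you transcribe (Lemma~\ref{Sha}, Theorem~\ref{C((t))(X)}, Corollaire~\ref{cornrs}) does not carry over: there, both sides of the pairing live in degree $1$, where the five-term exact sequence gives injectivity of $H^1(V,\cdot)\to H^1(K,\cdot)$ and an exact control of the image; in degree $d+1$ no such statement is available.

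The paper's proof bypasses the problem entirely: it never defines $\Sha^{d+1}_{nrs}(V,A^t)$. The dual side is kept as $D^{d+1}_{nrs}(V,\mathcal{A}^t)$ throughout, and the comparison with a Tate--Shafarevich group is made only at the level of $K$, where it is painless: since $\varinjlim_V H^{d+1}(V,\mathcal{A}^t)=H^{d+1}(K,A^t)$, one has $\Sha^{d+1}_{nrs}(A^t)\{\ell\}=\varinjlim_{V\subseteq V_0}D^{d+1}_{nrs}(V,\mathcal{A}^t)\{\ell\}$ — the non-injectivity of the finite-level edge maps disappears in the direct limit — and the ladder of exact sequences
$$0\rightarrow D^{d+1}_{nrs}(V,\mathcal{A}^t)\{\ell\}_{div}\rightarrow D^{d+1}_{nrs}(V,\mathcal{A}^t)\{\ell\}\rightarrow\overline{D^{d+1}_{nrs}(V,\mathcal{A}^t)}\{\ell\}\rightarrow 0,$$
in which the maps $\overline{D^{d+1}_{nrs}(V',\mathcal{A}^t)}\{\ell\}\rightarrow\overline{D^{d+1}_{nrs}(V,\mathcal{A}^t)}\{\ell\}$ are isomorphisms for $V\subseteq V'\subseteq V_0$ (both being dual, via \ref{d1bis}, to the stabilized group $\overline{\Sha^1(A)}\{\ell\}$), yields $\overline{D^{d+1}_{nrs}(V_0,\mathcal{A}^t)}\{\ell\}\xrightarrow{\sim}\overline{\Sha^{d+1}_{nrs}(A^t)}\{\ell\}$ upon passage to the limit. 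To repair your argument, delete the intermediate objects $\Sha^{d+1}_{nrs}(V,A^t)$ and run your own last paragraph directly with the groups $D^{d+1}_{nrs}(V,\mathcal{A}^t)$ in their place: the ``hard part'' you isolated then never has to be done.
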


\begin{proof}
D'après la proposition \ref{d1bis} et le lemme \ref{Shabis}, on a un accouplement parfait de groupes finis:
$$\overline{D^{d+1}_{nrs}(V,\mathcal{A}^t)}\{\ell\} \times \overline{\Sha^1_{nr}(V,A)}\{\ell\} \rightarrow \mathbb{Q}/\mathbb{Z}.$$
Pour $V \subseteq V'$ deux ouverts de $U$, on remarque que $\Sha^1_{nr}(V,A)\{\ell\}$ et $ \Sha^1_{nr}(V',A)\{\ell\}$ sont des sous-groupes du groupe de torsion de type cofini $\Sha^1_{nr}(U,A)\{\ell\}$ tels que $\Sha^1_{nr}(V,A)\{\ell\} \subseteq \Sha^1_{nr}(V',A)\{\ell\}$. On en déduit qu'il existe un ouvert non vide $V_0$ de $U$ tel que, pour tout ouvert non vide $V$ de $V_0$, on a $\Sha^1_{nr}(V,A)\{\ell\} = \Sha^1_{nr}(V_0,A)\{\ell\}$. Cela implique que $\Sha^1(V_0,A)\{\ell\}=\Sha^1(A)\{\ell\}$.\\
Par ailleurs, on remarque que, pour $V \subseteq V'$ deux ouverts non vides de $V_0$, on a un diagramme commutatif:\\
\centerline{\xymatrix{
0 \ar[r] & D^{d+1}_{nrs}(V',\mathcal{A}^t)\{\ell\}_{div} \ar[r]\ar[d] & D^{d+1}_{nrs}(V',\mathcal{A}^t)\{\ell\} \ar[r] \ar[d] & \overline{D^{d+1}_{nrs}(V',\mathcal{A}^t)}\{\ell\} \ar[d]^{\cong} \ar[r] & 0\\
0 \ar[r] & D^{d+1}_{nrs}(V,\mathcal{A}^t)\{\ell\}_{div} \ar[r] & D^{d+1}_{nrs}(V,\mathcal{A}^t)\{\ell\} \ar[r]  & \overline{D^{d+1}_{nrs}(V,\mathcal{A}^t)}\{\ell\}  \ar[r] & 0
}}
Comme $\Sha^{d+1}_{nrs}(A^t)\{\ell\} = \varinjlim_{V \subseteq V_0} D^{d+1}_{nrs}(V,\mathcal{A}^t)\{\ell\}$, en passant à la limite inductive, on obtient que la restriction $D^{d+1}_{nrs}(V_0,\mathcal{A}^t)\{\ell\} \rightarrow \Sha^{d+1}_{nrs}(A^t)\{\ell\}$ induit un isomorphisme $\overline{D^{d+1}_{nrs}(V_0,\mathcal{A}^t)}\{\ell\} \xrightarrow{\sim} \overline{\Sha^{d+1}_{nrs}(A^t)}\{\ell\}$. On obtient donc un accouplement non dégénéré de groupes finis:
$$\overline{\Sha^{d+1}_{nrs}(A^t)}\{\ell\} \times \overline{\Sha^1(A)}\{\ell\} \rightarrow \mathbb{Q}/\mathbb{Z}.$$
\end{proof}

\begin{corollary}\label{corglobbis}
On rappelle que $k=\mathbb{C}((t_0))...((t_d))$ et que $K=k(X)$ est le corps des fonctions de la courbe $X$. On suppose (H \ref{HCbis})$_{\ell}$ et on note $i^t: \Sha^{d+1}(A^t) \hookrightarrow \Sha^{d+1}_{nrs}(A^t)$ l'injection canonique. Alors il existe un accouplement non dégénéré à gauche de groupes finis:
$$\Sha^{d+1}(A^t)\{\ell\}/(i^t)^{-1}(\Sha^{d+1}_{nrs}(A^t)\{\ell\}_{div}) \times \overline{\Sha^1(A)}\{\ell\} \rightarrow \mathbb{Q}/\mathbb{Z}.$$
\end{corollary}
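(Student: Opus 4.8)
The plan is to obtain this statement as a direct corollary of the perfect pairing of finite groups
$$\overline{\Sha^{d+1}_{nrs}(A^t)}\{\ell\} \times \overline{\Sha^1(A)}\{\ell\} \rightarrow \mathbb{Q}/\mathbb{Z}$$
furnished by the théorème \ref{cornrsbis}, simply by restricting its left-hand factor along the map induced by the canonical inclusion $i^t$. The only genuine work is to identify correctly the kernel of the relevant map and to observe that restricting a perfect pairing to a subgroup of one side automatically yields non-degeneracy on that side.

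First I would consider the injection on $\ell$-primary parts $\Sha^{d+1}(A^t)\{\ell\} \hookrightarrow \Sha^{d+1}_{nrs}(A^t)\{\ell\}$ induced by $i^t$ (injective since $i^t$ is), and compose it with the canonical projection $\Sha^{d+1}_{nrs}(A^t)\{\ell\} \twoheadrightarrow \overline{\Sha^{d+1}_{nrs}(A^t)}\{\ell\}$ onto the quotient by the maximal divisible subgroup. By definition of this quotient, the kernel of the composite is the set of elements of $\Sha^{d+1}(A^t)\{\ell\}$ whose image under $i^t$ lands in $\Sha^{d+1}_{nrs}(A^t)\{\ell\}_{div}$, that is precisely $(i^t)^{-1}(\Sha^{d+1}_{nrs}(A^t)\{\ell\}_{div})$. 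This produces a canonical injection
$$\Sha^{d+1}(A^t)\{\ell\}/(i^t)^{-1}(\Sha^{d+1}_{nrs}(A^t)\{\ell\}_{div}) \hookrightarrow \overline{\Sha^{d+1}_{nrs}(A^t)}\{\ell\}.$$
Since $\overline{\Sha^{d+1}_{nrs}(A^t)}\{\ell\}$ is finite by the théorème \ref{cornrsbis}, the source is finite as well, which accounts for the ``groupes finis'' in the statement.

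I would then pull back the perfect pairing of the théorème \ref{cornrsbis} along this injection, obtaining the announced pairing. Its left non-degeneracy is immediate: if an element $x$ of the quotient lies in the left kernel, its image $\bar{x} \in \overline{\Sha^{d+1}_{nrs}(A^t)}\{\ell\}$ pairs trivially with all of $\overline{\Sha^1(A)}\{\ell\}$, hence $\bar{x}=0$ by perfectness of the pairing of the théorème \ref{cornrsbis}, and therefore $x=0$ by injectivity. I expect no real obstacle here; the point worth stressing is rather \emph{why} one obtains only left non-degeneracy. Unlike the case $d=0$ of the corollaire \ref{corglob}, where the symmetric roles of $A$ and $A^t$ (both entering through $H^1$) permitted a compatibility-of-pairings argument ($\text{CT}$ versus $\text{CT}^t$) and hence a two-sided conclusion, here the duality relates $\Sha^{d+1}(A^t)$ to $\Sha^1(A)$, objects living in different cohomological degrees as soon as $d\geq 1$; no swap of $A$ and $A^t$ is available, and the injection above into $\overline{\Sha^{d+1}_{nrs}(A^t)}\{\ell\}$ need not be surjective, so the right kernel can fail to vanish.
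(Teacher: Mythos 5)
Your proof is correct and is essentially the paper's: the paper proves this corollary by the argument of corollaire \ref{corglob}, whose only relevant content for this asymmetric statement is exactly your restriction of the perfect pairing of théorème \ref{cornrsbis} along $i^t$, together with the identification of the kernel of $\Sha^{d+1}(A^t)\{\ell\} \rightarrow \overline{\Sha^{d+1}_{nrs}(A^t)}\{\ell\}$ as $(i^t)^{-1}(\Sha^{d+1}_{nrs}(A^t)\{\ell\}_{div})$. Your closing remark — that the $\text{CT}$ versus $\text{CT}^t$ compatibility argument of \ref{corglob} has no analogue here because the two factors live in different cohomological degrees, so only left non-degeneracy can be obtained this way — matches the paper, which indeed poses the determination of the right kernel as an open question immediately after the corollary.
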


\begin{proof}
La preuve est analogue à celle de \ref{corglob}.
\end{proof}

\textbf{Question:} Quel est le noyau à droite dans l'accouplement précédent?

\section{\textsc{Variétés abéliennes sur $\mathbb{Q}_p((t_2))...((t_{d}))$}}\label{Qp}

On suppose dans cette section que $d \geq 2$ et que $k=k'((t_2))...((t_d))$ avec $k'$ corps $p$-adique, $p$ étant un nombre premier fixé. Soit $A$ une variété abélienne sur $k$. On introduit, pour $i \in \{1,2,...,d\}$, les notations $\mathcal{A}_i, A_i,F_i,U_i,T_i,B_i$ analogues à celles du début de la section \ref{notations}. On note aussi $\rho_i$ le rang du tore $T_i$ pour $i \in \{1,...,d\}$.\\
On note $A_0$ la fibre spéciale de $\mathcal{A}_1$. La composante connexe du neutre $A_0^0$ de $A_0$ s'insère dans une suite exacte:
$$0 \rightarrow U_0 \times_{k_0} T_0 \rightarrow A_0 \rightarrow B_0 \rightarrow 0$$
où $U_0$ est groupe abélien unipotent, $T_0$ est un tore et $B_0$ est une variété abélienne sur $k_0$.\\
Par ailleurs, si $M$ est un module galoisien sur un corps $l$, $\ell$ un nombre premier différent de la caractéristique de $l$ et $i$ un entier, on notera $M\{\ell\}(i) = \varinjlim_r {_{\ell^r}}M \otimes \mathbb{Z}/\ell^r\mathbb{Z}(i)$. En tant que groupe abélien, il est isomorphe à $M\{\ell\}$. Par abus de notation, quand $G$ est un groupe algébrique abélien sur $l$, on écrira $G\{\ell\}(i)$ au lieu de $G(l^s)\{\ell\}(i)$.

\subsection{Dualité modulo divisibles}\label{secmoddiv}

Posons ${_{(n)}}\tilde{A} =  \text{\underline{Ext}}^1_k(A,\mathbb{Z}/n\mathbb{Z}(d))$ pour chaque entier naturel $n$ non nul et notons $\tilde{A} =\varinjlim_n {_{(n)}}\tilde{A}$.

\begin{remarque}
En tenant compte de la formule de Barsotti-Weil, il serait plus naturel de considérer $\text{\underline{Ext}}^1(A,\mathbb{Q}/\mathbb{Z}(d))$ au lieu de $\tilde{A}$. Il se trouve en fait que ces deux faisceaux coïncident, comme le montre l'annexe à la fin de ce texte.
\end{remarque}

 Comme $\text{\underline{Hom}}_k(A,\mathbb{Z}/n\mathbb{Z}(d))=0$, on a un morphisme naturel dans la catégorie dérivée ${_{(n)}}\tilde{A}\rightarrow\mathbb{R}\text{\underline{Hom}}_k(A,\mathbb{Z}/n\mathbb{Z}(d))[1]$, d'où un morphisme:
$$A \otimes^{\mathbf{L}} {_{(n)}}\tilde{A} \rightarrow \mathbb{Z}/n\mathbb{Z}(d)[1],$$
induisant un accouplement:
$$H^r(k,A) \times H^{d-r}(k,{_{(n)}}\tilde{A}) \rightarrow H^{d+1}(k,\mathbb{Z}/n\mathbb{Z}(d)) \cong \mathbb{Z}/n\mathbb{Z}.$$
En passant à la limite inductive sur $n$, on obtient un accouplement:
$$H^r(k,A) \times H^{d-r}(k,\tilde{A}) \rightarrow H^{d+1}(k,\mathbb{Q}/\mathbb{Z}(d)) \cong \mathbb{Q}/\mathbb{Z}.$$

\begin{lemma}\label{torsion dual}
Soit $n \in \mathbb{N}$ non nul. On a l'égalité:
$${_{(n)}}\tilde{A} = \text{\underline{Hom}}_k({_n}A,\mathbb{Z}/n\mathbb{Z}(d)) = {_n}A^t \otimes \mathbb{Z}/n\mathbb{Z}(d-1)$$ et la multiplication par $n$ sur $\tilde{A}$ induit une suite exacte de faisceaux:
$$0 \rightarrow {_{(n)}}\tilde{A} \rightarrow \tilde{A} \rightarrow \tilde{A} \rightarrow 0.$$
\end{lemma}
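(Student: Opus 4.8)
The plan is to establish the two claimed identifications first, and then bootstrap the short exact sequence from the colimit structure.

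First I would exploit the Kummer sequence $0 \rightarrow {_n}A \rightarrow A \xrightarrow{n} A \rightarrow 0$, which is exact for the étale topology since $k$ has characteristic $0$. Applying the contravariant derived functor $\mathbb{R}\underline{\text{Hom}}_k(-,\mathbb{Z}/n\mathbb{Z}(d))$ yields a distinguished triangle whose associated long exact sequence of $\underline{\text{Ext}}$-sheaves has all of its multiplication-by-$n$ maps equal to zero, because $\mathbb{Z}/n\mathbb{Z}(d)$ is killed by $n$ and hence so is each $\underline{\text{Ext}}^i_k(A,\mathbb{Z}/n\mathbb{Z}(d))$. The long exact sequence therefore breaks into short exact sequences $0 \rightarrow \underline{\text{Ext}}^i_k(A,\mathbb{Z}/n\mathbb{Z}(d)) \rightarrow \underline{\text{Ext}}^i_k({_n}A,\mathbb{Z}/n\mathbb{Z}(d)) \rightarrow \underline{\text{Ext}}^{i+1}_k(A,\mathbb{Z}/n\mathbb{Z}(d)) \rightarrow 0$. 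For $i=0$ the sheaf $\underline{\text{Hom}}_k(A,\mathbb{Z}/n\mathbb{Z}(d))$ vanishes — a morphism from the geometrically connected group scheme $A$ to the finite sheaf $\mathbb{Z}/n\mathbb{Z}(d)$ is trivial on the identity component, hence zero — so the connecting map gives the first identification $ {_{(n)}}\tilde A = \underline{\text{Ext}}^1_k(A,\mathbb{Z}/n\mathbb{Z}(d)) \cong \underline{\text{Hom}}_k({_n}A,\mathbb{Z}/n\mathbb{Z}(d))$.

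Next I would compute this $\underline{\text{Hom}}$-sheaf. Writing $\mathbb{Z}/n\mathbb{Z}(d) = \mu_n \otimes \mathbb{Z}/n\mathbb{Z}(d-1)$ and observing that $\mathbb{Z}/n\mathbb{Z}(d-1) = \mu_n^{\otimes(d-1)}$ is an invertible sheaf of $\mathbb{Z}/n\mathbb{Z}$-modules (étale-locally isomorphic to the constant sheaf $\mathbb{Z}/n\mathbb{Z}$), tensoring by it commutes with $\underline{\text{Hom}}_k({_n}A,-)$, as one checks étale-locally where the twist is trivial. This reduces the computation to $\underline{\text{Hom}}_k({_n}A,\mu_n)$, which the Weil pairing identifies with ${_n}A^t$. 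Combining the two steps gives the second identification $\underline{\text{Hom}}_k({_n}A,\mathbb{Z}/n\mathbb{Z}(d)) \cong {_n}A^t \otimes \mathbb{Z}/n\mathbb{Z}(d-1)$.

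Finally, for the short exact sequence I would analyse the filtered colimit $\tilde A = \varinjlim_n {_{(n)}}\tilde A$, in which the transition map attached to $n \mid m$ is induced by the inclusion $\mathbb{Z}/n\mathbb{Z}(d) \xrightarrow{\cdot(m/n)} \mathbb{Z}/m\mathbb{Z}(d)$. Through the identifications above, the $\ell$-primary part of this directed system is exactly the system defining $A^t\{\ell\}(d-1) = \varinjlim_r {_{\ell^r}}A^t \otimes \mathbb{Z}/\ell^r\mathbb{Z}(d-1)$, so $\tilde A = \bigoplus_\ell A^t\{\ell\}(d-1)$ is a torsion sheaf. Since multiplication by $n$ is surjective on $A^t$ as an étale sheaf (again because $k$ has characteristic $0$), the colimit is $n$-divisible and its $n$-torsion is precisely ${_{(n)}}\tilde A$; the computation is the exact analogue — twisted by $(d-1)$ and tensored with ${_n}A^t$ — of the elementary sequence $0 \rightarrow \mathbb{Z}/n\mathbb{Z} \rightarrow \mathbb{Q}/\mathbb{Z} \xrightarrow{n} \mathbb{Q}/\mathbb{Z} \rightarrow 0$. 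This produces the desired $0 \rightarrow {_{(n)}}\tilde A \rightarrow \tilde A \xrightarrow{n} \tilde A \rightarrow 0$. The main obstacle is precisely this last step: one must pin down the transition maps in the colimit sharply enough to guarantee that the $n$-torsion of $\tilde A$ equals ${_{(n)}}\tilde A$ and not something larger, and that multiplication by $n$ is genuinely surjective as a morphism of sheaves. The functoriality in $n$ of the identifications of the first two steps is what lets one match the system with the standard one defining $A^t\{\ell\}(d-1)$, and the $n$-divisibility of $A^t$ as a sheaf is what legitimises the $\mathbb{Q}/\mathbb{Z}$-type computation in this setting.
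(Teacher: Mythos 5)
Your proof is correct and takes essentially the same route as the paper's: the Kummer sequence $0 \rightarrow {_n}A \rightarrow A \rightarrow A \rightarrow 0$ combined with the vanishing of $\underline{\text{Hom}}_k(A,\mathbb{Z}/n\mathbb{Z}(d))$ and of the multiplication-by-$n$ maps identifies ${_{(n)}}\tilde{A}$ with $\underline{\text{Hom}}_k({_n}A,\mathbb{Z}/n\mathbb{Z}(d)) = {_n}A^t \otimes \mathbb{Z}/n\mathbb{Z}(d-1)$, and the short exact sequence then follows from the resulting identification $\tilde{A} = \varinjlim_n {_n}A^t \otimes \mathbb{Z}/n\mathbb{Z}(d-1)$ together with the divisibility of $A^t(k^s)_{tors}$. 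The extra details you supply (why the long exact sequence of $\underline{\text{Ext}}$ sheaves splits, the invertibility of the twist $\mathbb{Z}/n\mathbb{Z}(d-1)$, the matching of transition maps in the colimit) only make explicit what the paper leaves implicit.
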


\begin{proof}
La suite exacte courte $0 \rightarrow {_n}A \rightarrow A \rightarrow A \rightarrow 0$ induit une suite exacte de cohomologie:
$$\text{\underline{Hom}}_k(A,\mathbb{Z}/n\mathbb{Z}(d)) \rightarrow \text{\underline{Hom}}_k({_n}A,\mathbb{Z}/n\mathbb{Z}(d)) \rightarrow {_{(n)}}\tilde{A} \rightarrow {_{(n)}}\tilde{A}.$$
Comme $\text{\underline{Hom}}_k(A,\mathbb{Z}/n\mathbb{Z}(d))=0$, on en déduit que: $${_{(n)}}\tilde{A} = \text{\underline{Hom}}_k({_n}A,\mathbb{Z}/n\mathbb{Z}(d)) = {_n}A^t \otimes \mathbb{Z}/n\mathbb{Z}(d-1).$$
Cela impose aussi que $\tilde{A} = \varinjlim_n {_n}A^t \otimes \mathbb{Z}/n\mathbb{Z}(d-1)$. Comme $A^t(k^s)_{tors}$ est divisible, cela montre immédiatement que la multiplication par $n$ sur $\tilde{A}$ induit une suite exacte de faisceaux:
$$0 \rightarrow {_{(n)}}\tilde{A} \rightarrow \tilde{A} \rightarrow \tilde{A} \rightarrow 0.$$
\end{proof}

Cela montre que ${_{(n)}}\tilde{A}$ est la $n$-torsion de $\tilde{A}$. On notera donc par la suite ${_{n}}\tilde{A}$ au lieu de ${_{(n)}}\tilde{A}$.

\begin{remarque}
En fait, on a montré que $\tilde{A}\{\ell\} = A^t\{\ell\}(d-1)$ pour chaque premier $\ell$.
\end{remarque}

\begin{corollary}\label{surjQp}
Pour chaque entier naturel $n$ et chaque entier $r$, le morphisme $ H^{r-1}(k,A)/n \rightarrow ({_n}H^{d+1-r}(k,\tilde{A}))^D$ est injectif et le morphisme ${_n}H^{r}(k,A) \rightarrow (H^{d-r}(k,\tilde{A})/n)^D$ est surjectif.
\end{corollary}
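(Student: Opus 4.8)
The plan is to mimic the proof of Lemma \ref{surj} essentially verbatim, replacing $A^t$ by the faisceau $\tilde{A}$ and invoking Lemma \ref{torsion dual} in place of the identity ${_n}A^t \cong \underline{\text{Hom}}_k({_n}A, \mathbb{Z}/n\mathbb{Z}(d))$ that was available in the $\mathbb{C}((t_0))\dots((t_d))$ setting. First I would record the two short exact sequences that produce the rows of the comparison diagram. The Kummer sequence $0 \to {_n}A \to A \to A \to 0$ gives the exact top row $0 \to H^{r-1}(k,A)/n \to H^r(k,{_n}A) \to {_n}H^r(k,A) \to 0$, while the exact sequence $0 \to {_n}\tilde{A} \to \tilde{A} \to \tilde{A} \to 0$ furnished by Lemma \ref{torsion dual} gives, after applying the functor $(-)^D = \text{Hom}(-,\mathbb{Q}/\mathbb{Z})$ (which is exact on finite groups), the exact bottom row of the following commutative diagram, whose vertical arrows are induced by the pairing $H^s(k,A) \times H^{d-s}(k,\tilde{A}) \to \mathbb{Q}/\mathbb{Z}$:\\
\centerline{\xymatrix{
0 \ar[r]& H^{r-1}(k,A)/n \ar[d] \ar[r] & H^r(k,{_n}A) \ar[r]\ar[d] & {_n}H^r(k,A) \ar[r]\ar[d] & 0\\
0 \ar[r]& ({_n}H^{d+1-r}(k,\tilde{A}))^D  \ar[r] & H^{d+1-r}(k,{_n}\tilde{A})^D \ar[r] & (H^{d-r}(k,\tilde{A})/n)^D \ar[r] & 0.
}}

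The crucial step is to identify the central vertical arrow $H^r(k,{_n}A) \to H^{d+1-r}(k,{_n}\tilde{A})^D$ as an isomorphism. By Lemma \ref{torsion dual} one has ${_n}\tilde{A} = \underline{\text{Hom}}_k({_n}A, \mathbb{Z}/n\mathbb{Z}(d))$, so this map is precisely the duality pairing of the théorème 2.17 de \cite{MilADT} applied to the finite $\text{Gal}(k^s/k)$-module ${_n}A$ over the $d$-local field $k$, the pairing at finite level being induced by evaluation ${_n}A \otimes {_n}\tilde{A} \to \mathbb{Z}/n\mathbb{Z}(d)$ followed by $H^{d+1}(k,\mathbb{Z}/n\mathbb{Z}(d)) \cong \mathbb{Z}/n\mathbb{Z}$. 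The one point deserving care in the $p$-adic tower is that the cited theorem requires $n$ premier à $\text{Car}(k_1)$; but here $k_1 = k'$ is a $p$-adic field, hence of characteristic $0$, so this hypothesis is vacuous and the duality holds for every $n$, including its $p$-part. Thus the central vertical arrow is an isomorphism.

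Finally, a routine diagram chase (snake lemma) on the diagram above forces the left vertical arrow $H^{r-1}(k,A)/n \to ({_n}H^{d+1-r}(k,\tilde{A}))^D$ to be injective and the right vertical arrow ${_n}H^r(k,A) \to (H^{d-r}(k,\tilde{A})/n)^D$ to be surjective, which is exactly the assertion of the corollary. I do not anticipate any genuine obstacle here: the argument is formally identical to that of Lemma \ref{surj}, and the only substantive difference, namely replacing the module-theoretic computation of ${_n}A^t$ by the computation of ${_n}\tilde{A}$ as a twisted dual, is already handled by Lemma \ref{torsion dual}. The subtlety specific to the $\mathbb{Q}_p$-tower, the validity of $d$-local duality with coefficients of $p$-power order, is resolved at once by the characteristic $0$ of $k_1$.
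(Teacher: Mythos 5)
Your proof is correct and follows essentially the same route as the paper: the same commutative diagram with exact Kummer rows, the identification ${_n}\tilde{A} = \underline{\text{Hom}}_k({_n}A,\mathbb{Z}/n\mathbb{Z}(d))$ from the lemme \ref{torsion dual}, and the duality of the théorème 2.17 de \cite{MilADT} making the central vertical arrow an isomorphism. Your explicit remark that $\text{Car}(k_1)=0$ renders the coprimality hypothesis of Milne's theorem vacuous (so the $p$-part is covered) is left implicit in the paper but is exactly the right justification.
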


\begin{proof}
On a un diagramme commutatif à lignes exactes:\\
\centerline{\xymatrix{
0 \ar[r]& H^{r-1}(k,A)/n \ar[d] \ar[r] & H^r(k,{_n}A) \ar[r]\ar[d] & {_n}H^r(k,A) \ar[r]\ar[d] & 0\\
0 \ar[r]& ({_n}H^{d+1-r}(k,\tilde{A}))^D  \ar[r] & H^{d+1-r}(k,{_n}\tilde{A})^D \ar[r] & (H^{d-r}(k,\tilde{A})/n)^D \ar[r] & 0,
}}
où le morphisme vertical central est un isomorphisme d'après le lemme précédent et le théorème 2.17 de \cite{MilADT}. On en déduit que le morphisme $ H^{r-1}(k,A)/n \rightarrow ({_n}H^{d+1-r}(k,\tilde{A}))^D$ est injectif et le morphisme ${_n}H^{r}(k,A) \rightarrow (H^{d-r}(k,\tilde{A})/n)^D$ est surjectif.
\end{proof}

\begin{notation}
Pour $r \in \mathbb{Z}$, $n \in \mathbb{N}^*$ et $M$ un $\text{Gal}(k^s/k)$-module discret tel que ${_n}H^r(k,M)$ et $H^r(k,M)/n$ sont finis, on note $\lambda_r(k,n,M) = \frac{|{_n}H^r(k,M)|}{|H^r(k,M)/n|}$.
\end{notation}

\begin{proposition}
Pour $r \geq 0$, il existe des familles d'entiers naturels $(\beta_{r,\ell})_{\ell}$, $(\beta_{r,{\ell}}^*)_{\ell}$, $(\beta_{r,{\ell}}^t)_{\ell}$, $(\beta_{0,{\ell}}^{tors})_{\ell}$ et $(\beta_{0,{\ell}}^{t,tors})_{\ell}$ indexées par les nombres premiers telles que, pour tout $n \in \mathbb{N}^*$, on a:
\begin{gather*}
\lambda_r(k,n,A) = \prod_{\ell} {\ell}^{\beta_{r,{\ell}}v_{\ell}(n)}, \\
\lambda_r(k,n,\tilde{A})=\prod_{\ell} {\ell}^{\beta_{r,{\ell}}^*v_{\ell}(n)},\\
\lambda_r(k,n,A^t)=\prod_{\ell} {\ell}^{\beta_{r,{\ell}}^tv_{\ell}(n)},\\
\lambda_0(k,n,A(k^s)_{tors})= \prod_{\ell} {\ell}^{\beta_{0,{\ell}}^{tors}v_{\ell}(n)},\\
\lambda_0(k,n,A^t(k^s)_{tors})= \prod_{\ell} {\ell}^{\beta_{0,{\ell}}^{t,tors}v_{\ell}(n)}.
 \end{gather*}
\end{proposition}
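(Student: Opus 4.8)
Le plan est de distinguer trois types de coefficients et de ramener chaque égalité, autant que possible, au second lemme de comptage de la sous-section 1.1 (celui qui affirme que, pour $B$ de torsion de type cofini, $\frac{|{_n}B|}{|B/n|}=\prod_{\ell}\ell^{r_{\ell}v_{\ell}(n)}$), le seul cas résistant étant le degré $0$ pour $A$ et $A^t$.

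On commencerait par traiter $\tilde{A}$ en tout degré. Le lemme \ref{torsion dual} montre que $\tilde{A}=\varinjlim_n {_n}\tilde{A}$ est un module galoisien discret de torsion ; donc $H^r(k,\tilde{A})$ est de torsion pour tout $r$, et la suite exacte $0\rightarrow {_n}\tilde{A}\rightarrow\tilde{A}\rightarrow\tilde{A}\rightarrow 0$ du même lemme réalise ${_n}H^r(k,\tilde{A})$ comme quotient du groupe fini $H^r(k,{_n}\tilde{A})$. Ainsi $H^r(k,\tilde{A})$ est de torsion de type cofini pour tout $r\geq 0$, et le lemme de comptage fournit directement les $\beta_{r,\ell}^*$. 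De même, les deux dernières égalités sont immédiates, car $A(k)_{tors}$ et $A^t(k)_{tors}$ sont de torsion de type cofini, leur $n$-torsion ${_n}A(k)$, resp. ${_n}A^t(k)$, étant finie pour tout $n$.

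Pour $A$ et $A^t$ en degré $r\geq 1$, les groupes $H^r(k,A)$ et $H^r(k,A^t)$ sont encore de torsion de type cofini : ils sont de torsion car tout élément provient par inflation d'un $H^r(\text{Gal}(L/k),-)$ avec $L/k$ finie, donc est annulé par $[L:k]$, et leur $n$-torsion est un quotient du groupe fini $H^r(k,{_n}A)$. Le lemme de comptage donne alors les $\beta_{r,\ell}$ et $\beta_{r,\ell}^t$ pour $r\geq 1$. Il ne reste que le degré $0$, où $H^0(k,A)=A(k)$ n'est plus de torsion : comme dans la preuve du cas $\mathbb{C}((t_0))...((t_d))$, on s'appuierait sur la proposition sur la caractéristique d'Euler-Poincaré. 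Puisque $k_0$ est fini et $d\geq 2$, on a $\chi(k,{_n}A)=1$, et la suite de Kummer fournit
$$\chi(k,{_n}A)=\prod_{r=0}^{d+1}\lambda_r(k,n,A)^{(-1)^r}=1,$$
d'où $\lambda_0(k,n,A)=\prod_{r=1}^{d+1}\lambda_r(k,n,A)^{(-1)^{r+1}}$. Chaque facteur du membre de droite ayant déjà la forme voulue, il en va de même de $\lambda_0(k,n,A)$, ce qui définit $\beta_{0,\ell}$ ; le traitement de $A^t$ est identique.

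La difficulté principale — et la seule différence réelle avec le cas déjà traité sur $\mathbb{C}((t_0))...((t_d))$ — réside dans ce dernier point : justifier $\chi(k,{_n}A)=1$ \emph{y compris lorsque $p$ divise $n$}. C'est précisément l'hypothèse $d\geq 2$ qui le permet, le facteur de correction $p$-adique de la formule de Tate sur $k_1$ se compensant dans la suite spectrale grâce à l'égalité des cardinaux de $H^0(k^{nr},-)$ et $H^1(k^{nr},-)$ ; pour $d=1$, c'est-à-dire $k$ $p$-adique, la caractéristique d'Euler-Poincaré vaut une puissance non triviale de $p$ et l'argument s'effondre, ce qui impose d'isoler ce cas dans une section à part.
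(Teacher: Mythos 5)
Votre démonstration est correcte et suit pour l'essentiel la même démarche que celle de l'article : pour les deux dernières égalités et pour les degrés $r \geq 1$, on invoque le fait que les groupes concernés sont de torsion de type cofini et le lemme de comptage, puis le cas $r=0$ de $A$ et $A^t$ se déduit de la formule $\chi(k,{_n}A)=\prod_{r=0}^{d+1}\lambda_r(k,n,A)^{(-1)^r}=1$, valable précisément parce que $k_0$ est fini et $d\geq 2$. La seule variante, mineure, est que vous traitez le degré $0$ de $\tilde{A}$ directement (le module $\tilde{A}$ étant de torsion, $H^0(k,\tilde{A})$ est lui-même de torsion de type cofini), là où l'article fait aussi passer $\tilde{A}$ par la caractéristique d'Euler-Poincaré; les deux arguments sont également valides.
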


\begin{proof}
Les deux dernières égalités sont évidentes car $A(k)_{tors}$ et $A(k)_{tors}$ sont de torsion de type cofini. Montrons les trois premières. Pour $r \geq 1$, elles sont évidentes, puisque les groupes $H^r(k,A)$, $H^r(k,\tilde{A})$ et $H^r(k,A^t)$ sont de torsion de type cofini. Le cas $r=0$ découle alors des formules suivantes:
\begin{gather*}
1 = \chi (k, {_n}A) = \prod_{r=0}^{d+1} \lambda_r(k,n,A)^{(-1)^r},\\
1 = \chi (k, ({_n}A)') = \prod_{r=0}^{d+1} \lambda_r(k,n,\tilde{A})^{(-1)^r},\\
1 = \chi (k, {_n}A^t) = \prod_{r=0}^{d+1} \lambda_r(k,n,A^t)^{(-1)^r}.
\end{gather*}
\end{proof}

\begin{theorem}\label{modulo divisibles Qp}
Pour $r \geq 1$, le noyau du morphisme $H^{r}(k,A) \rightarrow (H^{d-r}(k,\tilde{A})^{\wedge})^D$ est un groupe de torsion de type cofini divisible.
\end{theorem}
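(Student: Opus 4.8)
The plan is to replicate, essentially verbatim, the strategy of Théorème~\ref{modulo divisibles}, substituting the auxiliary faisceau $\tilde{A}$ (together with the exponents $\beta_{r,\ell}^{*}$ governing $\lambda_r(k,n,\tilde{A})$) for the dual variety $A^t$ (and its $\beta_{r,\ell}^{t}$) throughout. For each $s\in\{-1,0,\dots,d+1\}$ I set $N_s=\text{Ker}\big(H^s(k,A)\to (H^{d-s}(k,\tilde{A})^{\wedge})^D\big)$. For $s\ge 1$ the group $H^s(k,A)$ is de torsion de type cofini, hence so is its subgroup $N_s$, and it suffices to compute $|{_n}N_s|$ for every $n$ and to show this pins down $N_s$ as divisible.

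First I would identify the $n$-torsion of $N_s$. On $n$-torsion the defining map factors through the map ${_n}H^s(k,A)\to (H^{d-s}(k,\tilde{A})/n)^D$ provided by Corollaire~\ref{surjQp}, which is surjective; since $(H^{d-s}(k,\tilde{A})^{\wedge})^D$ has $n$-torsion equal to $(H^{d-s}(k,\tilde{A})/n)^D$, one checks directly that ${_n}N_s$ is exactly the kernel of this surjection, whence
$$|{_n}N_s|=\frac{|{_n}H^s(k,A)|}{|H^{d-s}(k,\tilde{A})/n|}.$$

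Next I would evaluate this ratio through the caractéristique d'Euler-Poincaré. As $k_0$ is fini and $d\ge 2$, point (i) of la proposition sur la caractéristique d'Euler-Poincaré gives $\chi(k,{_n}A)=1$ for the finite module ${_n}A$, including its partie $p$-primaire. Expanding $\chi(k,{_n}A)$ as the alternating product of the $\lambda_r(k,n,A)$ and rewriting every factor of degree $>s$ by means of the dualité parfaite de groupes finis $H^r(k,{_n}A)\cong H^{d+1-r}(k,{_n}\tilde{A})^D$ — which is le théorème 2.17 de \cite{MilADT} applied to the identification ${_n}\tilde{A}=\text{\underline{Hom}}_k({_n}A,\mathbb{Z}/n\mathbb{Z}(d))$ of Lemme~\ref{torsion dual} — the surviving contributions assemble into
$$|{_n}N_s|=\frac{|{_n}H^s(k,A)|}{|H^{d-s}(k,\tilde{A})/n|}=\prod_{\ell}\ell^{\gamma_{s,\ell}v_{\ell}(n)},\qquad \gamma_{s,\ell}=\sum_{r=0}^{s-1}(-1)^{r+s+1}\beta_{r,\ell}+\sum_{r=0}^{d-s}(-1)^{d+s-r}\beta_{r,\ell}^{*}.$$
Writing $N_s\cong F\oplus\bigoplus_\ell(\mathbb{Q}_\ell/\mathbb{Z}_\ell)^{r_\ell}$ with $F$ fini and comparing $|{_n}N_s|$ with $\prod_\ell \ell^{r_\ell v_\ell(n)}$ for all $n$ forces $F=0$ and $r_\ell=\gamma_{s,\ell}$; thus $N_s\cong\bigoplus_\ell(\mathbb{Q}_\ell/\mathbb{Z}_\ell)^{\gamma_{s,\ell}}$ is divisible, which is the assertion for $r=s\ge 1$. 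For $s=0$ the group $H^0(k,A)=A(k)$ is not de torsion, which is exactly why the statement is limited to $r\ge 1$.

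The only genuinely new input relative to Théorème~\ref{modulo divisibles} is the duality au niveau fini: in the complex case it came for free from Barsotti-Weil and $H^{d+1}(k,\mathbb{G}_m)\cong\mathbb{Q}/\mathbb{Z}$, whereas here it must be routed through $\mathbb{Z}/n\mathbb{Z}(d)$ via Lemme~\ref{torsion dual}. I expect the point deserving most care to be the validity of $\chi(k,{_n}A)=1$ on the partie $p$-primaire over the $p$-adic base; this is precisely what point (i) of la proposition d'Euler-Poincaré secures, and it is the single step where the présent cadre $p$-adique could a priori have diverged from the $\mathbb{C}((t_0))$ setting.
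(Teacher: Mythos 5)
Votre démonstration est correcte et suit essentiellement la même démarche que celle du texte: identification de ${_n}N_s$ via la surjectivité du corollaire \ref{surjQp}, calcul de $\chi(k,{_n}A)=1$ (point (i) de la proposition d'Euler-Poincaré, valable car $k_0$ est fini et $d\geq 2$), conversion des termes de degré $>s$ par la dualité de \cite{MilADT} appliquée à ${_n}\tilde{A}=\underline{\text{Hom}}_k({_n}A,\mathbb{Z}/n\mathbb{Z}(d))$ (lemme \ref{torsion dual}), puis argument de comptage sur les groupes de torsion de type cofini. Les seules différences sont des explicitations de détails (identification de la $n$-torsion du but, étape finale de comptage) que le texte laisse implicites.
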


\begin{proof}
Soit $s \in \{-1,0,...,d+1\}$. On note $N_s$ le noyau de $H^{s}(k,A) \rightarrow (H^{d-s}(k,\tilde{A})^{\wedge})^D$. On calcule la caractéristique d'Euler-Poincaré de ${_n}A$ pour chaque $n$:
\begin{align*}
1 & = \chi (k, {_n}A)\\
& = \prod_{r=0}^{s-1} \lambda_r(k,n,A)^{(-1)^r} \cdot \prod_{r=s+1}^{d+1} |H^r(k,{_n}A)|^{(-1)^r} \cdot |{_n}H^{s}(k,A)|^{(-1)^{s}}\\
& = \prod_{r=0}^{s-1} \lambda_r(k,n,A)^{(-1)^r} \cdot \prod_{r=0}^{d-s} |H^{r}(k,{_n}\tilde{A})|^{(-1)^{d+1-r}} \cdot |{_n}H^{s}(k,A)|^{(-1)^{s}}\\
& = \prod_{r=0}^{s-1} \lambda_r(k,n,A)^{(-1)^r} \cdot \prod_{r=0}^{d-s} \lambda_r(k,n,\tilde{A})^{(-1)^{d+1-r}}\cdot |{_n}H^{s}(k,A)|^{(-1)^{s}}|H^{d-s}(k,\tilde{A})/n|^{(-1)^{s+1}}\\
& = |{_n}H^{s}(k,A)|^{(-1)^{s}}|H^{d-s}(k,\tilde{A})/n|^{(-1)^{s+1}} \cdot \prod_{\ell} {\ell}^{(-1)^{s+1}\gamma_{s,{\ell}} v_{\ell}(n)},
\end{align*}
où $\gamma_{s,{\ell}} = \sum_{r=0}^{s-1}(-1)^{r+s+1}\beta_{r,{\ell}}+\sum_{r=0}^{d-s}(-1)^{d+s-r}\beta_{r,{\ell}}^*$. On obtient donc, pour tout $s \in \{-1,0,...,d+1\}$:
$$ |{_n}N_s| = \frac{|{_n}H^{s}(k,A)|}{|H^{d-s}(k,\tilde{A})/n|} = \prod_p {\ell}^{\gamma_{s,{\ell}} v_{\ell}(n)},$$
ce qui prouve que $N_s$ est divisible à condition que $s\neq 0$.
\end{proof}

En reprenant les notations de la preuve précédente, on a alors:
$$N_s \cong \bigoplus_{\ell} (\mathbb{Q}_{\ell}/\mathbb{Z}_{\ell})^{\gamma_{s,{\ell}}},$$
et nous voulons calculer les $\gamma_{s,{\ell}} = \sum_{r=0}^{s-1}(-1)^{r+s+1}\beta_{r,{\ell}}+\sum_{r=0}^{d-s}(-1)^{d+s-r}\beta_{r,{\ell}}^*$. Avant de passer à la suite, il est utile d'établir des équations reliant les différentes variables que nous avons introduites ($\beta_{r,{\ell}}, \beta_{r,{\ell}}^*,\beta_{r,p}^t,\beta_{0,{\ell}}^{tors},\beta_{0,{\ell}}^{t,tors},\gamma_{r,{\ell}}$).

\begin{proposition}\label{equa0Qp}
Soit $\ell$ un nombre premier. Les entiers $(\beta_{r,{\ell}})_r$ et $(\beta^*_{r,{\ell}})_r$ vérifient les équations:\\
$$\left
\{
\begin{array}{c}
\gamma_{r,{\ell}}=\beta_{r,{\ell}} \;\;\;\; \forall r \in \{-1\} \cup \{1,2,...,d+1\}\\
\gamma_{0,{\ell}} = \beta_{0,{\ell}}^{tors}\\
\beta_{r,{\ell}}=\beta^*_{d+1-r,{\ell}} \;\;\;\; \forall r \in \{2,3,...,d+1\}\\ 
\beta_{1,{\ell}} - \beta_{0,{\ell}} = \beta_{d,{\ell}}^* - \beta_{d+1,{\ell}}^*\\
\beta_{0,{\ell}}^{tors}=\beta_{d+1,{\ell}}^*\\
\sum_{r=0}^{d+1} (-1)^r\beta_{r,{\ell}}=0\\
\beta_{r,{\ell}}=\beta^*_{r,{\ell}}=0\;\;\;\; \forall r \geq d+2 
\end{array}
\right.$$
\end{proposition}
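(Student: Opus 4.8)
Le plan est de reprendre, pour chaque degré de pivot $r \in \{-1,0,\dots,d+1\}$, le calcul de caractéristique d'Euler-Poincaré de ${_n}A$ déjà effectué dans la preuve du théorème \ref{modulo divisibles Qp}, mais en lisant cette fois les exposants dans les deux membres. Concrètement, en scindant le produit $\chi(k,{_n}A)=\prod_{s=0}^{d+1}|H^s(k,{_n}A)|^{(-1)^s}=1$ au degré $r$ et en utilisant la dualité parfaite ${_n}A \leftrightarrow {_n}\tilde{A}$ (lemme \ref{torsion dual} et théorème 2.17 de \cite{MilADT}) pour convertir chaque $|H^{s}(k,{_n}A)|$ de degré $s>r$ en $|H^{d+1-s}(k,{_n}\tilde{A})|$, on obtiendrait pour chaque $r$ l'identité
$$\frac{|{_n}H^r(k,A)|}{|H^{d-r}(k,\tilde{A})/n|} = \prod_{\ell} \ell^{\gamma_{r,{\ell}} v_{\ell}(n)}, \qquad \gamma_{r,{\ell}} = \sum_{s=0}^{r-1}(-1)^{r+s+1}\beta_{s,{\ell}}+\sum_{s=0}^{d-r}(-1)^{d+r-s}\beta_{s,{\ell}}^*,$$
valable pour tout $n \in \mathbb{N}^*$. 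Tout le reste consiste à identifier les deux membres.

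Le point décisif — et c'est ici que le cas $p$-adique diffère de \ref{equa0} — est que $\tilde{A}$ est cette fois un module de \emph{torsion}: d'après le lemme \ref{torsion dual}, ${_n}\tilde{A} = {_n}A^t \otimes \mathbb{Z}/n\mathbb{Z}(d-1)$ est fini, donc chaque $H^{d-r}(k,\tilde{A})$ est de torsion de type cofini et $|H^{d-r}(k,\tilde{A})/n|$ reste borné quand $n$ varie. Sa contribution à la croissance est donc nulle pour \emph{tout} $r$, alors que dans le cas $\mathbb{C}((t))$ le terme $H^0(k,A^t)=A^t(k)$ n'était pas de torsion et faisait échouer l'égalité en degré $d$. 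En regardant le numérateur (dont la croissance vaut $\beta_{r,{\ell}}$ si $r\geq 1$, $\beta_{0,{\ell}}^{tors}$ si $r=0$, et $0$ si $r=-1$ puisque $H^{-1}=0$), on lirait aussitôt $\gamma_{r,{\ell}}=\beta_{r,{\ell}}$ pour $r\in\{-1\}\cup\{1,\dots,d+1\}$ et $\gamma_{0,{\ell}}=\beta_{0,{\ell}}^{tors}$: ce sont les deux premières lignes du système.

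Les lignes restantes s'obtiendraient alors par algèbre linéaire en injectant la formule explicite de $\gamma_{r,{\ell}}$. En soustrayant les instances de $\gamma_{r,{\ell}}=\beta_{r,{\ell}}$ pour deux degrés consécutifs $r$ et $r-1$ (tous deux dans $\{1,\dots,d+1\}$), les sommes en $\beta$ se téléscopent et il ne reste que $\beta_{r,{\ell}}=\beta^*_{d+1-r,{\ell}}$, soit la troisième ligne pour $r\in\{2,\dots,d+1\}$. L'instance $r=1$ fournit $\beta_{1,{\ell}}-\beta_{0,{\ell}}=\sum_{s=0}^{d-1}(-1)^{d+1-s}\beta^*_{s,{\ell}}$, qui se ramène à la quatrième ligne via la relation $\sum_{s=0}^{d+1}(-1)^s\beta^*_{s,{\ell}}=0$; de même, la deuxième ligne réécrite donne $\beta_{0,{\ell}}^{tors}=\sum_{s=0}^{d}(-1)^{d-s}\beta^*_{s,{\ell}}$, qui combinée à cette même relation donne la cinquième ligne $\beta_{0,{\ell}}^{tors}=\beta^*_{d+1,{\ell}}$. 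Or les deux relations de type Euler $\sum_{r}(-1)^r\beta_{r,{\ell}}=0$ (sixième ligne) et $\sum_{r}(-1)^r\beta^*_{r,{\ell}}=0$ sont exactement la nullité de $\chi(k,{_n}A)$ et de $\chi(k,{_n}\tilde{A})$: comme $k$ est $d$-local avec $k_0$ fini et $d\geq 2$, ces caractéristiques valent $1$ pour tout module fini, y compris en $\ell=p$. Enfin la dernière ligne $\beta_{r,{\ell}}=\beta^*_{r,{\ell}}=0$ pour $r\geq d+2$ traduit simplement que $\text{cd}(k)=d+1$.

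La principale difficulté n'est pas calculatoire mais conceptuelle: il s'agit de repérer correctement quels termes croissent avec $n$. Tout repose sur le fait que $\tilde{A}$ est de torsion, ce qui borne uniformément les $|H^{d-r}(k,\tilde{A})/n|$ et rend le cas $p$-adique en réalité plus simple que celui de \ref{equa0}; et sur l'hypothèse $d\geq 2$, indispensable pour garantir $\chi(k,F)=1$ sur la partie $p$-primaire. Une fois ces deux points acquis, l'ensemble du système se déduit mécaniquement, exactement comme dans la preuve de \ref{equa0}.
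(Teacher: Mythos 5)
Votre démonstration est correcte et suit essentiellement la même démarche que celle de l'article : elle repart de l'identité $|{_n}H^r(k,A)|/|H^{d-r}(k,\tilde{A})/n| = \prod_\ell \ell^{\gamma_{r,\ell}v_\ell(n)}$ établie dans la preuve du théorème \ref{modulo divisibles Qp}, en lit les exposants grâce au fait que $\tilde{A}$ est de torsion (c'est précisément l'argument que l'article invoque, sous la forme \emph{car $\tilde{A}$ est de torsion}, pour inclure le cas $r=d$), puis déduit les lignes restantes du système par le télescopage et les relations d'Euler que l'article laisse implicites dans sa conclusion. Seule petite réserve : le télescopage s'obtient en \emph{additionnant} les instances consécutives, ce qui donne $\gamma_{r,\ell}+\gamma_{r-1,\ell}=\beta_{r-1,\ell}+\beta^*_{d+1-r,\ell}$, et non en les soustrayant, mais l'identité $\beta_{r,\ell}=\beta^*_{d+1-r,\ell}$ que vous en tirez est bien la bonne.
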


\begin{proof}
Exactement comme dans la démonstration de \ref{modulo divisibles Qp}, on a pour chaque $r \in \{-1,0,1,...,d+1\}$ la relation:
$$1  = |{_n}H^{r}(k,A)|^{(-1)^{r}}|H^{d-r}(k,\tilde{A})/n|^{(-1)^{r+1}} \cdot \prod_{\ell} {\ell}^{(-1)^{r+1}\gamma_{r,{\ell}} v_{\ell}(n)}$$
avec $\gamma_{r,{\ell}} = \sum_{s=0}^{r-1}(-1)^{r+s+1}\beta_{s,{\ell}}+\sum_{s=0}^{d-r}(-1)^{d+r-s}\beta_{s,{\ell}}^*$. Cela montre immédiatement que:
\begin{itemize}
\item[$\bullet$] $\gamma_{r,{\ell}}=\beta_{r,{\ell}}$ pour $r \in \{1,2,...,d-1\}\cup\{-1,d+1\}$,
\item[$\bullet$] $\beta_{0,{\ell}}^{tors} =\gamma_{0,{\ell}}$,
\item[$\bullet$] $\beta_{d,{\ell}} = \gamma_{d,{\ell}}$ car $\tilde{A}$ est de torsion,
\end{itemize}
ce qui achève la preuve.
\end{proof}

\subsection{Étude hors de $p$}

On fixe un nombre premier $\ell$ différent de $p$.

\subsubsection{Conditions suffisantes pour la nullité des $\beta_{r,\ell}$}

On procède de manière similaire à la section \ref{Cbis}. On commence par des énoncés analogues à ceux des lemmes \ref{lemme1} et \ref{lemme2}.

\begin{lemma}\label{lemme1qp}
Pour chaque entier naturel $r$ et chaque entier $i$, les parties divisibles de groupes de type cofini $H^{r}(k_{d-1},H^1(k^{nr},A\{\ell\}(i)))$ et $H^{d-r}(k_{d-1},A_{d-1}^0\{\ell\}(d-1-i))$ sont (non canoniquement) isomorphes.
\end{lemma}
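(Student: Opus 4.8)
Le plan est de reproduire la stratégie à deux étages de la preuve du lemme \ref{lemme1}, en tenant compte des torsions à la Tate et en utilisant partout que $\ell \neq p$ (ce qui légitime la dualité sur les corps $(d-1)$-locaux pour les modules d'ordre premier à $p$). On note $I = \text{Gal}(k^s/k^{nr})$ le groupe d'inertie et $G_{d-1}=\text{Gal}(k_{d-1}^s/k_{d-1})$. Comme le corps résiduel $k_{d-1}$ est de caractéristique nulle, on a $I \cong \hat{\mathbb{Z}}(1)$, de dimension cohomologique $1$. D'abord je traiterais les pièces finies: pour chaque $r$, la dualité pour le groupe profini $\hat{\mathbb{Z}}$ (exemple 1.10 de \cite{MilADT}, avec torsion) donne, puisque $\text{\underline{Hom}}({_{\ell^r}}A(i),\mathbb{Z}/\ell^r\mathbb{Z}(1)) = {_{\ell^r}}A^t(-i)$ par l'accouplement de Weil, un isomorphisme
\[
H^1(k^{nr},{_{\ell^r}}A(i)) \cong H^0(k^{nr},{_{\ell^r}}A^t(-i))^D .
\]
En passant à la limite inductive sur $r$, on obtient la description de $H^1(k^{nr},A\{\ell\}(i))$ comme dual d'une limite projective, analogue à l'isomorphisme d'Ogg utilisé dans \ref{lemme1} (ici la théorie de \cite{Ogg} n'est même pas nécessaire, le module étant de torsion).

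Ensuite j'appliquerais $H^r(k_{d-1},-)$ et la dualité de Tate sur le corps $(d-1)$-local $k_{d-1}$. Comme dans \ref{lemme1}, $H^r(k_{d-1},-)$ commute à la limite inductive et au dual de Pontryagin des modules finis considérés, et le théorème 2.17 de \cite{MilADT} appliqué au module $H^0(k^{nr},{_{\ell^r}}A^t(-i))^D$ fournit, via $\text{\underline{Hom}}(N^D,\mathbb{Q}/\mathbb{Z}(d-1)) = N(d-1)$,
\[
H^r(k_{d-1},H^1(k^{nr},{_{\ell^r}}A(i))) \cong H^{d-r}\!\left(k_{d-1},H^0(k^{nr},{_{\ell^r}}A^t(d-1-i))\right)^D .
\]
C'est ici que la torsion $-i$ issue de l'inertie se combine à la torsion $(d-1)$ de la dualité $(d-1)$-locale pour produire la torsion $d-1-i$ attendue. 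En passant à la limite inductive en $r$ à gauche (soit projective à l'intérieur du dual à droite), on récupère
\[
H^r(k_{d-1},H^1(k^{nr},A\{\ell\}(i))) \cong \left(\varprojlim_r H^{d-r}(k_{d-1},H^0(k^{nr},{_{\ell^r}}A^t(d-1-i)))\right)^{\!D}.
\]

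La troisième étape consiste à identifier la partie divisible de ce groupe. Un argument identique à celui du lemme \ref{tech}, transposé aux torsions à la Tate — reposant sur le fait que $A^t(k^{nr})\{\ell\}(d-1-i)/\ell^s$ ne prend qu'un nombre fini de valeurs à isomorphisme près, d'où des constantes $C_\ell,D_\ell$ bornant uniformément noyaux et conoyaux — montre que la partie divisible du membre de droite est isomorphe à celle de $H^{d-r}(k_{d-1},A^t(k^{nr})\{\ell\}(d-1-i))$. Comme $A$ et $A^t$ sont isogènes, cette dernière est isomorphe à la partie divisible de $H^{d-r}(k_{d-1},A(k^{nr})\{\ell\}(d-1-i))$. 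Enfin, puisque $A(k^{nr}) = \mathcal{A}_d(\mathcal{O}_{k^{nr}})$ se surjecte sur $A_{d-1}(k_{d-1}^s)$ à noyau uniquement divisible (comme dans \ref{lemme2}), on a $A(k^{nr})\{\ell\}(d-1-i) \cong A_{d-1}\{\ell\}(d-1-i)$; et comme $F_{d-1}$ est fini, la suite $0 \rightarrow A_{d-1}^0 \rightarrow A_{d-1} \rightarrow F_{d-1} \rightarrow 0$ montre que $H^{d-r}(k_{d-1},A_{d-1}\{\ell\}(d-1-i))$ et $H^{d-r}(k_{d-1},A_{d-1}^0\{\ell\}(d-1-i))$ ont même partie divisible, ce qui conclut. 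Contrairement à \ref{lemme1}, aucune distinction selon $r$ n'est nécessaire, puisque l'on travaille dès le départ avec des modules de torsion.

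La principale difficulté sera le contrôle de la partie divisible à travers la limite projective (l'analogue du lemme \ref{tech}): il faudra vérifier avec soin la finitude uniforme des noyaux et conoyaux comparant $\varprojlim_s H^{d-r}(k_{d-1},{_{\ell^s}}(\cdots))$ et $\varprojlim_s {_{\ell^s}}H^{d-r}(k_{d-1},(\cdots))$, ainsi que la cofinitude de tous les groupes en jeu, qui garantit le bon comportement de la partie divisible. Un soin particulier devra aussi être apporté à la comptabilité des torsions à la Tate à chaque étage, afin de s'assurer que les torsions $(1)$ de l'inertie et $(d-1)$ de la dualité $(d-1)$-locale se recombinent bien en $d-1-i$.
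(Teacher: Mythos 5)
Votre preuve est correcte et suit essentiellement la même démarche que celle de l'article : isomorphisme à la Ogg tordu issu de la dualité pour l'inertie $\hat{\mathbb{Z}}(1)$, dualité sur le corps $(d-1)$-local $k_{d-1}$ (théorème 2.17 de \cite{MilADT}) recombinant les torsions $(-i)$ et $(d-1)$ en $(d-1-i)$, contrôle de la partie divisible à travers la limite par un argument analogue au lemme \ref{tech}, isogénie entre $A$ et $A^t$, puis dévissage par $F_{d-1}$ pour passer de $A_{d-1}$ à $A_{d-1}^0$. Vous ne faites qu'expliciter des étapes (accouplement de Weil, passage par \ref{lemme2qp}) que l'article laisse implicites, et votre remarque qu'aucune distinction selon $r$ n'est nécessaire correspond bien au texte.
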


\begin{proof}
Soit $r\geq 0$. On a un isomorphisme $H^1(k^{nr},A\{\ell\}(i)) \cong (\varprojlim_s {_{\ell^s}}A^t(k^{nr})(-i))^D$.
On calcule alors:
\begin{align*}
H^{r}(k_{d-1},H^1(k^{nr},A\{\ell\}(i))) &= H^{r}(k_{d-1},(\varprojlim_s {_{\ell^s}}A^t(k^{nr})(-i))^D)\\
& \cong \varinjlim_s H^{r}(k_{d-1}, {_{\ell^s}}A^t(k^{nr})(-i)^D)\\
& \cong \varinjlim_s H^{d-r}(k_{d-1},{_{\ell^s}}A^t(k^{nr})(d-1-i)^D\\
& \cong (\varprojlim_s H^{d-r}(k_{d-1},{_{\ell^s}}A^t(k^{nr})(d-1-i)))^D.
\end{align*}
Par conséquent, en utilisant un résultat analogue à celui du lemme \ref{tech} les parties divisibles de $H^{r}(k_{d-1},H^1(k^{nr},A(k^s)\{\ell\}(i)))$ et de $H^{d-r}(k_{d-1},A(k^{nr})\{\ell\}(d-1-i)) \cong H^{d-r}(k_{d-1},A_{d-1}\{\ell\}(d-1-i))$ sont isomorphes. \\
On remarque maintenant que l'on a la suite exacte:
$$0 \rightarrow A_{d-1}^0(k_{d-1}^s)  \rightarrow A_{d-1}(k_{d-1}^s) \rightarrow F_{d-1}(k_{d-1}^s) \rightarrow 0,$$
où $A_{d-1}^0$ désigne la composante connexe du neutre dans $A_{d-1}$. Il existe donc un $\text{Gal}(k_{d-1}^s/k_{d-1})$-module fini $F$ tel que:
$$0 \rightarrow A_{d-1}^0\{\ell\}(d-1-i)  \rightarrow A_{d-1}\{\ell\}(d-1-i) \rightarrow F\{\ell\}(d-1-i) \rightarrow 0.$$
On en déduit que les parties divisibles des groupes de type cofini $H^{d-r}(k_{d-1},A_{d-1}\{\ell\}(d-1-i))$ et $H^{d-r}(k_{d-1},A_{d-1}^0\{\ell\}(d-1-i))$ sont (non canoniquement) isomorphes. 
\end{proof}

\begin{lemma}\label{lemme2qp}
Pour chaque entier $r\geq 0$, on a un isomorphisme $H^r(k_{d-1},A(k^{nr})\{\ell\}(i)) \cong H^r(k_{d-1},A_{d-1}\{\ell\}(i))$.
\end{lemma}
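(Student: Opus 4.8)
The plan is to mimic the proof of the equal‑characteristic‑zero statement \ref{lemme2}, the only new ingredient being the observation that passing to $\{\ell\}(i)$-parts lets us drop the usual restriction $r>0$. First I would recall that, since $k^{nr}/k$ is unramified, the universal property of the Néron model gives $A(k^{nr}) = \mathcal{A}_d(\mathcal{O}_{k^{nr}})$, and that the reduction morphism $\mathcal{A}_d(\mathcal{O}_{k^{nr}}) \to A_{d-1}(k_{d-1}^s)$ is surjective with kernel a uniquely divisible group $D$ (the points of the formal group along the maximal ideal, which is uniquely divisible because the residue field $k_{d-1}$ is of characteristic $0$). This produces a short exact sequence of $\text{Gal}(k_{d-1}^s/k_{d-1})$-modules
$$0 \rightarrow D \rightarrow A(k^{nr}) \rightarrow A_{d-1}(k_{d-1}^s) \rightarrow 0.$$

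The key step is to check that unique divisibility of $D$ forces the two flanking modules to share their $\ell$-power torsion. Applying the lemme du serpent to multiplication by $\ell^s$ on the above sequence and using ${_{\ell^s}}D = 0$ and $D/\ell^s = 0$, I would obtain Galois-equivariant isomorphisms ${_{\ell^s}}A(k^{nr}) \xrightarrow{\sim} {_{\ell^s}}A_{d-1}(k_{d-1}^s)$ compatible with the transition maps as $s$ varies. Tensoring with $\mathbb{Z}/\ell^s\mathbb{Z}(i)$ and passing to the direct limit over $s$ then yields an isomorphism of $\text{Gal}(k_{d-1}^s/k_{d-1})$-modules $A(k^{nr})\{\ell\}(i) \cong A_{d-1}\{\ell\}(i)$, where on the right I use the convention $A_{d-1}\{\ell\}(i) = A_{d-1}(k_{d-1}^s)\{\ell\}(i)$.

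Finally I would simply apply the functor $H^r(k_{d-1},-)$ to this isomorphism of Galois modules to conclude, for every integer $r \geq 0$.

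I do not expect a genuine obstacle here. The contrast with \ref{lemme2} is instructive: there one can only argue that $H^r(k_{d-1},D)=0$ in positive degree (because a uniquely divisible, hence $\mathbb{Q}$-vector space, coefficient module has torsion cohomology, forcing it to vanish for $r>0$), whereas after taking $\{\ell\}(i)$-parts the uniquely divisible contribution is already annihilated at the level of Galois modules, so the statement holds in all degrees, including $r=0$. The single point worth double-checking is that the reduction kernel really is uniquely divisible, which rests on the residue characteristic being $0$; this is valid in the present setting since $k_{d-1}$ is an iterated Laurent field over the $p$-adic field $k'$ and is therefore of characteristic $0$.
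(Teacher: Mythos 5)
Votre preuve est correcte et suit essentiellement la même démarche que celle de l'article : identification $A(k^{nr}) = \mathcal{A}_d(\mathcal{O}_{k^{nr}})$, surjectivité de la réduction vers $A_{d-1}(k_{d-1}^s)$ à noyau uniquement ($\ell$-)divisible (grâce à la caractéristique résiduelle nulle), d'où un isomorphisme de modules galoisiens $A(k^{nr})\{\ell\}(i) \cong A_{d-1}\{\ell\}(i)$ et l'isomorphisme en cohomologie pour tout $r \geq 0$. Vous ne faites qu'expliciter, via le lemme du serpent et le passage à la limite, la déduction que l'article laisse implicite.
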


\begin{proof}
Comme l'extension $k^{nr}/k$ est non ramifiée par définition, on sait que $A(k^{nr}) = \mathcal{A}_d(\mathcal{O}_{k^{nr}})$ et que le morphisme $\mathcal{A}_d(\mathcal{O}_{k^{nr}}) \rightarrow A_{d-1}(k_{d-1}^s)$ est surjectif de noyau uniquement divisible par $\ell$. On en déduit que $H^r(k_{d-1},A(k^{nr})\{\ell\}(i)) \cong H^r(k_{d-1},A_{d-1}\{\ell\}(i))$.
\end{proof}

On est maintenant en mesure d'établir la proposition qui fournit des conditions suffisantes pour que les groupes de cohomologie de $A\{\ell\}(i)$ soient finis. 

\begin{proposition}\label{prop1}
Soient $r$ un entier naturel et $i$ un entier tels que $r-i-1 \not\in \{-1,0,1\}$. Le groupe $H^r(k,A\{\ell\}(i))$ est fini. 
\end{proposition}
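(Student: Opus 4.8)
Le plan est de raisonner par récurrence sur $d$, en se ramenant à chaque étape à la cohomologie du corps résiduel $k_{d-1}$, qui est $(d-1)$-local. Le cas de base est celui du corps fini: la proposition \ref{vafini} fournit la finitude de $H^0(k_0,B\{\ell\}(i))$ pour toute variété abélienne $B$ sur $k_0$ et tout $i$; comme $\text{Gal}(k_0^s/k_0)\cong\hat{\mathbb{Z}}$ est de dimension cohomologique $1$ et comme, pour un module de type cofini $M$ muni d'une action de $\hat{\mathbb{Z}}$, les parties divisibles de $H^0(k_0,M)$ et $H^1(k_0,M)$ sont isomorphes (ce sont le noyau et le conoyau de $F-1$ sur la partie divisible), on en déduit que $H^r(k_0,B\{\ell\}(i))$ est fini pour tout $r$ et tout $i$. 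Aucune condition n'est donc requise sur un corps fini; la condition $r-i-1\notin\{-1,0,1\}$ va apparaître en remontant la tour.

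Pour l'étape de récurrence, j'écrirais la suite spectrale $H^p(k^{nr}/k,H^q(k^{nr},A\{\ell\}(i)))\Rightarrow H^{p+q}(k,A\{\ell\}(i))$. Comme $k^{nr}$ a pour corps résiduel $k_{d-1}^s$, qui est séparablement clos, on a $\text{cd}_{\ell}(k^{nr})=1$, de sorte que seules les lignes $q\in\{0,1\}$ sont non nulles; de plus $\text{Gal}(k^{nr}/k)\cong\text{Gal}(k_{d-1}^s/k_{d-1})$. Une suite spectrale à deux lignes fournit alors, pour chaque $r$, une suite exacte
\[
0 \rightarrow \text{Coker}(d_2) \rightarrow H^r(k,A\{\ell\}(i)) \rightarrow \text{Ker}(d_2) \rightarrow 0,
\]
où $\text{Coker}(d_2)$ est un quotient de $H^r(k_{d-1},H^0(k^{nr},A\{\ell\}(i)))$ et $\text{Ker}(d_2)$ un sous-groupe de $H^{r-1}(k_{d-1},H^1(k^{nr},A\{\ell\}(i)))$. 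Il suffit donc de prouver que ces deux derniers groupes sont finis.

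Ces groupes étant de torsion de type cofini, leur finitude équivaut à la nullité de leur partie divisible. Grâce au lemme \ref{lemme2qp} (et au fait que $\mu_{\ell^{\infty}}\subset k^{nr}$, ce qui permet d'identifier $H^0(k^{nr},A\{\ell\}(i))$ à $A(k^{nr})\{\ell\}(i)$), le premier groupe s'identifie à $H^r(k_{d-1},A_{d-1}\{\ell\}(i))$; grâce au lemme \ref{lemme1qp}, la partie divisible du second est isomorphe à celle de $H^{d-r+1}(k_{d-1},A_{d-1}^0\{\ell\}(d-1-i))$. Je dévisserais alors $A_{d-1}$ et $A_{d-1}^0$ au moyen des suites exactes $0\rightarrow A_{d-1}^0\rightarrow A_{d-1}\rightarrow F_{d-1}\rightarrow 0$ et $0\rightarrow U_{d-1}\times T_{d-1}\rightarrow A_{d-1}^0\rightarrow B_{d-1}\rightarrow 0$. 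La partie finie $F_{d-1}\{\ell\}(\cdot)$ a une cohomologie finie en tout degré; la partie unipotente ne contribue pas car $U_{d-1}\{\ell\}=0$ (les corps considérés étant de caractéristique $0$); la partie torique est contrôlée par la dernière formule de la remarque \ref{tor1torsqp}, qui montre que la cohomologie de $T_{d-1}\{\ell\}(\cdot)$ est finie hors de deux degrés consécutifs; et la partie abélienne $B_{d-1}$, qui est une variété abélienne sur le corps $(d-1)$-local $k_{d-1}$, relève directement de l'hypothèse de récurrence.

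Le cœur de l'argument, qui en sera aussi le point délicat, est la vérification que l'hypothèse $r-i-1\notin\{-1,0,1\}$ écarte simultanément tous les degrés problématiques des deux contributions. Pour la partie torique, la remarque \ref{tor1torsqp} donne des degrés interdits correspondant à $r-i-1\in\{0,1\}$ pour $H^r(k_{d-1},T_{d-1}\{\ell\}(i))$ et à $r-i-1\in\{-1,0\}$ pour $H^{d-r+1}(k_{d-1},T_{d-1}\{\ell\}(d-1-i))$, tous exclus par hypothèse. Pour la partie abélienne, l'hypothèse de récurrence appliquée à $B_{d-1}$ impose, pour le premier terme, la condition $r-i-1\notin\{-1,0,1\}$ et, pour le second terme, la condition $(d-r+1)-(d-1-i)-1\notin\{-1,0,1\}$, c'est-à-dire $-(r-i-1)\notin\{-1,0,1\}$. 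L'intervalle $\{-1,0,1\}$ étant stable par passage à l'opposé, ces deux conditions coïncident avec l'hypothèse de départ: c'est précisément cette symétrie, issue de la réflexion $(r,i)\mapsto(d-r+1,\,d-1-i)$ induite par le lemme \ref{lemme1qp}, qui explique la forme de la condition et boucle la récurrence.
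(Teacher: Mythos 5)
Votre preuve est correcte et suit essentiellement la même démarche que celle de l'article: récurrence sur $d$, suite spectrale pour l'extension $k^{nr}/k$, identification des deux termes via les lemmes \ref{lemme2qp} et \ref{lemme1qp}, dévissage de $A_{d-1}^0$ en parties finie, unipotente, torique (contrôlée par la remarque \ref{tor1torsqp}) et abélienne, et exploitation de la symétrie $(r,i)\mapsto(d-r+1,d-1-i)$ qui stabilise la condition $r-i-1\notin\{-1,0,1\}$. Votre traitement du cas de base est même un peu plus soigneux que celui de l'article, qui renvoie seulement à la proposition \ref{vafini} (laquelle ne donne que le $H^0$) alors que la récurrence requiert aussi la finitude du $H^1$ sur le corps fini, que vous justifiez correctement par l'isomorphisme des parties divisibles de $H^0$ et $H^1$ pour un module de type cofini sous $\hat{\mathbb{Z}}$.
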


\begin{proof}
Procédons par récurrence sur $d$. \\
Pour $d=0$, c'est la proposition \ref{vafini}.\\
Soit $d$ un entier naturel tel que la proposition est vraie au rang $d-1$. Montrons la proposition au rang $d$. La suite spectrale $H^r(k_{d-1},H^s(k^{nr},A\{\ell\}(i))) \Rightarrow H^{r+s}(k,A\{\ell\}(i))$ dégénère en une suite exacte longue:
$$... \rightarrow H^r(k_{d-1},A(k^{nr})\{\ell\}(i))) \rightarrow H^r(k,A\{\ell\}(i)) \rightarrow H^{r-1}(k_{d-1},H^1(k^{nr},A\{\ell\}(i))) \rightarrow ... \text{.}$$
Étudions les termes $H^r(k_{d-1},A(k^{nr})\{\ell\}(i)))$ et $H^{r-1}(k_{d-1},H^1(k^{nr},A\{\ell\}(i)))$.
\begin{itemize}
\item[$\bullet$] D'après \ref{lemme2qp}, la partie divisible de $H^r(k_{d-1},A(k^{nr})\{\ell\}(i)))$ est isomorphe à celle de $H^r(k_{d-1},A_{d-1}^0\{\ell\}(i))$. De plus, la suite exacte $0 \rightarrow U_{d-1} \times T_{d-1} \rightarrow A_{d-1}^0 \rightarrow B_{d-1} \rightarrow 0$ montre l'exactitude de:
$$ H^{r}(k_{d-1},T_{d-1}\{\ell\}(i)) \rightarrow H^{r}(k_{d-1},A_0^{d-1}\{\ell\}(i)) \rightarrow H^{r}(k_{d-1},B_{d-1}\{\ell\}(i)),$$
et on a vu dans \ref{torr} que $\lambda_r(k_{d-1},n,T_{d-1}\{\ell\}(i))$ vaut $1$ car $r-i-1 \not\in \{0,1\}$. Comme $H^{r}(k_{d-1},B_{d-1}\{\ell\}(i))$ est fini par hypothèse de récurrence, on conclut que $H^r(k_{d-1},A(k^{nr})\{\ell\}(i)))$ est fini.
\item[$\bullet$] D'après \ref{lemme1qp}, la partie divisible de $H^{r-1}(k_{d-1},H^1(k^{nr},A\{\ell\}(i)))$ est isomorphe à celle de $H^{d+1-r}(k_{d-1},A_{d-1}^0\{\ell\}(d-1-i))$, et $H^{d-r+1}(k_{d-1},A_{d-1}^0\{\ell\}(d-1-i))$ s'insère dans une suite exacte:\\
\centerline{\xymatrix{
H^{d-r+1}(k_{d-1},T_{d-1}\{\ell\}(d-1-i)) \ar[d] \\ H^{d-r+1}(k_{d-1},A_{d-1}^0\{\ell\}(d-1-i))\ar[d] \\ H^{d-r+1}(k_{d-1},B_{d-1}\{\ell\}(d-1-i)),}}
où $\lambda_{d-r+1}(k_{d-1},n,T_{d-1}\{\ell\}(d-1-i)))=1$ d'après \ref{torr} car $r-i-1 \not\in \{0,-1\}$. Comme $H^{d-r+1}(k_{d-1},B_{d-1}\{\ell\}(d-1-i))$ est fini par hypothèse de récurrence, on conclut que $H^{r-1}(k_{d-1},H^1(k^{nr},A\{\ell\}(i)))$ est fini.
\end{itemize}
On en déduit que $H^r(k,A\{\ell\}(i))$ est fini. 
\end{proof}

\begin{remarque}\label{rq1}
De manière tout à fait analogue, on peut montrer que:
\begin{itemize}
\item[$\bullet$] si $\rho_0=...=\rho_{d-1}=0$, alors $H^2(k,A\{\ell\}(i))$ est fini; 
\item[$\bullet$] si $\rho_0=...=\rho_{d-2}=0$, alors $H^1(k,A)\{\ell\}$ est fini;
\item[$\bullet$] pour $i \neq -1$, le groupe $H^0(k,A\{\ell\}(i))$ est fini.
\end{itemize}
\end{remarque}

Nous pouvons à présent établir le théorème suivant qui montre la nullité des $\beta_{r,\ell}$ sous certaines hypothèses.

\begin{theorem}\label{suff}
Soit $\ell \neq p$ un nombre premier. 
\begin{itemize}
\item[(i)] Pour $r \in \{ 3,...,d+1\}$, on a $\beta_{r,\ell}=0$.
\item[(ii)] Si $\rho_0=...=\rho_{d-1}=0$, alors $\beta_{2,\ell}=0$.
\item[(iii)] Si $\rho_0=...=\rho_{d-2}=0$, alors $\beta_{1,\ell}=0$.
\item[(iv)] On a $\sum_{s=0}^{d+1} (-1)^s\beta_{s,\ell}=0$, $\beta_{0,\ell}=-\sum_{e=1}^{d-1} e \rho_e$ et $\beta_{0,\ell}^{tors}=0$.
\end{itemize}
\end{theorem}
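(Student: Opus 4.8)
Le plan consiste à ramener chaque assertion à des résultats de finitude déjà disponibles pour la cohomologie des faisceaux $A\{\ell\}(i)$. L'observation préliminaire est que, pour $r \geq 2$, la suite de Kummer $0 \to {_{\ell^s}}A \to A \to A \to 0$ fournit, après passage à la limite inductive sur $s$, une suite exacte
$$0 \to H^{r-1}(k,A) \otimes_{\mathbb{Z}} \mathbb{Q}_{\ell}/\mathbb{Z}_{\ell} \to H^r(k,A\{\ell\}) \to H^r(k,A)\{\ell\} \to 0.$$
Comme $H^{r-1}(k,A)$ est de torsion pour $r-1 \geq 1$, le terme de gauche est nul, d'où un isomorphisme $H^r(k,A\{\ell\}) \cong H^r(k,A)\{\ell\}$ pour $r \geq 2$. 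Ainsi $\beta_{r,\ell}$ s'identifie au corang de $H^r(k,A\{\ell\})$ pour $r \geq 2$, et au corang de $H^1(k,A)\{\ell\}$ pour $r=1$.

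Ceci ramène (i), (ii) et (iii) à des énoncés de finitude. Pour (i), on applique la proposition \ref{prop1} avec $i=0$: si $r \geq 3$, alors $r-i-1 = r-1 \geq 2 \notin \{-1,0,1\}$, donc $H^r(k,A\{\ell\})$ est fini et $\beta_{r,\ell}=0$. Pour (ii), sous l'hypothèse $\rho_0=\cdots=\rho_{d-1}=0$, la remarque \ref{rq1} assure que $H^2(k,A\{\ell\}) \cong H^2(k,A)\{\ell\}$ est fini, d'où $\beta_{2,\ell}=0$. Pour (iii), sous l'hypothèse $\rho_0=\cdots=\rho_{d-2}=0$, la même remarque assure directement que $H^1(k,A)\{\ell\}$ est fini, d'où $\beta_{1,\ell}=0$.

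Pour (iv), la relation $\sum_{s=0}^{d+1}(-1)^s\beta_{s,\ell}=0$ est précisément celle fournie par la proposition \ref{equa0Qp} (elle traduit $\chi(k,{_n}A)=1$). La nullité de $\beta_{0,\ell}^{tors}$ résulte de la remarque \ref{rq1} appliquée avec $i=0 \neq -1$: le groupe $H^0(k,A\{\ell\}) = A(k)\{\ell\}$ étant fini, son corang $\beta_{0,\ell}^{tors}$ est nul. Reste le calcul de $\beta_{0,\ell}$, qui constitue le point central de la preuve. On évalue le rapport $\lambda_0(k,n,A)=|{_n}A(k)|/|A(k)/n|$ par dévissage le long de la chaîne $k=k_d, k_{d-1},\dots,k_0$, comme dans le théorème \ref{1-local} et la proposition \ref{beta0}. À chaque étage $j \geq 1$, le noyau de réduction $\mathcal{A}_j(\mathcal{O}_{k_j}) \to A_{j-1}(k_{j-1})$ est uniquement $\ell$-divisible pour $\ell \neq p$ (uniquement divisible si $j \geq 2$, pro-$p$ si $j=1$), le groupe des composantes $F_{j-1}$ est fini, et la partie unipotente ne contribue pas aux parties $\ell$-primaires; on descend donc de $B_j$ à $A_{j-1}$, puis on isole le tore $T_{j-1}$ et la variété abélienne $B_{j-1}$.

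La principale difficulté est le terme de correction provenant de la suite exacte $0 \to U_{j-1} \times T_{j-1} \to A_{j-1}^0 \to B_{j-1} \to 0$: contrairement à la situation de \ref{beta0}, où la cohomologie du tore était nulle, ici le conoyau du morphisme $A_{j-1}^0(k_{j-1}) \to B_{j-1}(k_{j-1})$ s'injecte dans $H^1(k_{j-1},T_{j-1})$, qui n'est pas nul en général. Il faut donc vérifier que ce conoyau est fini en sa partie $\ell$-primaire, ce qui découle de la finitude de $H^1(k_{j-1},T_{j-1})\{\ell\}$ pour $\ell \neq p$: le lemme d'Ono ramène cette finitude au cas des tores quasi-triviaux (pour lesquels $H^1$ s'annule par Hilbert 90 et Shapiro) et à celle des groupes de cohomologie du schéma en groupes fini intervenant, finie hors de $p$ sur le corps $(j-1)$-local $k_{j-1}$. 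Un groupe fini ayant un rapport $\lambda_0$ trivial, ce terme de correction disparaît du calcul des parties $\ell$-primaires. On obtient alors, pour $n$ premier à $p$,
$$\lambda_0(k,n,A) = \prod_{r=0}^{d-1} \lambda_0(k_r,n,T_r) \cdot \lambda_0(k_0,n,B_0),$$
et, en utilisant la remarque \ref{tor0qp} (qui donne $\lambda_0(k_r,n,T_r)=n^{-r\rho_r}$ pour $n$ premier à $p$) ainsi que la finitude de $B_0(k_0)$ sur le corps fini $k_0$ (d'où $\lambda_0(k_0,n,B_0)=1$), il vient $\beta_{0,\ell} = -\sum_{r=0}^{d-1} r\rho_r = -\sum_{e=1}^{d-1} e\rho_e$ pour tout $\ell \neq p$.
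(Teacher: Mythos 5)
Your proof is correct and follows essentially the same route as the paper: parts (i)--(iii) are exactly the paper's reduction to Proposition \ref{prop1} and Remark \ref{rq1} via the isomorphism $H^r(k,A)\{\ell\}\cong H^r(k,A(k^s)\{\ell\})$ for $r\geq 2$, and part (iv) carries out the Néron-model dévissage down the residue tower that the paper prescribes (arguments analogous to \ref{equa0Qp}, \ref{beta0} and \ref{0}, combined with \ref{tor0qp} and \ref{tor0torsqp}). The only cosmetic differences are that you get $\beta_{0,\ell}^{tors}=0$ directly from Remark \ref{rq1} rather than by redoing the torsion dévissage, and you handle the $p$-adic layer by reducing to the finite residue field instead of citing Lemma 3.3 and Corollary 3.4 of \cite{MilADT} --- equivalent substitutes, and your explicit treatment of the finite correction term coming from $H^1(k_{j-1},T_{j-1})$ is a welcome detail that the paper leaves implicit.
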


\begin{proof}
\begin{itemize}
\item[(i)] C'est un corollaire immédiat de la proposition \ref{prop1} car $H^r(k,A)\{\ell\} \cong H^r(k,A(k^s)\{\ell\})$.
\item[(ii)] C'est un corollaire immédiat de la remarque \ref{rq1} car $H^2(k,A)\{\ell\} \cong H^2(k,A(k^s)\{\ell\})$.
\item[(iii)] C'est un corollaire immédiat de la remarque \ref{rq1}.
\item[(iv)] La preuve est analogue à celles de \ref{equa0Qp}, \ref{beta0} et \ref{0} en utilisant les remarques \ref{tor0qp} et \ref{tor0torsqp}, ainsi que le lemme 3.3 et le corollaire 3.4 de \cite{MilADT}.
\end{itemize}
\end{proof}

\begin{corollary}\label{inviso}
La quantité $\sum_{e=1}^{d-1} e \rho_e$ est invariante par isogénie. En particulier, elle prend la même valeur pour $A$ et $A^t$.
\end{corollary}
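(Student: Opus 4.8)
The plan is to reduce the statement to the isogeny invariance of a single cohomological invariant. First I would fix a prime $\ell \neq p$ and invoke Theorem~\ref{suff}(iv), which gives the identity $\beta_{0,\ell} = -\sum_{e=1}^{d-1} e\rho_e$. Since the left-hand side is, by its very definition, an invariant attached to the Galois module $A(k^s)$, whereas the right-hand side is the geometric quantity we wish to control, it suffices to prove that the integer $\beta_{0,\ell}$ depends only on the $k$-isogeny class of $A$. Recall that $\beta_{0,\ell}$ is determined by the function $s \mapsto \lambda_0(k,\ell^s,A) = |{_{\ell^s}}A(k)|/|A(k)/\ell^s| = \ell^{\beta_{0,\ell}\,s}$.

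Next I would argue exactly as in the proof of Proposition~\ref{isogénie}. Let $\phi\colon A \to A'$ be a $k$-isogeny with finite kernel $F$; since $k$ has characteristic $0$, the group scheme $F$ is étale and $F(k)$ is finite. The short exact sequence $0 \to F \to A \to A' \to 0$ induces on cohomology an exact sequence $0 \to F(k) \to A(k) \to A'(k) \to H^1(k,F)$, so the natural map $A(k) \to A'(k)$ has finite kernel $F(k)$ and cokernel $C$ embedding into $H^1(k,F)$. Because $\ell \neq p$, the $\ell$-primary part $H^1(k,F)\{\ell\}$ is finite by the duality theorem over the $d$-local field $k$ (théorème~2.17 de~\cite{MilADT}), whence $C\{\ell\}$ is finite. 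The function $q_{\ell^s}(G):=|{_{\ell^s}}G|/|G/\ell^s|$ is multiplicative on short exact sequences of groups with finite $\ell^s$-torsion and $\ell^s$-cotorsion (apply the snake lemma to multiplication by $\ell^s$) and equals $1$ on finite groups. Applying it to the two short exact sequences $0 \to F(k) \to A(k) \to I \to 0$ and $0 \to I \to A'(k) \to C \to 0$, where $I=\mathrm{Im}(A(k)\to A'(k))$, yields $q_{\ell^s}(A(k)) = q_{\ell^s}(I) = q_{\ell^s}(A'(k))$, that is $\beta_{0,\ell}(A) = \beta_{0,\ell}(A')$.

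Finally, since $A$ and $A^t$ are $k$-isogenous (paragraphe~10 de~\cite{MilAV}), the previous step gives $\beta_{0,\ell}(A) = \beta_{0,\ell}(A^t)$, and comparing Theorem~\ref{suff}(iv) for $A$ and for $A^t$ produces $\sum_{e=1}^{d-1} e\rho_e = \sum_{e=1}^{d-1} e\rho_e^t$, which is the asserted equality. The only delicate point, and the step I expect to require the most care, is ensuring one obtains a genuine equality rather than an equality up to a bounded factor: this rests on the finiteness of $F(k)$ and of $C\{\ell\} \subseteq H^1(k,F)\{\ell\}$, which is precisely where the hypothesis $\ell \neq p$ enters (for $\ell = p$ the relevant groups may acquire nontrivial coranks). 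Everything else is the formal robustness of the Euler-characteristic quantity $q_{\ell^s}$ already exploited repeatedly, for instance in Propositions~\ref{isogénie} and~\ref{beta0}.
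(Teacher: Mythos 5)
Your proof is correct and is precisely the argument the paper leaves implicit: Theorem~\ref{suff}(iv) identifies $\sum_{e=1}^{d-1} e\rho_e$ with $-\beta_{0,\ell}$, and the snake-lemma dévissage of Proposition~\ref{isogénie} (finiteness of $F(k)$ and of the cokernel inside $H^1(k,F)$) shows that $\beta_{0,\ell}$ depends only on the $k$-isogeny class of $A$. One minor correction to your closing remark: since $\mathrm{Car}(k_1)=0$, the groups $H^r(k,F)$ are finite for \emph{every} finite Galois module $F$ (Theorem 2.17 of \cite{MilADT}, as recalled in the paper, applies to all orders), so $\beta_{0,\ell}$ is an isogeny invariant for all $\ell$, including $\ell=p$; the hypothesis $\ell\neq p$ is needed solely to invoke the formula $\beta_{0,\ell}=-\sum_{e=1}^{d-1} e\rho_e$ of Theorem~\ref{suff}(iv).
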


\subsubsection{Conditions nécessaires pour la nullité des $\beta_{r,\ell}$}

Dans cette section, nous allons donner des réciproques partielles au théorème \ref{suff}. Nous avons d'abord besoin de trois lemmes préliminaires.

\begin{lemma}\label{lemrec}
Soit $i \in \mathbb{Z} \setminus \{0,1\}$. Si $H^1(k,A\{\ell\}(i))$ ou $H^1(k_{d-1},A_{d-1}^0\{\ell\}(i))$ est fini, alors il en est de même de $H^1(k_r,B_r\{\ell\}(i))$ et de $H^1(k_r,A_r^0\{\ell\}(i))$ pour $0 \leq r \leq d-1$.
\end{lemma}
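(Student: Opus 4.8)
The plan is to reduce the statement to two elementary operations which I iterate along the chain of residue fields $k = k_d, k_{d-1}, \ldots, k_0$. Throughout, every group in sight is of cofinite type, so ``fini'' means ``à partie divisible nulle'', and it suffices to control divisible parts. I will first show that finiteness of $H^1(k,A\{\ell\}(i))$ forces that of $H^1(k_{d-1},A_{d-1}^0\{\ell\}(i))$, so that in either case of the hypothesis we may assume this latter group is finite; I then descend from $k_{d-1}$ down to $k_0$.

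\textbf{Step 1 (réduction).} Let $C$ be an abelian variety over an $m$-local field $L$ ($m\ge 1$), with residue field $\kappa$, Néron special fibre $C_0$ and connected component $C_0^0$. Since $\ell\neq p$, the inertia group $\text{Gal}(L^s/L^{nr})$ has $\ell$-cohomological dimension $1$, so the spectral sequence $H^a(\kappa,H^b(L^{nr},C\{\ell\}(i)))\Rightarrow H^{a+b}(L,C\{\ell\}(i))$ degenerates into a long exact sequence whose five-term beginning gives an injection
$$H^1(\kappa,H^0(L^{nr},C\{\ell\}(i)))\hookrightarrow H^1(L,C\{\ell\}(i)).$$
By the argument of Lemma \ref{lemme2qp}, $H^0(L^{nr},C\{\ell\}(i))=C(L^{nr})\{\ell\}(i)$ satisfies $H^1(\kappa,C(L^{nr})\{\ell\}(i))\cong H^1(\kappa,C_0\{\ell\}(i))$, and the sequence $0\to C_0^0\to C_0\to F\to 0$ with $F$ finite shows this group has the same divisible part as $H^1(\kappa,C_0^0\{\ell\}(i))$. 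Hence finiteness of $H^1(L,C\{\ell\}(i))$ propagates to $H^1(\kappa,C_0^0\{\ell\}(i))$. Applied to $C=A$, $L=k$ this gives $H^1(k_{d-1},A_{d-1}^0\{\ell\}(i))$ finite; applied to $C=B_r$, $L=k_r$ (whose Néron special fibre is $A_{r-1}$) it gives, for $1\le r\le d-1$, the implication: $H^1(k_r,B_r\{\ell\}(i))$ finite $\Rightarrow H^1(k_{r-1},A_{r-1}^0\{\ell\}(i))$ finite.

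\textbf{Step 2 (dévissage).} Over $k_r$ the sequence $0\to U_r\times T_r\to A_r^0\to B_r\to 0$, together with $U_r\{\ell\}(i)=0$ (as $U_r$ is unipotent and $\ell\neq p$) and the divisibility of $B_r(k_r^s)$, yields $0\to T_r\{\ell\}(i)\to A_r^0\{\ell\}(i)\to B_r\{\ell\}(i)\to 0$, whence
$$H^1(k_r,A_r^0\{\ell\}(i))\to H^1(k_r,B_r\{\ell\}(i))\to H^2(k_r,T_r\{\ell\}(i)).$$
The cokernel of the first map injects into $H^2(k_r,T_r\{\ell\}(i))$, and by Remark \ref{tor1torsqp} (applied over the $r$-local field $k_r$) this group is finite precisely because $i\notin\{0,1\}$: the corank $c_{i+1,r-1}\rho_r$ occurs only in degrees $a\in\{i+1,i+2\}$, hence never in degree $a=2$; for $r=0$ it even vanishes since $k_0$ has cohomological dimension $1$. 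Thus finiteness of $H^1(k_r,A_r^0\{\ell\}(i))$ implies that of $H^1(k_r,B_r\{\ell\}(i))$.

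Finally I combine the two steps: starting from $H^1(k_{d-1},A_{d-1}^0\{\ell\}(i))$ finite, Step 2 gives $H^1(k_{d-1},B_{d-1}\{\ell\}(i))$ finite, Step 1 then gives $H^1(k_{d-2},A_{d-2}^0\{\ell\}(i))$ finite, and so on, alternating the two operations down to $r=0$; this produces the finiteness of all the $H^1(k_r,A_r^0\{\ell\}(i))$ and $H^1(k_r,B_r\{\ell\}(i))$ for $0\le r\le d-1$. The delicate point, and the only place where $i\notin\{0,1\}$ is used, is the finiteness of $H^2(k_r,T_r\{\ell\}(i))$ in Step 2; since we only ever need this one direction of the dévissage, the possibly infinite group $H^1(k_r,T_r\{\ell\}(i))$ (which appears when $i=-1$) never interferes.
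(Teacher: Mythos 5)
Votre preuve est correcte et suit la même stratégie que celle (très laconique) du papier : une récurrence descendante sur $r$ dont le moteur est précisément la finitude de $H^2(k_r,T_r\{\ell\}(i))$ pour $i \notin \{0,1\}$, seul endroit où l'hypothèse sur $i$ intervient. Vos étapes 1 et 2 ne font qu'expliciter le pas de récurrence que le papier laisse implicite, au moyen de l'injection d'inflation $H^1(\kappa,H^0(L^{nr},\cdot)) \hookrightarrow H^1(L,\cdot)$, de l'argument du lemme \ref{lemme2qp} et du dévissage $0 \to U_r \times T_r \to A_r^0 \to B_r \to 0$, ce qui est cohérent avec les méthodes employées ailleurs dans la section (lemmes \ref{lemme1qp} et \ref{lemme2qp}, proposition \ref{prop1}).
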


\begin{proof}
Il suffit de procéder par récurrence descendante en remarquant $H^2(k_r,T_r\{\ell\}(d))$ est fini pour chaque $r$.
\end{proof}

\begin{lemma}\label{(C)}
Supposons que $H^1(k_{r},A^0_{r}\{\ell\}(-1))$ soit fini pour $0\leq r \leq d-2$. Alors $\rho_0=...=\rho_{d-2}=0$.
\end{lemma}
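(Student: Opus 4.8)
Le plan est de raisonner par récurrence ascendante sur $r$ en démontrant simultanément, pour $0 \le r \le d-2$, les deux assertions: (a) le groupe $H^0(k_r, B_r\{\ell\}(-1))$ est fini; (b) $\rho_r = 0$. Le point de départ est que, $\ell$ étant distinct de $p$, la partie unipotente disparaît après passage à $\{\ell\}(-1)$: en effet $U_r(k_r^s)$ est sans $\ell$-torsion et $\ell$-divisible (c'est un $\mathbb{F}_p$-espace vectoriel si $r=0$, un $\mathbb{Q}$-espace vectoriel si $r \ge 1$), donc $U_r\{\ell\}(-1)=0$. La suite exacte $0 \to U_r \times T_r \to A_r^0 \to B_r \to 0$, évaluée sur $k_r^s$ puis tordue, fournit alors une suite exacte de modules galoisiens $0 \to T_r\{\ell\}(-1) \to A_r^0\{\ell\}(-1) \to B_r\{\ell\}(-1) \to 0$, d'où une suite exacte de cohomologie que j'exploiterai à chaque étape.

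Pour l'assertion (b), je pars de la portion $H^0(k_r, B_r\{\ell\}(-1)) \xrightarrow{\partial} H^1(k_r, T_r\{\ell\}(-1)) \to H^1(k_r, A_r^0\{\ell\}(-1))$ de cette suite. Si (a) est acquise au rang $r$, l'image de $\partial$ est finie, donc le noyau de la flèche $H^1(k_r, T_r\{\ell\}(-1)) \to H^1(k_r, A_r^0\{\ell\}(-1))$ est fini. Or, d'après la Remarque \ref{tor1torsqp} appliquée avec $i=-1$, le groupe $H^1(k_r, T_r\{\ell\}(-1))$ est de type cofini de corang $c_{0,r-1}\rho_r$, quantité strictement positive dès que $\rho_r > 0$ (la partie non ramifiée fournit toujours au moins une copie de $\mathbb{Q}_\ell/\mathbb{Z}_\ell$ par caractère). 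L'hypothèse du lemme assure que $H^1(k_r, A_r^0\{\ell\}(-1))$ est fini pour $r \le d-2$; comme le noyau est fini, $H^1(k_r, T_r\{\ell\}(-1))$ serait alors lui aussi fini, ce qui force $\rho_r = 0$.

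Il reste donc à propager l'assertion (a). Pour $r=0$, le corps $k_0$ est fini et $H^0(k_0, B_0\{\ell\}(-1))$ est fini par la proposition \ref{vafini}. Pour $r \ge 1$, j'utilise que $B_r$ se réduit en $A_{r-1}$ sur $k_{r-1}$: la suite spectrale $H^a(k_{r-1}, H^b(k_r^{nr}, B_r\{\ell\}(-1))) \Rightarrow H^{a+b}(k_r, B_r\{\ell\}(-1))$ donne $H^0(k_r, B_r\{\ell\}(-1)) = H^0(k_{r-1}, B_r(k_r^{nr})\{\ell\}(-1))$, et un argument analogue à celui du lemme \ref{lemme2qp} (le morphisme de réduction $B_r(k_r^{nr}) \to A_{r-1}(k_{r-1}^s)$ étant de noyau uniquement $\ell$-divisible, et l'inertie agissant trivialement sur $\mathbb{Z}_\ell(1)$ puisque $\ell \neq p$) identifie ce dernier à $H^0(k_{r-1}, A_{r-1}\{\ell\}(-1))$. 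Grâce à la finitude de $F_{r-1}$ puis à la suite exacte $0 \to T_{r-1}\{\ell\}(-1) \to A_{r-1}^0\{\ell\}(-1) \to B_{r-1}\{\ell\}(-1) \to 0$, la finitude de ce groupe se ramène à celle de $H^0(k_{r-1}, T_{r-1}\{\ell\}(-1))$ (fini car $\rho_{r-1}=0$ par (b) au rang $r-1$, via \ref{tor1torsqp}) et de $H^0(k_{r-1}, B_{r-1}\{\ell\}(-1))$ (fini par (a) au rang $r-1$). Ceci boucle la récurrence et donne $\rho_0 = \cdots = \rho_{d-2}=0$.

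La principale difficulté réside dans le contrôle de $H^0(k_r, B_r\{\ell\}(-1))$: la torsion $i=-1$ est précisément celle que la remarque \ref{rq1} exclut, de sorte que ce $H^0$ n'est \emph{pas} automatiquement fini sur un corps local supérieur et que tout repose sur l'identification, par réduction successive, avec la cohomologie en degré $0$ des $A_{r-1}$ et sur l'annulation déjà obtenue des rangs $\rho_{r-1}$. Il faudra en particulier vérifier avec soin l'exactitude du dévissage après torsion ainsi que la compatibilité des isomorphismes de réduction à l'action galoisienne.
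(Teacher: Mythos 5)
Votre démonstration est correcte et suit essentiellement la même voie que celle du texte: une récurrence ascendante où l'hypothèse, combinée à la finitude de $H^0(k_r,B_r\{\ell\}(-1))$, force la finitude de $H^1(k_r,T_r\{\ell\}(-1))$ via la suite exacte issue de $0 \to T_r\{\ell\}(-1) \to A_r^0\{\ell\}(-1) \to B_r\{\ell\}(-1) \to 0$, puis $\rho_r=0$ par la remarque \ref{tor1torsqp}, l'initialisation reposant sur la proposition \ref{vafini}. La seule différence est que le texte affirme en une ligne la finitude de $H^0(k_s,B_s\{\ell\}(-1))$ (\og puisque $\rho_0=\cdots=\rho_{s-1}=0$ \fg), tandis que vous la propagez explicitement comme assertion (a) d'une récurrence simultanée (réduction au modèle de Néron, dévissage par $F_{r-1}$, $T_{r-1}$, $B_{r-1}$): c'est exactement l'argument implicite du texte, rendu détaillé.
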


\begin{proof}
Montrons par récurrence sur $s$ que $\rho_s=0$
\begin{itemize}
\item[$\bullet$] Les groupes $H^1(k_{0},A^0_{0}\{\ell\}(-1))$ et $H^0(k_{0},B_0\{\ell\}(-1))$ sont finis. Il en est donc de même de $H^1(k_0,T_0\{\ell\}(-1))$. Donc $\rho_0=0$.
\item[$\bullet$] Soit $s \leq d-2$ tel que $\rho_0=...=\rho_{s-1}=0$. Montrons $\rho_s=0$. Le groupe $H^1(k_{s},A^0_{s}\{\ell\}(-1))$ est fini. De plus, il en est de même de $H^0(k_s,B_s\{\ell\}(-1))$ puisque $\rho_0=...=\rho_{s-1}=0$. Donc $H^1(k_{s},T_{s}\{\ell\}(-1))$ est fini et $\rho_s=0$.
\end{itemize}
\end{proof}

\begin{theorem}\label{nullitéQp}
Supposons $d \geq 2$. Soit $\ell \neq p$ un nombre premier. Les deux assertions suivantes sont équivalentes:
\begin{itemize}
\item[(i)] $\rho_{d-1}=\rho_{d-2}=...=\rho_0=0$;
\item[(ii)] $\beta_{d,\ell}=\beta_{d-1,\ell}=...=\beta_{2,\ell}=0$.
\end{itemize}
\end{theorem}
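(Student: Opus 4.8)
The plan is to treat the two implications separately, after two reductions. First, for $r\ge 1$ the group $H^r(k,A)$ is torsion of cofinite type, so $\beta_{r,\ell}$ is exactly its $\ell$-corank; hence $\beta_{r,\ell}=0$ is equivalent to the finiteness of $H^r(k,A)\{\ell\}\cong H^r(k,A\{\ell\}(0))$. Second, by Théorème \ref{suff}(i) the coranks $\beta_{3,\ell},\dots,\beta_{d+1,\ell}$ vanish unconditionally, so assertion (ii) is equivalent to the single condition $\beta_{2,\ell}=0$. With this in hand, (i)$\Rightarrow$(ii) is immediate: if $\rho_{d-1}=\dots=\rho_0=0$, then Théorème \ref{suff}(ii) gives $\beta_{2,\ell}=0$ (and \ref{suff}(iii) even gives $\beta_{1,\ell}=0$), which is (ii).

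For (ii)$\Rightarrow$(i) I would first dispose of $\rho_1,\dots,\rho_{d-1}$ by pure bookkeeping. Substituting $\beta_{3,\ell}=\dots=\beta_{d+1,\ell}=0$ and $\beta_{0,\ell}=-\sum_{e=1}^{d-1}e\rho_e$ (Théorème \ref{suff}(i),(iv)) into the Euler relation $\sum_{s=0}^{d+1}(-1)^s\beta_{s,\ell}=0$ gives
$$\beta_{2,\ell}=\beta_{1,\ell}+\sum_{e=1}^{d-1}e\,\rho_e .$$
Since $\beta_{1,\ell}\ge 0$ and each $\rho_e\ge 0$, the hypothesis $\beta_{2,\ell}=0$ forces simultaneously $\beta_{1,\ell}=0$ and $\rho_1=\dots=\rho_{d-1}=0$.

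It remains to prove $\rho_0=0$, the rank of the torus $T_0$ over the finite residue field $k_0$; this is the part for which Lemmes \ref{lemrec} and \ref{(C)} are tailored. The aim is to produce the finiteness of $H^1(k_r,A_r^0\{\ell\}(-1))$ for $0\le r\le d-2$, for then Lemme \ref{(C)} yields $\rho_0=\dots=\rho_{d-2}=0$, whence $\rho_0=0$. By Lemme \ref{lemrec} with $i=-1$ it suffices to obtain finiteness at the top, namely of $H^1(k_{d-1},A_{d-1}^0\{\ell\}(-1))$, and I would derive this from the equality $\beta_{1,\ell}=0$ just established. The link is the spectral sequence $H^p(k_{d-1},H^q(k^{nr},-))\Rightarrow H^{p+q}(k,-)$: its edge term $H^0(k_{d-1},H^1(k^{nr},A\{\ell\}(0)))$ has, by Lemme \ref{lemme1qp}, divisible part isomorphic to that of $H^{d}(k_{d-1},A_{d-1}^0\{\ell\}(d-1))$, and the finiteness of $H^1(k,A\{\ell\}(0))$ (i.e. $\beta_{1,\ell}=0$) controls it up to the image of the differential $d_2$ in $H^2(k_{d-1},A(k^{nr})\{\ell\}(0))\cong H^2(k_{d-1},A_{d-1}\{\ell\}(0))$ (Lemme \ref{lemme2qp}). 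Using $\rho_1=\dots=\rho_{d-1}=0$ and the isogeny-invariance of the ranks (Corollaire \ref{inviso}), the dévissage of Proposition \ref{prop1} collapses every torus contribution $H^{\bullet}(k_{d-1},T_{d-1}\{\ell\}(\ast))$, whose corank is proportional to $\rho_{d-1}=0$, and relates the surviving groups, degree by degree and twist by twist, to the analogous cohomology of $B_{d-1}$ over $k_{d-1}$, and finally to that of $B_0$ and $T_0$ over $k_0$.

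The hard part will be exactly this last bookkeeping, where the whole subtlety of the $p$-adic case is concentrated. The hypothesis lives in Tate twist $0$ (it concerns the coranks $\beta_{r,\ell}$), whereas $\rho_0$ only becomes visible in twist $-1$ at the bottom of the tower, through $H^0(k_0,T_0\{\ell\}(-1))$, which has corank precisely $\rho_0$ because $\mathbb{G}_m\{\ell\}(-1)=\mathbb{Q}_\ell/\mathbb{Z}_\ell$ carries the trivial Frobenius action. One must therefore follow the Tate twist as it is shifted by $d-1-i$ at each use of Lemme \ref{lemme1qp} while climbing the tower, and control the successive differentials $d_2$ — using the top-degree vanishing $H^{d+1}(k_{d-1},-)=0$ (the field $k_{d-1}$ being of $\ell$-cohomological dimension $d$) to annihilate them in the decisive degree — so as to guarantee that the corank attached to $\rho_0$ really contributes to $\beta_{1,\ell}$, and is not spuriously cancelled along the way. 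Once that is secured, $\beta_{1,\ell}=0$ forces $H^1(k_{d-1},A_{d-1}^0\{\ell\}(-1))$ to be finite, and Lemmes \ref{lemrec} and \ref{(C)} close the argument by yielding $\rho_0=0$, hence (i).
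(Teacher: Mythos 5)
Your reduction of (ii) to the single condition $\beta_{2,\ell}=0$, the direct implication, and your counting step are all correct, and the counting step is genuinely slicker than the paper: combining $\sum_{s=0}^{d+1}(-1)^s\beta_{s,\ell}=0$ and $\beta_{s,\ell}=0$ for $s\ge 3$ (Théorème \ref{suff}(i)) with $\beta_{0,\ell}=-\sum_{e=1}^{d-1}e\rho_e$ (Théorème \ref{suff}(iv)) gives $\beta_{2,\ell}=\beta_{1,\ell}+\sum_{e=1}^{d-1}e\rho_e$, and non-negativity of coranks kills $\beta_{1,\ell}$ and $\rho_1,\dots,\rho_{d-1}$ at once, whereas the paper must prove $\rho_{d-1}=0$ at the very end by a separate dévissage. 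You are also right that $\rho_0$ is invisible to any such count: $T_0(k_0)$ is finite, so $\rho_0$ never enters $\beta_{0,\ell}$, and a cohomological argument is unavoidable there.

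That remaining step is where the gap lies, and the mechanism you sketch fails for $d\ge 3$. To get finiteness of the edge term $H^0(k_{d-1},H^1(k^{nr},A\{\ell\}))$ out of $\beta_{1,\ell}=0$ you need the image of the differential $\partial\colon H^0(k_{d-1},H^1(k^{nr},A\{\ell\}))\to H^2(k_{d-1},A_{d-1}\{\ell\})$ to be finite, and nothing in your sketch bounds this image except finiteness of the target. But the target cannot be shown finite without already knowing $\rho_0=0$: dévissage of $H^2(k_{d-1},B_{d-1}\{\ell\})$ plus two applications of Lemme \ref{lemme1qp} shifts the twist $0\mapsto 1\mapsto -1$ and reaches $H^0(k_0,A_0^0\{\ell\}(-1))\supseteq H^0(k_0,T_0\{\ell\}(-1))$, of corank $\rho_0$; concretely, for $d=3$ and $\rho_1=\rho_2=0$ one computes that $H^2(k_2,A_2\{\ell\})$ and the source of $\partial$ (which Lemme \ref{lemme1qp} identifies with $H^3(k_2,A_2^0\{\ell\}(2))$) both have corank exactly $\rho_0$, so the hypothesis $\beta_{1,\ell}=\beta_{2,\ell}=0$ produces no contradiction through this sequence — the argument is circular. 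The vanishing $H^{d+1}(k_{d-1},-)=0$ you invoke is of no help, since $\partial$ lands in degree $2$, not $d+1$ (your route does work in the border case $d=2$, where the target happens to have corank $0$). There is also a twist mismatch: even granting your edge term finite, Lemme \ref{lemme1qp} would give finiteness of $H^d(k_{d-1},A_{d-1}^0\{\ell\}(d-1))$, not of the group $H^1(k_{d-1},A_{d-1}^0\{\ell\}(-1))$ that Lemme \ref{lemrec} needs; twist $-1$ is only reached one floor lower. The paper's proof avoids all of this by starting from $\beta_{2,\ell}=0$: in degree $2$ the error term of the spectral sequence is $H^3(k_{d-1},H^0(k^{nr},A\{\ell\}))$, which is \emph{unconditionally} finite (Proposition \ref{prop1}, as $r-i-1=2$); Lemme \ref{lemme1qp} then yields finiteness of $H^{d-1}(k_{d-1},B_{d-1}\{\ell\}(d-1))$, the descent to $k_{d-2}$ costs nothing because $H^d(k_{d-2},-)=0$, and a second application of Lemme \ref{lemme1qp} lands exactly on $H^1(k_{d-2},A_{d-2}^0\{\ell\}(-1))$, after which Lemmes \ref{lemrec} and \ref{(C)} conclude. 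The repair is therefore to keep your counting step and to replace your sketch for $\rho_0$ by this chain issued from $\beta_{2,\ell}=0$.
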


\begin{proof}
Le sens direct a déjà été prouvé. Montrons que $(ii) \Rightarrow (i)$. Le groupe $H^2(k,A)$ est fini. Il en est de même du groupe $H^3(k_{d-1},H^0(k^{nr},A(k^s)\{\ell\})$. Par conséquent, $H^1(k_{d-1},H^1(k^{nr},A(k^s)\{\ell\}))$ est fini. Le lemme \ref{lemme1qp} montre alors la finitude de $H^{d-1}(k_{d-1},A^0_{d-1}\{\ell\}(d-1))$. Comme $H^d(k_{d-1},T_{d-1}\{\ell\}(d-1))$ est fini, on en déduit qu'il en est même de $H^{d-1}(k_{d-1},B_{d-1}\{\ell\}(d-1))$ et donc de $H^{d-2}(k_{d-2}, H^1(k_{d-1}^{nr},B_{d-1}\{\ell\}(d-1)))$. Toujours avec le lemme \ref{lemme1qp}, on obtient la finitude de $H^1(k_{d-2},A^0_{d-2}\{\ell\}(-1))$.  Le lemme \ref{lemrec} montre alors que $H^1(k_{r},A^0_{r}\{\ell\}(-1))$ est fini pour $r \leq d-2$, et avec le lemme \ref{(C)}, on obtient $\rho_0=...=\rho_{d-2}$.\\
Reste à montrer que $\rho_{d-1}=0$. Pour ce faire, on remarque que $H^{d}(k_{d-1},B_{d-1}\{\ell\}(d-1))$ est fini car $\rho_0=...=\rho_{d-2}=0$. Comme $H^{d}(k_{d-1},T_{d-1}\{\ell\}(d-1))$ est aussi fini, il en est de même de $H^{d}(k_{d-1},A_{d-1}^0\{\ell\}(d-1))$ et donc de $H^{0}(k_{d-1},H^1(k^{nr},A(k^s)\{\ell\}))$. On en déduit la finitude de $H^2(k_{d-1},H^0(k^{nr},A(k^s)\{\ell\}))$ et donc aussi celle de $H^2(k_{d-1},A^0_{d-1}(k_{d-1}^s)\{\ell\})$. Le groupe $H^1(k_{d-1},B_{d-1}(k_{d-1}^s)\{\ell\})$ est fini car $\rho_0=...=\rho_{d-2}$. Il en est donc de même de $H^2(k_{d-1},T_{d-1}(k_{d-1}^s)\{\ell\})$, et $\rho_{d-1}=0$.
\end{proof}

\begin{remarque}\label{faux}
Plaçons-nous dans le cas où $k=\mathbb{Q}_p((t))$ (et $d=2$). Dans l'article \cite{Koy}, Y. Koya construit un complexe de $\text{Gal}(k^s/k)$-modules pour lequel la multiplication par $\ell^s$ induit un triangle distingué $({_{\ell^s}}A)' \rightarrow C \rightarrow C \rightarrow ({_{\ell^s}}A)'[1]$ quel que soit l'entier naturel $s$. Son théorème principal (théorème 1.1) implique que $H^0(k,C)\{\ell\}$ et $H^1(k,C)\{\ell\}$ sont finis quelle soit la variété abélienne $A$ sur $k$. De plus, sa preuve repose très fortement sur la proposition 4.1, qui impose que, pour chaque $s \geq 0$, on a $|H^0(k,C)/\ell^s|=|{_{\ell^s}}H^0(k,C)|$. Cela montre que la fonction $s \mapsto |H^0(k,C)/\ell^s|$ est bornée. Or on remarque que:
\begin{align*}\frac{|{_{\ell^s}}H^1(k,C)|}{|{_{\ell^s}}H^2(k,A)|}&= \frac{|{_{\ell^s}}H^1(k,C)|}{|H^1(k,({_{\ell^s}}A)')|}\frac{|H^1(k,({_{\ell^s}}A)')|}{|H^2(k,{_{\ell^s}}A)|}\frac{|H^2(k,{_{\ell^s}}A)|}{|{_{\ell^s}}H^2(k,A)|}\\
&=\frac{|H^1(k,A)/\ell^s|}{|H^0(k,C)/\ell^s|}.
\end{align*}
Par conséquent, $H^2(k,A)\{\ell\}$ est fini, quelle que soit la variété abélienne $A$. En utilisant \ref{nullitéQp}, cela impose que $\rho_1=0$ pour toute variété abélienne $A$. Mais cela est clairement faux: par exemple, la courbe elliptique $y^2=x^3+x^2+t$ vérifie $\rho_1=1$. On en déduit que, parmi le théorème 1.1 et la proposition 4.1 de \cite{Koy}, au moins l'un des deux énoncés est faux. En particulier, la preuve du théorème 1.1 de \cite{Koy} semble erronée et difficile à rattraper.
\end{remarque}

\subsection{Étude en $p$}

Les résultats sont plus imprécis que dans le paragraphe précédent, puisque nous ne savons pas calculer les parties $p$-primaires des groupes de cohomologie d'un tore.

\subsubsection{Cas général}

Voici une condition suffisante pour que les $\beta_{r,p}$ soient nuls:

\begin{proposition}\label{nullitéQpp}
Supposons que: $$\dim B_1=\dim T_1=\dim T_2 = ... = \dim T_{d-1}=0.$$
Alors $\beta_{r,p}=0$ pour tout entier $r$.
\end{proposition}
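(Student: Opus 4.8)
Le plan est de procéder par récurrence sur $d$, le cas $d=1$ (où $\dim B_1=\dim A$ force $A=0$) étant trivial. L'outil principal sera la suite spectrale $H^r(k_{d-1},H^s(k^{nr},A))\Rightarrow H^{r+s}(k,A)$, qui dégénère, puisque $\mathrm{cd}(\mathrm{Gal}(k^s/k^{nr}))=1$, en une suite exacte longue
$$\cdots \to H^r(k_{d-1},A(k^{nr})) \to H^r(k,A) \to H^{r-1}(k_{d-1},H^1(k^{nr},A)) \to \cdots$$
Pour $r\geq 1$ le groupe $H^r(k,A)$ est de torsion de type cofini et $\beta_{r,p}$ en est exactement le corang $p$-primaire; comme ce corang est sous-additif dans les suites exactes, il suffira de montrer que les corangs $p$ de $H^r(k_{d-1},A(k^{nr}))$ et de $H^{r-1}(k_{d-1},H^1(k^{nr},A))$ sont nuls pour tout $r\geq 1$. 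On en déduira $\beta_{r,p}=0$ pour $r\geq 1$, puis $\beta_{0,p}=0$ grâce à la relation $\prod_r \lambda_r(k,n,A)^{(-1)^r}=\chi(k,{_n}A)=1$ (caractéristique d'Euler-Poincaré, valable ici car $k_0$ est fini et $d\geq 2$), et enfin $\beta_{r,p}=0$ pour $r\geq d+2$ par dimension cohomologique.

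Le point crucial du pas de récurrence sera de ramener les deux termes ci-dessus à la cohomologie de la variété abélienne $B_{d-1}$ sur $k_{d-1}$. D'une part, comme le corps résiduel $k_{d-1}$ de $k$ est de caractéristique nulle, le morphisme $A(k^{nr})=\mathcal{A}_d(\mathcal{O}_{k^{nr}})\to A_{d-1}(k_{d-1}^s)$ est surjectif à noyau uniquement divisible (y compris en $p$, ce qui est décisif), d'où $H^r(k_{d-1},A(k^{nr}))\cong H^r(k_{d-1},A_{d-1})$ pour $r>0$ (analogue du lemme \ref{lemme2}). D'autre part, via l'isomorphisme de Ogg $H^1(k^{nr},A)\cong(\varprojlim_m {_m}A^t(k^{nr}))^D$ et l'analogue du lemme \ref{lemme1} (reposant sur le lemme \ref{tech}), la partie divisible de $H^{r-1}(k_{d-1},H^1(k^{nr},A))$ s'identifiera à celle de $H^{d-r+1}(k_{d-1},A_{d-1}^0(k_{d-1}^s))$, ou de sa partie de torsion pour $r\in\{d,d+1\}$. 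Dans les deux cas on dévisse $A_{d-1}$ à l'aide de sa filtration: comme $T_{d-1}=0$ par hypothèse, que $F_{d-1}$ est fini (corang $p$ nul, quelle que soit sa divisibilité par $p$, ce qui explique l'absence d'hypothèse sur $|F_{d-1}|$) et que $U_{d-1}$ est une puissance de $\mathbb{G}_a$ sur le corps $k_{d-1}$ de caractéristique nulle (donc $H^{\geq 1}(k_{d-1},U_{d-1})=0$ et $U_{d-1}(k_{d-1})$ uniquement divisible, de même que $U_{d-1}(k_{d-1}^s)$ qui est sans torsion), le corang $p$ de tous ces groupes coïncidera avec celui de la cohomologie correspondante de $B_{d-1}$.

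Il restera à appliquer l'hypothèse de récurrence à $B_{d-1}$: sa propre tour de réduction possède pour parties abélienne et toriques au bas de la tour exactement $B_1$ et $T_1,\dots,T_{d-2}$, de sorte que les hypothèses $\dim B_1=0$ et $\dim T_1=\cdots=\dim T_{d-2}=0$ sont bien satisfaites; on obtiendra donc $\beta_{s,p}(B_{d-1})=0$ pour tout $s$, c'est-à-dire la nullité des corangs $p$ de tous les $H^s(k_{d-1},B_{d-1})$ (pour $s=0$ on passera de $B_{d-1}(k_{d-1}^s)_{tors}$ à $B_{d-1}(k_{d-1}^s)$ via un quotient uniquement divisible). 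Le principal obstacle technique sera la vérification soigneuse des analogues en $p$ des lemmes \ref{lemme1} et \ref{lemme2}: il faudra s'assurer que la descente le long de $k^{nr}/k$ est « propre » en $p$, ce qui repose de façon essentielle sur la caractéristique nulle de $k_{d-1}$ — les parties unipotentes, bien qu'elles puissent porter toute la dimension de $A$, sont alors cohomologiquement négligeables en $p$ — et sur le fait que l'hypothèse $\dim B_1=0$ force $\mathcal{A}_1=0$, donc $A_0=U_0=T_0=B_0=0$, éliminant ainsi toute géométrie en caractéristique $p$ et toute contribution $p$-adique d'une variété abélienne de dimension strictement positive à la base de la tour.
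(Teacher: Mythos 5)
Votre squelette (suite exacte longue issue de la suite spectrale de $k^{nr}/k$, réduction à la fibre spéciale, dévissage par la filtration, récurrence sur $d$ initialisée à $B_1=0$) est bien celui que l'article a en tête, et vos observations sur $U_{d-1}$, $F_{d-1}$ et sur le rôle de la caractéristique résiduelle nulle sont correctes. Mais il y a une lacune réelle dans l'exécution: vous calquez l'identification clé sur le lemme \ref{lemme1} de la section 3, \emph{sans twist à la Tate}. Or ce lemme n'est sans twist que parce que, dans la section 3, $k_{d-1}$ contient $\mathbb{C}$, donc toutes les racines de l'unité. Ici $k_{d-1}=k'((t_2))\cdots((t_{d-1}))$ ne contient pas $\mu_{p^{\infty}}$, et la dualité locale sur $k_{d-1}$ (qui vaut bien en $p$ puisque $\text{Car}(k_1)=0$) apparie $M$ avec $\text{Hom}(M,\mu_{p^s}^{\otimes (d-1)})$. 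L'énoncé correct est celui du lemme \ref{lemme1qp} (dont la preuve s'étend à $\ell=p$): la partie divisible de $H^{r-1}(k_{d-1},H^1(k^{nr},A\{p\}))$ s'identifie à celle de $H^{d-r+1}(k_{d-1},A_{d-1}^0\{p\}(d-1))$, et non à celle de $H^{d-r+1}(k_{d-1},A_{d-1}^0(k_{d-1}^s))$. Comme les corangs sont sensibles aux twists (comparer $H^0(k_1,\mathbb{Q}_p/\mathbb{Z}_p)$ et $H^0(k_1,\mathbb{Q}_p/\mathbb{Z}_p(1))$ sur un corps $p$-adique), votre hypothèse de récurrence --- la seule nullité des $\beta_{s,p}(B_{d-1})$, qui ne porte que sur les groupes non tordus $H^s(k_{d-1},B_{d-1})$ --- ne s'applique pas aux groupes qui apparaissent réellement dans le pas de récurrence: telle quelle, la récurrence ne se referme pas.

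La réparation est naturelle mais doit être explicitée: il faut renforcer l'énoncé de récurrence en l'analogue en $p$ de la proposition \ref{prop1}, à savoir que sous l'hypothèse de la proposition, $H^r(k,A\{p\}(i))$ est fini pour \emph{tous} $r\geq 0$ et $i \in \mathbb{Z}$ (aucune condition liant $r$ et $i$ n'est nécessaire car tous les tores sont nuls, et l'initialisation est triviale car $B_1=0$ --- alors que pour $\ell\neq p$ elle requiert la proposition \ref{vafini} sur le corps fini), en y adjoignant l'assertion de torsion $\beta^{tors}_{0,p}=0$. Ce dernier point révèle une seconde imprécision: passer de $B_{d-1}(k_{d-1}^s)_{tors}$ à $B_{d-1}(k_{d-1}^s)$ \emph{via} un quotient uniquement divisible ne préserve pas les corangs en degrés $0$ et $1$; on a en fait, comme dans la preuve du théorème \ref{majoration}, que le corang de $H^0(k_{d-1},B_{d-1}(k_{d-1}^s)_{tors})\{p\}$ est $\beta^{tors}_{0,p}$ et que celui de $H^1(k_{d-1},B_{d-1}(k_{d-1}^s)_{tors})\{p\}$ vaut $\beta_{1,p}+\beta^{tors}_{0,p}-\beta_{0,p}$, quantités que la seule nullité des $\beta_{s,p}(B_{d-1})$ ne contrôle pas. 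Une fois l'énoncé de récurrence ainsi tordu et enrichi, votre dévissage ($T_{d-1}=0$, $F_{d-1}$ fini, $U_{d-1}$ cohomologiquement trivial et sans torsion, lemme \ref{tech} et isogénie pour remplacer $A^t$ par $A$) se referme et redonne la preuve que l'article sous-entend par son renvoi aux lemmes \ref{lemme1qp} et \ref{lemme2qp}.
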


\begin{proof}
La preuve est analogue à celles réalisées dans les paragraphes précédents. Elle est en fait beaucoup plus facile!
\end{proof}

\subsubsection{Quelques précisions dans le cas $d=2$}\label{precisions}

Dans ce paragraphe, on suppose $d=2$.

\begin{proposition}
On a: $\frac{|H^0(k,A)/n|}{|{_n}H^0(k,A)|}=n^{\rho_1} p^{[k_1:\mathbb{Q}_p] \cdot (\dim T_1+\dim B_1) \cdot v_p(n)}$.
\end{proposition}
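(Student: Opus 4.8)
Le plan est de calculer le rapport annoncé comme l'inverse de $\lambda_0(k,n,A) = \frac{|{_n}H^0(k,A)|}{|H^0(k,A)/n|}$, en reprenant le dévissage de la proposition \ref{beta0} mais en gardant trace de la partie $p$-primaire, qui ne se simplifie plus comme dans le cas $\mathbb{C}((t_0))...((t_d))$. D'abord, je réduirais l'étude de $A(k)=\mathcal{A}_2(\mathcal{O}_k)$ à celle de $A_1(k_1)$: comme $\mathcal{A}_2$ est lisse sur $\mathcal{O}_k$, le lemme de Hensel fournit une suite exacte $0 \rightarrow D \rightarrow A(k) \rightarrow A_1(k_1) \rightarrow 0$, où $D$ est le groupe des points du groupe formel associé. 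Le corps résiduel $k_1$ étant de caractéristique $0$, le groupe $D$ est uniquement divisible, d'où ${_n}D=0$ et $D/n=0$; le lemme du serpent donne alors $\lambda_0(k,n,A)=\lambda_0(k_1,n,A_1(k_1))$.

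Ensuite je dévisserais $A_1$ au-dessus du corps $p$-adique $k_1$. La suite $0 \rightarrow A_1^0(k_1) \rightarrow A_1(k_1) \rightarrow F_1(k_1)$ et la finitude de $F_1$ ramènent au calcul de $\lambda_0(k_1,n,A_1^0(k_1))$, puisqu'un groupe fini a un rapport $|{_n}\cdot|/|\cdot/n|$ trivial. La suite $0 \rightarrow U_1 \times T_1 \rightarrow A_1^0 \rightarrow B_1 \rightarrow 0$, jointe à $H^1(k_1,U_1)=0$, à l'uniforme divisibilité de $U_1(k_1) \cong k_1^{\dim U_1}$ (qui est un $\mathbb{Q}$-espace vectoriel) et à la finitude de $H^1(k_1,T_1)$ pour un tore sur un corps $p$-adique, fournit une suite exacte $0 \rightarrow U_1(k_1) \times T_1(k_1) \rightarrow A_1^0(k_1) \rightarrow B_1(k_1) \rightarrow Q \rightarrow 0$ avec $Q$ fini. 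Par multiplicativité du rapport $|{_n}\cdot|/|\cdot/n|$ dans les suites exactes, on en tire $\lambda_0(k_1,n,A_1^0(k_1))=\lambda_0(k_1,n,T_1) \cdot \lambda_0(k_1,n,B_1)$, la contribution de $U_1(k_1)$ étant triviale.

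Il ne resterait qu'à évaluer les deux facteurs. Pour le tore $T_1$, vu comme tore de rang $\rho_1$ sur le corps $1$-local $k_1$, la remarque \ref{tor0qp} (cas $d=1$) donne $\lambda_0(k_1,n,T_1)=n^{-\rho_1} p^{-[k_1:\mathbb{Q}_p]\dim T_1 \cdot v_p(n)}$. Pour la variété abélienne $B_1$ sur le corps $p$-adique $k_1$, le théorème de structure des points (dû à Mattuck) donne $B_1(k_1) \cong \mathbb{Z}_p^{[k_1:\mathbb{Q}_p]\dim B_1} \oplus F$ avec $F$ fini, d'où $|{_n}B_1(k_1)|=|{_n}F|$ et $|B_1(k_1)/n|=p^{[k_1:\mathbb{Q}_p]\dim B_1 \cdot v_p(n)}|F/n|$, et donc $\lambda_0(k_1,n,B_1)=p^{-[k_1:\mathbb{Q}_p]\dim B_1 \cdot v_p(n)}$. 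En multipliant, j'obtiendrais $\lambda_0(k,n,A)=n^{-\rho_1} p^{-[k_1:\mathbb{Q}_p](\dim T_1+\dim B_1) \cdot v_p(n)}$, dont l'inverse est exactement la formule voulue.

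Le point le plus délicat sera le contrôle de la partie $p$-primaire: contrairement au cas complexe, ni le tore ni la variété abélienne n'ont de rapport trivial en $p$, et il faut s'assurer que toutes les contributions des groupes finis ($F_1$, $Q$, les parties de torsion de $B_1(k_1)$) se compensent effectivement. C'est là qu'interviennent de façon cruciale la divisibilité unique de $D$ et de $U_1(k_1)$ ainsi que la finitude de $H^1(k_1,T_1)$, qui garantissent ensemble que seuls $T_1$ et $B_1$ contribuent au rapport final.
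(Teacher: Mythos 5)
Votre preuve est correcte et suit essentiellement la même démarche que celle de l'article: réduction de $A(k)=\mathcal{A}_2(\mathcal{O}_k)$ à $A_1(k_1)$ via le noyau uniquement divisible, puis dévissage par les suites $0 \rightarrow A_1^0 \rightarrow A_1 \rightarrow F_1 \rightarrow 0$ et $0 \rightarrow U_1 \times T_1 \rightarrow A_1^0 \rightarrow B_1 \rightarrow 0$, pour aboutir au produit des contributions de $T_1$ et de $B_1$. Vous ne faites qu'expliciter les deux évaluations finales que l'article laisse implicites (la remarque \ref{tor0qp} pour le tore et le théorème de Mattuck pour $B_1(k_1) \cong \mathbb{Z}_p^{[k_1:\mathbb{Q}_p]\dim B_1} \oplus F$), ce qui est tout à fait conforme à l'esprit de la preuve originale.
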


\begin{proof}
On a $A(k)=\mathcal{A}_2(\mathcal{O}_{k})$. On dispose en plus d'une suite exacte $0 \rightarrow D\rightarrow \mathcal{A}_2(\mathcal{O}_{k}) \rightarrow A_1(k_1) \rightarrow 0,$
où $D$ est uniquement divisible. Le lemme du serpent fournit alors des isomorphismes ${_n}H^0(k,A) \cong  {_n}H^0(k_1,A_1)$ et $H^0(k,A)/ \cong  H^0(k_1,A_1)/n$. On obtient donc $\frac{|H^0(k,A)/n|}{|{_n}H^0(k,A)|} = \frac{|H^0(k_1,A_1)/n|}{|{_n}H^0(k_1,A_1)|}$. Les suites $0 \rightarrow A_1^0 \rightarrow A_1 \rightarrow F_1 \rightarrow 0$ et $0 \rightarrow U_1 \times T_1 \rightarrow A_1^0 \rightarrow B_1 \rightarrow 0$ montrent que $\frac{|H^0(k,A)/n|}{|{_n}H^0(k,A)|} = \frac{|H^0(k_1,A^1_0)/n|}{|{_n}H^0(k_1,A^1_0)|} = \frac{|H^0(k_1,T_1)/n|}{|{_n}H^0(k_1,T_1)|}  \frac{|H^0(k_1,B_1)/n|}{|{_n}H^0(k_1,B_1)|} = n^{\rho_1} p^{[k_1:\mathbb{Q}_p] \cdot (\dim T_1+\dim B_1) \cdot v_p(n)}$.
\end{proof}

\begin{theorem} \label{nullitéQpp2}
On a:
\begin{gather*}
\beta_{0,p} - \beta_{1,p} +\beta_{2,p} - \beta_{3,p} =0\\
\beta_{0,p} = -\rho_1-[k_1:\mathbb{Q}_p](\dim T_1 + \dim B_1),
\end{gather*}
Si $\dim B_1=0$, alors $\beta_{1,p}=\beta_{3,p}=0$. 
\end{theorem}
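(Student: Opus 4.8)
La première égalité est l'identité d'Euler--Poincaré. Puisque $k_0$ est fini et $d=2\geq 2$, la proposition de caractéristique d'Euler--Poincaré donne $\chi(k,{_n}A)=1$, et comme $k$ est de dimension cohomologique $d+1=3$ on a $\beta_{r,p}=0$ pour $r\geq 4$; en écrivant $1=\chi(k,{_n}A)=\prod_{r=0}^{3}\lambda_r(k,n,A)^{(-1)^r}$ et en lisant la valuation $p$-adique de l'exposant de $n$, j'obtiens $\beta_{0,p}-\beta_{1,p}+\beta_{2,p}-\beta_{3,p}=0$. La deuxième égalité résulte de la proposition précédente: celle-ci calcule $\frac{|H^0(k,A)/n|}{|{_n}H^0(k,A)|}=n^{\rho_1}\,p^{[k_1:\mathbb{Q}_p](\dim T_1+\dim B_1)\,v_p(n)}$, quantité inverse de $\lambda_0(k,n,A)$, d'où en isolant la partie $p$-primaire $\beta_{0,p}=-\rho_1-[k_1:\mathbb{Q}_p](\dim T_1+\dim B_1)$.

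Supposons maintenant $\dim B_1=0$, de sorte que $B_1=0$, que $\mathcal{A}_1$ et sa fibre spéciale $A_0$ sont nuls, et que $\rho_0=\dim B_0=0$. Le but est de montrer que $H^1(k,A)\{p\}$ et $H^3(k,A)\{p\}$ sont finis. L'outil central sera la suite spectrale $H^r(k_1,H^s(k^{nr},A))\Rightarrow H^{r+s}(k,A)$. Comme $k_1$ est de caractéristique $0$, le corps résiduel de $k^{nr}$ est $k_1^s=\overline{k_1}$, algébriquement clos, donc $k^{nr}$ est de dimension cohomologique $1$ et $H^s(k^{nr},A)=0$ pour $s\geq 2$: la suite spectrale dégénère en une suite exacte longue reliant les $H^r(k_1,A(k^{nr}))$, les $H^r(k,A)$ et les $H^{r-1}(k_1,H^1(k^{nr},A))$. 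Le théorème de Ogg donne par ailleurs $H^1(k^{nr},A)\cong(\varprojlim_m {_m}A^t(k^{nr}))^D$, et puisque $\dim B_1^*=\dim B_1=0$ (invariance par isogénie et autodualité des modèles de Néron), la partie $p$-primaire de $A^t(k^{nr})\cong A_1^*(\overline{k_1})$ vaut, à un groupe fini près, $T_1^*(\overline{k_1})\{p\}=X_*(T_1^*)\otimes\mu_{p^{\infty}}$; ainsi $T_pA^t(k^{nr})\cong X_*(T_1^*)\otimes\mathbb{Z}_p(1)$, les parts unipotente et finie ne contribuant pas.

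Pour $\beta_{1,p}$, la suite exacte longue ramène la finitude de $H^1(k,A)\{p\}$ à celles de $H^1(k_1,A(k^{nr}))\{p\}$ et de $H^0(k_1,H^1(k^{nr},A))\{p\}$. Pour le premier groupe, la surjection $A(k^{nr})\twoheadrightarrow A_1(\overline{k_1})$ à noyau uniquement divisible donne $H^1(k_1,A(k^{nr}))\{p\}\cong H^1(k_1,A_1(\overline{k_1}))\{p\}$; comme $B_1=0$ et que $U_1(\overline{k_1})$ est uniquement divisible, ce dernier coïncide à un groupe fini près avec $H^1(k_1,T_1)\{p\}$, fini par la dualité de Tate--Nakayama sur le corps $p$-adique $k_1$. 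Pour le second, $H^0(k_1,H^1(k^{nr},A))\{p\}=\big((T_pA^t(k^{nr}))_{\mathrm{Gal}(k_1^s/k_1)}\big)^D$; or $T_1^*$ est déployé par une extension finie, donc $\mathrm{Gal}(k_1^s/k_1)$ agit à travers un quotient fini sur $X_*(T_1^*)\otimes\mathbb{Q}_p$, tandis que le caractère cyclotomique est d'image infinie, ce qui force la nullité des coinvariants de $X_*(T_1^*)\otimes\mathbb{Q}_p(1)$ et donc la finitude cherchée. On conclut $\beta_{1,p}=0$.

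Pour $\beta_{3,p}$, j'utiliserai la relation $\beta_{3,p}=\beta^*_{0,p}$ fournie par la proposition \ref{equa0Qp} (valable pour tout premier, sa preuve étant purement formelle), où $\beta^*_{0,p}$ compte les copies de $\mathbb{Q}_p/\mathbb{Z}_p$ dans $H^0(k,\tilde{A})\{p\}=H^0(k,A^t\{p\}(1))$. Comme $\mu_{p^{\infty}}\subset\overline{k_1}\subset k^{nr}$, la suite spectrale donne $H^0(k,A^t\{p\}(1))=H^0(k_1,A^t(k^{nr})\{p\}(1))$, égal à un groupe fini près à $H^0(k_1,X_*(T_1^*)\otimes\mathbb{Q}_p/\mathbb{Z}_p(2))$; le même argument d'image finie confrontée au carré du caractère cyclotomique, d'image infinie, montre que ce groupe est fini, d'où $\beta^*_{0,p}=0$ puis $\beta_{3,p}=0$. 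Le principal obstacle est précisément ce contrôle des parties $p$-primaires: contrairement au cas $\ell\neq p$, on ne dispose pas des formules explicites des remarques sur la cohomologie des tores, et il faut exploiter finement que les réseaux de (co)caractères de $T_1$ et $T_1^*$ portent une action galoisienne d'image finie, incompatible avec les torsions cyclotomiques $\mathbb{Q}_p(1)$ et $\mathbb{Q}_p(2)$, pour forcer la finitude des invariants et coinvariants pertinents.
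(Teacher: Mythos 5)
Your proposal is correct, and since the paper's own proof of this theorem is the single sentence that the arguments are analogous to those of the preceding paragraphs, what you wrote is essentially the intended proof with the details filled in. The two displayed identities are obtained exactly as the paper does: the alternating relation is the equation $\sum_{r=0}^{d+1}(-1)^r\beta_{r,\ell}=0$ of Proposition \ref{equa0Qp} taken at $\ell=p$, $d=2$ (and you are right that this proposition is available at $p$: here $\mathrm{Car}(k_1)=0$, so the higher local duality and the identity $\chi(k,{}_nA)=1$ hold without any coprimality restriction), while $\beta_{0,p}$ is read off, with the sign inversion you note, from the proposition immediately preceding the theorem. For the last assertion, where the paper is silent, your dévissage — the spectral sequence $H^r(k_1,H^s(k^{nr},A))\Rightarrow H^{r+s}(k,A)$, Ogg's duality $H^1(k^{nr},A)\cong(\varprojlim_m {}_{m}A^t(k^{nr}))^D$, the finiteness of $H^1(k_1,T_1)$ over the $p$-adic field $k_1$, and $B_1^*=0$ by isogeny invariance — is precisely the machinery of Lemmas \ref{lemme1qp} and \ref{lemme2qp}. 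The one ingredient you had to supply, because it has no literal counterpart in the earlier paragraphs, is the finiteness of the coinvariants of $X_*(T_1^*)\otimes\mathbb{Z}_p(1)$ and of the invariants of $X_*(T_1^*)\otimes\mathbb{Q}_p/\mathbb{Z}_p(2)$, which you get by confronting the finite-image Galois action on the cocharacter lattice with the infinite image of the cyclotomic character (and of its square) on every open subgroup of $\mathrm{Gal}(k_1^s/k_1)$; this is the correct $p$-adic substitute both for the explicit torus computations of remarks \ref{tor0qp}, \ref{torrqp} and \ref{tor1torsqp} (stated only away from $p$) and for the weight argument of Proposition \ref{vafini} (which lives over the finite field $k_0$ and plays the same role when $\ell\neq p$). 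Finally, reducing $\beta_{3,p}$ to $\beta^*_{0,p}$, the corank of $H^0(k,A^t\{p\}(1))$, via Proposition \ref{equa0Qp}, rather than attacking $H^3(k,A)\{p\}$ directly, is a clean and legitimate shortcut entirely within the paper's framework.
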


\begin{proof}
Les preuves sont analogues à celles réalisées dans les paragraphes précédents.
\end{proof}

\begin{corollary}
On a $\dim U_1 =\dim U_1^*$. En particulier, si $A$ a très mauvaise réduction, alors il en est de même de $A^t$.
\end{corollary}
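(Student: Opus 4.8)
Le plan est de ramener l'égalité $\dim U_1 = \dim U_1^*$ à un simple calcul de dimensions, en injectant l'isogénie entre $A$ et $A^t$ dans la formule du théorème \ref{nullitéQpp2}. Posons $g = \dim A = \dim A^t$. La composante connexe $A_1^0$ de la fibre spéciale du modèle de Néron de $A$ sur $\mathcal{O}_k$ est de dimension $g$, et la suite exacte $0 \rightarrow U_1 \times T_1 \rightarrow A_1^0 \rightarrow B_1 \rightarrow 0$ fournit donc $g = \dim U_1 + \dim T_1 + \dim B_1$; de même, $g = \dim U_1^* + \dim T_1^* + \dim B_1^*$. Il suffit ainsi d'établir que $\dim T_1 + \dim B_1 = \dim T_1^* + \dim B_1^*$.

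Pour cela, j'appliquerais le théorème \ref{nullitéQpp2} à la fois à $A$ et à $A^t$, ce qui donne
\begin{gather*}
\beta_{0,p} = -\rho_1 - [k_1:\mathbb{Q}_p](\dim T_1 + \dim B_1),\\
\beta_{0,p}^t = -\rho_1^* - [k_1:\mathbb{Q}_p](\dim T_1^* + \dim B_1^*).
\end{gather*}
D'une part, le corollaire \ref{inviso} assure que $\rho_1 = \rho_1^*$ (pour $d=2$, la quantité $\sum_{e=1}^{d-1} e\rho_e$ se réduit à $\rho_1$). D'autre part, comme $A$ et $A^t$ sont isogènes (paragraphe 10 de \cite{MilAV}), l'argument de la proposition \ref{isogénie} s'applique tel quel: une isogénie $A \rightarrow A^t$ induit un morphisme $H^0(k,A) \rightarrow H^0(k,A^t)$ de noyau et conoyau finis, de sorte que $\lambda_0(k,n,A)$ et $\lambda_0(k,n,A^t)$ ne diffèrent que d'un facteur borné indépendamment de $n$; on en déduit $\beta_{0,p} = \beta_{0,p}^t$.

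En comparant les deux expressions de $\beta_{0,p}$ et en simplifiant par $\rho_1 = \rho_1^*$, on obtient $[k_1:\mathbb{Q}_p](\dim T_1 + \dim B_1) = [k_1:\mathbb{Q}_p](\dim T_1^* + \dim B_1^*)$, d'où $\dim T_1 + \dim B_1 = \dim T_1^* + \dim B_1^*$ et finalement $\dim U_1 = g - (\dim T_1 + \dim B_1) = \dim U_1^*$. Pour l'assertion finale, on note que $A$ a très mauvaise réduction si, et seulement si, $T_1 = 0$ et $B_1 = 0$, c'est-à-dire $\dim T_1 = \dim B_1 = 0$, ce qui revient à $\dim U_1 = g$; l'égalité $\dim U_1 = \dim U_1^*$ impose alors $\dim U_1^* = g$, puis $\dim T_1^* = \dim B_1^* = 0$, et $A^t$ a elle aussi très mauvaise réduction.

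Le point véritablement délicat est l'égalité $\beta_{0,p} = \beta_{0,p}^t$: c'est le seul ingrédient qui ne soit pas fourni directement par un énoncé déjà démontré de cette section, et il faut reprendre l'argument d'isogénie. Il est par ailleurs essentiel de remarquer que le coefficient $p$-adique de $\lambda_0(k,n,A)$ détecte la dimension totale $\dim T_1 + \dim B_1$, et non le seul rang $\rho_1$ capté par la partie première à $p$: c'est précisément cette information supplémentaire contenue dans la partie $p$-primaire qui rend possible la récupération de $\dim U_1$.
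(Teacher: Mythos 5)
Your proof is correct and coincides with the argument the paper intends: the corollary is stated there without proof, as an immediate consequence of Théorème \ref{nullitéQpp2} applied to both $A$ and $A^t$, combined with isogeny invariance ($\rho_1=\rho_1^*$ from Corollaire \ref{inviso}, and $\beta_{0,p}=\beta_{0,p}^t$ by the snake-lemma argument of Proposition \ref{isogénie}, which indeed carries over to $\ell=p$ because $H^1(k,F)$ is finite for any finite Galois module $F$ over the $2$-local field $k$) and the dimension count $\dim U_1+\dim T_1+\dim B_1=g=\dim U_1^*+\dim T_1^*+\dim B_1^*$. Your identification of $\beta_{0,p}=\beta_{0,p}^t$ as the one ingredient not explicitly available in this section is accurate, and your treatment of it is exactly what the paper's subsequent remark (that $A^t$ may be replaced by any abelian variety isogenous to $A$) presupposes.
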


\begin{remarque}
On pourrait bien sûr remplacer $A^t$ par n'importe quelle variété abélienne isogène à $A$.
\end{remarque}

\subsection{Le noyau de $H^d(k,\tilde{A}) \rightarrow (H^0(k,A)^{\wedge})^D$}

Exactement comme dans la section \ref{secmoddiv}, on peut montrer que:

\begin{theorem}
Pour chaque $r \geq 0$, il existe un morphisme naturel surjectif $H^r(k,\tilde{A}) \rightarrow (H^{d-r}(k,A)^{\wedge})^D$ dont le noyau est de torsion de type cofini divisible.
\end{theorem}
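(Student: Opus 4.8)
Le plan est de reprendre la stratégie de la section \ref{secmoddiv} en échangeant les rôles de $A$ et de $\tilde{A}$, l'ingrédient nouveau essentiel étant que $\tilde{A}$ est un faisceau de torsion, ce qui permettra d'obtenir la divisibilité du noyau pour tout $r \geq 0$ et non seulement pour $r \geq 1$. Je commencerais par établir l'analogue du corollaire \ref{surjQp} avec $A$ et $\tilde{A}$ échangés. D'après le lemme \ref{torsion dual}, on a ${_n}\tilde{A} = \underline{\text{Hom}}_k({_n}A,\mathbb{Z}/n\mathbb{Z}(d))$; comme ${_n}A$ et ${_n}\tilde{A}$ sont des schémas en groupes finis et que $\mathbb{Z}/n\mathbb{Z}(d)$ est inversible, la bidualité fournit réciproquement ${_n}A = \underline{\text{Hom}}_k({_n}\tilde{A},\mathbb{Z}/n\mathbb{Z}(d))$. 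Le théorème de dualité locale (théorème 2.17 de \cite{MilADT}) rend alors la flèche verticale centrale du diagramme commutatif à lignes exactes
$$0 \rightarrow H^{r-1}(k,\tilde{A})/n \rightarrow H^r(k,{_n}\tilde{A}) \rightarrow {_n}H^r(k,\tilde{A}) \rightarrow 0,$$
$$0 \rightarrow ({_n}H^{d+1-r}(k,A))^D \rightarrow H^{d+1-r}(k,{_n}A)^D \rightarrow (H^{d-r}(k,A)/n)^D \rightarrow 0$$
un isomorphisme, d'où la surjectivité de ${_n}H^r(k,\tilde{A}) \rightarrow (H^{d-r}(k,A)/n)^D$ pour tous $r$ et $n$.

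Je passerais ensuite à la limite inductive sur $n$. Comme $\tilde{A} = \varinjlim_n {_n}\tilde{A}$ est un faisceau de torsion, chaque groupe $H^r(k,\tilde{A})$ est de torsion, de sorte que $\varinjlim_n {_n}H^r(k,\tilde{A}) = H^r(k,\tilde{A})$; de l'autre côté, $\varinjlim_n (H^{d-r}(k,A)/n)^D = (H^{d-r}(k,A)^{\wedge})^D$. La limite des surjections précédentes est alors le morphisme naturel surjectif cherché $H^r(k,\tilde{A}) \rightarrow (H^{d-r}(k,A)^{\wedge})^D$, induit par l'accouplement de la section \ref{secmoddiv}.

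Enfin, pour contrôler le noyau $\tilde{N}_r$ de ce morphisme, je reproduirais le calcul de caractéristique d'Euler-Poincaré du théorème \ref{modulo divisibles Qp}. Comme ${_n}\tilde{A}$ est un module galoisien fini et que $k_0$ est fini avec $d \geq 2$, la proposition sur la caractéristique d'Euler-Poincaré donne $\chi(k,{_n}\tilde{A}) = 1$. En combinant cela avec les égalités $|H^j(k,{_n}\tilde{A})| = |H^{d+1-j}(k,{_n}A)|$ issues de la dualité locale et avec les suites de Kummer, exactement comme dans le théorème \ref{modulo divisibles Qp}, on obtient $|{_n}\tilde{N}_r| = |{_n}H^r(k,\tilde{A})|/|H^{d-r}(k,A)/n| = \prod_{\ell} \ell^{\tilde{\gamma}_{r,\ell} v_{\ell}(n)}$ pour des entiers $\tilde{\gamma}_{r,\ell}$ indépendants de $n$.

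La différence cruciale avec la situation du théorème \ref{modulo divisibles Qp} est que $H^r(k,\tilde{A})$ est de torsion de type cofini pour tout $r \geq 0$ (sa $n$-torsion est un quotient du groupe fini $H^r(k,{_n}\tilde{A})$); ainsi $\tilde{N}_r$ est lui-même de torsion de type cofini, et la formule pour $|{_n}\tilde{N}_r|$, jointe aux lemmes élémentaires rappelés au début de l'article, force $\tilde{N}_r \cong \bigoplus_{\ell} (\mathbb{Q}_{\ell}/\mathbb{Z}_{\ell})^{\tilde{\gamma}_{r,\ell}}$, qui est divisible. Le principal obstacle, et la seule véritable nouveauté par rapport à la section \ref{secmoddiv}, est précisément cette propriété de torsion de $\tilde{A}$, qui supprime la restriction $r \geq 1$; tout le reste est une transcription routinière des arguments déjà donnés.
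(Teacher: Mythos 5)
Votre proposition est correcte et suit essentiellement la démarche du papier : la preuve du papier consiste en la seule phrase « Exactement comme dans la section \ref{secmoddiv} », et votre rédaction est précisément la transcription de cet argument avec les rôles de $A$ et $\tilde{A}$ échangés (bidualité ${_n}A = \underline{\text{Hom}}_k({_n}\tilde{A},\mathbb{Z}/n\mathbb{Z}(d))$, dualité locale, suites de Kummer, calcul de caractéristique d'Euler-Poincaré). Vous identifiez en outre correctement le seul point réellement nouveau, à savoir que $H^r(k,\tilde{A})$ est de torsion de type cofini pour tout $r \geq 0$ (y compris $r=0$) parce que $\tilde{A}$ est un faisceau de torsion, ce qui est exactement ce qui supprime la restriction $r \geq 1$ du théorème \ref{modulo divisibles Qp}.
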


Il se trouve que, dans certains cas, il est possible d'expliciter le noyau de $H^d(k,\tilde{A}) \rightarrow (H^0(k,A)^{\wedge})^D$. Pour ce faire, il convient de poser $\tilde{\mathcal{A}}=g_*\tilde{A}$ où $g: \text{Spec} \; k \rightarrow \text{Spec} \; \mathcal{O}_k$ désigne l'immersion ouverte, et d'établir quelques propriétés préliminaires:

\begin{lemma}\label{cohonrQp}
\begin{itemize}
\item[(i)] Le morphisme naturel $H^1(\mathcal{O}_k,\mathcal{A}_d) \rightarrow H^1(k,A)$ est injectif d'image le sous-groupe $H^1(k^{nr}/k,A(k^{nr}))$ de $ H^1(k,A)$.
\item[(ii)] Le faisceau $\tilde{\mathcal{A}}$ est de torsion. De plus, pour chaque $n \geq 1$, on a l'égalité ${_n}\tilde{\mathcal{A}} = {_n}\mathcal{A}^*_d\otimes \mathbb{Z}/n\mathbb{Z}(d-1)$.
\item[(iii)] On a un isomorphisme $H^d(\mathcal{O}_k,\tilde{\mathcal{A}}) \rightarrow H^d(k^{nr}/k,\tilde{A}(k^{nr}))$ faisant commuter le diagramme:\\
\centerline{\xymatrix{
H^d(\mathcal{O}_k,\tilde{\mathcal{A}}) \ar[r]^{\text{Res}} \ar[d]^{\cong} & H^d(k,\tilde{A})\\
H^d(k^{nr}/k,\tilde{A}(k^{nr})) \ar[ru]_{\text{Inf}} & .
}}
\end{itemize}
\end{lemma}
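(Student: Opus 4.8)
The plan is to handle the three assertions in turn, each time transporting the computation to the special fibre of $\mathcal{O}_k$ via the open immersion $g : \text{Spec}\, k \to \text{Spec}\, \mathcal{O}_k$ of the generic point, whose residue field is $k_{d-1}$.

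For (i) I would copy the argument of Lemme \ref{cohonr}. Since $\mathcal{A}_d \cong g_* A$ by the Néron mapping property, the low-degree exact sequence of the Leray spectral sequence $H^p(\mathcal{O}_k, R^q g_* A) \Rightarrow H^{p+q}(k,A)$ identifies $H^1(\mathcal{O}_k, \mathcal{A}_d)$ with the kernel of $H^1(k,A) \to H^0(\mathcal{O}_k, R^1 g_* A)$. As $R^1 g_* A$ is a skyscraper at the closed point with stalk $H^1(k^{nr}, A)$, this kernel is $\text{Ker}(H^1(k,A) \to H^1(k^{nr},A))$, which by inflation-restriction equals the subgroup $H^1(k^{nr}/k, A(k^{nr}))$, as claimed.

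For (ii) the input is Lemme \ref{torsion dual}, giving ${_n}\tilde{A} = {_n}A^t \otimes \mathbb{Z}/n\mathbb{Z}(d-1)$. Left-exactness of $g_*$ yields ${_n}\tilde{\mathcal{A}} = g_*({_n}\tilde{A})$ and, from $\mathcal{A}^*_d \cong g_* A^t$, also ${_n}\mathcal{A}^*_d = g_*({_n}A^t)$. The decisive remark is that $\mathcal{O}_k$ has residue characteristic $0$, since $k_{d-1} = k'((t_2))\cdots((t_{d-1}))$ is of characteristic $0$; hence every integer $n$ is invertible on $\text{Spec}\,\mathcal{O}_k$ and $\mathbb{Z}/n\mathbb{Z}(d-1) = \mu_n^{\otimes(d-1)}$ is a locally constant, locally free $\mathbb{Z}/n\mathbb{Z}$-sheaf of rank $1$, itself the pullback $g^*$ of the corresponding sheaf on $\mathcal{O}_k$. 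The projection formula for $g_*$ then gives
$$g_*\!\left({_n}A^t \otimes \mathbb{Z}/n\mathbb{Z}(d-1)\right) \cong g_*({_n}A^t) \otimes \mathbb{Z}/n\mathbb{Z}(d-1),$$
that is ${_n}\tilde{\mathcal{A}} = {_n}\mathcal{A}^*_d \otimes \mathbb{Z}/n\mathbb{Z}(d-1)$. The torsionness of $\tilde{\mathcal{A}} = g_* \tilde{A}$ follows from its stalks being $\tilde{A}$ and $\tilde{A}(k^{nr})$, both torsion because $\tilde{A}$ is.

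For (iii) I would use that $\tilde{\mathcal{A}}$ is torsion, as just shown. Over the Henselian local ring $\mathcal{O}_k$ the descent spectral sequence $H^p(k^{nr}/k, H^q(\mathcal{O}_{k^{nr}}, \tilde{\mathcal{A}})) \Rightarrow H^{p+q}(\mathcal{O}_k, \tilde{\mathcal{A}})$ degenerates, since a torsion sheaf over the strictly Henselian ring $\mathcal{O}_{k^{nr}}$ has no higher cohomology and $\tilde{\mathcal{A}}(\mathcal{O}_{k^{nr}}) = \tilde{A}(k^{nr})$; this produces the isomorphism $H^d(\mathcal{O}_k, \tilde{\mathcal{A}}) \cong H^d(k^{nr}/k, \tilde{A}(k^{nr}))$, just as in Lemme \ref{cohonrbis}. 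The main obstacle is the commutativity of the triangle: one must check that $\text{Res} = g^*$ coincides with $\text{Inf}$ under this identification. I would deduce it from the functoriality of spectral sequences along $g^*$, the descent spectral sequence over $\mathcal{O}_k$ mapping to the Hochschild-Serre spectral sequence $H^p(k^{nr}/k, H^q(k^{nr}, \tilde{A})) \Rightarrow H^{p+q}(k, \tilde{A})$ over $k$; this map is the identity on the $(d,0)$-terms (the coefficient map $\tilde{\mathcal{A}}(\mathcal{O}_{k^{nr}}) \to \tilde{A}(k^{nr})$ being the identity), and the edge morphism of the target is precisely $\text{Inf}$.
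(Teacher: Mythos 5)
Your proof is correct; for part (ii) it is in substance the paper's own computation ${_n}\tilde{\mathcal{A}} = g_*({_n}\tilde{A}) = g_*({_n}A^t\otimes\mathbb{Z}/n\mathbb{Z}(d-1)) = g_*({_n}A^t)\otimes\mathbb{Z}/n\mathbb{Z}(d-1) = {_n}\mathcal{A}^*_d\otimes\mathbb{Z}/n\mathbb{Z}(d-1)$, except that you justify the step the paper merely asserts: pulling the twist out of $g_*$ is the projection formula for a locally constant, free rank-one $\mathbb{Z}/n\mathbb{Z}$-sheaf defined over $\mathcal{O}_k$, legitimate precisely because the residue field $k_{d-1}$ has characteristic $0$, so $n$ is invertible on $\mathcal{O}_k$. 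For (i) and (iii), however, your mechanism differs from the paper's. The paper declares both parts analogous to Lemme \ref{cohonr}, whose key step is the Henselian comparison $H^r(\mathcal{O}_k,\mathcal{F})\cong H^r(\kappa,i^*\mathcal{F})$ (for the non-torsion sheaf $g_*A$ this rests on representability by the smooth N\'eron model; for the torsion sheaf $\tilde{\mathcal{A}}$ on the torsion case of that comparison), followed by the identification of the stalk $i^*g_*(-)$ with the $\mathrm{Gal}(k^{nr}/k)$-module of $k^{nr}$-points. You avoid that comparison entirely: in (i) you use the low-degree exact sequence of the Leray spectral sequence of $g$, the skyscraper computation of $R^1g_*A$, and inflation-restriction; in (iii) you use the Hochschild--Serre spectral sequence of the pro-\'etale Galois covering $\mathcal{O}_{k^{nr}}/\mathcal{O}_k$, which degenerates because $\mathcal{O}_{k^{nr}}$ is strictly Henselian and $\tilde{\mathcal{A}}(\mathcal{O}_{k^{nr}})=\tilde{A}(k^{nr})$. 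The two routes produce the same isomorphisms, but yours buys something the paper leaves implicit: an actual verification that the triangle in (iii) commutes, via the morphism of spectral sequences induced by $g$ and the counit $g^*\tilde{\mathcal{A}}\to\tilde{A}$, whose map on coefficients is the identity — this is exactly the content of ``$\mathrm{Res}=\mathrm{Inf}$'' that the paper's ``analogue'' never spells out; conversely, the paper's route is shorter and treats (i) and (iii) uniformly. Two harmless slips: the generic stalk of $\tilde{\mathcal{A}}$ is $\tilde{A}(k^s)$ (the underlying Galois module), not ``$\tilde{A}$''; and the vanishing of higher cohomology over the strictly local $\mathcal{O}_{k^{nr}}$ needs no torsion hypothesis (global sections there is a stalk functor, hence exact), so your degeneration argument is even more robust than you claim.
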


\begin{proof}
Les preuves de (i) et (iii) sont analogues à celle de \ref{cohonr}. Le fait que $\tilde{\mathcal{A}}$ est de torsion est évident, et pour montrer que ${_n}\tilde{\mathcal{A}} = {_n}\mathcal{A}^*_d\otimes \mathbb{Z}/n\mathbb{Z}(d-1)$, il suffit d'écrire:
$${_n}\tilde{\mathcal{A}} = g_*({_n}\tilde{A}) = g_*({_n}A^t \otimes \mathbb{Z}/n\mathbb{Z}(d-1)) = g_*({_n}A^t) \otimes \mathbb{Z}/n\mathbb{Z}(d-1) = {_n}\mathcal{A}^*_d\otimes \mathbb{Z}/n\mathbb{Z}(d-1).$$
\end{proof}

\begin{proposition}\label{prelQp}
Soit $\ell$ un nombre premier ne divisant pas $|F_{d-1}|$ (mais pouvant être éventuellement égal à $p$).
\item[(i)] Les groupes $A(k^{nr})$ et $H^0(k^{nr},\tilde{A})$ sont $\ell$-divisibles.
\item[(ii)] Il existe un morphisme fonctoriel injectif $$(T_{\ell}H^1(\mathcal{O}_k,\mathcal{A}_d))^D\rightarrow (\varprojlim_r H^1(k^{nr}/k,{_{\ell^r}}A(k^{nr})))^D.$$
\end{proposition}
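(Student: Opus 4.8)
The plan is to follow the strategy of Propositions \ref{prelC} and \ref{prelCbis}, while paying attention to two features specific to the present setting: the prime $\ell$ is allowed to equal $p$, and the dualizing object is the torsion sheaf $\tilde{A}$ rather than $A^t$. The structural observation that makes the case $\ell=p$ harmless is that the residue field $k_{d-1}=k'((t_2))\cdots((t_{d-1}))$ has characteristic $0$: consequently the reduction morphism has uniquely $\ell$-divisible kernel (this is exactly Lemma \ref{lemme2qp}), and, since adjoining roots of unity never ramifies an extension of residue characteristic $0$, one has $\mu_{\ell^\infty}\subset k^{nr}$ for every prime $\ell$. In particular $G_{k^{nr}}=\text{Gal}(k^s/k^{nr})$ acts trivially on every Tate twist, a fact I will use to dispose of the twist hidden in $\tilde{A}$.

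For part (i), I would first treat $A(k^{nr})$ exactly as in \ref{prelC}: one has $A(k^{nr})=\mathcal{A}_d(\mathcal{O}_{k^{nr}})$, and by Lemma \ref{lemme2qp} the reduction onto $A_{d-1}(k_{d-1}^s)$ has uniquely $\ell$-divisible kernel, so it suffices to prove $A_{d-1}(k_{d-1}^s)$ is $\ell$-divisible. Dévissant along $0\to A_{d-1}^0\to A_{d-1}\to F_{d-1}\to 0$ (using $\ell\nmid|F_{d-1}|$) and then along $0\to U_{d-1}\times T_{d-1}\to A_{d-1}^0\to B_{d-1}\to 0$, this reduces to the divisibility of $\mathbb{G}_a$, $\mathbb{G}_m$ and an abelian variety over the separably closed characteristic-$0$ field $k_{d-1}^s$, which is clear. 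For $H^0(k^{nr},\tilde{A})$, I would invoke the identity $\tilde{A}\{\ell\}=A^t\{\ell\}(d-1)$ (Lemma \ref{torsion dual} and the remark following it): since $G_{k^{nr}}$ acts trivially on the twist, $H^0(k^{nr},\tilde{A})\{\ell\}\cong A^t(k^{nr})\{\ell\}$ as abstract groups. Now $A^t(k^{nr})$ is $\ell$-divisible by the same dévissage applied to $A^t$ (the equality $|F_{d-1}^*|=|F_{d-1}|$ from \cite{SGA7} gives $\ell\nmid|F_{d-1}^*|$), and the $\ell$-primary torsion of an $\ell$-divisible group is again $\ell$-divisible; since the prime-to-$\ell$ part of $H^0(k^{nr},\tilde{A})$ is automatically $\ell$-divisible, the claim follows.

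For part (ii), I would use Lemma \ref{cohonrQp}(i) to identify $H^1(\mathcal{O}_k,\mathcal{A}_d)\cong H^1(k^{nr}/k,A(k^{nr}))$, so that $T_{\ell}H^1(\mathcal{O}_k,\mathcal{A}_d)=T_{\ell}H^1(k^{nr}/k,A(k^{nr}))$. The $\ell$-divisibility from part (i) yields, for each $r$, a short exact sequence $0\to{_{\ell^r}}A(k^{nr})\to A(k^{nr})\xrightarrow{\ell^r}A(k^{nr})\to 0$, whose $\text{Gal}(k^{nr}/k)$-cohomology produces a surjection $H^1(k^{nr}/k,{_{\ell^r}}A(k^{nr}))\twoheadrightarrow {_{\ell^r}}H^1(k^{nr}/k,A(k^{nr}))$. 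Because $\text{Gal}(k^{nr}/k)\cong\text{Gal}(k_{d-1}^s/k_{d-1})$ is the absolute Galois group of the $(d-1)$-local field $k_{d-1}$, the finite modules ${_{\ell^r}}A(k^{nr})$ have finite $H^1$; the inverse system therefore satisfies Mittag-Leffler, the limit of the surjections remains surjective, and one obtains $\varprojlim_r H^1(k^{nr}/k,{_{\ell^r}}A(k^{nr}))\twoheadrightarrow T_{\ell}H^1(\mathcal{O}_k,\mathcal{A}_d)$. Applying $(-)^D$ converts this surjection into the desired injective functorial morphism.

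The point demanding the most care, rather than a genuine obstacle, is the treatment of $H^0(k^{nr},\tilde{A})$: unlike in the $\mathbb{C}$-coefficient setting there is a real Tate twist, and the whole argument works only because the residue field has characteristic $0$, which forces $\mu_{\ell^\infty}\subset k^{nr}$ and hence trivializes the twist on $G_{k^{nr}}$-invariants. I would double-check that this same characteristic-$0$ feature is what legitimizes the uniform $\ell$-divisibility statements even when $\ell=p$, since here it is the residue characteristic — and not the deeper $p$-adic level $k_1$ — that governs both the formal group and the ramification of cyclotomic extensions.
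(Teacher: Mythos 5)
Your proof is correct and takes essentially the same route as the paper: the paper's own proof simply says that the $\ell$-divisibility of $A(k^{nr})$ and $A^t(k^{nr})$ is obtained exactly as in la proposition \ref{prelC}, identifies $H^0(k^{nr},\tilde{A})=\varinjlim_n H^0(k^{nr},{_n}\tilde{A})\cong\varinjlim_n H^0(k^{nr},{_n}A^t)=A^t(k^{nr})_{tors}$ (your triviality-of-the-Tate-twist argument over $k^{nr}$ being precisely the implicit justification of that middle isomorphism), and gets (ii) by the same surjection-onto-$T_{\ell}$-then-dualize argument. The details you add — unique $\ell$-divisibility of the reduction kernel thanks to residue characteristic $0$, and Mittag--Leffler via finiteness of $H^1(k^{nr}/k,{_{\ell^r}}A(k^{nr}))$ over $k_{d-1}$ — are exactly what the paper's « analogue à celle de \ref{prelC} » tacitly invokes.
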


\begin{proof}
\begin{itemize}
\item[(i)] On prouve que $A(k^{nr})$ et $A^t(k^{nr})$ sont $\ell$-divisibles exactement de la même manière que dans \ref{prelC}. De plus, on remarque que:
 $$H^0(k^{nr},\tilde{A})= \varinjlim_n H^0(k^{nr},{_n}\tilde{A}) \cong \varinjlim_n H^0(k^{nr},{_n}A^t) = A^t(k^{nr})_{\text{tors}}.$$
On en déduit que $H^0(k^{nr},\tilde{A})$ est $\ell$-divisible.
\item[(ii)] La preuve est analogue à celle de \ref{prelC}.
\end{itemize}
\end{proof}

Comme dans \ref{HnrsCbis}, nous sommes maintenant en mesure d'introduire la définition suivante:

\begin{definition}\label{HnrsQp}
Soit $\ell$ un nombre premier ne divisant pas $|F_{d-1}|$ (mais éventuellement égal à $p$). On appelle \textbf{$\ell$-groupe de cohomologie non ramifiée symétrisé de $\tilde{A}$} le groupe:
$$H^d_{nrs}(k,\tilde{A},\ell) := (\iota_{\ell} \circ \varphi)^{-1}((T_{\ell}H^1(\mathcal{O}_k,\mathcal{A}_d))^D) \subseteq H^d(k,\tilde{A})\{\ell\}$$
où $\varphi: H^d(k,\tilde{A}) \rightarrow H^{d-1}(k^{nr}/k,H^1(k^{nr},\tilde{A}))$ désigne le morphisme induit par la suite spectrale $H^{r}(k^{nr}/k,H^s(k^{nr},\tilde{A})) \Rightarrow H^{r+s}(k,\tilde{A})$ et $\iota_{\ell}$ l'isomorphisme composé: 
\begin{align*}
 H^{d-1}(k^{nr}/k,H^1(k^{nr},\tilde{A}))\{\ell\} &\xrightarrow{\sim} \varinjlim_r H^{d-1}(k^{nr}/k,{_{\ell^r}}H^1(k^{nr},\tilde{A})) \\
&\xleftarrow{\sim} \varinjlim_r H^{d-1}(k^{nr}/k,H^1(k^{nr},{_{\ell^r}}\tilde{A})) \\
&\xrightarrow{\sim} \varinjlim_r H^{d-1}(k^{nr}/k,({_{\ell^r}}A(k^{nr})(1-d))^D) \\
&\xrightarrow{\sim} (\varprojlim_r H^1(k^{nr}/k,{_{\ell^r}}A(k^{nr})))^D.
\end{align*}  
On a alors une suite exacte:
$$0 \rightarrow \frac{H^d(\mathcal{O}_k,\tilde{\mathcal{A}})}{\delta (H^{d-2}(\mathcal{O}_k,R^1g_*\tilde{A}))}\{\ell\} \rightarrow H^d_{nrs}(k,\tilde{A},\ell) \rightarrow (T_{\ell}H^1(\mathcal{O}_k,\mathcal{A}_d))^D \rightarrow 0$$
où $\delta: H^{d-2}(\mathcal{O}_k,R^1g_*\tilde{A})\rightarrow H^d(\mathcal{O}_k,\tilde{\mathcal{A}})$ est le morphisme de bord provenant de la suite spectrale $H^{r}(\mathcal{O}_k,R^sg_*\tilde{A})\Rightarrow H^{r+s}(k,\tilde{A})$.
\end{definition}

\begin{remarque}
Soit $\ell$ un nombre premier \emph{différent de $p$} et ne divisant pas $|F_{d-1}|$. Supposons de plus que $\rho_0=\rho_1=...=\rho_{d-3}=0$. Dans ce contexte, on a $H^1(\mathcal{O}_k,\mathcal{A}_d)\{\ell\}\cong H^1(k^{nr}/k,A(k^{nr})) \cong H^1(k_{d-1},A_{d-1})\{\ell\} \cong H^1(k_{d-1},A_{d-1}^0)\{\ell\}$. Comme $H^1(k_{d-1},T_{d-1})\{\ell\}$ et $H^1(k_{d-1},B_{d-1})\{\ell\}$ sont finis (remarque \ref{rq1}), la suite exacte $0 \rightarrow U_{d-1} \times T_{d-1} \rightarrow A_{d-1}^0 \rightarrow B_{d-1} \rightarrow 0$ impose que $H^1(k_{d-1},A_{d-1}^0)\{\ell\}$ est fini. Cela montre la finitude de $H^1(\mathcal{O}_k,\mathcal{A}_{d})\{\ell\}$ et donc la nullité de $T_{\ell}H^1(\mathcal{O}_k,\mathcal{A}_d)$. Par conséquent: $$H^d_{nrs}(k,\tilde{A},\ell) = \text{Im}(H^d(\mathcal{O}_k,\tilde{\mathcal{A}})\rightarrow H^d(k,\tilde{A}))\{\ell\} = H^d_{nr}(k,\tilde{A})\{\ell\}$$
où $H^d_{nr}(k,\tilde{A}) := \text{Im}(H^d(\mathcal{O}_k,\tilde{\mathcal{A}})\rightarrow H^d(k,\tilde{A}))$. Ainsi, il est vraiment nécessaire de parler du groupe de cohomologie non ramifié symétrisé uniquement dans le cas $\ell=p$. Mais il est quand même utile d'introduire ce groupe quel que soit $\ell$ pour deux raisons: d'une part, dans le théorème qui suit, c'est avec le groupe de cohomologie non ramifié symétrisé qu'on identifie naturellement le noyau du morphisme de la dualité locale; d'autre part, cela permet de donner des énoncés vrais pour tout $\ell$.
\end{remarque}

\begin{theorem}\label{noyauQp}
Pour $\ell$ premier ne divisant pas $|F_{d-1}|$ (éventuellement égal à $p$), la partie $\ell$-primaire du noyau de $H^d(k,\tilde{A}) \rightarrow (H^0(k,A)^{\wedge})^D$ est $H^d_{nrs}(k,\tilde{A},\ell)$.
\end{theorem}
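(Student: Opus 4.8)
The plan is to transpose, almost verbatim, the strategy of Théorème \ref{noyaubis} (itself a variant of \ref{noyau}) to the pair $(\tilde{A},A)$ in place of $(A,A^t)$: here $\tilde{A}$ plays the role previously played by $A$, and $A$ plays the role of $A^t$, the local pairing now being $H^d(k,\tilde{A}) \times H^0(k,A) \to \mathbb{Q}/\mathbb{Z}$. First I would assemble the large commutative diagram whose left vertex is $H^d(k,\tilde{A})\{\ell\}$ (with the edge map $\varphi$ going up-right and the duality map $f_4\colon H^d(k,\tilde{A})\to (H^0(k,A)^{(\ell)})^D$ going down), whose central vertex is $\varinjlim_r H^d(k,{_{\ell^r}}\tilde{A})$ surjecting onto $H^d(k,\tilde{A})\{\ell\}$, whose right column carries $\varinjlim_r H^{d-1}(k^{nr}/k,{_{\ell^r}}H^1(k^{nr},\tilde{A}))$ and $\varinjlim_r H^{d-1}(k^{nr}/k,H^1(k^{nr},{_{\ell^r}}\tilde{A}))$, and whose bottom row identifies $(H^0(k,A)^{(\ell)})^D = (H^0(k^{nr}/k,A(k^{nr}))^{(\ell)})^D$ with $(\varprojlim_r H^1(k^{nr}/k,{_{\ell^r}}A(k^{nr})))^D$ via the diagonal $f_9$. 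The central vertical isomorphism $\varinjlim_r H^d(k,{_{\ell^r}}\tilde{A}) \xrightarrow{\sim} (\varprojlim_r H^1(k,{_{\ell^r}}A))^D$ is exactly the one underlying Corollaire \ref{surjQp}, which uses Lemme \ref{torsion dual} (giving ${_n}\tilde{A} = \underline{\text{Hom}}_k({_n}A,\mathbb{Z}/n\mathbb{Z}(d))$) and is valid for \emph{all} $n$, hence in particular for $\ell=p$.

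Second, I would check that the diagram commutes. Most squares commute by functoriality of the Hochschild--Serre edge maps and of the cup products; the only delicate square is the compatibility, through the Tate twists by $(d-1)$ and $(1-d)$, of the pairing over $k$ with the $\hat{\mathbb{Z}}$-duality over $\text{Gal}(k^{nr}/k) \cong \hat{\mathbb{Z}}$ that enters the definition of $\iota_{\ell}$ in Définition \ref{HnrsQp}. This is precisely the verification carried out in \ref{noyaubis}, transported along the identifications ${_{\ell^r}}\tilde{A} \cong ({_{\ell^r}}A(k^{nr})(1-d))^D$ of Lemme \ref{torsion dual}, and I expect it to go through unchanged; the backward isomorphism $\varinjlim_r H^{d-1}(k^{nr}/k,H^1(k^{nr},{_{\ell^r}}\tilde{A})) \xrightarrow{\sim} \varinjlim_r H^{d-1}(k^{nr}/k,{_{\ell^r}}H^1(k^{nr},\tilde{A}))$ rests on the $\ell$-divisibility of $H^0(k^{nr},\tilde{A})$ furnished by Proposition \ref{prelQp}(i).

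Third, once commutativity is in hand, I would perform the element chase exactly as in the proof of \ref{noyau}. Given $x \in {_{\ell^r}}H^d(k,\tilde{A})$, I lift it along the surjection $\varinjlim_r H^d(k,{_{\ell^r}}\tilde{A}) \twoheadrightarrow H^d(k,\tilde{A})\{\ell\}$ to some $z$, observe that the image of $z$ in the right column computes $\varphi(x)$ up to the isomorphisms of $\iota_{\ell}$, and chase the diagram to obtain the identity $f_9(\iota_{\ell}(\varphi(x))) = f_4(x)$. Hence $x$ lies in the kernel of $H^d(k,\tilde{A}) \to (H^0(k,A)^{\wedge})^D$ if and only if $\iota_{\ell}(\varphi(x)) \in \ker(f_9) = (T_{\ell}H^1(\mathcal{O}_k,\mathcal{A}_d))^D$, where the last equality uses Proposition \ref{prelQp}(ii) together with Lemme \ref{cohonrQp}. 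This is precisely the defining condition for $H^d_{nrs}(k,\tilde{A},\ell)$ in Définition \ref{HnrsQp}, which yields the claim.

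The main obstacle is bookkeeping rather than a new idea: keeping the twists by $(d-1)$ and $(1-d)$ consistent so that all four arrows composing $\iota_{\ell}$ are canonical, and confirming that none of the coprimality or finiteness hypotheses of the section \emph{Étude hors de $p$} enter the chase. The point is that the argument uses only the torsion identifications of Lemme \ref{torsion dual}, the divisibility of Proposition \ref{prelQp}(i) (which holds for all $\ell$ not dividing $|F_{d-1}|$, including $\ell=p$), the local duality of Corollaire \ref{surjQp} (valid for all $n$), and the $\hat{\mathbb{Z}}$-duality over $k^{nr}/k$; no finiteness of the groups $H^r(k,A\{\ell\}(i))$ is invoked. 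Consequently the proof covers $\ell=p$ with no modification, and I would conclude by noting this explicitly.
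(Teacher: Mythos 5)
Your proof is correct and is essentially the paper's own: the paper's proof of this theorem consists of the single remark that it is analogous to Th\'eor\`eme \ref{noyaubis}, i.e., one transposes the commutative diagram of \ref{noyaubis} and the element chase of \ref{noyau} to the pair $(\tilde{A},A)$ via Lemme \ref{torsion dual}, Proposition \ref{prelQp} and Lemme \ref{cohonrQp}, exactly as you do. Your closing observation --- that the chase uses only Lemme \ref{torsion dual}, the divisibility of Proposition \ref{prelQp}(i), the duality underlying Corollaire \ref{surjQp} (valid for all $n$ since $\operatorname{Car}(k_1)=0$) and the $\hat{\mathbb{Z}}$-duality over $k^{nr}/k$, and none of the finiteness results of the section \emph{\'Etude hors de $p$} --- is precisely what justifies the parenthetical claim that $\ell=p$ is allowed.
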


\begin{proof}
La preuve est analogue à celle de \ref{noyaubis}.
\end{proof}

Pour alléger les notations dans la section suivante, nous noterons:
$$H^d_{nrs}(k,\tilde{A}) := \bigoplus_{\ell \wedge |F_{d-1}|=1} H^d_{nrs}(k,\tilde{A},\ell).$$
C'est le groupe de torsion dont la partie $\ell$-primaire est $H^1_{nrs}(k,\tilde{A},\ell)$ si $\ell$ ne divise pas $|F_{d-1}|$, triviale sinon.

\section{\textsc{Variétés abéliennes sur $\mathbb{Q}_p((t_2))...((t_d))(u)$}}\label{globQp}

On suppose maintenant que $d \geq 1$ et que $k=k'((t_2))...((t_d))$ avec $k'$ corps $p$-adique, $p$ étant un nombre premier fixé. Soient $A$ une variété abélienne sur $K=k(X)$ et $\tilde{A}=\varinjlim_n \underline{\text{Ext}}^1_K(A,\mathbb{Z}/n\mathbb{Z}(d+1))=\varinjlim_n {_n}A^t \otimes \mathbb{Z}/n\mathbb{Z}(d)$. Le but de ce paragraphe est d'établir, sous de bonnes hypothèses géométriques sur $A$, des théorèmes de dualité entre certains groupes de Tate-Shafarevich de $A$ et $\tilde{A}$. \\
Pour chaque $v \in X^{(1)}$, on adopte des notations analogues à celles de la section \ref{notations} pour la variété abélienne $A_v=A \times_K K_v$ sur le corps $d+1$-local $K_v$ (on prendra garde au fait que $K_v$ n'est pas $d$-local). Ainsi, on introduit les schémas en groupes $\mathcal{A}_{v,i}, A_{v,i},F_{v,i},U_{v,i},T_{v,i},B_{v,i}$ pour $i \in \{0,1,...,d+1\}$.

\subsection{Approche sans les groupes de cohomologie non ramifiée symétrisés}

On se donne un entier $r_0 \in \{0,1,...,d+1\}$ et un nombre premier $\ell$ différent de $p=\text{Car}(k_0)$ et on fait les hypothèses suivantes:
\begin{hypo}\label{H1}
\begin{minipage}[t]{12.72cm}
\begin{itemize}
\item[$\bullet$] si $r_0 = d$, alors les tores $T_{v,0},...,T_{v,d}$ sont anisotropes pour toute place $v \in X^{(1)}$;
\item[$\bullet$] si $r_0 = d+1$, alors les tores $T_{v,0},...,T_{v,d-1}$ sont anisotropes pour toute place $v \in X^{(1)}$.
\end{itemize}
\end{minipage}
\end{hypo}

\begin{remarque}
Ces hypothèses sont assez restrictives puisqu'elles portent sur toutes les places $v \in X^{(1)}$ et pas uniquement sur les places de mauvaise réduction (voir remarque \ref{restr}). Dans le paragraphe suivant, on pourra s'affranchir de ces hypothèses grâce aux groupes de cohomologie non ramifiée symétrisés.
\end{remarque}

Soit maintenant $U$ un ouvert non vide de $X$ sur lequel $A$ a bonne réduction, de sorte que le modèle de Néron $\mathcal{A}$ de $A$ sur $U$ est un schéma abélien. Soit $\tilde{\mathcal{A}}=\varinjlim_n \underline{\text{Ext}}^1_K(\mathcal{A},\mathbb{Z}/n\mathbb{Z}(d+1))$. En procédant comme dans le lemme \ref{torsion dual}, on montre pour chaque $n>0$ que:
$${_n}\tilde{\mathcal{A}}=\underline{\text{Ext}}^1_U(\mathcal{A},\mathbb{Z}/n\mathbb{Z}(d+1))=\underline{\text{Hom}}_U({_n}\mathcal{A},\mathbb{Q}/\mathbb{Z}(d+1))= {_n}\mathcal{A}^t \otimes \mathbb{Z}/n\mathbb{Z}(d)$$ et que la multiplication par $n$ sur $\tilde{\mathcal{A}}$ est surjective.

\begin{lemma}
\begin{itemize}
\item[(i)] Pour $r>0$, le groupe $H^r(U,\mathcal{A})$ est de torsion de type cofini.
\item[(ii)] Pour $r>1$, le groupe $H^r_c(U,\mathcal{A})$ est de torsion de type cofini.
\end{itemize}
\end{lemma}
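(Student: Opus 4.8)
The plan is to imitate step by step the proof of Lemma~\ref{cofC}, the only change being that the base field $\mathbb{C}((t))$ is replaced by the $d$-local field $k=k'((t_2))\cdots((t_d))$, so that the local fields $K_v$ attached to closed points $v$ become $(d+1)$-local; since $k'$ is $p$-adic, $K$ has characteristic $0$, so $A(K^s)$ is divisible and no wild phenomena appear. For (i), I would let $g:\operatorname{Spec}K\to U$ be the inclusion of the generic point; as $A$ has good reduction on $U$, the abelian scheme $\mathcal{A}$ represents $g_*A$, and I would write the Leray spectral sequence $H^r(U,R^sg_*A)\Rightarrow H^{r+s}(K,A)$. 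Computing the stalks of $R^sg_*A$ at geometric points of $U$ as Galois cohomology of strict henselizations shows that $R^sg_*A$ is a torsion sheaf for $s>0$ (using divisibility of $A(K^s)$); since $U$ is quasi-compact, $H^r(U,R^sg_*A)$ is then torsion for all $r\ge 0$ and $s>0$, while $H^r(K,A)$ is torsion for $r>0$. Hence the edge term $E_2^{r,0}=H^r(U,\mathcal{A})$ is torsion for $r>0$, being an extension of a subgroup of the torsion group $H^r(K,A)$ by the torsion groups coming from the incoming differentials. Finiteness of the $n$-torsion then follows from the Kummer sequence $0\to{_n}\mathcal{A}\to\mathcal{A}\xrightarrow{n}\mathcal{A}\to 0$ (exact on the étale site since $U$ lives in characteristic $0$), which exhibits ${_n}H^r(U,\mathcal{A})$ as a quotient of $H^r(U,{_n}\mathcal{A})$.

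For (ii), I would invoke the same localization exact sequence for compactly supported cohomology as in Lemma~\ref{cofC}(ii), namely
$$\cdots \to \bigoplus_{v \in X \setminus U} H^{r-1}(K_v,A) \to H^r_c(U,\mathcal{A}) \to H^r(U,\mathcal{A}) \to \cdots.$$
Here $X\setminus U$ is a finite set of closed points, so the direct sums are finite. For $r>1$ the group $H^r(U,\mathcal{A})$ is of cofinite torsion type by (i), and each $H^{r-1}(K_v,A)$ with $r-1\ge 1$ is of cofinite torsion type, $K_v$ being a $(d+1)$-local field over which the higher cohomology of an abelian variety is torsion of cofinite type (and vanishes in large degree by cohomological dimension). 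Since the class of cofinite torsion groups is stable under subgroups, quotients and extensions, I would conclude that $H^r_c(U,\mathcal{A})$ is of cofinite torsion type. The restriction $r>1$ is genuinely needed: for $r=1$ the sequence brings in $H^0(K_v,A)=A(K_v)$, which is not torsion.

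The only non-formal input, and hence the main obstacle, is the finiteness of $H^r(U,{_n}\mathcal{A})$ used in (i): this is not a diagram chase but rests on the finiteness theorems for the étale cohomology of finite group schemes on open curves over higher local fields established in \cite{Izq1}. Granting that, everything else is bookkeeping with the Leray spectral sequence and the localization sequence, together with the elementary stability properties of cofinite torsion groups recalled at the start of the paper.
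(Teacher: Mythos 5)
Your proposal is correct and is essentially the paper's own proof: the paper simply says the argument is "analogue à celle de \ref{cofC}", and what you wrote is precisely that transposition — Leray spectral sequence for $g_*A$ plus Kummer dévissage for (i), localization sequence for compact supports for (ii), with the non-formal input (finiteness of $H^r(U,{_n}\mathcal{A})$ over the higher local base, via \cite{Izq1}) correctly identified. Your remark on why $r>1$ is needed in (ii), because $H^0(K_v,A)$ is not torsion, also matches the role this restriction plays in the paper.
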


\begin{proof}
La preuve est tout à fait analogue à celle de \ref{cofC}.
\end{proof}

\begin{lemma} \label{exact'}
Pour $r \geq 0$, il existe des suites exactes:
$$ 0 \rightarrow H^r(U,\mathcal{A}) \otimes_{\mathbb{Z}} (\mathbb{Q}/\mathbb{Z})\{\ell\}\rightarrow H^{r+1}(U,\mathcal{A}\{\ell\}) \rightarrow H^{r+1}(U,\mathcal{A})\{\ell\} \rightarrow 0 ,$$
$$0 \rightarrow H^r_c(U,\tilde{\mathcal{A}})^{(\ell)}\rightarrow H^{r+1}_c(U,T_{\ell}\tilde{\mathcal{A}}) \rightarrow  T_{\ell}H^{r+1}_c(U,\tilde{\mathcal{A}}) \rightarrow 0.$$
Ici, $H^{r+1}(U,\mathcal{A}\{\ell\})$ et $H^{r+1}_c(U,T_{\ell}\tilde{\mathcal{A}})$ désignent $\varinjlim_n H^{r+1}(U,{_{\ell^n}}\mathcal{A})$ et $\varprojlim_n H^2_c(U,{_{\ell^n}}\tilde{\mathcal{A}})$ respectivement.
\end{lemma}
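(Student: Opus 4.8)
The plan is to obtain both exact sequences from the multiplication-by-$\ell^n$ (Kummer) short exact sequences of sheaves on $U$, followed by a passage to the limit, exactly along the lines of the proof of Lemma~\ref{exact}. The only geometric inputs are that $\ell$ is invertible on $U$ (here $\ell\neq p=\mathrm{Car}(k_0)$, and in fact $U$ is a characteristic-$0$ scheme), so that multiplication by $\ell^n$ is an epimorphism of étale sheaves on the abelian scheme $\mathcal{A}$ with kernel ${_{\ell^n}}\mathcal{A}$, together with the surjectivity of multiplication by $n$ on $\tilde{\mathcal{A}}$ already established before the statement, with kernel ${_n}\tilde{\mathcal{A}}$.

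For the first sequence I would start from the Kummer sequence $0\to {_{\ell^n}}\mathcal{A}\to\mathcal{A}\xrightarrow{\ell^n}\mathcal{A}\to 0$ and extract from the associated long exact sequence of étale cohomology on $U$ the short exact sequence
\[
0\to H^r(U,\mathcal{A})/\ell^n\to H^{r+1}(U,{_{\ell^n}}\mathcal{A})\to {_{\ell^n}}H^{r+1}(U,\mathcal{A})\to 0 .
\]
Passing to the inductive limit over $n$, which is exact, then yields the claim once the three limits are identified. The transition maps on the left-hand terms $H^r(U,\mathcal{A})/\ell^n$ are multiplication by $\ell$, so their colimit is $H^r(U,\mathcal{A})\otimes_{\mathbb{Z}}\mathbb{Q}_\ell/\mathbb{Z}_\ell=H^r(U,\mathcal{A})\otimes_{\mathbb{Z}}(\mathbb{Q}/\mathbb{Z})\{\ell\}$; the middle colimit is $H^{r+1}(U,\mathcal{A}\{\ell\})$ by definition, and the right colimit is $H^{r+1}(U,\mathcal{A})\{\ell\}$.

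For the second sequence I would argue identically with $\tilde{\mathcal{A}}$ and compactly supported cohomology $H^*_c(U,-)=H^*(X,j_!-)$, obtaining for each $n$
\[
0\to H^r_c(U,\tilde{\mathcal{A}})/\ell^n\to H^{r+1}_c(U,{_{\ell^n}}\tilde{\mathcal{A}})\to {_{\ell^n}}H^{r+1}_c(U,\tilde{\mathcal{A}})\to 0 ,
\]
and then pass to the projective limit over $n$. The middle and right limits are $H^{r+1}_c(U,T_\ell\tilde{\mathcal{A}})$ and $T_\ell H^{r+1}_c(U,\tilde{\mathcal{A}})$ by definition, and the left limit is the $\ell$-adic completion $H^r_c(U,\tilde{\mathcal{A}})^{(\ell)}$. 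The main (and essentially only non-formal) point here is that the projective limit preserves exactness: this holds because the transition maps of the left system $\{H^r_c(U,\tilde{\mathcal{A}})/\ell^n\}$ are the canonical surjections, so the system satisfies the Mittag-Leffler condition and $\varprojlim^1$ vanishes. I expect this $\varprojlim^1$-vanishing to be the step most worth making explicit, the remaining manipulations of Kummer sequences being purely formal and already carried out in Lemma~\ref{exact}.
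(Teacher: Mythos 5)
Votre démonstration est correcte et suit exactement la même voie que celle du texte: le lemme \ref{exact'} y est prouvé en renvoyant à la preuve du lemme \ref{exact}, qui consiste précisément à écrire les suites de Kummer pour $\mathcal{A}$ et $\tilde{\mathcal{A}}$, puis à passer à la limite inductive (resp. projective). Le seul point que vous explicitez davantage que le texte est la condition de Mittag-Leffler assurant la nullité de $\varprojlim^1$ pour le système $\{H^r_c(U,\tilde{\mathcal{A}})/\ell^n\}$ à applications de transition surjectives, ce qui est exact et bienvenu.
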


\begin{proof}
La preuve est identique à celle de \ref{exact}.
\end{proof}

\begin{lemma} \label{acc'}
Pour chaque $r \geq 0$, il existe un accouplement canonique:
$$ H^r(U,\mathcal{A}\{\ell\}) \times H^{d+3-r}_c(U,T_{\ell}\tilde{\mathcal{A}}) \rightarrow \mathbb{Q}/\mathbb{Z}$$
qui est non dégénéré.
\end{lemma}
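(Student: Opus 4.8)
Le plan est de se ramener au niveau fini puis de passer à la limite, suivant exactement le schéma de la preuve du lemme \ref{acc}. Rappelons d'abord que, pour chaque $n \geq 1$, on a établi avant l'énoncé (et via le lemme \ref{torsion dual}) l'identification de faisceaux
$${_{\ell^n}}\tilde{\mathcal{A}} = \underline{\text{Hom}}_U({_{\ell^n}}\mathcal{A}, \mu_{\ell^n}^{\otimes(d+1)}) = {_{\ell^n}}\mathcal{A}^t \otimes \mathbb{Z}/\ell^n\mathbb{Z}(d).$$
Ainsi ${_{\ell^n}}\mathcal{A}$ et ${_{\ell^n}}\tilde{\mathcal{A}}$ sont l'un de l'autre les duaux de Cartier tordus par le faisceau dualisant $\mu_{\ell^n}^{\otimes(d+1)}$; comme $\ell \neq p$, ce sont des faisceaux finis étales d'ordre premier à $p$, et la classe fondamentale fournit l'isomorphisme trace $H^{d+3}_c(U, \mu_{\ell^n}^{\otimes(d+1)}) \cong \mathbb{Z}/\ell^n\mathbb{Z}$.

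La première étape sera d'invoquer la dualité au niveau fini. Le théorème 2.1 de \cite{Izq1}, appliqué à la courbe $U$ sur le corps $d$-local $k$ et au faisceau fini ${_{\ell^n}}\mathcal{A}$ d'ordre premier à $p$, fournit pour chaque $r$ un accouplement parfait de groupes finis
$$H^r(U, {_{\ell^n}}\mathcal{A}) \times H^{d+3-r}_c(U, {_{\ell^n}}\tilde{\mathcal{A}}) \rightarrow H^{d+3}_c(U, \mu_{\ell^n}^{\otimes(d+1)}) \cong \mathbb{Z}/\ell^n\mathbb{Z} \hookrightarrow \mathbb{Q}/\mathbb{Z},$$
autrement dit un isomorphisme $H^r(U, {_{\ell^n}}\mathcal{A}) \xrightarrow{\sim} H^{d+3-r}_c(U, {_{\ell^n}}\tilde{\mathcal{A}})^D$ (la finitude de ces groupes faisant partie du même paquet de résultats).

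La seconde étape sera le passage à la limite. En prenant la limite inductive sur $n$, le membre de gauche devient $H^r(U, \mathcal{A}\{\ell\})$. Les isomorphismes ci-dessus étant compatibles aux applications de transition — l'inclusion ${_{\ell^n}}\mathcal{A} \hookrightarrow {_{\ell^{n+1}}}\mathcal{A}$ correspondant par adjonction à la multiplication par $\ell$ de ${_{\ell^{n+1}}}\tilde{\mathcal{A}}$ vers ${_{\ell^n}}\tilde{\mathcal{A}}$, qui définit précisément $T_{\ell}\tilde{\mathcal{A}}$ — on obtient
$$\varinjlim_n H^{d+3-r}_c(U, {_{\ell^n}}\tilde{\mathcal{A}})^D = \left(\varprojlim_n H^{d+3-r}_c(U, {_{\ell^n}}\tilde{\mathcal{A}})\right)^D = H^{d+3-r}_c(U, T_{\ell}\tilde{\mathcal{A}})^D,$$
la première égalité venant de ce que le dual de Pontryagin échange limites inductives et projectives de groupes (pro)finis. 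On en déduit un isomorphisme $H^r(U, \mathcal{A}\{\ell\}) \xrightarrow{\sim} H^{d+3-r}_c(U, T_{\ell}\tilde{\mathcal{A}})^D$, c'est-à-dire l'accouplement canonique non dégénéré recherché.

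La principale difficulté sera de s'assurer que le théorème 2.1 de \cite{Izq1} s'applique sous la forme voulue, avec les coefficients tordus $\mu_{\ell^n}^{\otimes(d+1)}$ placés en degré $d+3$, et surtout de vérifier que les accouplements de niveau fini sont bien compatibles aux applications de transition (c'est ce qui permet d'identifier la limite inductive des duaux au dual de la limite projective). Une fois ces points acquis, la non-dégénérescence limite est formelle, puisqu'elle résulte directement de la perfection de chaque accouplement de groupes finis.
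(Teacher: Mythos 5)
Votre preuve est correcte et suit essentiellement la même démarche que celle du papier: la preuve du papier se réduit à « analogue à celle du lemme \ref{acc} », c'est-à-dire invoquer la dualité parfaite au niveau fini du théorème 2.1 de \cite{Izq1} entre $H^r(U,{_{\ell^n}}\mathcal{A})$ et $H^{d+3-r}_c(U,{_{\ell^n}}\tilde{\mathcal{A}})$ (où ${_{\ell^n}}\tilde{\mathcal{A}} = \underline{\text{Hom}}_U({_{\ell^n}}\mathcal{A},\mathbb{Z}/\ell^n\mathbb{Z}(d+1))$, identification établie juste avant l'énoncé), puis passer à la limite. Vos vérifications supplémentaires (compatibilité des accouplements aux flèches de transition, échange du dual de Pontryagin avec les limites) explicitent précisément ce que ce passage à la limite laisse implicite.
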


\begin{proof}
La preuve est analogue à celle de \ref{acc}.
\end{proof}

Comme $\underline{\text{Hom}}_U(\mathcal{A},\mathbb{Z}/n\mathbb{Z}(d+1))=0$ et $H^{d+3}_c(U,\mathbb{Q}/\mathbb{Z}(d+1))\cong \mathbb{Q}/\mathbb{Z}$ (lemme 1.3 de \cite{Izq1}), on a pour chaque $n>0$, un accouplement:
$$ H^r(U,{_n}\tilde{\mathcal{A}}) \times H^{d+2-r}_c(U,\mathcal{A}) \rightarrow H^{d+3}_c(U,\mathbb{Z}/n\mathbb{Z}(d+1)) \rightarrow H^{d+3}_c(U,\mathbb{Q}/\mathbb{Z}(d+1))\cong \mathbb{Q}/\mathbb{Z}.$$
En passant à la limite inductive sur $n$, on obtient un accouplement:
$$ H^r(U,\tilde{\mathcal{A}}) \times H^{d+2-r}_c(U,\mathcal{A}) \rightarrow \mathbb{Q}/\mathbb{Z}.$$
Posons maintenant, pour chaque $r\geq 0$:
$$D^r(U,\tilde{\mathcal{A}}) = \text{Im}(H^r_c(U,\tilde{\mathcal{A}}) \rightarrow H^r(K,\tilde{A})).$$
Ce groupe est de torsion de type cofini.

\begin{lemma}\label{Sha'}
Soit $r \geq 0$. Il existe $V_0$ un ouvert non vide de $U$ tel que, pour tout ouvert $V$ de $V_0$, le morphisme $H^r(V,\tilde{\mathcal{A}}) \rightarrow H^r(K,\tilde{A})$ induit un isomorphisme: 
$$ D^r(V,\tilde{\mathcal{A}}) \cong \Sha^r(K,\tilde{\mathcal{A}}).$$
\end{lemma}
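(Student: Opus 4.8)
The plan is to mimic the proof of Lemma \ref{Sha} (which gives a local description of the image of the cohomology of an open in the cohomology of the generic point) together with the stabilization argument of Corollary \ref{cornrs}. First I would record the two features that make $\Sha^r(K,\tilde A)$ and the family $D^r(V,\tilde{\mathcal A})$ comparable: on one hand $\Sha^r(K,\tilde A)\subseteq D^r(V,\tilde{\mathcal A})$ for every open $V\subseteq U$, because a class vanishing at every place is in particular unramified on $V$ (so it comes from $H^r(V,\tilde{\mathcal A})$) and vanishes off $V$ (so a preimage can be lifted to $H^r_c(V,\tilde{\mathcal A})$); on the other hand, for $V'\subseteq V$ one has $D^r(V',\tilde{\mathcal A})\subseteq D^r(V,\tilde{\mathcal A})$, so the family decreases as $V$ shrinks. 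The goal is then to show that this decreasing family of cofinite-type groups stabilizes at $\Sha^r(K,\tilde A)=\bigcap_{V\subseteq U}D^r(V,\tilde{\mathcal A})$.

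The engine for both inclusions is a precise local description of $D^r(V,\tilde{\mathcal A})$. Writing $g\colon\text{Spec}\,K\to V$, I would use the Leray spectral sequence $H^a(V,R^bg_*\tilde A)\Rightarrow H^{a+b}(K,\tilde A)$ as in Lemma \ref{Sha}: since $\mathcal A$ is an abelian scheme over $U$, the finite sheaves ${_n}\tilde{\mathcal A}={_n}\mathcal A^t\otimes\mathbb Z/n\mathbb Z(d)$ are locally constant on $V$, and the Godement resolution embeds $H^0(V,R^bg_*\tilde A)$ into $\prod_{v\in V^{(1)}}H^b(K_v^{nr},\tilde A)$, which identifies the image of $H^r(V,\tilde{\mathcal A})\to H^r(K,\tilde A)$ with the classes unramified at every $v\in V^{(1)}$. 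Combining this with the excision sequence computing $H^r_c(V,\tilde{\mathcal A})=H^r(X,j_!\tilde{\mathcal A})$, whose boundary terms are the $H^\bullet(K_v,\tilde A)$ for $v\in X\setminus V$, shows that $D^r(V,\tilde{\mathcal A})$ consists precisely of the classes of $H^r(K,\tilde A)$ unramified at every $v\in V^{(1)}$ and vanishing at every $v\in X\setminus V$. From this description the two inclusions above are immediate, and one sees that removing a point $v$ from $V$ upgrades the condition ``unramified at $v$'' to ``zero at $v$''; hence $\bigcap_V D^r(V,\tilde{\mathcal A})$ is the set of classes dying at all places, i.e. $\Sha^r(K,\tilde A)$.

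Finally I would conclude by stabilization: the groups $D^r(V,\tilde{\mathcal A})$ are cofinite-type subgroups of $D^r(U,\tilde{\mathcal A})$ forming a decreasing family, so by Lemme 3.7 de \cite{HS1} the $\ell$-primary parts stabilize, and, the prime $\ell$ being fixed throughout this subsection, there is a single nonempty open $V_0\subseteq U$ with $D^r(V,\tilde{\mathcal A})=\Sha^r(K,\tilde A)$ for every $V\subseteq V_0$. The main obstacle I expect is this last step combined with the higher-degree bookkeeping of the spectral sequence: for $r=1$ Lemma \ref{Sha} only needed the five-term exact sequence, whereas for general $r$ one must control the intermediate steps $E_\infty^{a,r-a}$ of the Leray filtration to be sure that ``unramified everywhere'' genuinely forces a class into the image of $H^r(V,\tilde{\mathcal A})$, and one must check that the decreasing family reaches its intersection after finitely much shrinking rather than only in the limit.
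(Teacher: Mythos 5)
Your overall skeleton (monotonicity of the family $D^r(V,\tilde{\mathcal{A}})$, stabilization via le lemme 3.7 de \cite{HS1}, identification of the stable value with the Tate--Shafarevich group) matches the paper's, but the mechanism you propose for the crucial inclusion of $\Sha^r(K,\tilde{A})$ into the groups $D^r(V,\tilde{\mathcal{A}})$ has a genuine gap --- the very one you flag at the end without resolving. You assert that $D^r(V,\tilde{\mathcal{A}})$ is \emph{exactly} the set of classes of $H^r(K,\tilde{A})$ unramified at every $v\in V^{(1)}$ and vanishing at every $v\in X\setminus V$, and in particular that $\Sha^r(K,\tilde{A})\subseteq D^r(V,\tilde{\mathcal{A}})$ for \emph{every} open $V$, by running the Leray/Godement argument of the lemme \ref{Sha}. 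That argument is specific to $r=1$: the edge map identifies the image of $H^r(V,\tilde{\mathcal{A}})\to H^r(K,\tilde{A})$ with the \emph{bottom} step $F^rH^r(K,\tilde{A})$ of the Leray filtration, whereas ``unramified at every point of $V$'' only says that the class dies in $E_2^{0,r}=H^0(V,R^rg_*\tilde{A})$, i.e.\ lies in $F^1H^r(K,\tilde{A})$. For $r\geq 2$ the intermediate graded pieces $E_\infty^{a,r-a}$, $0<a<r$, obstruct the implication ``unramified everywhere on $V$ $\Rightarrow$ comes from $H^r(V,\tilde{\mathcal{A}})$'', so your characterization of $D^r(V,\tilde{\mathcal{A}})$ is unjustified (and there is no reason for $\Sha^r\subseteq D^r(V)$ to hold for every $V$). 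As written, your argument only yields the easy inclusion $\bigcap_V D^r(V,\tilde{\mathcal{A}})\subseteq \Sha^r(K,\tilde{A})$, which follows from the compact-support exact sequence alone.

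The paper's proof never needs this characterization, and the gap can be closed without any control of the Leray filtration. Since $\tilde{\mathcal{A}}$ is a torsion sheaf, one has $H^r(K,\tilde{A})=\varinjlim_V H^r(V,\tilde{\mathcal{A}})$; hence any $x\in\Sha^r(K,\tilde{A})$ is the restriction of some $\tilde{x}\in H^r(V_1,\tilde{\mathcal{A}})$ for a suitable nonempty open $V_1$ depending on $x$ --- no unramifiedness argument at all. For every open $V\subseteq V_1$, the restriction of $x$ to $H^r(K_v,\tilde{A})$ vanishes at the finitely many $v\in X\setminus V$, so the exact sequence $H^r_c(V,\tilde{\mathcal{A}})\to H^r(V,\tilde{\mathcal{A}})\to\bigoplus_{v\in X\setminus V}H^r(K_v,\tilde{A})$ lifts $\tilde{x}|_V$ to $H^r_c(V,\tilde{\mathcal{A}})$, whence $x\in D^r(V,\tilde{\mathcal{A}})$ for \emph{all sufficiently small} $V$. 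Stabilization then does the rest: with $V_0$ chosen so that $D^r(V,\tilde{\mathcal{A}})=D^r(V_0,\tilde{\mathcal{A}})$ for all $V\subseteq V_0$, one gets $x\in D^r(V_0\cap V_1,\tilde{\mathcal{A}})=D^r(V_0,\tilde{\mathcal{A}})$, so $\Sha^r\subseteq D^r(V_0,\tilde{\mathcal{A}})$; conversely $D^r(V_0,\tilde{\mathcal{A}})=D^r(V_0\setminus\{v\},\tilde{\mathcal{A}})$ kills every place $v$, giving $D^r(V_0,\tilde{\mathcal{A}})\subseteq\Sha^r$. (One caveat, which your write-up actually handles more carefully than the paper: the stabilization lemma of \cite{HS1} is a statement about $\ell$-primary parts, so this argument is really carried out prime by prime, with $V_0$ a priori depending on $\ell$; this is all that is used in la proposition \ref{d1'} and le théorème \ref{th1}.)
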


\begin{proof}
On remarque que, si $V \subseteq V'$ sont des ouverts dans $U$, alors $D^r(V,\tilde{\mathcal{A}}) \subseteq D^r(V',\tilde{\mathcal{A}})$. Comme $D^r(U,\tilde{\mathcal{A}})$ est de torsion de type cofini, il existe $V_0$ un ouvert non vide de $U$ tel que, pour tout $V$ contenu dans $V_0$, on a: $$D^r(V,\tilde{\mathcal{A}}) = D^r(V_0,\tilde{\mathcal{A}}).$$
Un tel $V_0$ convient.
\end{proof}

Afin d'établir un théorème de dualité pour le groupe de Tate-Shafarevich, il convient donc d'établir un théorème de dualité pour le groupe $D^r(U,\tilde{\mathcal{A}})$:

\begin{proposition}\label{d1'}
On suppose (H \ref{H1}). On pose $$D^{d+2-r_0}_{sh}(U,\mathcal{A}) = \text{Ker}(H^{d+2-r_0}(U,\mathcal{A}) \rightarrow \prod_{v \in X^{(1)}} H^{d+2-r_0}(K_v,A)).$$ Il existe alors un accouplement canonique:
$$ \overline{D^{r_0}(U,\tilde{\mathcal{A}})}\{\ell\} \times \overline{D^{d+2-r_0}_{sh}(U,\mathcal{A})}\{\ell\} \rightarrow  \mathbb{Q}/\mathbb{Z}$$
qui est non dégénéré.
\end{proposition}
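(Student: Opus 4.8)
The plan is to mimic the proof of Proposition \ref{d1}, replacing the abelian scheme $\mathcal{A}^t$ by the torsion sheaf $\tilde{\mathcal{A}}$ and shifting the cohomological degrees so that the two sides sit in the complementary degrees $r_0$ and $d+2-r_0$, whose sum is one below the top degree $d+3$. Two pairings drive the argument. On the one hand, since ${_n}\tilde{\mathcal{A}} = \underline{\text{Hom}}_U({_n}\mathcal{A},\mathbb{Z}/n\mathbb{Z}(d+1))$, the finite-level duality of Lemma \ref{acc'} is symmetric in $\mathcal{A}$ and $\tilde{\mathcal{A}}$; reading it with the roles exchanged and passing to the limit yields an isomorphism $H^{r_0}(U,\tilde{\mathcal{A}}\{\ell\}) \xrightarrow{\sim} (H^{d+3-r_0}_c(U,T_\ell\mathcal{A}))^D$. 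On the other hand, the Barsotti--Weil cup product gives $H^{r_0}_c(U,\tilde{\mathcal{A}}) \times H^{d+2-r_0}(U,\mathcal{A}) \to H^{d+3}_c(U,\mathbb{Q}/\mathbb{Z}(d+1)) \cong \mathbb{Q}/\mathbb{Z}$, which I would first check descends to $D^{r_0}(U,\tilde{\mathcal{A}}) \times D^{d+2-r_0}_{sh}(U,\mathcal{A})$, exactly as in Lemma \ref{Sha'} and the discussion preceding it.

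Next I would assemble the two commutative diagrams used in \ref{d1}. Diagram (1) compares the exact sequence of Lemma \ref{exact'} for $H^{d+3-r_0}_c(U,T_\ell\mathcal{A})$, namely $0 \to (T_\ell H^{d+3-r_0}_c(U,\mathcal{A}))^D \to (H^{d+3-r_0}_c(U,T_\ell\mathcal{A}))^D \to (H^{d+2-r_0}_c(U,\mathcal{A})^{(\ell)})^D \to 0$ after dualizing, with the corresponding sequence for $H^{r_0}(U,\tilde{\mathcal{A}}\{\ell\})$; the central vertical is the isomorphism just produced. Here, because $\tilde{\mathcal{A}}$ is a torsion sheaf, the term $H^{r_0-1}(U,\tilde{\mathcal{A}})\otimes_{\mathbb{Z}}\mathbb{Q}/\mathbb{Z}\{\ell\}$ vanishes and the sequence degenerates to $H^{r_0}(U,\tilde{\mathcal{A}}\{\ell\}) \cong H^{r_0}(U,\tilde{\mathcal{A}})\{\ell\}$. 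Diagram (2) then compares the localization sequence realizing $D^{r_0}(U,\tilde{\mathcal{A}})\{\ell\}$ inside $H^{r_0}(U,\tilde{\mathcal{A}})\{\ell\}$ with the $(-)^D$-dual of the localization sequence realizing $D^{d+2-r_0}_{sh}(U,\mathcal{A})$ inside $H^{d+2-r_0}(U,\mathcal{A})$, the right-hand vertical being the sum over the finitely many relevant places $v$ of the local duality morphisms between $H^{r_0}(K_v,\tilde{A})\{\ell\}$ and $(H^{d+2-r_0}(K_v,A)^{(\ell)})^D$.

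The main obstacle, and the point where hypothesis (H \ref{H1}) is decisive, is proving that this right-hand vertical is an isomorphism, so that (contrary to \ref{d1}) no symmetrized unramified correction is needed. Each $K_v$ is $(d+1)$-local, so the local duality of Theorem \ref{noyauQp} applies with kernel governed by the integers $\beta_{r,\ell}$ of the local theory; by Theorem \ref{nullitéQp} and Theorem \ref{suff} the hypothesis that the tori $T_{v,0},\dots,T_{v,d}$ (for $r_0=d$), respectively $T_{v,0},\dots,T_{v,d-1}$ (for $r_0=d+1$), are anisotropic forces the relevant $\beta_{r,\ell}$ to vanish for $\ell\neq p$, whence the local maps become genuine isomorphisms of finite groups in the degrees at hand. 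I would treat the two cases $r_0=d$ and $r_0=d+1$ separately, matching the two bullets of (H \ref{H1}); for the remaining values of $r_0$ no hypothesis is required, the corresponding local groups being automatically finite by Proposition \ref{prop1}.

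Finally, with the right vertical of diagram (2) an isomorphism, a diagram chase through diagrams (1) and (2) together with the snake lemma identifies the kernel of $D^{r_0}(U,\tilde{\mathcal{A}})\{\ell\} \to (D^{d+2-r_0}_{sh}(U,\mathcal{A})^{(\ell)})^D$ with $(T_\ell H^{d+3-r_0}_c(U,\mathcal{A}))^D$, the degeneracy of the $\tilde{\mathcal{A}}$-side sequence making this identification clean. Since this group is the dual of a finitely generated torsion-free $\mathbb{Z}_\ell$-module it is divisible, so the kernel is exactly the maximal divisible subgroup. As $D^{d+2-r_0}_{sh}(U,\mathcal{A})$ is of cofinite type, $D^{d+2-r_0}_{sh}(U,\mathcal{A})^{(\ell)} \cong \overline{D^{d+2-r_0}_{sh}(U,\mathcal{A})}\{\ell\}$ is finite, and the induced pairing $\overline{D^{r_0}(U,\tilde{\mathcal{A}})}\{\ell\} \times \overline{D^{d+2-r_0}_{sh}(U,\mathcal{A})}\{\ell\} \to \mathbb{Q}/\mathbb{Z}$ is therefore nondegenerate, as required.
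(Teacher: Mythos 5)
Your global strategy is the right one, and it is the paper's: two diagrams, hypothesis (H \ref{H1}) forcing injectivity of the local maps, and divisibility of the kernel coming from the dual of a Tate module. Your diagram (1) is also legitimate in itself: the finite-level duality underlying Lemma \ref{acc'} is symmetric in $\mathcal{A}$ and $\tilde{\mathcal{A}}$, so one does get $H^{r_0}(U,\tilde{\mathcal{A}}\{\ell\})\cong (H^{d+3-r_0}_c(U,T_{\ell}\mathcal{A}))^D$, and the Kummer sequence for the torsion sheaf $\tilde{\mathcal{A}}$ degenerates as you say. The genuine gap is in your diagram (2), which cannot be assembled as described, for three concrete reasons. (a) $D^{r_0}(U,\tilde{\mathcal{A}})$ is \emph{not} a kernel of localization inside $H^{r_0}(U,\tilde{\mathcal{A}})$: by definition it is $\text{Im}(H^{r_0}_c(U,\tilde{\mathcal{A}})\rightarrow H^{r_0}(K,\tilde{A}))$, and the only presentation of it available is the \emph{quotient} realization $\bigoplus_{v\in X^{(1)}}H^{r_0-1}(K_v,\tilde{A})^{(\ell)}\rightarrow H^{r_0}_c(U,\tilde{\mathcal{A}})^{(\ell)}\rightarrow D^{r_0}(U,\tilde{\mathcal{A}})^{(\ell)}\rightarrow 0$, which rests on proposition 2.4 of \cite{Izq1}; that statement requires $\tilde{\mathcal{A}}$ to be torsion (so nothing analogous is available for $\mathcal{A}$) and involves \emph{all} the places of $X^{(1)}$, infinitely many of them, not finitely many relevant ones. (b) Your middle vertical would require a pairing between $H^{r_0}(U,\tilde{\mathcal{A}})$ and $H^{d+2-r_0}(U,\mathcal{A})$, two \emph{ordinary} cohomology groups; no such pairing into $\mathbb{Q}/\mathbb{Z}$ exists, since the trace isomorphism is $H^{d+3}_c(U,\mathbb{Q}/\mathbb{Z}(d+1))\cong\mathbb{Q}/\mathbb{Z}$ and one factor must carry compact support. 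Equivalently, the $(-)^D$-dual of the kernel realization of $D^{d+2-r_0}_{sh}(U,\mathcal{A})$ is a right-exact sequence with the local terms on the left, and it cannot be compared with your left-exact top row by any snake-lemma chase. (c) Your local maps are off by one degree: $K_v$ is $(d+1)$-local, so local duality pairs $H^{r_0}(K_v,\tilde{A})$ with $H^{d+1-r_0}(K_v,A)$, not with $H^{d+2-r_0}(K_v,A)$; the groups that do pair with $H^{d+2-r_0}(K_v,A)$ are precisely the $H^{r_0-1}(K_v,\tilde{A})$ occurring in the compact-support presentation of (a).

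All three defects have the same cure, which is the paper's arrangement: keep ordinary cohomology (hence the Kummer sequence of Lemma \ref{exact'}) on the $\mathcal{A}$-side and compact support on the $\tilde{\mathcal{A}}$-side. Diagram (1) then produces the map $H^{d+2-r_0}(U,\mathcal{A})\{\ell\}\rightarrow (H^{r_0}_c(U,\tilde{\mathcal{A}})^{(\ell)})^D$, and diagram (2) compares the kernel realization of $D^{d+2-r_0}_{sh}(U,\mathcal{A})\{\ell\}$ with the dual of the quotient realization of $D^{r_0}(U,\tilde{\mathcal{A}})^{(\ell)}$; its right vertical consists of the maps $H^{d+2-r_0}(K_v,A)\{\ell\}\rightarrow(H^{r_0-1}(K_v,\tilde{A})^{(\ell)})^D$, whose injectivity is exactly what (H \ref{H1}) buys: by Corollary \ref{surjQp} and Theorem \ref{modulo divisibles Qp} applied to the $(d+1)$-local field $K_v$, the $\ell$-primary kernel is divisible of corank $\beta_{d+2-r_0,\ell}$, and Theorem \ref{suff} makes this vanish, unconditionally when $d+2-r_0\geq 3$ and under the anisotropy hypotheses when $d+2-r_0\in\{1,2\}$ (your citations of Theorem \ref{noyauQp}, which concerns a different duality, and of Theorem \ref{nullitéQp}, which is the converse implication, are not the statements needed). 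With this orientation the chase identifies $\text{Ker}(D^{d+2-r_0}_{sh}(U,\mathcal{A})\{\ell\}\rightarrow(D^{r_0}(U,\tilde{\mathcal{A}})^{(\ell)})^D)$ with $\text{Coker}(H^{d+1-r_0}(U,\mathcal{A})\otimes_{\mathbb{Z}}(\mathbb{Q}/\mathbb{Z})\{\ell\}\rightarrow(T_{\ell}H^{r_0+1}_c(U,\tilde{\mathcal{A}}))^D)$, which is divisible, and one concludes as in Proposition \ref{d1}.
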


Il convient d'établir préalablement le lemme suivant:

\begin{lemma}
La suite:
$$ \bigoplus_{v \in X^{(1)}} H^{r_0-1}(K_v,\tilde{A})^{(\ell)} \rightarrow H^{r_0}_c(U,\tilde{\mathcal{A}})^{(\ell)} \rightarrow  D^{r_0}(U,\tilde{\mathcal{A}})^{(\ell)} \rightarrow 0$$
est exacte.
\end{lemma}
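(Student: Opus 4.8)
The plan is to follow verbatim the argument of Lemma~\ref{7aux}: exhibit an integral right-exact sequence, reduce it modulo $\ell^r$, and then complete $\ell$-adically. First I would record that the surjectivity on the right is immediate from the definitions, since $D^{r_0}(U,\tilde{\mathcal{A}})$ is \emph{by construction} the image of $H^{r_0}_c(U,\tilde{\mathcal{A}}) \to H^{r_0}(K,\tilde{A})$; hence $H^{r_0}_c(U,\tilde{\mathcal{A}}) \twoheadrightarrow D^{r_0}(U,\tilde{\mathcal{A}})$ and the same remains true after completing.

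The substantive input is the integral exact sequence
$$\bigoplus_{v \in X^{(1)}} H^{r_0-1}(K_v,\tilde{A}) \to H^{r_0}_c(U,\tilde{\mathcal{A}}) \to D^{r_0}(U,\tilde{\mathcal{A}}) \to 0.$$
This is the localization sequence for the compact-support cohomology of the torsion sheaf $\tilde{\mathcal{A}}$: the excision triangle attached to $j: U \hookrightarrow X$ produces the boundary maps $H^{r_0-1}(K_v,\tilde{A}) \to H^{r_0}_c(U,\tilde{\mathcal{A}})$ for the places $v \notin U$, while the remaining places $v \in U$ enter when one passes from $H^{r_0}(U,\tilde{\mathcal{A}})$ to the generic fibre $H^{r_0}(K,\tilde{A})$ through the Leray spectral sequence of $\text{Spec}\,K \to U$, exactly as in Lemma~\ref{Sha}. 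Because $D^{r_0}(U,\tilde{\mathcal{A}})$ is defined as the image in $H^{r_0}(K,\tilde{A})$, the kernel of $H^{r_0}_c(U,\tilde{\mathcal{A}}) \to D^{r_0}(U,\tilde{\mathcal{A}})$ is exactly the subgroup generated by the images of all the local groups, which is what the displayed sequence records; this is the same input already invoked in Lemma~\ref{7aux}.

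Granting this, I would tensor with $\mathbb{Z}/\ell^r\mathbb{Z}$ (right exactness of $-\otimes\mathbb{Z}/\ell^r\mathbb{Z}$), obtaining for every $r$ an exact sequence
$$\bigoplus_{v \in X^{(1)}} H^{r_0-1}(K_v,\tilde{A})/\ell^r \to H^{r_0}_c(U,\tilde{\mathcal{A}})/\ell^r \to D^{r_0}(U,\tilde{\mathcal{A}})/\ell^r \to 0,$$
and then pass to the inverse limit over $r$. Here I would use that $H^{r_0}_c(U,\tilde{\mathcal{A}})$ and $D^{r_0}(U,\tilde{\mathcal{A}})$ are of torsion of cofinite type, so that the middle and right towers consist of finite groups with surjective transition maps; the relevant $\varprojlim^1$ terms then vanish and $\varprojlim$ preserves the exactness, yielding
$$\bigoplus_{v \in X^{(1)}} H^{r_0-1}(K_v,\tilde{A})^{(\ell)} \to H^{r_0}_c(U,\tilde{\mathcal{A}})^{(\ell)} \to D^{r_0}(U,\tilde{\mathcal{A}})^{(\ell)} \to 0.$$
The one point requiring care is that the inverse limit commute, on the left, with the infinite sum $\bigoplus_v$: this holds because the kernels at finite level are finite, so for each $r$ only finitely many local terms contribute (and since $A$ has good reduction on $U$ and $\ell\neq p$, the unramified places contribute trivially). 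The hard part is thus concentrated in the integral localization sequence; the completion step is then formal.
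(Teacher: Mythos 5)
Your overall architecture — an integral right-exact sequence, then reduction modulo $\ell^r$ and passage to the inverse limit — is the same as the paper's, and your completion step is sound (if anything, more careful than the paper, which simply declares that part analogous to Lemma \ref{7aux}). The genuine gap is in your justification of the integral exact sequence, which is the actual content of the lemma. Contrary to what you claim, this is \emph{not} ``the same input already invoked in Lemma \ref{7aux}'': there, $D^1(V,\mathcal{A})$ is the image of $H^1_c(V,\mathcal{A})$ in $H^1(V,\mathcal{A})$, so the sequence involves only the places $v \in X^{(1)}\setminus V$ and is an immediate consequence of the compact-support localization sequence, with no hypothesis on the sheaf. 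Here $D^{r_0}(U,\tilde{\mathcal{A}})$ is the image of $H^{r_0}_c(U,\tilde{\mathcal{A}})$ in the Galois cohomology group $H^{r_0}(K,\tilde{A})$, and the sum runs over \emph{all} places, including those of good reduction. That stronger statement is not formal: it fails for a sheaf like $\mathcal{A}$ itself and requires precisely that $\tilde{\mathcal{A}}=\varinjlim_n {_n}\tilde{\mathcal{A}}$ be a direct limit of (locally constant) constructible sheaves. The paper obtains it by invoking Proposition 2.4 of \cite{Izq1} under exactly this hypothesis.

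Moreover, the mechanism you propose for the places $v \in U$ does not produce the maps in the statement. The Leray spectral sequence of $\text{Spec}\, K \rightarrow U$, as exploited in Lemma \ref{Sha}, yields restrictions landing in $H^{*}(K_v^{nr},\tilde{A})$ — cohomology of the \emph{unramified} local field — whereas the lemma requires maps $H^{r_0-1}(K_v,\tilde{A}) \rightarrow H^{r_0}_c(U,\tilde{\mathcal{A}})$ with the completed local field $K_v$ (in Lemma \ref{Sha} the passage from $K_v^{nr}$ to $K_v$ is done by inflation-restriction in degree $1$, which has no direct analogue here). To define these maps for $v \in U$ one must choose a smaller open $V \subseteq U\setminus\{v\}$, use the boundary map of the compact-support sequence for $V$, and compose with the covariant map $H^{r_0}_c(V,\tilde{\mathcal{A}}) \rightarrow H^{r_0}_c(U,\tilde{\mathcal{A}})$ (checking independence of $V$); and to prove exactness one needs $H^{r_0}(K,\tilde{A}) = \varinjlim_V H^{r_0}(V,\tilde{\mathcal{A}})$ — this is where torsionness enters — together with a diagram chase over shrinking $V$. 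Your sketch asserts the resulting kernel identification instead of proving it; you should either carry out this argument or cite \cite{Izq1} (Proposition 2.4) as the paper does. A minor additional point: your parenthetical claim that the good-reduction places ``contribute trivially'' because $\ell \neq p$ is also unjustified and is not needed once the integral sequence is in place.
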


\begin{proof} 
On a $\tilde{\mathcal{A}}= \varinjlim_n {_n}\tilde{\mathcal{A}}$, et donc, en utilisant la proposition 2.4 de \cite{Izq1}, on a une suite exacte:
$$ \bigoplus_{v \in X^{(1)}} H^{r_0-1}(K_v,\tilde{A}) \rightarrow H^{r_0}_c(U,\tilde{\mathcal{A}}) \rightarrow  D^{r_0}(U,\tilde{\mathcal{A}}) \rightarrow 0.$$
Cela étant établi, la preuve est analogue à celle du lemme \ref{7aux}.
\end{proof}

\begin{proof} \textit{(De la proposition \ref{d1'})}\\
Rappelons que, d'après le lemme \ref{acc'}, nous disposons d'un accouplement non dégénéré:
$$ H^{d+2-r_0}(U,\mathcal{A}\{\ell\}) \times H^{r_0+1}_c(U,T_{\ell}\tilde{\mathcal{A}})) \rightarrow \mathbb{Q}/\mathbb{Z},$$
d'où un isomorphisme $H^{d+2-r_0}(U,\mathcal{A}\{\ell\}) \rightarrow (H^{r_0+1}_c(U,T_{\ell}\tilde{\mathcal{A}}))^D$. On dispose aussi d'un accouplement:
$$H^{d+2-r_0}(U,\mathcal{A})  \times H^{r_0}_c(U,\tilde{\mathcal{A}}) \rightarrow \mathbb{Q}/\mathbb{Z}$$
qui induit un accouplement:
$$H^{d+2-r_0}(U,\mathcal{A})\{\ell\}   \times H^{r_0}_c(U,\tilde{\mathcal{A}})^{(\ell)} \rightarrow \mathbb{Q}/\mathbb{Z}.$$
Ainsi on obtient un diagramme commutatif à lignes exactes (que l'on appellera diagramme (1)):\\
\centerline{\xymatrix{
 0 \ar[r]& H^{d+1-r_0}(U,\mathcal{A}) \otimes_{\mathbb{Z}} (\mathbb{Q}/\mathbb{Z})\{\ell\} \ar[r]\ar[d]& H^{d+2-r_0}(U,\mathcal{A}\{\ell\}) \ar[r]\ar[d]^{\cong}& H^{d+2-r_0}(U,\mathcal{A})\{\ell\} \ar[r]\ar[d]& 0\\
 0 \ar[r]& (T_{\ell}H^{r_0+1}_c(U,\tilde{\mathcal{A}}))^D\ar[r]& (H^{r_0+1}_c(U,T_{\ell}\tilde{\mathcal{A}}))^D \ar[r]&  (H^{r_0}_c(U,\tilde{\mathcal{A}})^{(\ell)})^D \ar[r]& 0
}} 
De plus, nous disposons aussi d'un autre diagramme diagramme commutatif à lignes exactes (que l'on appellera diagramme (2)):\\
\centerline{\xymatrix{
0 \ar[r] & D^{d+2-r_0}_{sh}(U,\mathcal{A})\{\ell\}  \ar[d]\ar[r] & H^{d+2-r_0}(U,\mathcal{A})\{\ell\} \ar[d] \ar[r] & \prod_{v \in X^{(1)}} H^{d+2-r_0}(K_v,A)\{\ell\} \ar[d] \\
0 \ar[r] & (D^{r_0}(U,\tilde{\mathcal{A}})^{(\ell)})^D   \ar[r] & (H^{r_0}_c(U,\tilde{\mathcal{A}})^{(\ell)})^D  \ar[r] & \prod_{v \in X^{(1)}} (H^{r_0-1}(K_v,\tilde{A})^{(\ell)})^D 
}}
En procédant exactement comme dans \ref{d1} et en utilisant la section \ref{Qp} ainsi que l'hypothèse (H \ref{H1}), on montre alors que l'on a un accouplement non dégénéré:
$$ \overline{D^{r_0}(U,\tilde{\mathcal{A}})}\{\ell\} \times \overline{D^{d+2-r_0}_{sh}(U,\mathcal{A})}\{\ell\} \rightarrow  \mathbb{Q}/\mathbb{Z}.$$
\end{proof}

Nous sommes maintenant en mesure de conclure:

\begin{theorem} \label{th1} On rappelle que $k=k'((t_2))...((t_d))$ avec $k'$ un corps $p$-adique et que $K=k(X)$ est le corps des fonctions de la courbe $X$. Soit $A$ une variété abélienne sur $K$. On suppose (H \ref{H1}). Alors il existe un accouplement non dégénéré de groupes de torsion:
$$\overline{\Sha^{r_0}(K,\tilde{A})}_{\text{non}-p} \times \overline{\Sha^{d+2-r_0}(K,A)}_{\text{non}-p} \rightarrow \mathbb{Q}/\mathbb{Z}.$$
De plus, $\Sha^{r_0}(K,\tilde{A})$ et $\Sha^{d+2-r_0}(K,A)$ sont de torsion de type cofini.
\end{theorem}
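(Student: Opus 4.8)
The plan is to deduce Theorem~\ref{th1} from the local--global pairing of Proposition~\ref{d1'} by the same limit procedure that proves Theorems~\ref{cornrsbis} and~\ref{cornrs}. I first fix a prime $\ell\neq p$; since the anisotropy conditions in (H~\ref{H1}) do not involve $\ell$, it suffices to produce a perfect pairing of finite groups on each $\ell$-primary part and then take $\bigoplus_{\ell\neq p}$ at the end, which yields the non-degenerate pairing of torsion groups $\overline{\Sha^{r_0}(K,\tilde A)}_{\mathrm{non}-p}\times\overline{\Sha^{d+2-r_0}(K,A)}_{\mathrm{non}-p}\to\mathbb{Q}/\mathbb{Z}$. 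The role of hypothesis (H~\ref{H1}) is to force, via the $(d+1)$-local version of Theorem~\ref{suff} applied to each $K_v$, the local groups $H^{d+2-r_0}(K_v,A)\{\ell\}$ to be finite: for $r_0\leq d-1$ one has $d+2-r_0\geq 3$, so $\beta^{(v)}_{d+2-r_0,\ell}=0$ automatically, while for $r_0=d$ (resp. $r_0=d+1$) it is exactly the anisotropy of $T_{v,0},\dots,T_{v,d}$ (resp. $T_{v,0},\dots,T_{v,d-1}$) that makes $\beta^{(v)}_{2,\ell}$ (resp. $\beta^{(v)}_{1,\ell}$) vanish. This finiteness is precisely what makes the right-hand vertical arrow of diagram~(2) in Proposition~\ref{d1'} an isomorphism, so that Proposition~\ref{d1'}, applied with any non-empty $V\subseteq U$ in place of $U$, supplies a non-degenerate pairing of finite groups
$$\overline{D^{r_0}(V,\tilde{\mathcal{A}})}\{\ell\}\times\overline{D^{d+2-r_0}_{sh}(V,\mathcal{A})}\{\ell\}\rightarrow\mathbb{Q}/\mathbb{Z}.$$

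Next I carry out the stabilization. By Lemma~\ref{Sha'} there is a non-empty open $V_0\subseteq U$ with $D^{r_0}(V,\tilde{\mathcal{A}})\cong\Sha^{r_0}(K,\tilde A)$ for all open $V\subseteq V_0$, so the left factor above is constant on $V\subseteq V_0$. As both factors are finite and the pairing is perfect, the transition maps $\overline{D^{d+2-r_0}_{sh}(V',\mathcal{A})}\{\ell\}\to\overline{D^{d+2-r_0}_{sh}(V,\mathcal{A})}\{\ell\}$ for $V\subseteq V'\subseteq V_0$ are forced to be isomorphisms. Since étale cohomology commutes with the filtered colimit over the shrinking opens and the localization target $\prod_v H^{d+2-r_0}(K_v,A)$ is fixed, one has $\varinjlim_{V\subseteq V_0}D^{d+2-r_0}_{sh}(V,\mathcal{A})=\Sha^{d+2-r_0}(K,A)$. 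Feeding the short exact sequences $0\to(\cdot)_{div}\to(\cdot)\to\overline{(\cdot)}\to 0$ into this colimit, exactly as in the diagram chase of Corollary~\ref{cornrs}, shows that the natural map $\overline{D^{d+2-r_0}_{sh}(V_0,\mathcal{A})}\{\ell\}\to\overline{\Sha^{d+2-r_0}(K,A)}\{\ell\}$ is an isomorphism. Combined with $\overline{D^{r_0}(V_0,\tilde{\mathcal{A}})}\{\ell\}\cong\overline{\Sha^{r_0}(K,\tilde A)}\{\ell\}$, this gives the perfect pairing of finite groups $\overline{\Sha^{r_0}(K,\tilde A)}\{\ell\}\times\overline{\Sha^{d+2-r_0}(K,A)}\{\ell\}\to\mathbb{Q}/\mathbb{Z}$ for each $\ell\neq p$.

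It remains to prove the cofinite-type assertions, and here I expect the main difficulty. For $\tilde A$ this is immediate: Lemma~\ref{Sha'} identifies $\Sha^{r_0}(K,\tilde A)$ with $D^{r_0}(V_0,\tilde{\mathcal{A}})$, a subquotient of the cofinite-type group $H^{r_0}_c(V_0,\tilde{\mathcal{A}})$. The group $\Sha^{d+2-r_0}(K,A)$ is the delicate one: a priori it is only a filtered colimit of the cofinite-type groups $D^{d+2-r_0}_{sh}(V,\mathcal{A})$, and a colimit of cofinite-type groups need not be cofinite type. What must be shown is that the $\ell$-coranks stabilize --- i.e. that the colimit is essentially constant on the $\ell$-divisible parts, and not merely on the quotients $\overline{(\cdot)}\{\ell\}$ already controlled above. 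The key input is again the finiteness of the local terms $H^{d+2-r_0}(K_v,A)\{\ell\}$ under (H~\ref{H1}): analysing the divisible part of $D^{d+2-r_0}_{sh}(V,\mathcal{A})$ through the diagrams of Proposition~\ref{d1'}, whose corank is governed by the $\mathbb{Z}_\ell$-rank of $T_\ell H^{r_0+1}_c(V,\tilde{\mathcal{A}})$, the local finiteness results of Section~\ref{Qp} pin down this corank independently of $V$, so that $\Sha^{d+2-r_0}(K,A)\{\ell\}$ stabilizes to $D^{d+2-r_0}_{sh}(V_1,\mathcal{A})\{\ell\}$ for $V_1$ small enough and is therefore cofinite type. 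Assembling over all $\ell\neq p$ then completes the proof.
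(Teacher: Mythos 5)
Your proposal is correct and follows essentially the same route as the paper: its proof of Theorem \ref{th1} is exactly "Proposition \ref{d1'} plus Lemma \ref{Sha'}, passing to the limit over $U$", and your stabilization-by-duality argument over shrinking opens is the one the paper itself writes out in the proof of Theorem \ref{cornrsbis}. If anything, you give more detail than the paper does — your identification of the role of (H \ref{H1}) through the local vanishing results of Theorem \ref{suff}, and your observation that cofinite-typeness of $\Sha^{d+2-r_0}(K,A)$ requires the coranks of the divisible parts of the groups $D^{d+2-r_0}_{sh}(V,\mathcal{A})$ to remain bounded as $V$ shrinks (controlled via $T_{\ell}H^{r_0+1}_c(V,\tilde{\mathcal{A}})$ and the local finiteness results), are both points the paper's one-line proof leaves implicit.
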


\begin{proof}
Cela découle immédiatement de la proposition \ref{d1'} et du lemme \ref{Sha'} en passant à la limite sur $U$.
\end{proof}

\begin{remarque}\label{restr}
\begin{itemize}
\item[$\bullet$] Les hypothèses de (H \ref{H1}) concernent toutes les places de $X^{(1)}$. On ne peut pas restreindre ces hypothèses aux places de mauvaise réduction de $A$ puisqu'on ne sait pas si l'ouvert $V_0$ du lemme \ref{Sha'}  peut être choisi égal à $U$. Ce problème vient en particulier du fait que le corps $K$ est de dimension cohomologique 3 et que, même si $v \in X^{(1)}$ est une place de bonne réduction, le groupe $H^1(\mathcal{O}_v, \mathcal{A})$ peut être non nul!
\item[$\bullet$] Même si le théorème est une dualité modulo divisibles, dans la preuve, on a besoin d'une dualité locale qui n'est pas modulo divisibles. C'est pourquoi nous sommes amenés à faire les hypothèses (H \ref{H1}).
\end{itemize}
\end{remarque}

\begin{remarque}
Toute variété abélienne sur $K$ vérifie les hypothèses du théorème lorsque $r_0=0$. Dans ce cas, le théorème affirme que la partie divisible de  $\Sha^{d+2}(K,A)$ est $p$-primaire.
\end{remarque}

\begin{example} 
Dans le cas où $k$ est $p$-adique et $K=k(u)$, si on se donne $f(u) \in K^{\times}$, la courbe elliptique d'équation $y^2=x^3+f(u)$ vérifie les hypothèses du théorème pour $r_0 \in \{1,2\}$.
\end{example}

\subsection{Approche avec les groupes de cohomologie non ramifiée symétrisés}

Soient $\ell$ un nombre premier (éventuellement égal à $p$) et $U$ un ouvert non vide de $X$ sur lequel $A$ a bonne réduction. Faisons l'hypothèse suivante:

\begin{hypol}\label{HQp}
\begin{minipage}[t]{12.72cm}
\begin{itemize}
\item[$\bullet$] si $\ell \neq p$, pour chaque $v \in X \setminus U$, au moins l'une des deux affirmations suivantes est vérifiée:
\begin{itemize}
\item[$\circ$] $\ell$ ne divise pas $|F_{v,d}|$,
\item[$\circ$] les tores $T_{v,d-1},...,T_{v,0}$ sont anisotropes.
\end{itemize}
\item[$\bullet$] si $\ell = p$, pour chaque $v \in X \setminus U$, au moins l'une des deux affirmations suivantes est vérifiée:
\begin{itemize}
\item[$\circ$] $\ell$ ne divise pas $|F_{v,d}|$,
\item[$\circ$] les groupes algébriques $T_{v,d},...,T_{v,1}$ et $B_{v,1}$ sont triviaux.
\end{itemize}
\end{itemize}
\end{minipage}
\end{hypol}

\begin{remarque}
Cette hypothèse est nettement mois forte que l'hypothèse de la section précédente. Elle ne concerne que les places de mauvaise réduction et est vérifiée pour presque tout $\ell$.
\end{remarque}

On note $Z$ l'ensemble suivant:
\begin{itemize}
\item[$\bullet$] si $\ell \neq p$, alors $Z$ désigne l'ensemble des $v \in X^{(1)}$ tels que les tores $T_{v,d-1},...,T_{v,-1}$ sont anisotropes,
\item[$\bullet$] si $\ell = p$, alors $Z$ désigne l'ensemble des $v \in X^{(1)}$ tels que les groupes algébriques $T_{v,d},...,T_{v,1}$ et $B_{v,1}$ sont triviaux.
\end{itemize}
On introduit le groupe suivant:
\begin{multline*}
\Sha^{d+1}_{nrs}(\tilde{A}):= \text{Ker} \left(  H^{d+1}(K,\tilde{A}) \rightarrow \prod_{v \in Z} H^{d+1}(K_v,\tilde{A}) \times \prod_{v \in X^{(1)} \setminus Z} H^{d+1}(K_v,\tilde{A})/ H^{d+1}_{nrs}(K_v,\tilde{A}) \right).
\end{multline*}

En procédant exactement de la même manière que dans la section \ref{Cfonctions}, on peut établir le théorème et le corollaire suivants:

\begin{theorem}\label{cornrsQp}
On rappelle que $k=k'((t_2))...((t_d))$ avec $k'$ un corps $p$-adique et que $K=k(X)$ est le corps des fonctions de la courbe $X$. Soit $A$ une variété abélienne sur $K$. On suppose (H \ref{HQp})$_{\ell}$. Alors il existe un accouplement non dégénéré de groupes finis:
$$\overline{\Sha^{d+1}_{nrs}(\tilde{A})}\{\ell\} \times \overline{\Sha^1(A)}\{\ell\} \rightarrow \mathbb{Q}/\mathbb{Z}.$$
\end{theorem}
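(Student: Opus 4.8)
The plan is to transcribe, \emph{mutatis mutandis}, the proof of Theorem \ref{cornrsbis} (equivalently Corollaire \ref{cornrs}), replacing the dual abelian variety $A^t$ by the torsion faisceau $\tilde{A}$ and shifting the cohomological degrees by one, since each completion $K_v$ is now $(d+1)$-local rather than $d$-local. Concretely, I would first fix a nonempty open $V$ of $U$ and prove the $V$-level statement: the $\mathbb{Q}_p$-analogue of Proposition \ref{d1bis}, namely a canonical nondegenerate pairing
$$\overline{D^{d+1}_{nrs}(V,\tilde{\mathcal{A}})}\{\ell\} \times \overline{D^1(V,\mathcal{A})}\{\ell\} \longrightarrow \mathbb{Q}/\mathbb{Z},$$
where $D^{d+1}_{nrs}(V,\tilde{\mathcal{A}})$ is the evident analogue of $D^{d+1}_{nrs}(V,\mathcal{A}^t)$, obtained by imposing away from $Z$ the local conditions encoded by $H^{d+1}_{nrs}(K_v,\tilde{A})$. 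This is established exactly as in the proof of Proposition \ref{d1} (and \ref{d1'}): one combines the nondegenerate pairing $H^1(V,\mathcal{A}\{\ell\}) \times H^{d+2}_c(V,T_{\ell}\tilde{\mathcal{A}}) \to \mathbb{Q}/\mathbb{Z}$ of Lemme \ref{acc'} with the Barsotti--Weil pairing $H^{d+1}(V,\tilde{\mathcal{A}}) \times H^1_c(V,\mathcal{A}) \to \mathbb{Q}/\mathbb{Z}$, the Kummer-type exact sequences of Lemme \ref{exact'}, and a diagram chase by the snake lemma over the resulting analogues of diagrams (1) and (2).

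The only genuinely new ingredient, and the heart of the argument, is the local input at the bad places: under (H \ref{HQp})$_{\ell}$, for each $v \in X \setminus V$ the quotient $H^{d+1}(K_v,\tilde{A})\{\ell\}/H^{d+1}_{nrs}(K_v,\tilde{A})\{\ell\}$ must be identified with the dual of $H^0(K_v,A)^{(\ell)}$, so that the right-hand vertical arrow in the analogue of diagram (2) is an isomorphism. This is precisely Theorem \ref{noyauQp} applied over the $(d+1)$-local field $K_v$, stating that the $\ell$-primary kernel of $H^{d+1}(K_v,\tilde{A}) \to (H^0(K_v,A)^{\wedge})^D$ is $H^{d+1}_{nrs}(K_v,\tilde{A},\ell)$. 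The dichotomy in (H \ref{HQp})$_{\ell}$ between $\ell \neq p$ and $\ell = p$ is exactly what makes this identification hold at every bad place: for $\ell \neq p$ one invokes the vanishing results of Section \ref{Qp} controlling the $\beta_{r,\ell}$ through anisotropy of the tores $T_{v,d-1},\dots,T_{v,-1}$, whereas for $\ell = p$ one must instead use the coarser triviality hypothesis on $T_{v,d},\dots,T_{v,1}$ and $B_{v,1}$ coming from Section \ref{precisions} and Proposition \ref{nullitéQpp}. Granting this, the snake lemma identifies the kernel of $D^{d+1}_{nrs}(V,\tilde{\mathcal{A}})\{\ell\} \to (D^1(V,\mathcal{A})^{(\ell)})^D$ with a quotient of $(T_{\ell}H^{d+2}_c(V,\tilde{\mathcal{A}}))^D$, which is divisible since the relevant Tate module is a finitely generated torsion-free $\mathbb{Z}_{\ell}$-module; as $\overline{D^1(V,\mathcal{A})}\{\ell\} \cong D^1(V,\mathcal{A})^{(\ell)}$ is finite, the induced pairing on the quotients $\overline{(-)}\{\ell\}$ is nondegenerate.

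It then remains to pass to the limit over $V$, exactly as in Theorem \ref{cornrsbis}. On the $A$-side, Lemme \ref{Shabis} gives $D^1(V,\mathcal{A}) \cong \Sha^1_{nr}(V,A)$; since the subgroups $\Sha^1_{nr}(V,A)\{\ell\}$ of the cofinite-type group $\Sha^1_{nr}(U,A)\{\ell\}$ form a decreasing family as $V$ shrinks, Lemme 3.7 of \cite{HS1} yields a nonempty open $V_0$ below which they are constant, the common value being $\Sha^1(A)\{\ell\}$. On the $\tilde{A}$-side, for $V \subseteq V' \subseteq V_0$ the commutative diagram of short exact sequences relating $D^{d+1}_{nrs}(V,\tilde{\mathcal{A}})\{\ell\}$, its maximal divisible subgroup and $\overline{D^{d+1}_{nrs}(V,\tilde{\mathcal{A}})}\{\ell\}$ forces the transition maps on the quotients to be isomorphisms; since $\Sha^{d+1}_{nrs}(\tilde{A})\{\ell\} = \varinjlim_{V \subseteq V_0} D^{d+1}_{nrs}(V,\tilde{\mathcal{A}})\{\ell\}$, taking the direct limit produces $\overline{D^{d+1}_{nrs}(V_0,\tilde{\mathcal{A}})}\{\ell\} \xrightarrow{\sim} \overline{\Sha^{d+1}_{nrs}(\tilde{A})}\{\ell\}$. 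Feeding both identifications into the $V_0$-level pairing gives the asserted nondegenerate pairing of finite groups. The hard part will be the $\ell = p$ case of the local step: the cohomology of tores in residual characteristic $p$ is not fully computable, which is exactly why one is forced into the stronger but still adequate triviality hypothesis recorded in (H \ref{HQp})$_{\ell}$, and why the definition of $H^{d+1}_{nrs}(K_v,\tilde{A})$ (see Définition \ref{HnrsQp}) is genuinely needed precisely when $\ell = p$.
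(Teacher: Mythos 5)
Your proposal is, in structure, exactly the paper's proof: the paper establishes Theorem \ref{cornrsQp} by repeating the argument of section \ref{Cfonctions} verbatim (analogues of Proposition \ref{d1bis}, Lemme \ref{Shabis}, and the limit-over-$V$ argument of Theorem \ref{cornrsbis}), with the local input at the bad places given by Theorem \ref{noyauQp} together with the nullity results of section \ref{Qp} (Theorem \ref{nullitéQp} for $\ell\neq p$, Proposition \ref{nullitéQpp} for $\ell=p$) under (H \ref{HQp})$_{\ell}$. Your identification of these ingredients, of the role of the $\ell\neq p$ versus $\ell=p$ dichotomy, and of the stationarity/direct-limit argument is correct.

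One repair is needed in your diagram (1): your references are transposed. The map whose kernel must be shown divisible is $H^{d+1}(V,\tilde{\mathcal{A}})\{\ell\} \rightarrow (H^1_c(V,\mathcal{A})^{(\ell)})^D$ (the middle column of your diagram (2), induced by the Barsotti--Weil-type pairing you quote). To fit it into a Kummer-type diagram you need the limit duality $\varinjlim_r H^{d+1}(V,{_{\ell^r}}\tilde{\mathcal{A}}) \cong (H^{2}_c(V,T_{\ell}\mathcal{A}))^D$ — torsion coefficients on the $\tilde{\mathcal{A}}$-side, Tate module on the $\mathcal{A}$-side — together with the Kummer sequence of $\tilde{\mathcal{A}}$ in ordinary cohomology (degrees $d$, $d+1$) and that of $\mathcal{A}$ in compactly supported cohomology (degrees $1$, $2$). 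These are the \emph{transposes} of the statements of Lemme \ref{acc'} and Lemme \ref{exact'} that you cite; they hold by the same proofs (take $\varinjlim$ and $\varprojlim$ on the opposite sides of the finite-level duality, théorème 2.1 of \cite{Izq1}), but the pairing $H^1(V,\mathcal{A}\{\ell\}) \times H^{d+2}_c(V,T_{\ell}\tilde{\mathcal{A}}) \rightarrow \mathbb{Q}/\mathbb{Z}$ as you quote it cannot sit in the same diagram as the map above: it instead controls the kernel of $H^1(V,\mathcal{A})\{\ell\} \rightarrow (H^{d+1}_c(V,\tilde{\mathcal{A}})^{(\ell)})^D$, which is the map relevant to Proposition \ref{d1'}, not to the nrs statement. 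Accordingly, the divisible group controlling the kernel of $D^{d+1}_{nrs}(V,\tilde{\mathcal{A}})\{\ell\} \rightarrow (D^1(V,\mathcal{A})^{(\ell)})^D$ is $(T_{\ell}H^{2}_c(V,\mathcal{A}))^D$, not $(T_{\ell}H^{d+2}_c(V,\tilde{\mathcal{A}}))^D$ as you wrote. With this indexing fixed, the rest of your argument goes through exactly as in the paper.
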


\begin{corollary}\label{corglobQp}
On rappelle que $k=k'((t_2))...((t_d))$ avec $k'$ un corps $p$-adique et que $K=k(X)$ est le corps des fonctions de la courbe $X$. Soit $A$ une variété abélienne sur $K$. On suppose (H \ref{HQp})$_{\ell}$ et on note $\tilde{i}: \Sha^{d+1}(\tilde{A}) \hookrightarrow \Sha^{d+1}_{nrs}(\tilde{A})$ l'injection canonique. Alors il existe un accouplement non dégénéré à gauche de groupes finis:
$$\Sha^{d+1}(\tilde{A})\{\ell\}/\tilde{i}^{-1}(\Sha^{d+1}_{nrs}(\tilde{A})\{\ell\}_{div}) \times \overline{\Sha^1(A)}\{\ell\} \rightarrow \mathbb{Q}/\mathbb{Z}.$$
\end{corollary}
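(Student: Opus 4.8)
Corollaire \ref{corglobQp} — proof proposal.

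The plan is to deduce this corollary from the non-degenerate pairing of the preceding theorem \ref{cornrsQp} by exactly the same formal manipulation that was used to pass from \ref{cornrs} to \ref{corglob} in the $\mathbb{C}((t))$ setting. The key observation is that theorem \ref{cornrsQp} already furnishes a non-degenerate pairing $\overline{\Sha^{d+1}_{nrs}(\tilde{A})}\{\ell\} \times \overline{\Sha^1(A)}\{\ell\} \rightarrow \mathbb{Q}/\mathbb{Z}$. The canonical injection $\tilde{i}: \Sha^{d+1}(\tilde{A}) \hookrightarrow \Sha^{d+1}_{nrs}(\tilde{A})$ (which exists because the defining kernel for $\Sha^{d+1}(\tilde{A})$ imposes strictly more vanishing conditions than the one for $\Sha^{d+1}_{nrs}(\tilde{A})$) induces a map $\tilde{i}: \overline{\Sha^{d+1}(\tilde{A})}\{\ell\} \rightarrow \overline{\Sha^{d+1}_{nrs}(\tilde{A})}\{\ell\}$. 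Composing the pairing with this map yields an accouplement
$$\overline{\Sha^{d+1}(\tilde{A})}\{\ell\} \times \overline{\Sha^1(A)}\{\ell\} \rightarrow \mathbb{Q}/\mathbb{Z}.$$
Since the pairing of \ref{cornrsQp} is non-degenerate on the right (in $\overline{\Sha^1(A)}\{\ell\}$), the composed pairing remains non-degenerate on the right: an element of $\overline{\Sha^1(A)}\{\ell\}$ orthogonal to all of $\overline{\Sha^{d+1}(\tilde{A})}\{\ell\}$ need not a priori be orthogonal to all of $\overline{\Sha^{d+1}_{nrs}(\tilde{A})}\{\ell\}$, so this direction requires an argument—this is exactly where the main work lies.

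The main obstacle is precisely establishing non-degeneracy on the \emph{left} but not expecting it on the right, i.e. justifying the "à gauche" qualifier and computing the left kernel to be exactly $\tilde{i}^{-1}(\Sha^{d+1}_{nrs}(\tilde{A})\{\ell\}_{div})$. First I would unwind: the left kernel of the composed pairing consists of those classes $x \in \Sha^{d+1}(\tilde{A})\{\ell\}$ whose image $\tilde{i}(x)$ in $\overline{\Sha^{d+1}_{nrs}(\tilde{A})}\{\ell\}$ is zero, by non-degeneracy of the pairing in \ref{cornrsQp}. But $\tilde{i}(x) = 0$ in $\overline{\Sha^{d+1}_{nrs}(\tilde{A})}\{\ell\} = \Sha^{d+1}_{nrs}(\tilde{A})\{\ell\}/\Sha^{d+1}_{nrs}(\tilde{A})\{\ell\}_{div}$ means precisely that $\tilde{i}(x) \in \Sha^{d+1}_{nrs}(\tilde{A})\{\ell\}_{div}$, i.e. $x \in \tilde{i}^{-1}(\Sha^{d+1}_{nrs}(\tilde{A})\{\ell\}_{div})$. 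Hence the left kernel of the induced pairing is exactly that subgroup, and quotienting by it produces the claimed injective-on-the-left pairing $\Sha^{d+1}(\tilde{A})\{\ell\}/\tilde{i}^{-1}(\Sha^{d+1}_{nrs}(\tilde{A})\{\ell\}_{div}) \times \overline{\Sha^1(A)}\{\ell\} \rightarrow \mathbb{Q}/\mathbb{Z}$.

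For the right-hand factor I would retain $\overline{\Sha^1(A)}\{\ell\}$ unchanged, since the pairing of \ref{cornrsQp} is already perfect in that variable and the finiteness of both groups (a consequence of the cofinite-type statements already established, together with the fact that one quotients by maximal divisible subgroups) guarantees we are pairing finite groups. The final step is to verify that the resulting pairing is non-degenerate on the left by construction—which is immediate once the kernel computation above is in place—and that it is the restriction of a genuine pairing, which follows from the functoriality of the cup-product constructions underlying theorem \ref{cornrsQp}. The phrase "la preuve est analogue à celle de \ref{corglob}" covers the verification that the relevant square of pairings commutes, which in the $\mathbb{C}((t))$ case reduced (via the lemma on $H^r_c(V,\mathcal{A}) \times H^s(V,\mathcal{A}^t)$) to the trivial commutativity of a diagram in the derived category $D(X)$; here the same derived-category argument applies verbatim with $\mathcal{A}^t$ replaced by $\tilde{\mathcal{A}}$ and the duality pairing $H^{d+3}_c(U,\mathbb{Q}/\mathbb{Z}(d+1)) \cong \mathbb{Q}/\mathbb{Z}$ supplied by lemma 1.3 of \cite{Izq1}. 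I expect no genuinely new difficulty beyond bookkeeping, since every ingredient (the perfect pairing, the injection $\tilde{i}$, the finiteness, the derived-category commutativity) is already available.
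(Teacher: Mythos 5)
Your proposal is correct and takes essentially the same route as the paper: the paper establishes this corollary by the same formal manipulation as in \ref{corglob} and \ref{corglobbis}, namely composing the non-degenerate pairing of Theorem \ref{cornrsQp} with the map induced by $\tilde{i}$ and identifying the left kernel as $\tilde{i}^{-1}(\Sha^{d+1}_{nrs}(\tilde{A})\{\ell\}_{div})$ via left non-degeneracy, which is exactly your central computation. Two minor points: your clause asserting that ``the composed pairing remains non-degenerate on the right'' is false and contradicts your own (correct) correction immediately after it --- right non-degeneracy does not transfer, which is precisely why the statement is only ``à gauche'' and why the right kernel is posed as an open question in the paper; and the derived-category commutativity you invoke at the end is not needed here, since in \ref{corglob} it served only to compare the two pairings $\text{CT}$ and $\text{CT}^t$ for the two-sided statement, whereas the one-sided statement requires no such comparison.
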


\textbf{Question:} Quel est le noyau à droite dans l'accouplement précédent?

\begin{remarque}
Dans le cas où $k$ est un corps $p$-adique, en utilisant le paragraphe \ref{precisions}, il est possible de remplacer l'hypothèse (H \ref{HQp})$_{p}$ par l'hypothèse légèrement plus faible suivante:

\centerline{\begin{minipage}[t]{0.9\textwidth}
« pour chaque $v \in X \setminus U$, au moins l'une des deux affirmations suivantes est vérifiée:
\begin{itemize}
\item[$\circ$] $\ell$ ne divise pas $|F_{v,d}|$,
\item[$\circ$] la variété abélienne $B_{v,1}$ est triviale. »
\end{itemize}
\end{minipage}}
\end{remarque}

\section{\textsc{Quelques remarques sur la finitude des groupes de Tate-Shafarevich}}

Le but de cette section est de donner, pour $k = \mathbb{C}((t))$ ou $k=\mathbb{Q}_p$, des exemples de variétés abéliennes sur $K$ pour lesquelles on peut déterminer si le premier groupe de Tate-Shafarevich est fini ou pas. Pour ce faire, nous allons utiliser le théorème 3.1 de \cite{Tat66}, dont nous rappelons l'énoncé (adapté à notre situation):

\begin{theorem}\label{Tate} (théorème 3.1 de \cite{Tat66})\\
Soit $Y$ une surface régulière sur $k$ munie d'un morphisme propre $f:Y \rightarrow X$ à fibres de dimension 1. On suppose que les fibres géométriques de $f$ sont connexes, et que la fibre générique est lisse. Si $f$ admet une section, $\text{Br} \; X$ est un sous-groupe de $\text{Br} \; Y$ et on a un isomorphisme  $\Sha^1(K, J)\cong \text{Br} \; Y / \text{Br} \; X$ où $J$ désigne la jacobienne de la fibre générique de $f$.
\end{theorem}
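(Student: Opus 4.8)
Le plan est d'exploiter la suite spectrale de Leray $H^p(X, R^q f_* \mathbb{G}_m) \Rightarrow H^{p+q}(Y, \mathbb{G}_m)$ associée au morphisme $f$ et au faisceau $\mathbb{G}_m$ sur le petit site étale. La première étape consiste à calculer les images directes supérieures en bas degré. Comme $f$ est propre à fibres géométriquement connexes, on a $f_* \mathbb{G}_m = \mathbb{G}_m$; le faisceau $R^1 f_* \mathbb{G}_m$ s'identifie par définition au foncteur de Picard relatif $\text{Pic}_{Y/X}$ (la lissité de la fibre générique et l'existence d'une section garantissant la représentabilité); enfin, $R^2 f_* \mathbb{G}_m = 0$ par un argument d'Artin, les fibres géométriques étant des courbes sur un corps séparablement clos, de groupe de Brauer nul par le théorème de Tsen.

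La deuxième étape utilise la section $s : X \to Y$. Celle-ci fournit une rétraction $s^*$ du morphisme $f^*$ en cohomologie, ce qui scinde les morphismes d'arête: en particulier, les différentielles $d_2 : E_2^{0,1} \to E_2^{2,0}$ et $d_2 : E_2^{1,1} \to E_2^{3,0}$ sont nulles, et le morphisme $f^* : \text{Br}\, X = H^2(X, \mathbb{G}_m) \to H^2(Y, \mathbb{G}_m) = \text{Br}\, Y$ est injectif, d'où la première assertion. Puisque $R^2 f_* \mathbb{G}_m = 0$ force $E_\infty^{0,2} = 0$, la filtration de $\text{Br}\, Y$ n'a que deux crans, de gradués $E_\infty^{2,0} = \text{Br}\, X$ et $E_\infty^{1,1} = H^1(X, \text{Pic}_{Y/X})$; on obtient ainsi un isomorphisme canonique $\text{Br}\, Y / \text{Br}\, X \cong H^1(X, \text{Pic}_{Y/X})$.

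Il reste à relier $H^1(X, \text{Pic}_{Y/X})$ au groupe $\Sha^1(K, J)$. La suite exacte de degré $0 \to \text{Pic}^0_{Y/X} \to \text{Pic}_{Y/X} \xrightarrow{\deg} \mathbb{Z} \to 0$ se scinde grâce à la section, qui fournit un diviseur de degré $1$ sur chaque fibre; comme $H^1(X, \mathbb{Z}) = \text{Hom}_{\text{cont}}(\pi_1(X), \mathbb{Z}) = 0$, il vient $H^1(X, \text{Pic}_{Y/X}) \cong H^1(X, \text{Pic}^0_{Y/X})$. Le foncteur $\text{Pic}^0_{Y/X}$ ayant pour fibre générique la jacobienne $J$, je comparerais enfin sa cohomologie sur $X$ à celle de sa fibre générique à l'aide de la suite spectrale de Leray pour l'immersion $j : \text{Spec}\, K \hookrightarrow X$: les conditions imposées en chaque point fermé $v$ redonnent précisément le noyau de $H^1(K, J) \to \prod_v H^1(K_v, J)$, c'est-à-dire $\Sha^1(K, J)$.

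Le point le plus délicat sera cette dernière identification. Il faudra contrôler le comportement du faisceau de Picard relatif aux points de mauvaise réduction et vérifier que $\text{Pic}^0_{Y/X}$ coïncide localement avec la composante neutre du modèle de Néron de $J$, de sorte que les conditions de recollement sur $X$ redonnent exactement les conditions de trivialité locale définissant $\Sha^1$; la régularité de la surface $Y$ y jouera un rôle essentiel.
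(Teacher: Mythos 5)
The paper itself gives no proof of this statement: it is quoted, with the base field changed, from Tate's exposé \cite{Tat66}, so your argument can only be judged on its own merits. Its first half is the standard Artin--Grothendieck argument and is essentially correct: $f_*\mathbb{G}_m=\mathbb{G}_m$, $R^1f_*\mathbb{G}_m=\text{Pic}_{Y/X}$, $R^2f_*\mathbb{G}_m=0$ by Tsen--Artin, and the section kills the relevant differentials, whence $\text{Br}\,Y/\text{Br}\,X\cong H^1(X,\text{Pic}_{Y/X})$. One local inaccuracy: your sequence $0\to\text{Pic}^0_{Y/X}\to\text{Pic}_{Y/X}\xrightarrow{\deg}\mathbb{Z}\to 0$ is wrong as soon as a closed fibre is reducible, since the quotient of $\ker(\deg)$ by $\text{Pic}^0_{Y/X}$ is then a nontrivial skyscraper sheaf built out of the fibre components; relating $\text{Pic}_{Y/X}$ to the Néron model $\mathcal{J}$ of $J$ (and not merely $\text{Pic}^0_{Y/X}$ to $\mathcal{J}^0$) is Raynaud's theorem, which indeed uses the regularity of $Y$ and the section, as you anticipate.

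The genuine gap is your final step, and it is not merely ``délicat'': it is exactly where Tate's proof uses that the residue fields of the closed points of $X$ are \emph{finite}. The sheaf-theoretic argument identifies $\text{Br}\,Y/\text{Br}\,X$ with a group of classes of $H^1(K,J)$ that are everywhere \emph{unramified}, whereas $\Sha^1(K,J)$ consists of classes that are everywhere \emph{locally trivial}. Passing from one to the other requires the vanishing of the local groups $H^1(\mathcal{O}_v,\mathcal{J}^0)=H^1(\kappa(v),\mathcal{J}^0_{\kappa(v)})$, which over finite residue fields is Lang's theorem; over the fields considered in this paper these groups are typically infinite --- that is precisely what Ogg's theorem and le théorème \ref{1-local} compute --- and the statement itself then fails. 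Concretely, take $k=\mathbb{C}((t))$, $X=\mathbb{P}^1_k$, $E$ a constant elliptic curve over $k$, $Y=E\times_k X$ with the projection and its evident section. Then $\text{Br}\,X=\text{Br}\,k=0$, while $\text{Br}\,Y\supseteq\text{Br}_1\,Y\cong H^1(k,\text{Pic}(Y\times_k k^s))\cong H^1(k,E)\cong(\mathbb{Q}/\mathbb{Z})^2$ (using $\text{Br}(k)=H^3(k,\mathbb{G}_m)=0$ and Ogg's theorem). On the other hand $\Sha^1(K,E_K)=0$: a locally trivial class is everywhere unramified, hence (by the Néron property and $H^1(\mathbb{P}^1_{k^s},E)=0$) inflated from some $\beta\in H^1(k,E)$; restricting at a $k$-rational point $v$, where $K_v=k((u))$, forces $\beta=0$, because a torsor under $E$ over $k$ with a $k((u))$-point has a $k[[u]]$-point by properness, hence a $k$-point. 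So here $\text{Br}\,Y/\text{Br}\,X\not\cong\Sha^1(K,J)$. In other words, your outline can be completed into a proof only in Tate's original setting (finite, or algebraically closed, residue fields); in the generality in which the theorem is stated in this paper, what the argument actually yields is the isomorphism $\text{Br}\,Y/\text{Br}\,X\cong H^1(X,\text{Pic}_{Y/X})$, a group which in general strictly contains $\Sha^1(K,J)$.
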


\subsection{Cas où $k=\mathbb{C}((t))$}

On se place dans le cas où $k = \mathbb{C}((t))$. 

\subsubsection{Cas où $Y$ est un produit}\label{produit}

Soient $C$ une courbe projective lisse sur $k$ telle que $C(k) \neq \emptyset$ et $Y = C \times_k X$. On note $J_C$ (resp. $J_X$) la jacobienne de $C$ (resp. $X$) sur $k$. D'après le théorème \ref{Tate}, le groupe $\Sha^1(K, J_C \times_k K)$ est égal à $\text{Br} \; Y / \text{Br} \; X$. Par ailleurs, nous savons que $\text{Br}_1 \; Y = H^1(k,\text{Pic}\; Y_{\overline{k}})$. D'après la proposition 1.7 de \cite{SZ14}, le morphisme naturel $H^1(k,\text{Pic}\; C_{\overline{k}})\times H^1(k,\text{Pic}\; X_{\overline{k}}) \rightarrow H^1(k,\text{Pic}\; Y_{\overline{k}})$ a un noyau et un conoyau finis. Écrivons la suite exacte de modules galoisiens:
$$0 \rightarrow J_C(\overline{k}) \rightarrow \text{Pic} \; C_{\overline{k}} \rightarrow \mathbb{Z} \rightarrow 0.$$
$$0 \rightarrow J_X(\overline{k}) \rightarrow \text{Pic} \; X_{\overline{k}} \rightarrow \mathbb{Z} \rightarrow 0.$$
On en déduit une suite exacte de cohomologie:
$$\mathbb{Z} \rightarrow H^1(k, J_C) \rightarrow H^1(k, \text{Pic} \; C_{\overline{k}}) \rightarrow 0.$$
$$\mathbb{Z} \rightarrow H^1(k, J_X) \rightarrow H^1(k, \text{Pic} \; X_{\overline{k}}) \rightarrow 0.$$
Les noyaux des morphismes surjectifs $H^1(k, J_C) \rightarrow H^1(k, \text{Pic} \; C_{\overline{k}})$ et $H^1(k, J_X) \rightarrow H^1(k, \text{Pic} \; X_{\overline{k}})$ sont donc finis. On en déduit que les parties divisibles de $\text{Br}_1 \; Y / \text{Br} \; X \cong H^1(k,\text{Pic}\; Y_{\overline{k}}) / H^1(k,\text{Pic}\; X_{\overline{k}})$ et $H^1(k,J_C)$ sont égales. Par conséquent, si $J_C$ n'a pas très mauvaise réduction, alors $\Sha^1(K, J_C \times_k K)_{div}\neq 0$. La réciproque est vraie par exemple si $X$ est de genre 0, puisque dans ce cas, la partie divisible de $\text{Br} \; Y_{\overline{k}}$ est nulle d'après la section 2.9 de \cite{SZ14}.

\subsubsection{Cas où $Y$ est de dimension de Kodaira $-\infty$}

Soit $Y$ une surface projective lisse sur $k$ de dimension de Kodaira $-\infty$. Le cas où $Y$ est une fibration en coniques sur une courbe est inintéressant, puisque la jacobienne de la fibre générique est triviale.\\ Supposons donc que $Y$ est une surface de del Pezzo vérifiant les hypothèses de \ref{Tate}. On note $J$ la jacobienne de la fibre générique de $Y \rightarrow X$. Soit $L$ une extension finie de $k$ telle que $Y \times_k L$ est rationnelle. Alors $\text{Br}\; Y$ est fini, et donc $\Sha^1(L(X),J \times_k L(X))_{div}=0$. Par restriction-corestriction, on déduit que $\Sha^1(K,J)_{div}=0$.\\
Remarquons finalement qu'il existe bien des surfaces de del Pezzo $Y$ vérifiant les hypothèses de \ref{Tate}. Par exemple, il suffit de choisir $Y_0/\mathbb{C}$ l'éclatement de $\mathbb{P}^2_{\mathbb{C}}$ en les points base d'un pinceau de cubiques et $Y=Y_0 \times_{\mathbb{C}} k$ puisque, dans ce cas, $Y$ est une surface jacobienne sur $\mathbb{P}^1_k$.

\subsubsection{Cas où $Y$ est de dimension de Kodaira 0}

Dans ce paragraphe, nous allons étudier le cas où $Y$ est une surface projective lisse minimale sur $k$ de dimension de Kodaira 0. La classification de telles surfaces montre que $Y$ est un twist d'une surface abélienne, une surface bielliptique, une surface K3 ou une surface d'Enriques. \\

\textbf{Surfaces abéliennes.} Supposons que $X$ soit une courbe elliptique et soit $Y$ le produit de $X$ par une courbe elliptique $E$. Dans ce cas, $Y$ est une surface abélienne fibrée au-dessus de $X$. Si $E$ n'a pas très mauvaise réduction, alors, d'après l'étude menée en \ref{produit}, $\Sha^1(K,E\times_k K)_{div} \neq 0$. Le cas où $E$ a très mauvaise réduction est plus difficile, puisque $\text{Br}_1 \; Y/\text{Br}\; X$ est fini et il faut donc s'intéresser au groupe de Brauer transcendant $\text{Im}(\text{Br} \; Y \rightarrow \text{Br}(Y \times_k \overline{k}))$.\\

\textbf{Surfaces bielliptiques.} Soit $Y=(E_1 \times_k E_2)/G$ une surface bielliptique, avec $E_1$ et $E_2$ deux courbes elliptiques et $G$ un sous-groupe fini de $E_1$ agissant sur $E_2$ de sorte que $E_2/G \cong \mathbb{P}^1_k$. Le morphisme $\pi : Y \rightarrow E_1/G$ est alors une fibration elliptique isotriviale, de fibre $E_2$. On prend $X=E_1/G$ et on suppose que la fibration a une section. Comme le genre géométrique de $Y$ est nul et $\text{Alb} \; Y = E_1/G$, on déduit que $\text{Br} (Y \times_k \overline{k})$ et $\text{Br}_1 \; Y/\text{Br}\; X$ sont finis. Cela montre que $\text{Br}\; Y/\text{Br}\; X$ est fini, et il en est donc de même de $\Sha^1(K,J)$ où $J$ désigne la jacobienne de la fibre générique de $\pi$. \\

\textbf{Surfaces K3.} Supposons que $X=\mathbb{P}^1_k$. Soient $Y_0$ une surface K3 sur $\mathbb{C}$ telle que $Y=Y_0\times_{\mathbb{C}} k$ est une surface elliptique sur $X$ avec une section. On note $\overline{Y}=Y \times_k \overline{k}$ et $\rho =\text{rg}(NS(Y_0))= \text{rg}(NS(\overline{Y}))$. Comme la variété d'Albanese de $Y$ est triviale, le groupe $\text{Br}_1 \; Y$ est fini. Concernant le groupe de Brauer transcendant de $Y$, comme $\text{Br} \; Y_0 \cong \text{Br} \; \overline{Y}$, on a $\text{Br} \; \overline{Y} = \text{Im}(\text{Br} \; Y \rightarrow \text{Br} \; \overline{Y})$. Or $(\text{Br} \; \overline{Y})_{div}\cong (\mathbb{Q}/\mathbb{Z})^{22-\rho}$. Étant donné que $\rho \leq 20$, on a $\text{Im}(\text{Br} \; Y \rightarrow \text{Br} \; \overline{Y})_{div}\neq 0$. Le théorème \ref{Tate} permet alors de conclure que $\Sha^1(K,J)_{div} \neq 0$ où $J$ est la jacobienne de la fibre générique de $Y \rightarrow X$. On remarquera que dans cette situation, la non nullité de $\Sha^1(K,J)_{div}$ n'est pas expliquée par le groupe de Brauer algébrique de $Y$.\\
Reste à rappeler qu'il existe bel et bien des surfaces K3 sur $\mathbb{C}$ qui sont des surfaces jacobiennes: 
\begin{itemize}
\item[$\bullet$] toutes les surfaces K3 avec $\rho \geq 13$ sont jacobiennes (lemme 12.22 de \cite{SS10});
\item[$\bullet$] pour $\rho<13$, dans la section 3.2 de \cite{HS11}, Hulek et Schütt construisent une famille de surfaces K3 qui sont des surfaces jacobiennes et qui vérifient $\rho \geq 10$; 
\item[$\bullet$] d'après \cite{CD89}, la surface jacobienne d'une surface K3 elliptique est une surface K3 de même rang de Picard, et toute surface K3 avec $\rho \geq 5$ est elliptique.\end{itemize}
~\\
\textbf{Surfaces d'Enriques.} Si $Y$ est une surface d'Enriques, c'est toujours une surface elliptique, mais elle ne possède jamais de section, ce qui ne permet donc pas d'appliquer \ref{Tate}.

\begin{remarque}
Si $Y$ est une surface projective lisse sur $k$ de dimension de Kodaira 1, alors $Y$ est automotiquement une surface elliptique, mais pas forcément jacobienne. Si $Y$ est de type général, alors $Y$ n'est pas une surface elliptique.
\end{remarque}

\subsection{Cas où $k=\mathbb{Q}_p$}

Soient $p$ et $\ell$ des nombres premiers distincts. On se place dans le cas où $k$ un corps $p$-adique. Soient $C$ une courbe projective lisse sur $k$ telle que $C(k) \neq \emptyset$ et $Y = C \times_k X$. On note $J_C$ (resp. $J_X$) la jacobienne de $C$ (resp. $X$) sur $k$. Comme dans le paragraphe précédent, on montre que la partie divisible de $(\text{Br}_1 \; Y / \text{Br} \; X)\{\ell\}$ est toujours triviale. En particulier, si $X$ est de genre 0, alors $\Sha^1(K, J_C \times_k K)\{\ell\}_{div}= 0$ (et ce résultat reste vrai si $C(k) = \emptyset$ par un argument de restriction-corestriction). Par contre, si $J_C \neq 0$, alors $\Sha^1(K,J_C \times_k K)\{p\}_{div} \neq 0$.\\

\textbf{Question:} Si $X$ est de genre 0, est-ce que toute jacobienne $J$ sur $K$ vérifie $\Sha^1(K,J)\{\ell\}_{div}=0$?

\section*{\textsc{Annexe: Le faisceau $\tilde{A}$}}\label{annexeva}

En tenant compte de la formule de Barsotti-Weil, il serait naturel de considérer $\text{\underline{Ext}}^1(A,\mathbb{Q}/\mathbb{Z}(d))$ au lieu de $\tilde{A}$ dans les sections \ref{Qp} et \ref{globQp}. Il se trouve en fait que ces deux faisceaux coïncident. Pour le voir, il faut utiliser certains travaux de Breen:

\begin{proposition}\label{breen}
Soient $l$ un corps de caractéristique nulle et $X$ un $l$-schéma séparé. Soit $\mathcal{A}$ un schéma abélien sur $X$. Soient $r$ un entier différent de 1, $i$ un entier quelconque et $n$ un entier naturel non nul. 
\begin{itemize}
\item[(i)] On a: $$\underline{\text{Ext}}^r_X(\mathcal{A},\mathbb{Q}/\mathbb{Z}(i))=\underline{\text{Ext}}^r_X(\mathcal{A},\mathbb{Z}/n\mathbb{Z}(i))=0.$$
\item[(ii)] Le faisceau $\underline{\text{Ext}}^1_X(\mathcal{A},\mathbb{Q}/\mathbb{Z}(i))$ est de torsion.
\item[(iii)] On a: $$\underline{\text{Ext}}^1_X(\mathcal{A},\mathbb{Q}/\mathbb{Z}(i)) = \varinjlim_n \underline{\text{Ext}}^1_X(\mathcal{A},\mathbb{Z}/n\mathbb{Z}(i)).$$
\end{itemize}
\end{proposition}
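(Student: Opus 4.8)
The plan is to treat $\underline{\text{Ext}}^r_X(\mathcal{A},-)$ as a cohomological $\delta$-functor on étale sheaves and to feed it the Breen–Barsotti–Weil computation over the base $X$. The $\delta$-functoriality is legitimate: a short exact sequence of coefficient sheaves produces a long exact sequence of presheaves $T\mapsto \text{Ext}^r_{T_{fppf}}(\mathcal{A},-)$, and étale sheafification is exact, so it yields a long exact sequence of the sheaves $\underline{\text{Ext}}^r_X(\mathcal{A},-)$. The only external inputs I would use are the Breen results $\underline{\text{Ext}}^r_X(\mathcal{A},\mathbb{G}_m)=0$ for $r\neq 1$ and $\underline{\text{Ext}}^1_X(\mathcal{A},\mathbb{G}_m)=\mathcal{A}^t$ (the relative Barsotti–Weil/Breen theorem), together with a commutation of $\underline{\text{Ext}}$ with filtered colimits discussed below.

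First I would dispose of the Tate twist. As $l$ has characteristic $0$, the sheaf $\mu_n$ is étale-locally isomorphic to $\mathbb{Z}/n\mathbb{Z}$, hence so is every $\mathbb{Z}/n\mathbb{Z}(i)=\mu_n^{\otimes i}$. Since the stalk of $\underline{\text{Ext}}^r_X(\mathcal{A},\mathcal{F})$ at a geometric point is $\varinjlim_U \text{Ext}^r_{U_{fppf}}(\mathcal{A}|_U,\mathcal{F}|_U)$ over small étale neighborhoods $U$, two étale-locally isomorphic coefficient sheaves give $\underline{\text{Ext}}$-sheaves with the same stalks; so for the vanishing statements it suffices to treat $\mathbb{Z}/n\mathbb{Z}(i)=\mu_n$, which I connect to $\mathbb{G}_m$ via the Kummer sequence $0\to\mu_n\to\mathbb{G}_m\xrightarrow{n}\mathbb{G}_m\to 0$. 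Substituting the $\mathbb{G}_m$-values into the long exact sequence gives immediately $\underline{\text{Hom}}_X(\mathcal{A},\mu_n)=0$, then $\underline{\text{Ext}}^1_X(\mathcal{A},\mu_n)={}_{n}\mathcal{A}^t$, and vanishing for $r\geq 3$ (both neighbouring $\mathbb{G}_m$-terms are zero). The delicate degree is $r=2$, where the sequence identifies $\underline{\text{Ext}}^2_X(\mathcal{A},\mu_n)$ with the cokernel of $n:\mathcal{A}^t\to\mathcal{A}^t$ taken in étale sheaves; here the characteristic-zero hypothesis is used decisively, since $[n]$ is finite étale and surjective on the abelian scheme $\mathcal{A}^t$, hence an epimorphism of étale sheaves, so this cokernel vanishes. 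This establishes (i) for finite coefficients.

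For $\mathbb{Q}/\mathbb{Z}(i)$ and for (ii)–(iii) I would write $\mathbb{Q}/\mathbb{Z}(i)=\varinjlim_n\mathbb{Z}/n\mathbb{Z}(i)$ and invoke the commutation of $\underline{\text{Ext}}^r_X(\mathcal{A},-)$ with filtered colimits of coefficients. Granting it, $\underline{\text{Ext}}^r_X(\mathcal{A},\mathbb{Q}/\mathbb{Z}(i))=\varinjlim_n\underline{\text{Ext}}^r_X(\mathcal{A},\mathbb{Z}/n\mathbb{Z}(i))$, which is statement (iii) for $r=1$ and gives (i) for $\mathbb{Q}/\mathbb{Z}(i)$ since a colimit of zero sheaves is zero; finally (ii) follows because each $\underline{\text{Ext}}^1_X(\mathcal{A},\mathbb{Z}/n\mathbb{Z}(i))$ is killed by $n$, so the colimit is a torsion sheaf. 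I expect this colimit commutation to be the main obstacle: it is not formal. It rests on $\mathcal{A}$ being finitely presented, hence a compact object of the topos, together with the fact that fppf cohomology over quasi-compact quasi-separated schemes commutes with filtered colimits — this is precisely where Breen's results are needed — while the outer étale sheafification, being a left adjoint, passes through colimits automatically.

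As a consistency check that does not require the commutation, I would also run the exact sequence $0\to\mathbb{Z}/n\mathbb{Z}(i)\to\mathbb{Q}/\mathbb{Z}(i)\xrightarrow{n}\mathbb{Q}/\mathbb{Z}(i)\to 0$. Using $\underline{\text{Ext}}^2_X(\mathcal{A},\mathbb{Z}/n\mathbb{Z}(i))=0$ from the previous step, multiplication by $n$ is surjective on $\underline{\text{Ext}}^1_X(\mathcal{A},\mathbb{Q}/\mathbb{Z}(i))$, so this sheaf is divisible; and using $\underline{\text{Hom}}_X(\mathcal{A},\mathbb{Q}/\mathbb{Z}(i))=0$ (the $r=0$ case) its $n$-torsion is isomorphic to $\underline{\text{Ext}}^1_X(\mathcal{A},\mathbb{Z}/n\mathbb{Z}(i))$. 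This is exactly compatible with the colimit description, the point of (ii) being that the commutation rules out a spurious torsion-free divisible summand that the torsion subgroups alone cannot detect.
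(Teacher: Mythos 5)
Your computations in degrees $r=0,1,2$ are essentially sound (your identification $\underline{\text{Ext}}^1_X(\mathcal{A},\mu_n)={}_n\mathcal{A}^t$ agrees with what the paper proves by other means), but part (i) is unproved for every $r\geq 3$, because it rests on an input that does not exist: the vanishing $\underline{\text{Ext}}^r_X(\mathcal{A},\mathbb{G}_m)=0$ for \emph{all} $r\neq 1$ is not a theorem of Breen, nor of anyone else. What the literature provides is $\underline{\text{Hom}}_X(\mathcal{A},\mathbb{G}_m)=0$, the Barsotti--Weil identification $\underline{\text{Ext}}^1_X(\mathcal{A},\mathbb{G}_m)=\mathcal{A}^t$, Breen's 1975 vanishing theorem $\underline{\text{Ext}}^2_X(\mathcal{A},\mathbb{G}_m)=0$ (a different paper from \cite{Bre}, and already a much harder result than anything the present proof needs), and, in higher degrees, only the fact that $\underline{\text{Ext}}^r_X(\mathcal{A},\mathbb{G}_m)$ is a \emph{torsion} sheaf: these higher Ext sheaves are exactly where the torsion of Eilenberg--MacLane homology shows up, and no vanishing statement for $r\geq 3$ can be quoted. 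Torsion-ness is useless in your Kummer sequence: its exactness gives $\underline{\text{Ext}}^{r-1}_X(\mathcal{A},\mathbb{G}_m)\to\underline{\text{Ext}}^{r}_X(\mathcal{A},\mu_n)\to{}_n\underline{\text{Ext}}^{r}_X(\mathcal{A},\mathbb{G}_m)\to 0$, so you must kill precisely the $n$-torsion of a sheaf you only know to be torsion. Worse, in this étale-sheafified, characteristic-zero setting the vanishing you invoke is in fact a \emph{consequence} of the proposition being proved (combine the proposition with Breen's torsion-ness via this same Kummer sequence), so using it as an input is essentially circular.

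The repair is the paper's own device: run the dévissage in the first variable rather than on the coefficients. From $0\to{}_n\mathcal{A}\to\mathcal{A}\xrightarrow{n}\mathcal{A}\to 0$ one gets the exact sequence $\underline{\text{Ext}}^r_X(\mathcal{A},\mathbb{Z}/n\mathbb{Z}(i))\xrightarrow{n}\underline{\text{Ext}}^r_X(\mathcal{A},\mathbb{Z}/n\mathbb{Z}(i))\to\underline{\text{Ext}}^r_X({}_n\mathcal{A},\mathbb{Z}/n\mathbb{Z}(i))$, and here the third term is controllable: in characteristic zero both ${}_n\mathcal{A}$ and $\mathbb{Z}/n\mathbb{Z}(i)$ are finite étale, hence étale-locally constant, and $\underline{\text{Ext}}^r_X(\mathbb{Z}/n\mathbb{Z},F)=0$ for $r\geq 2$ by the resolution $0\to\mathbb{Z}\xrightarrow{n}\mathbb{Z}\to\mathbb{Z}/n\mathbb{Z}\to 0$, since $\underline{\text{Ext}}^r_X(\mathbb{Z},F)$ is the sheafification of the cohomology presheaf $T\mapsto H^r(T,F)$ and so vanishes for $r\geq 1$. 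Hence $\underline{\text{Ext}}^r_X(\mathcal{A},\mathbb{Z}/n\mathbb{Z}(i))$ is $n$-divisible for $r\geq 2$; being also killed by $n$, it is zero — with no appeal to $\mathbb{G}_m$, Barsotti--Weil, or any deep vanishing theorem. As for (ii) and (iii), your colimit-commutation strategy is defensible and draws on the same Breen machinery (resolution of $\mathcal{A}$ by sheaves $\mathbb{Z}[\mathcal{A}^k]$ plus commutation of cohomology of quasi-compact schemes with filtered colimits), but it is a stronger extraction than necessary: the paper only takes from \cite{Bre} that $\underline{\text{Ext}}^r_X(\mathcal{A},F)$ is torsion for torsion $F$, observes that for $r\neq 1$ this sheaf is also torsion-free by the finite-coefficient vanishing, hence zero, and then deduces (iii) formally from (i) and (ii).
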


\begin{proof}
\begin{itemize}
\item[(i)]
\begin{itemize}
\item[$\bullet$] Montrons d'abord que le faisceau $\underline{\text{Ext}}^r_X(\mathcal{A},\mathbb{Z}/n\mathbb{Z}(i))$ est nul.
\begin{itemize}
\item[$\bullet$] Pour $r=0$, la multiplication par $n$ est injective sur $\underline{\text{Hom}}_X(\mathcal{A},\mathbb{Z}/n\mathbb{Z}(i))$. Ce faisceau étant de $n$-torsion, il est nul.
\item[$\bullet$] Supposons $r \geq 2$. On a une suite exacte:
$$\underline{\text{Ext}}^r_X(\mathcal{A},\mathbb{Z}/n\mathbb{Z}(i)) \rightarrow \underline{\text{Ext}}^r_X(\mathcal{A},\mathbb{Z}/n\mathbb{Z}(i)) \rightarrow \underline{\text{Ext}}^r_X({_n}\mathcal{A},\mathbb{Z}/n\mathbb{Z}(i)).$$
Or $\underline{\text{Ext}}^r_X({_n}\mathcal{A},\mathbb{Z}/n\mathbb{Z}(i)) = 0$. Donc le faisceau $\underline{\text{Ext}}^r_X({_n}\mathcal{A},\mathbb{Z}/n\mathbb{Z}(i))$ est $n$-divisible. Étant de $n$-torsion, il est nul.
 \end{itemize}
 On remarque alors que la nullité de $\underline{\text{Ext}}^r_X(\mathcal{A},\mathbb{Z}/n\mathbb{Z}(i))$ implique que le faisceau $\underline{\text{Ext}}^r_X(\mathcal{A},\mathbb{Q}/\mathbb{Z}(i))$ est sans torsion pour $r\neq 1$. Or, comme $\mathbb{Q}/\mathbb{Z}(i)$ est de torsion, en utilisant les résultats de \cite{Bre} (en particulier la méthode décrite dans le paragraphe 6 et le complexe 5.13), on déduit que le faisceau $\underline{\text{Ext}}^r_X(\mathcal{A},\mathbb{Q}/\mathbb{Z}(i))$ est de torsion. Cela achève la preuve.
 \end{itemize}
\item[(ii)] Cela découle de \cite{Bre} (en particulier la méthode décrite dans le paragraphe 6 et le complexe 5.13). 
\item[(iii)] D'après (i), on a une suite exacte:
$$0 \rightarrow \text{\underline{Ext}}^1(A,\mathbb{Z}/n\mathbb{Z}(d)) \rightarrow  \text{\underline{Ext}}^1(A,\mathbb{Q}/\mathbb{Z}(d)) \rightarrow  \text{\underline{Ext}}^1(A,\mathbb{Q}/\mathbb{Z}(d)) \rightarrow 0.$$
L'assertion (ii) permet alors de conclure.
\end{itemize}
\end{proof}

\nocite*

\end{document}